\title{\vspace{-1cm}\semiLARGE
Dirichlet $\bm{L}$-series at $\bm{s = 0}$
and the scarcity 
of Euler systems}
\date{}
\author{Dominik Bullach \and David Burns \and Alexandre Daoud \and Soogil Seo}
\begin{document}

\maketitle

\let\thefootnote\relax\footnotetext{2020 {\em Mathematics Subject Classification.} Primary: 11R42; Secondary: 11R23.
}

\setcounter{tocdepth}{2}

\vspace{-1.5cm}

\begin{abstract}
    We study Euler systems for $\mathbb{G}_m$ over a number field $k$. Motivated by a distribution-theoretic idea of Coleman, we formulate a conjecture regarding the existence of such systems that is elementary to state and yet strictly finer than Kato's equivariant Tamagawa number conjecture for Dirichlet $L$-series at $s=0$. To investigate the conjecture, we develop an abstract theory of `Euler limits' and, in particular, prove the existence of canonical `restriction' and `localisation' sequences in this theory. By using this approach we obtain a variety of new results, ranging from a proof, modulo standard $\mu$-vanishing  hypotheses, of our central conjecture in the case $k$ is $\QQ$ or imaginary quadratic to a proof of the `minus part' of Kato's conjecture in the case $k$ is totally real. In proving these results, we also show that higher-rank Euler systems for a wide class of $p$-adic representations 
    control the structure of Iwasawa-theoretic Selmer groups in the manner predicted by `main conjectures'. 
\end{abstract}

\vspace{-0.4cm}
\tableofcontents

\section{Introduction}

The mysterious link between $L$-series and arithmetic, manifestations of which include the analytic class number formula and the Conjecture of Birch and Swinnerton-Dyer, is a key theme in modern arithmetic geometry. 
The theory of Euler systems has, since the 1980s, been crucial to investigations of this link, though finding concrete examples of such systems has proved to be a difficult problem. In the present article, we  consider in detail the apparent scarcity of Euler systems in the setting of the multiplicative group $\mathbb{G}_m$ over number fields, 
and find that this scarcity itself, when made precise, has important consequences regarding the formulation and study of special value conjectures.
\\ 
To help motivate our approach we recall that, in 1989, Coleman conjectured a striking, and intrinsically global, distribution-theoretic analogue of the fact that norm-compatible families of units in towers of local cyclotomic fields arise by evaluating a power series at roots of unity, as had been proved in \cite{coleman}.  Hitherto, however, a resolution of this conjecture has seemed out of reach, with comparatively little supporting evidence and no proof strategy apparent (see \cite{Seo1} or  \cite{BurnsSeo} for a discussion of the history). 

To generalise Coleman's idea, we note that, after suitable reinterpretation, his conjecture implies that every Euler system (of rank one) for $\mathbb{G}_m$ over $\Q$ should, modulo certain torsion considerations, arise as a product of Galois-conjugates of the system of cyclotomic units. We then further note that, for any number field $k$, the collection of Euler systems of any given rank for $\mathbb{G}_m$ over $k$ is a module over the algebra $\mathcal{R}_k \coloneqq \varprojlim_{E}\Z[\Gal(E/k)]$, where $E$ runs over all finite abelian extensions of $k$ (and the transition maps in the inverse limit are the  natural projection maps). Then, roughly speaking, our central conjecture will assert that every Euler system over $k$ that satisfies certain natural, and explicit, auxiliary conditions should be an $\mathcal{R}_k$-multiple of the `Rubin--Stark system' $c_k^\mathrm{RS}$ that is defined in \cite{Rub96}. 
This straightforward prediction (which, in the sequel, we refer to as the `Scarcity Conjecture') is stated precisely as Conjecture \ref{scarcity-conjecture} and lies at the centre of our approach.
\\
At this point, it is important to note $\mathcal{R}_k$ is neither Noetherian nor compact and hence that various standard algebraic techniques cannot be applied to the study of these questions. To overcome such difficulties, we develop an abstract theory of `Euler limits'
that simultaneously incorporates, amongst other things, the theories of inverse limits, Euler systems and Perrin-Riou functionals. This general theory is then  the main theoretical advance that we make in this article and can be expected to have applications beyond those that we discuss here. In particular, the theory concerns systems that are defined integrally (that is, over $\Z$) rather than either $p$-adically (for a fixed prime $p$)  or adelically, and hence crucially incorporates techniques from both classical, and (what one might call) `horizontal', Iwasawa theory. For instance, these differing techniques can be combined with a detailed analysis of the classical embedding problem for number fields to prove the existence of canonical $p$-adic `restriction' and global `localisation' exact sequences for modules arising from Euler limits, and also to carefully analyse completion functors in this setting. 

These general results can then be used to reduce the classification of Euler systems to a family of $p$-primary problems (for all $p$) and thereby to prove, under natural hypotheses, that the Scarcity Conjecture is implied by the sort of divisibilities in `Iwasawa Main Conjecture'-type statements that Euler systems are already expected to satisfy (and can often be verified using existing techniques). This approach therefore provides both a conceptual underpinning of the Scarcity Conjecture and also an effective means of obtaining strong supporting evidence, such as the following result concerning Euler systems of rank one. 
\begin{thmintro}\label{rank one intro} After inverting $2$ and all primes that ramify in $k$, and assuming standard $\mu$-vanishing hypotheses, the Scarcity Conjecture is valid if $k$ is either $\Q$ or imaginary quadratic.
\end{thmintro}

This result is later stated precisely as  Theorem \ref{thm-scarcity-rank-one}, whilst concrete evidence in support of the Scarcity Conjecture for Euler systems of rank greater than one can be found, for example, in Theorems \ref{bdss-result} and \ref{iwasawa-scarcity-evidence-proposition}. 

With some additional effort, the methods developed here will allow one to prove the result of Theorem \ref{rank one intro} without inverting any rational primes. We also stress that, whilst its proof relies on key properties of cyclotomic and elliptic units, for the reasons given above the content of Theorem \ref{rank one intro} goes far beyond what has previously been shown in these classical settings. For example, the argument used to prove it also leads to a proof of Coleman's original distribution-theoretic conjecture and, beyond that, to an explicit characterisation of all of the distributions that are considered by Coleman (many of which are not cyclotomic in nature). For brevity, however, the treatment of these important issues concerning Theorem \ref{rank one intro} is deferred to the supplementary article \cite{scarcity2}.

In the present article, we instead choose to focus  on the link between our study of Euler systems and the `equivariant' strengthening of the Tamagawa Number Conjecture of Bloch and Kato \cite{bloch-kato} that is formulated by Kato in \cite{kato93a, kato93b} (where the conjecture is referred to as the `generalised Iwasawa Main Conjecture’). More precisely, our approach is related to Kato's conjecture in the case of leading terms at zero of  Dirichlet $L$-series and, for ease, we refer to this as  `eTNC($\mathbb{G}_m$)'. We recall, in particular, that eTNC($\mathbb{G}_m$) is known to imply a wide range of previously formulated refinements of Stark's Conjecture (cf.\@ Remark \ref{etnc cons}). \\
To explain this link, we recall that, aside from the analytically-defined system $c_k^\mathrm{RS}$, there also exists (unconditionally) a family of Galois-cohomological Euler systems $c_k^\mathrm{coh}$ for $\mathbb{G}_m$ over $k$ (as constructed by Sano and the second author in  \cite{sbA}). Then, roughly speaking, a special case of the  Scarcity Conjecture implies that  $c_k^\mathrm{coh} = \lambda\cdot c_k^\mathrm{RS}$ for an element $\lambda$ of $\mathcal{R}_k$, the analytic class number formula implies that any such element $\lambda$ belongs to $\mathcal{R}_k^\times$, and the resulting relation between $c_k^\mathrm{RS}$ and $c_k^\mathrm{coh}$ implies eTNC($\mathbb{G}_m$) over all abelian extensions of $k$. In this way, then, eTNC($\mathbb{G}_m$) is seen to be a direct consequence of the analytic class number formula and the scarcity of Euler systems, thereby providing a straightforward philosophy to underpin eTNC($\mathbb{G}_m$).
To the best of the authors' knowledge, excluding the basic analogy to Deligne's proof of the Weil Conjectures for varieties over function fields, no heuristic of any sort has previously been available for Kato's conjecture. \\ 
In fact, it turns out that the Scarcity Conjecture is strictly finer than eTNC($\mathbb{G}_m$) and also encodes precise information about the structure of the Selmer module of $\mathbb{G}_m$ over the abelian closure of $k$ (for details see \cite{scarcity2}). Fortunately, however, the theory of Euler limits is flexible enough to provide an effective strategy for proving eTNC($\mathbb{G}_m$) without requiring one to first prove the Scarcity Conjecture. The resulting approach then has a significant advantage over previous strategies in this context since it avoids delicate issues relating to Iwasawa-theoretic descent that have been key obstacles to progress on eTNC($\mathbb{G}_m$). 

The point here is that, for any given prime $p$, the methods of $p$-adic Iwasawa theory involve passing to the limit over $\Z_p$-power extensions of $k$ and therefore primarily concern extensions that ramify at $p$-adic places. Hence, when considering extensions of $k$ that are unramified at any such places, this approach can introduce undesired Euler factors that are not invertible (in the presence of `trivial zeroes') and so cannot easily be removed. Whilst previous strategies to deal with this issue have relied on the deep conjectures of Gross--Kuz'min, of Leopoldt and of Mazur-Rubin and Sano, the validity of which remain restricted to a small number of well-known cases, our theory completely avoids any reliance on the first two of these conjectures and only depends on the third conjecture in the (much easier) setting of tamely ramified cyclic extensions since the Euler systems we study are essentially characterised by their values on extensions that ramify at all $p$-adic places. 

In this way we can therefore directly leverage existing techniques to obtain concrete new evidence for Kato's conjecture such as in the following result. Before stating this result, we recall that eTNC($\mathbb{G}_m$) predicts, for each finite abelian extension of number fields $K/k$, an equality of graded invertible $\ZZ[\Gal(K/k)]$-modules (cf.\@ Remark \ref{etnc cons}\,(a)). 
\newpage

\begin{thmintro} \label{intro etnc} Fix a finite abelian extension of number fields $K/k$ and set $G \coloneqq \Gal(K/k)$. Then 
$\mathrm{eTNC}(\mathbb{G}_m)$ for $K/k$ is valid in both of the following cases: 
\begin{liste}
\item $k$ is totally real, $K$ is CM and one extends scalars from $\Z[G]$ to $\Z[1/2][G](1-\tau)$, where $\tau$ denotes the element of $G$ corresponding with complex conjugation;
\item $k$ is imaginary quadratic and such that for every finite abelian extension (and every odd prime $p$) a certain $p$-adic Iwasawa $\mu$-invariant vanishes, and one extends scalars from $\Z[G]$ to $\Z[1/2][G]$.  
\end{liste}
\end{thmintro}

Claim (a) of this result verifies what is often referred to as the `minus part' eTNC$(\mathbb{G}_m)^-$ of eTNC$(\mathbb{G}_m)$, and our approach  reduces its proof to a statement in Iwasawa theory that is easily seen to follow from the `Strong Brumer--Stark Conjecture' recently proved (outside $2$) by Dasgupta and Kakde in \cite{DasguptaKakde}. In particular, in this setting the horizontal Iwasawa theory of Euler limits allows us, via Theorem \ref{p adic restriction sequence}, to avoid technical hypotheses used in other attempts to derive 
$\mathrm{eTNC}(\mathbb{G}_m)^-$ from the seminal results of \cite{DasguptaKakde}, either directly as in Nickel \cite{nickel2021strong} (though the deduction  in loc.\@ cit.\@ of the $p$-part of eTNC$(\mathbb{G}_m)^-$ for extensions that are tamely ramified at $p$ does play a role in our argument) or by strengthening relevant aspects of the arguments in \cite{DasguptaKakde} as in Atsuta and Kataoka \cite{atsuta-kataoka}, whilst at the same time avoiding difficult hypotheses related to the Gross--Kuz'min Conjecture that arose in earlier attempts to use Iwasawa theory in this context. The result of claim (a) also itself has interesting consequences: for example, it implies the validity, after inverting $2$, of the `integral Gross--Stark Conjecture' from \cite{Gross88} (cf.\@ Remark \ref{etnc cons}\,(b)) and this fact has recently been used by Honnor to remove, up to $2$-power torsion, the `root of unity ambiguity' in the $p$-adic analytic formula for Brumer--Stark units that is proved by Dasgupta and Kakde in \cite[Th.\@ 1.6]{DasguptaKakde2} (for details see  \cite{Honnor}). 
In addition, claim (a) also combines with work of Atsuta and Kataoka \cite{atsuta-kataoka-Fitting} to imply an explicit description for the Fitting ideals of the minus parts of class groups in the relevant cases.\\
A precise version of claim (b) of Theorem \ref{intro etnc} is stated as Theorem \ref{precise intro B(ii)} and strongly improves upon previous results towards eTNC($\mathbb{G}_m$) over imaginary quadratic fields. In addition, in recent work of Hofer and the first author \cite{BullachHofer}, it is shown that the hypothesis on $\mu$-invariants that occurs in this result, and which Iwasawa has conjectured to always hold, can sometimes be avoided. However, this observation relies on techniques that seem to be restricted to the setting of imaginary quadratic fields, whilst the approach used here appears, in principle, to be completely general and thereby applicable to many different contexts. 

The arguments proving Theorem \ref{intro etnc} also lead us, at the same time, to a new proof of eTNC$(\mathbb{G}_m)$ for $K/k$ in the case that $k = \Q$ and one extends scalars from $\Z[G]$ to $\mathbb{Z}[1/2][G]$. This result was first proved as the main result of Greither and the second author in \cite{BurnsGreither}, but the proof obtained here is much simpler since it avoids the extensive, and delicate, descent calculations in Iwasawa theory that are key to the argument of loc.\@ cit. \\
Going beyond special cases, our approach provides an effective strategy for deriving the validity of the Scarcity Conjecture over an arbitrary number field $k$, and hence also of eTNC($\mathbb{G}_m$) and the numerous conjectures it implies, from a single, explicit, `integrality' prediction of Rubin \cite{Rub96} concerning the system $c_k^\mathrm{RS}$ (that is usually referred to as the `Rubin--Stark Conjecture'). 

In addition, at this stage, the theory of higher-rank Euler, Kolyvagin and Stark systems developed by Sakamoto, Sano and the second author in \cite{bss} already allows us to partially achieve this goal, and thereby to obtain strong evidence for the Scarcity Conjecture over any $k$. In particular, as a key aspect of these arguments we prove (in an appendix) that higher-rank Euler systems for a wide class of $p$-adic representations control the structure of Selmer groups in precisely the manner predicted by `Main Conjectures' in this setting. In fact, to complete the deduction of the Scarcity Conjecture from the Rubin--Stark Conjecture, it would suffice to extend the latter result to an `equivariant' setting in just the same degree of generality as has already been established for Euler systems of rank one in \cite{Rubin-euler}, and we will discuss this problem elsewhere. 
 We note, however, that the Rubin--Stark Conjecture itself is of a different nature and that its verification may well require both an explicit construction of $c_k^\mathrm{RS}$ by geometric means and the proof of an appropriate 
explicit reciprocity law, as has already been achieved in the case $k=\Q$  by Urban in \cite{Urban}.

\smallskip
In brief, the main contents of this article are as follows. In \S\,\ref{esGm} we define, for each non-negative rank, several special families of Euler systems over $k$, prove some important preliminary results and then formulate the Scarcity Conjecture. In \S\,\ref{consSC} we establish the precise link between the Scarcity Conjecture and eTNC($\mathbb{G}_m$).
In \S\,\ref{elimits} we introduce an abstract theory of `Euler limits' and prove key results in this theory. In \S\,\ref{euler systems and limits} we prove a `Uniformisation Theorem' for Euler systems over $k$ that are valued in a field, combine this with general results concerning Euler limits to derive explicit criteria for the validity of the Scarcity Conjecture and of eTNC($\mathbb{G}_m$) and finally use higher-rank Kolyvagin systems to provide evidence for these criteria. In \S\,\ref{iwasawa theory section} we combine criteria established in \S\,\ref{euler systems and limits} with several existing results to obtain further   evidence in support of the Scarcity Conjecture and also prove precise versions of Theorems \ref{rank one intro} and \ref{intro etnc}. Finally, in an appendix we prove new results concerning the `Iwasawa Main Conjecture' for $p$-adic representations over arbitrary number fields.  
\smallskip\\
For the reader's convenience, we end this section by specifying some general notation that will be used throughout the article. 
Given a commutative ring $R$, an $R$-module $M$ and an integer $r \geq 0$, we write $\exprod_R^r M$ for the $r^{th}$-exterior power over $R$ of $M$.
Given a homomorphism $f\: M \to N$ of $R$-modules we write $f^{(r)}$ for the induced map $\exprod_R^r M \to \exprod_R^r N$. Except in cases of ambiguity we will often abuse notation and simply refer to this map as $f$.\\
We write $M^\ast \coloneqq \Hom_R (M, R)$ for the $R$-linear dual of $M$ and, if $\p$ denotes a prime ideal of $R$, then we let $M_\p$ be the localisation of $M$ at $\p$. \\ 
For an abelian group $A$ we denote by $A_\tor$ its torsion-subgroup and by $A_\tf = A/A_\tor$ its torsion-free quotient. The Pontryagin dual of $A$ will be denoted by $A^\vee = \Hom_\Z (A, \Q/\Z)$. If there is no confusion possible, we often 
shorten the functor $(-) \otimes_\Z A$ to just $(-) \cdot A$ (or even $(-) A$) and, if $A$ is also a $\Z_p$-module, similarly for the functor $(-) \otimes_{\Z_p} A$. 
If $A$ is finite, we denote by $\widehat{A} = \Hom_\Z (A, \C^\times)$ its character group and write $\mathbf{1}_A$ for the trivial character, and for any $\chi \in \widehat{A}$ we write $e_\chi$ for the primitive idempotent $|A|^{-1}\sum_{\sigma \in A} \chi (\sigma) \sigma^{-1}$ of $\C [A]$. Furthermore, $\NN_A = \sum_{\sigma \in A} \sigma \in \Z [A]$ denotes the norm element of $A$. \\
The duals $M^\ast$ and $M^\vee$ of a $\Z [A]$-module $M$ will in general be endowed with the contragredient $A$-action. \\
Finally, for any finite set $\mathcal{S}$ of prime numbers, we often use the subring of $\Q$ defined by 
\[ \Z_\mathcal{S} \coloneqq \Z \bigl[1/p \mid p \in \mathcal{S}\bigr]\]
(so that $\ZZ_\emptyset = \ZZ$). 
\vskip0.1truein

\textbf{\sffamily Acknowledgements}\, We are very grateful to Takenori Kataoka for his careful reading of an earlier version of this article and, in particular, for identifying an error in one of our arguments. 

The second author is also very grateful to  Mahesh Kakde, Masato Kurihara and Takamichi Sano for many inspiring discussions on these, and related, subjects. He and the first author are also grateful to Matthew Honnor, S\"oren Kleine, Andreas Nickel, and Daniel Mac{\'i}as Castillo for helpful discussions.

The first and third authors wish to acknowledge the financial support of the Engineering and Physical Sciences Research Council [EP/L015234/1, EP/N509498/1, EP/W522429/1], the EPSRC Centre for Doctoral Training
in Geometry and Number Theory (The London School of Geometry and Number Theory),
University College London and King’s College London.

Finally we would like to point out that this article 
incorporates various ideas from an earlier unpublished version of an article of Hofer and the first author \cite{BullachHofer} and we are very grateful to Martin Hofer for his permission to include these.

\section{Euler systems for $\mathbb{G}_m$}\label{esGm}

For any number field $E$ we write $S_\infty(E)$ and $S_\fin(E)$ for the sets of archimedean and finite (non-archimedean) places of $E$ respectively, and $S_p(E)$ for the subset of $S_\fin(E)$ comprising all places that lie above a rational prime $p$.
 Given an extension $F / E$ we write $S_\ram(F /E)$ for the places of $E$ that ramify in $F$.

If $S$ is a set of places of $E$, we denote by $S_F$ the set of places of $F$ that lie above those contained in $S$. We will however omit the explicit reference to the field in case it is clear from the context. For example, $\bigO_{F, S}$ will  denote the ring of $(S_F\cap S_\fin(F))$-integers of $F$.
 We also define $Y_{F, S}$ to be the free abelian group on $S_F$ and write $X_{F,S}$ for the subgroup thereof comprising elements whose coefficients sum to zero. 
 
For any finite place $v$ of $E$ we write $\ord_v \: E^\times \to \Z$ for the normalised valuation at $v$ and $\NN v$ for its absolute norm $ | \bigO_E/\p_v |$, where $\p_v$ is the ideal of $\bigO_E$ corresponding to $v$. We also write $E(\m)$ for the (narrow) ray class field of a given modulus $\m$ of $E$. 

If $F / E$ is a Galois extension and $v$ is a finite place of $E$ that is unramified in $F$, then we write $\Frob_v$ for the arithmetic Frobenius of $v$ in $\gal{F}{E}$. We recall that an archimedean place of $E$ is said to `ramify' in $F$ if its decomposition group is non-trivial (and hence has order two); for each archimedean place $v$ of $E$ that is unramified (that is, does not ramify) in $F$, we will therefore write $\mathrm{Frob}_v$ for the trivial element of $\gal{F}{E}$.

\subsection{Multi-rank Euler systems}\label{ES-Gm-section}

\subsubsection{Abelian extensions}
We fix an algebraic closure $\overline{\Q}$ of $\Q$ and refer to finite extensions of $\Q$ in $\overline{\Q}$ as `number fields'. 

We then fix a number field $k$ and for any Galois extension $K$ of $k$ in $\overline{\Q}$ set
\[ \cG_K \coloneqq \gal{K}{k} \,\,\,\text{  and  } \,\,\, S (K) \coloneqq S_\ram (K / k). 
\]
We also write $\cK$ for the maximal abelian extension of $k$ in $\overline{\Q}$ and 
\[ \Omega = \Omega (k)\]
for the set of finite extensions of $k$ in $\cK$ that are ramified at at least one finite place (so that $S(K)\not\subseteq S_\infty(k)$).

A `modulus' of $k$ is a formal product $\m$ of places of $k$ and gives rise to an associated ray class field extension $k(\m)$ of $k$ of conductor $\m$. The extension $k(\m)/k$ is finite, abelian and such that $S(k(\m))$ is contained in the set of places that divide $\m$. In particular, the ray classfield  $k(1)$ of conductor equal to the empty product of places is the Hilbert classfield of $k$. 

For a number field $E$, we write $\mu_E$ for the $\Z$-torsion subgroup of $E^\times$. For a natural number $M$ we write 
\[ \mu_M \coloneqq \{ x\in \overline{\Q}^\times: x^M = 1\}\]
for the group of roots of unity in  $\overline{\Q}$ of order dividing $M$. 

The following result records an observation of Rubin concerning ray classfields that will be useful in the sequel.

\begin{lem}\label{rubin lemma} 
    Fix a natural number $M$ and a finite place $\q$ of $k$ that does not divide $M$. Then the ray class field $k (\q)$ has the following properties.
\begin{itemize}
    \item[(i)] $S(k(\q)) \subseteq \{\q\}$ and so $S(k(\q)) \cap S_\infty(k) = \emptyset$.
    \item[(ii)] The inertia degree of $\q$ in $k(\q)$ is equal to $[k(\q):k(1)]$ and is divisible by $M$ if and only if $\q$ splits completely in $k (\mu_M, (\cO_k^\times)^{1 / M})$. 
\end{itemize}
\end{lem}

\begin{proof}
Claim (i) and the first assertion in claim (ii) are clear. The second assertion in claim (ii) is proved by Rubin in \cite[Lem.\@ 4.1.2]{Rubin-euler} but, for completeness, we give the argument.

Class field theory identifies the inertia subgroup $\gal{k (\q)}{k (1)}$ of $\q$ in $\cG_{k(\q)}$ with the cokernel of the natural map $\theta_\q: \cO_k^\times \to (\cO_k / \q)^\times$. Hence, if  
 $M$ divides $[k (\q) : k (1)]$, then  $M$ divides $|(\cO_k^\times / \q)^\times| = \NN \q - 1$ and so $\q$ splits completely in $k (\mu_M)$. \\
In addition, any such $\q$ splits completely in $k (\mu_M, (\cO_k^\times)^{1 / M})$ if and only if every element of $\cO_k^\times$ is an $M$-th power in the completion $k_\q$ of $k$ at $\q$. Then, since $\q$ does not divide $M$, Hensel's Lemma implies that this last condition is satisfied if and only if every element of $\im(\theta_\q)$ is an $M$-th power in the cyclic group $(\cO_k / \q)^\times$, or equivalently the order of $\cok(\theta_\q)$ is divisible by $M$. This proves claim (ii). 
\end{proof}

\subsubsection{Euler systems} For each field $K$ in $\Omega$ we write $V_K$ for the set $S_\infty (k)\setminus S (K) $ of archimedean places of $k$ that split completely in $K$ and set 
 \[ r_K \coloneqq \begin{cases} | V_K|, &\text{ if $K \neq k$},\\
 |S_\infty ( k)| - 1, &\text{ if $K = k$.}\end{cases}\]
 We write $\mathcal{P} (S_\infty (k))$ for the power set of $S_\infty (k)$ and $\N_0$ for the set of non-negative integers. 
 
 \begin{definition}\label{rank def} A \emph{rank function} (or simply a \emph{rank}) for $k$ is a function $\bm{r}\:\Omega \to \N_0$ that factors through the function
  $\Omega  \to \mathcal{P} (S_\infty (k))$ sending each $K$ to $V_K$ (so that $\bm{r}(K) = \bm{r}(K')$ if $V_{K} = V_{K'}$). The \emph{maximal rank function} $\bm{r}_\mathrm{max}$ for $k$ is the function $K \mapsto r_K$.
 \end{definition}
 
 The motivation for the terminology `maximal' used above will become apparent in Lemma \ref{faithful-lem}\,(b) below. In addition, for any given field $k$, we will usually identify a non-negative integer $r$ with the (constant) rank function that sends each $K$ in $\Omega$ to $r$. 
 
Finally, for any pair of fields 
$E, F \in \Omega$ with $E \subseteq F$, any non-negative integer $s$, and any field $\cQ$, we write  %
\[ \NN^{s}_{F / E}: \cQ \cdot \exprod^{s}_{\Z [\cG_F]} \bigO_{F, S (F)}^\times \to \cQ\cdot  \exprod^{s}_{\Z [\cG_E]} \bigO_{E, S(F)}^\times\]
for the homomorphism of $\cQ[\cG_F]$-modules induced by the field-theoretic norm $\NN_{F / E} \: F^\times \to E^\times$. 

\begin{definition} \label{euler-systems-definition-1} Let $\bm{r}$ be a rank function for $k$ and $\cQ$ a field. A $\cQ$-rational \emph{Euler system} for $k$ of rank $\bm{r}$ is a collection of elements
\[
(c_E)_E \in \prod_{E \in \Omega} \cQ \cdot \exprod^{\bm{r}(E)}_{\Z [\cG_E]} \cO_{E, S(E)}^\times
\]

that satisfy the following `Euler system distribution relations': for every pair of fields 
$E$ and $F$ in $\Omega$ with $E \subseteq F$, there is, in the graded module
$\bigoplus_{i = 0}^\infty \big ( \cQ \exprod^i_{\Z [\cG_E]} \bigO_{E, S(F)}^\times \big)$, an equality  
\begin{equation}\label{es dist rel}
\NN^{\bm{r} (F)}_{F / E} (c_F) = \Big ( \prod_{v \in S (F) \setminus S (E)} (1 - \Frob_v^{-1} ) \Big) c_E .
\end{equation}
We write $\ES^{\bm{r}}_k (\cQ)$ for the $\cQ \llbracket \cG_{\cK}\rrbracket$-module of $\cQ$-rational Euler systems for $k$ of rank $\bm{r}$. If $\bm{r}$ is the maximal rank function $\bm{r}_\mathrm{max}$, then we abbreviate $\ES^{\bm{r}}_k (\cQ)$ to $\ES_k (\cQ)$. 
\end{definition}

\begin{rk}\label{arch euler factors}
If any place $v$ in $S(F) \setminus S(E)$ is archimedean, then $v$ splits completely in $E$ and so the corresponding Euler factor $1 - \Frob_v^{-1}$  in the equality (\ref{es dist rel}) is $0$. In this case, therefore, one has $\NN^{\bm{r} (F)}_{F / E} (c_F) = 0$.  \end{rk}

In the sequel it will often be useful to consider certain `projections' of an Euler system to suitable subsets of $\Omega$.
More precisely, for any subset $\cX \subseteq \Omega $, we write $\ES_k^{\bm{r}, \cX} (\cQ)$ for the image of $\ES_k^{\bm{r}} (\cQ)$ under the natural projection map
\[
\varrho^\cX \: \prod_{E \in \Omega} \cQ \cdot \exprod^{\bm{r} (E)}_{\Z [\cG_E]} \cO_{E, S(E)}^\times
\to \prod_{E \in \cX} \cQ \cdot \exprod^{\bm{r} (E)}_{\Z [\cG_E]} \cO_{E, S(E)}^\times.
\]

\begin{rk}\label{omega def} For a subset $\mathcal{V}$ of $\mathcal{P}(S_\infty (k))$, we write $\Omega^\mathcal{V} (k)$ for the subset of $\Omega (k)$ comprising all fields $K$ such that $V_K \in \mathcal{V}$. If $\mathcal{V} = \{ V \}$ for some subset $V \subseteq S_\infty (k)$, then we abbreviate $\Omega^\mathcal{V} (k)$ to $\Omega^V (k)$ and, for the set  
$\cX = \Omega^{V} (k)$, we shorten all adornments $\cX$ (as, for example, for the map $\varrho^\cX$ defined above) to $V$. \end{rk}

To end this subsection, we record an important first consequence of the Euler system relations. \\
For any field $K \in \Omega (k)$, we abbreviate the trivial character $\mathbf{1}_{\cG_K}$ of $\cG_K$ to $\mathbf{1}_K$. For any finite set of places $\Pi$ of $k$ we set 
\begin{equation} \label{order-of-vanishing-formula}
r_\Pi (\chi) \coloneqq \dim_\C (e_\chi \C \cO_{K, \Pi}^\times)
= \begin{cases}
| \{ v \in (\Pi \cup S_\infty (k)) \mid \chi (\cG_{K, v}) = 1 \} | & \text{ if } \chi \neq \mathbf{1}_K, \\
|\Pi \cup S_\infty (k)| - 1 & \text{ if } \chi = \mathbf{1}_K,
\end{cases}
\end{equation}
where $\cG_{K, v} \subseteq \cG_K$ denotes the decomposition group at the place $v$. We then define $\Upsilon_K$ to be the subset of $\widehat{\cG_K}$ comprising all characters $\chi$ for which one has $r_{S(K)} (\chi) = r_K$. In particular, since $\Upsilon_K$ is a union of orbits of the natural action of $\mathrm{Aut}(\CC)$ on $\widehat{\cG_K}$, we  obtain an idempotent of $\Q [\cG_K]$ by setting 
\begin{equation}\label{eK def} e_K \coloneqq \sum_{\chi \in \Upsilon_K}e_\chi,\end{equation}
where, for each $\chi$, we write $e_\chi$ for the primitive idempotent $|\cG_K|^{-1}\sum_{g \in \cG_K}\chi(g^{-1})g$ of $\CC[\cG_K]$.

\begin{lem} \label{faithful-lem}
If $c = (c_E)_E \in \ES^{\bm{r}}_k (\cQ)$ is an Euler system for $k$ with coefficients in a subfield $\cQ$ of $\C$, then for all fields $E \in \Omega$ the following claims are valid.
\begin{liste}
    \item $(1 - e_E) \cdot c_E =0$.
    \item If $\bm{r} (E) > r_E$, then $c_E = 0$. 
\end{liste}
\end{lem}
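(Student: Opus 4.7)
My plan is to prove (a) first and then deduce (b) from it. For (b): if $\bm{r}(E) > r_E$, then for every $\chi \in \Upsilon_E$ we have $\bm{r}(E) > r_E = r_{S(E)}(\chi) = \dim_{\mathbb{C}} e_\chi(\cO_{E,S(E)}^\times \otimes \mathbb{C})$, so the space $\bigwedge^{\bm{r}(E)}_\mathbb{C} e_\chi(\cO_{E,S(E)}^\times \otimes \mathbb{C})$ is trivially zero; combined with (a), which forces $c_E$ to be supported on $\Upsilon_E$, this yields $c_E = 0$. For (a), I extend scalars to $\mathbb{C}$ and decompose $c_E$ into $\chi$-isotypical components. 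A direct check against the formula defining $r_{S(E)}(\chi)$ shows $r_{S(E)}(\chi) \geq r_E$ for every character $\chi$ of $\cG_E$ (since each place in $V_E$ has trivial decomposition group in $\cG_E$ and hence contributes), so the content of (a) is to show $e_\chi c_E = 0$ whenever $r_{S(E)}(\chi) > r_E$. For such a non-trivial $\chi$, the strict inequality produces a place $v_0 \in (S(E) \cup S_\infty(k)) \setminus V_E$ with $\chi(\cG_{E,v_0}) = 1$; the trivial character is handled by the same idea applied to any proper subfield of $E$ lying in $\Omega(k)$, since $\mathbf{1}$ annihilates every Euler factor $1 - \Frob_v^{-1}$.

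The key step is to set $L := E^{\cG_{E,v_0}}$, through which $\chi$ factors, and to apply the distribution relation
\[
N^{\bm{r}(E)}_{E/L}(c_E) = \prod_{v \in S(E) \setminus S(L)}(1 - \Frob_v^{-1}) \cdot c_L
\]
(in each fixed wedge degree of the graded module; if $\bm{r}(L) \neq \bm{r}(E)$ both sides vanish a fortiori). By construction $v_0$ splits completely in $L/k$, so $v_0 \in S(E) \setminus S(L)$, and the factor at $v_0$ vanishes: by Remark \ref{arch euler factors} if $v_0$ is archimedean, or directly from $\Frob_{v_0} = 1$ in $\cG_L$ if $v_0$ is finite. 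Hence $N^{\bm{r}(E)}_{E/L}(c_E) = 0$. A standard character calculation then finishes the job: since $\chi$ is trivial on $H := \Gal(E/L)$, the spaces $e_\chi(\cO_{E,S(E)}^\times \otimes \mathbb{C})$ and $e_{\chi|_{\cG_L}}(\cO_{L,S(E)}^\times \otimes \mathbb{C})$ coincide, and on this common subspace the norm acts as multiplication by $|H|$, hence on the $\bm{r}(E)$-th exterior power as multiplication by $|H|^{\bm{r}(E)}$; the vanishing of the norm therefore forces $e_\chi c_E = 0$.

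The main obstacle will be guaranteeing that $L$ itself lies in $\Omega(k)$, as is needed for the distribution relation to apply; this can fail when every finite place ramified in $E/k$ has its inertia contained in $\cG_{E,v_0}$, for instance when $v_0$ is the unique finite prime of $k$ ramifying in $E$. To handle this edge case I would adjoin an auxiliary ray class field $k(\mathfrak{l})$ for a finite prime $\mathfrak{l} \notin S(E)$ chosen (via Chebotarev) so that the analogous subfield constructed inside $E \cdot k(\mathfrak{l})$ does lie in $\Omega(k)$ and so that $\chi(\Frob_\mathfrak{l}) \neq 1$, run the argument above in the enlarged field to obtain the vanishing of the appropriate $\chi$-component of $c_{E \cdot k(\mathfrak{l})}$, and then descend back to $E$ via the distribution relation from $E \cdot k(\mathfrak{l})$ to $E$, whose Euler factor at $\mathfrak{l}$ is by construction invertible on the $\chi$-component.
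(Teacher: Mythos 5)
Your part (b) and the mechanism you use for non-trivial characters are sound and close in spirit to the paper's proof (which works with the field $E_\chi$ cut out by $\chi$, an auxiliary prime of full decomposition group in $E_\chi$, and a comparison of two factorisations of a norm map); your variant via the decomposition field $L=E^{\cG_{E,v_0}}$ of the offending place $v_0$ is a legitimate alternative when $L\in\Omega(k)$. The genuine gap is the trivial character. When $\mathbf{1}_E\notin\Upsilon_E$, i.e.\ $|S(E)|\geq 2$, your prescription ``apply the same idea to any proper subfield of $E$ lying in $\Omega(k)$'' fails for two reasons: such a subfield need not exist at all (take $E/k$ cyclic of prime degree ramified at two finite primes, so that the only proper subfield is $k\notin\Omega(k)$), and even when it exists one may have $S(L)=S(E)$, so the distribution relation carries no Euler factor for $e_{\mathbf{1}_E}$ to annihilate. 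Your auxiliary-prime rescue is structurally unavailable here, because it hinges on choosing $\mathfrak{l}$ with $\chi(\Frob_\mathfrak{l})\neq 1$ so that $1-\Frob_\mathfrak{l}^{-1}$ is invertible on the $\chi$-component; for $\chi=\mathbf{1}_E$ that factor is always killed by $e_{\mathbf{1}}$, so the descent from $E\cdot k(\mathfrak{l})$ back to $E$ yields only $0=0$. The paper handles this case by a different device: it reduces $e_{\mathbf{1}_E}c_E$ to the vanishing of $\NN^{\bm{r}(E)}_{k(\m\p)/k}(c_{k(\m\p)})$ for the ray class field of the conductor $\p\m$ of $E$, and exploits that the norm down to the base field $k$ itself annihilates $1-\Frob_\p^{-1}$ (the target carries the trivial Galois action), so no invertible Euler factor is required. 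Some idea of this kind is missing from your proposal.

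A softer, but real, issue concerns your patch in the non-trivial case: you ask for $\mathfrak{l}$, chosen ``via Chebotarev'', such that the decomposition field of $v_0$ inside $E\cdot k(\mathfrak{l})$ lies in $\Omega(k)$. Chebotarev controls $\Frob_\mathfrak{l}$ in a fixed finite extension (and so does give $\chi(\Frob_\mathfrak{l})\neq 1$), but it does not control whether the inertia at $\mathfrak{l}$ in $E\cdot k(\mathfrak{l})/k$ escapes the decomposition group of $v_0$ — equivalently, whether that decomposition field is ramified at $\mathfrak{l}$ — since the field $k(\mathfrak{l})$ varies with $\mathfrak{l}$. The cheap repair is to drop the insistence that the Euler factor at $v_0$ vanish identically: replace the decomposition field of $v_0$ in $E\cdot k(\mathfrak{l})$ by the compositum of $L=E^{\cG_{E,v_0}}$ with any abelian extension ramified at $\mathfrak{l}$ and unramified at infinity. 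Then the factor $1-\Frob_{v_0}^{-1}$ is merely annihilated by $e_\chi$ (because $\chi(\cG_{E,v_0})=1$), which is all your character computation actually uses, and the argument then takes essentially the shape of the paper's comparison between $EF$ and $E_\chi F$.
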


\begin{proof}
Let $\chi \in \widehat{\cG_E}$ and write 
$E_\chi$ for the subfield of $E$ cut out by $\chi$. Consider the injective map
\[
\nu_{E / E_\chi} \: \cQ \exprod^{r_E}_{\Z [\cG_{E_\chi}]} \bigO_{E_\chi, S(E)}^\times 
\to \cQ \exprod^{r_E}_{\Z [\cG_{E}]} \bigO_{E, S(E)}^\times,
\quad a \mapsto 
\begin{cases}
[E : E_\chi]^{1 - r_E} a & \text{ if } r_E \geq 1, \\
\NN_{\gal{E}{E_\chi}} a & \text{ if } r_E = 0.
\end{cases}
\]
We also remark that $\NN^0_{E / E_\chi}$ agrees with the natural restriction map $\cQ [\cG_E] \to \cQ [\cG_{E_\chi}]$. 
We then have
\[
[E : E_\chi] \cdot e_\chi  \cdot c_E  = e_\chi \cdot \NN_{\gal{E}{E_\chi}} \cdot c_E = e_\chi (\nu_{E / E_\chi} \circ \NN_{E / E_\chi}^{\bm{r} (E)})( c_E )
\]
and so it suffices to prove that $e_\chi \NN_{E / E_\chi}^{\bm{r} (E)}( c_E ) = 0$ whenever $\chi \not \in \Upsilon_E$. \\
To do this, we first consider the case that $\chi \neq \mathbf{1}_E$. 
Let $l$ be a large-enough odd prime number such that $E \cap k (\mu_l) = k$. By Cebotarev's Density Theorem we may then choose a $\p \in S_\fin(k)\setminus  S(E)$ that has full decomposition group in $E_\chi$ and splits completely in $k(\mu_l, (\cO_k)^{1 / l})$. By Lemma \ref{rubin lemma} the ray class field $F \coloneqq k(\p)$ is then a non-trivial extension of the Hilbert class field of $k$. It follows that $F$, and hence also $E_\chi F$, is a ramified extenison of $k$, which implies that the value $c_{E_\chi F}$ is well-defined. We also note that $\bm{r} (E) = \bm{r} (EF)$. The Euler systems distribution relations now imply that
\begin{align*}
(1 - \Frob_\p^{-1}) \cdot \NN^{\bm{r}(E)}_{E / E_\chi} (c_E) & = \NN^{\bm{r}(E)}_{EF / E_\chi} (c_{EF})
= \Big( \prod_{v \in S(E) \setminus S(E_\chi)} (1 - \Frob_v^{-1}) \Big) \cdot \NN^{\bm{r} (E)}_{E_\chi F / E_\chi} (c_{E_\chi F}).
\end{align*}
If $\chi \not \in \Upsilon_{E}$, then there exists a place $v \in S(E) \setminus S (E_\chi)$ such that $\chi (\Frob^{-1}_v) = 1$, hence $e_\chi$ annihilates the above Euler product. On the other hand, $1 - \chi (\Frob_\p^{-1}) \neq 0$ because $\chi \neq \bm{1}_E$ and $\Frob_\p$ generates $\cG_{E_\chi}$, so we conclude that $e_\chi \NN^{\bm{r}(E)}_{E / E_\chi} (c_E) = 0$, as required to prove claim (a) for non-trivial characters. \\
To deal with the trivial character, we note that $\mathbf{1}_E\in \Upsilon_E$ if and only if $| S(E) \cup S_\infty (k) |= r_E + 1$ or, equivalently (since $r_E \leq | S_\infty (k)|$), $|S(E) | = 1$. Assume this condition is not satisfied. Then we may factor the conductor of $E$ as $\p \m$, where $\m$ is a non-trivial modulus and $\p$ is a (finite or archimedean) place. 
It is enough to prove $\NN^{\bm{r}(E)}_{k(\m \p) / k} (c_{k (\m \p)}) = 0$ and this follows from
\[
\NN^{\bm{r}(E)}_{k(\m \p) / k} (c_{k (\m \p)}) = \NN^{\bm{r}(E)}_{k(\m) / k} \big ( (1 - \Frob_\p^{-1}) c_{k (\m)} \big) = 0.
\]
Turning now to the proof of claim (b), we observe that by (\ref{order-of-vanishing-formula}) we have
\[
e_\chi c_E
\in \exprod^{\bm{r} (E)}_{e_\chi \C [\cG_E]} e_\chi \C \cO^\times_{E, S(E)} = 0
\]
whenever $\chi$ is contained in $\Upsilon_E$ and $\bm{r} (E) > r_E$. 
Since $e_E c_E = c_E$ by claim (a), this shows that $c_E = 0$, as desired.
\end{proof}

\subsection{Special families}

For the purpose of arithmetic applications an Euler system  must be `integral' in a suitable sense. However, defining a precise notion of integrality for  higher-rank systems is a delicate task. In this subsection we use exterior biduals and $T$-modification, pioneered by Rubin \cite{Rub96} and Gross \cite{Gross88}, respectively, to address this question. We then introduce certain special families of such systems that will play an important role in our theory. 

If $R$ is a commutative Noetherian ring, then for any $R$-module $M$ we write $M^\ast = \Hom_R (M, R)$ for its dual. We recall that, for each non-negative integer $s$, the $s$-th \textit{exterior power bidual} of $M$ is defined to be the module 
\[\bidual^s_R M \coloneqq \left ( \exprod^s_R M^\ast \right )^\ast .\]
We note that if $R = \Z [A]$ for a finite abelian group $A$, then $\bidual^s_R M$ coincides with the lattice first introduced by Rubin in \cite[\S\,2]{Rub96} (cf.\@ \cite[Rk.~A.9]{sbA}). 

\subsubsection{Rubin lattices}\label{rubin lattice section}

For a finite abelian extension $K$ of $k$ we write $\mathscr{P}_K^{\rm ad}$ for the collection of non-empty finite sets of places of $k$ that are 
 disjoint from $S(K)\cup S_\infty(k)$ and contain no place that divides $|\mu_K|$. We refer to elements of $\mathscr{P}_K^{\rm ad}$ as `admissible sets for $K$'. 
For any such $T$ in $\mathscr{P}_K^\mathrm{ad}$ it is easy to check that the group 
\[
\bigO_{K, S(K), T}^\times = \{ u \in \bigO_{K, S(K)}^\times \mid u \equiv 1 \mod  T_K \}
\]
of `$T$-modified $S(K)$-units' of $K$ is a finite index subgroup of $\bigO_{K, S(K)}^\times$ that is $\Z$-torsion free.  
For $T$ in $\mathscr{P}_K^{\rm ad}$ we set 
\[ \delta_T = \delta_{T,K} \coloneqq \prod_{v \in T} (1 - \NN v\cdot \Frob_v^{-1}) \in \ZZ[\G_K],\]
and then, for any integer $s \geq 0$, define a $\G_K$-submodule of $\Q \exprod^s_{\Z [\cG_K]} \bigO^\times_{K, S(K)}$ by setting 
\[
\mathfrak{L}^s_K \coloneqq \Big \{
a \in \Q \exprod^s_{\Z [\cG_K]} \bigO_{K, S(K)}^\times 
\; \big | \;
\delta_{T,K} (a) \in \bidual^s_{\Z [\cG_K]} \bigO_{K, S(K), T}^\times \text{ for all } T \in \mathscr{P}_K^{\rm ad}\Big\}.
\]
If $s = r_K$, then we will suppress the superscript $r_K$ in the notation.\smallskip \\  
The argument of \cite[Ch.\@ IV, Lem.\@ 1.1]{Tate} implies that $\Ann_{\Z [\cG_K]}(\mu_K)$ is generated over $\ZZ$ by the set $\{\delta_{T,K} \mid T \in \mathscr{P}_K^{\rm ad}\}$. By combining this fact together with the defining conditions of $\fL^s_K$, one deduces that  
\begin{equation}\label{tate pop} \Ann_{\Z [\cG_K]}(\mu_K) \cdot \fL^s_K \; \subseteq \; \bidual^s_{\Z [\cG_K]}\bigO_{K, S(K)}^\times.\end{equation}

In particular, since $\bidual^s_{\Z [\cG_K]} \cO_{K, S(K)}$ is finitely generated and  $\Ann_{\Z [\cG_K]} (\mu_K)$ contains $|\mu_K|$, it follows that $\fL^s_K$ is finitely generated as a $\Z [\cG_K]$-module.

\begin{rk} \label{lattice-remark} 
If $s \leq 1$, then the result of \cite[Ch.\@ IV, Prop.\@ 1.2]{Tate}
 implies that the inclusion (\ref{tate pop}) uniquely characterises $\fL^s_K$. In other words, in this case, an element $a$ of $\Q \exprod^s_{\Z [\cG_K]} \bigO_{K, S(K)}^\times$ belongs to $\fL_K^s$ if and only if $\Ann_{\Z [\cG_K]} (\mu_K)\cdot a \subseteq \bidual^s_{\Z [\cG_K]} \bigO^\times_{K, S(K)}$.
 Similarly, if $p$ is a prime such that $\mu_K \otimes_\Z \Z_p$ is $\cG_K$-cohomologically trivial, then the arguments of Popescu in \cite[Thm.\@ 5.5.1\,(3)]{Popescu02} show that, for any $s \geq 0$, an element $a$ of $\Q_p \exprod^s_{\Z [\cG_K]} \bigO_{K, S(K)}^\times$ belongs to $\Z_p \fL_K^s$ if and only if $\Ann_{\Z_p [\cG_K]} (\mu_K \otimes_\Z \Z_p)\cdot a \subseteq  \bidual^s_{\Z_p [\cG_K]} (\Z_p \bigO^\times_{K, S(K)})$.\\
 In general, the inclusion (\ref{tate pop}) allows one to explicitly relate $\fL^s_K$ to the lattices defined by Popescu in \cite[Def.\@ 2.1.1]{Popescu02}. To be more precise, setting \[
 U^\mathrm{ab}_{K, S(K)} \coloneqq \big \{ u \in \bigO_{K, S(K)}^\times 
\; \big | \; K \big (  u^{1 / | \mu_K|} \big) / k \text{ is abelian} \big \},
\]
one finds that an element $a$ of $\Q \exprod^s_{\Z [\cG_K]} \bigO_{K, S(K)}^\times$ satisfies  $\Ann_{\Z [\cG_K]} (\mu_K) \cdot a \subseteq \bidual^s_{\Z [\cG_K]} \bigO^\times_{K, S(K)}$ if and only if one has $|\mu_K|\cdot f(a) \in (U^\mathrm{ab}_{K, S(K)})^{\ast \ast} $ for all $f \in \exprod^{s - 1}_{\Z [\cG_K]} (\bigO_{K, S(K)}^\times)^\ast$.
\end{rk}

\begin{rk}\label{surj of annihilator} The explicit description of the modules $\mathrm{Ann}_{\Z[\cG_{K}]}(\mu_{K})$ that is given above has the following useful consequence: for any finite abelian extensions  $K$ and $K'$ of $k$ with $K \subseteq K'$, the homomorphism $\mathrm{Ann}_{\Z[\cG_{K'}]}(\mu_{K'}) \to \mathrm{Ann}_{\Z[\cG_K]}(\mu_K)$ that is induced by the restriction map $\Z[\cG_{K'}] \to \Z[\cG_K]$ is surjective (cf.\@ \cite[Lem.\@ 3.9]{bdss}). 
\end{rk}

\vskip 0.1truein

The following result establishes several useful properties of the lattices $\fL^s_K$. 

\begin{lem}\label{lattice lemma2} Fix a finite abelian extension $E$ of $k$ with $K \subseteq E$  and 
consider the injection
\[
\nu_{E / K} \: \Q \exprod^s_{\Z [\cG_K]} \bigO_{K, S(E)}^\times \to \Q \exprod^s_{\Z [\G_E]} \bigO_{E, S(E)}^\times, 
\quad a \mapsto 
\begin{cases}
[E : K]^{1 - s} a \quad & \text{ if } s > 0, \\
\NN_{\gal{E}{K}} a & \text { if } s = 0.
\end{cases}
\]
Then the following claims are valid. 
\begin{liste}
\item $\nu_{E / K}$ restricts to an injection of $\G_K$-modules
 from $\fL^s_K$ to $( \fL^s_E)^{\gal{E}{K}}$,
 \item the composite $\nu_{E / K} \circ \NN^s_{E / K}$ coincides with multiplication by  $\NN_{\gal{E}{K}}$,
\item for every non-zero element $a \in \fL_K^s$ there exists a finite set $\mathcal{N}$ of integers, depending only on $a$, such that $\nu_{E / K} (a)$ is not divisible in $\fL_E^s$ by any integer outside $\mathcal{N}$,
\item $\nu_{E / k}$ induces an isomorphism $ |\mu_k|^{-1} \exprod^s_\Z (\cO_{k, S(E)}^\times)^{\ast \ast} \cong (\fL_E^s)^{\cG_E}$. 
\end{liste}
\end{lem}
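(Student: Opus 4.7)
The plan is to prove the four parts in the order (b), (a), (d), (c), since each later step relies essentially on the preceding ones.

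For part (b), the verification is a direct calculation. The case $s = 0$ unwinds on group-ring generators. For $s > 0$, the composition sends a pure wedge $u_1 \wedge \cdots \wedge u_s \in \exprod^s_{\Z[\cG_E]} \cO_{E, S(E)}^\times$ to $[E:K]^{1-s} (\NN_{\gal{E}{K}} u_1) \wedge \cdots \wedge (\NN_{\gal{E}{K}} u_s)$, where the field norm $\NN_{E/K}(u_i)$ is identified with $\NN_{\gal{E}{K}} u_i = \sum_{\sigma \in \gal{E}{K}} \sigma u_i$ in the ambient module. Since $\NN_{\gal{E}{K}}$ is central in $\Z[\cG_E]$ and satisfies $\NN_{\gal{E}{K}}^2 = [E:K] \cdot \NN_{\gal{E}{K}}$, extracting this scalar from each wedge factor rewrites the expression as $[E:K]^{1-s} \cdot [E:K]^{s-1} \NN_{\gal{E}{K}}(u_1 \wedge \cdots \wedge u_s) = \NN_{\gal{E}{K}}(u_1 \wedge \cdots \wedge u_s)$, as required.

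Building on (b), part (a) follows. The $\gal{E}{K}$-invariance of $\nu_{E/K}(a)$ is immediate from the fact that every element of $\cO_{K, S(E)}^\times$ is fixed by $\gal{E}{K}$; injectivity is then a routine character-by-character check after extension of scalars to $\Q$. The substantive step is containment in $\fL^s_E$: for $T \in \mathscr{P}_E^{\rm ad}$ we have $T \in \mathscr{P}_K^{\rm ad}$ (since $|\mu_K|$ divides $|\mu_E|$), and on $\gal{E}{K}$-invariant elements $\delta_{T,E}$ acts as the image of $\delta_{T,K}$ under the quotient $\Z[\cG_E] \twoheadrightarrow \Z[\cG_K]$ (since Frobenius at $v$ in $\cG_E$ restricts to Frobenius in $\cG_K$). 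This yields $\delta_{T,E}(\nu_{E/K}(a)) = \nu_{E/K}(\delta_{T,K}(a))$, and containment in $\bidual^s_{\Z[\cG_E]} \cO_{E, S(E), T}^\times$ then reduces to the base-change compatibility of biduals applied to the inclusion $\cO_{K, S(K), T}^\times \hookrightarrow \cO_{E, S(E), T}^\times$.

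For part (d), using the duality $\bidual^s_R M = \Hom_R(\exprod^s_R M^*, R)$ with $R = \Z[\cG_E]$ and $M = \cO_{E, S(E), T}^\times$, the $\cG_E$-invariants of the bidual are identified with $\Hom_{\Z[\cG_E]}(\exprod^s_R M^*, \NN_{\cG_E} \Z)$, since $R^{\cG_E} = \NN_{\cG_E} \Z$. Combined with the inclusion (\ref{tate pop}) and the standard identity $\aug(\Ann_{\Z[\cG_E]}(\mu_E)) = |\mu_k| \Z$ (a consequence of $\mu_E^{\cG_E} = \mu_k$ together with the computation $\langle \chi_{\rm cyc}(\sigma) - 1 : \sigma \in \cG_E \rangle = |\mu_k| \Z / |\mu_E|\Z$), an inspection of the resulting Hom-groups identifies $(\fL^s_E)^{\cG_E}$ with the image of $|\mu_k|^{-1} \exprod^s_\Z \cO_{k, S(E)}^\times$ under $\nu_{E/k}$.

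Finally, part (c) is the most delicate, because the finite set $\mathcal{N}$ must be uniform in $E$. Given $mb = \nu_{E/K}(a)$ with $b \in \fL^s_E$ and $m \in \Z_{>0}$, $b$ is automatically $\gal{E}{K}$-invariant: the relation $\sigma(mb) = \sigma \nu_{E/K}(a) = \nu_{E/K}(a) = mb$ forces $\sigma b = b$ in the ambient $\Q$-vector space. Applying $\NN^s_{E/K}$, using (b) together with the direct calculation $\NN^s_{E/K}(\nu_{E/K}(a)) = [E:K] \cdot a$ (which follows from $\NN_{E/K}(u) = u^{[E:K]}$ for $u \in \cO_{K, S(E)}^\times$), we deduce $m \cdot \NN^s_{E/K}(b) = [E:K] \cdot a$ in $\Q \exprod^s_{\Z[\cG_K]} \cO_{K, S(E)}^\times$. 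For each prime $p$ coprime to $[E:K]$, this bounds $v_p(m)$ by the $p$-adic content of $a$, a quantity depending only on $a$. For primes $p$ dividing $[E:K]$, a finer descent is required: applying a $p$-primary analogue of (d) to the extension $E/K$ should bound the index of $\nu_{E/K}(\fL^s_K \otimes \Z_p)$ in $(\fL^s_E \otimes \Z_p)^{\gal{E}{K}}$ by a $p$-adic invariant of $K$ alone, independently of $E$. Obtaining this uniformity is the principal obstacle of the proof.
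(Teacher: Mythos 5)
Your parts (a), (b) and the overall order of attack are reasonable, and (b) is essentially the paper's (routine) computation; but part (c) — which you yourself flag as the principal obstacle — is genuinely incomplete, and the route you propose for closing it does not work. Even for primes $p\nmid[E:K]$, the relation $m\cdot \NN^s_{E/K}(b)=[E:K]\,a$ only bounds $v_p(m)$ by the divisibility of $a$ in whatever lattice contains $\NN^s_{E/K}(b)$, and that lattice is a priori built from $S(E)$-units of $K$, so the bound depends on $E$; to make it uniform one needs the intersection property of \cite[Lem.\@ 4.7\,(ii)]{BKS}, namely that an element of $\Q\exprod^s_{\Z[\cG_K]}\bigO^\times_{K,S(K)}$ lying in $\bidual^s_{\Z[\cG_K]}\bigO^\times_{K,S(E),T}$ already lies in $\bidual^s_{\Z[\cG_K]}\bigO^\times_{K,S(K),T}$, which your sketch never invokes. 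More seriously, for $p\mid [E:K]$ (exactly the case the paper later needs, e.g.\ divisibility by $[F_n:F]=p^n$ in $\Z_p$-towers) your proposed ``finer descent'' — a bound on the index of $\nu_{E/K}(\fL^s_K\otimes\Z_p)$ in $(\fL^s_E\otimes\Z_p)^{\gal{E}{K}}$ depending on $K$ alone — is hopeless as stated: that index is in general infinite, since the invariants contain (up to bounded denominators) the $\nu_{E/K}$-images of exterior powers of the $S(E)$-units of $K$ (compare part (d) with $K=k$), whose rank grows with $S(E)$, whereas $\nu_{E/K}(\fL^s_K)$ involves only $S(K)$-units.

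The paper's mechanism for (c) is different and handles all primes at once: writing $\nu_{E/K}(a)=N\,x$ with $x\in\fL^s_E$, one sets $b\coloneqq\nu_{E/K}^{-1}(x)$ (which makes sense since $x$ is a rational multiple of $\nu_{E/K}(a)$, hence lies in the $\Q$-span of the $S(K)$-level) and shows, using the descent isomorphism for exterior biduals of Lemma \ref{algebraic-lemma-injection} together with the intersection property above, that $b$ lies in the fixed finitely generated lattice $\widetilde{\fL}^s_K=\{c\mid \Ann_{\Z[\cG_K]}(\mu_K)\cdot c\subseteq\bidual^s_{\Z[\cG_K]}\bigO^\times_{K,S(K)}\}$, which depends only on $K$; the equation $a=Nb$ then confines $N$ to the finite set of divisors of $a$ in this single lattice, uniformly in $E$. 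Two further soft spots: your (a) and (d) both lean on the identification of $(\bidual^s_{\Z[\cG_E]}M)^{H}$ with a bidual formed over the quotient group (``base-change compatibility of biduals'', ``inspection of the resulting Hom-groups''), but this is precisely the non-formal content of Lemma \ref{algebraic-lemma-injection}, proved via a free presentation and Sakamoto's lemma; and in (d) your sketch only plausibly yields the inclusion $(\fL^s_E)^{\cG_E}\subseteq \nu_{E/k}(|\mu_k|^{-1}\exprod^s_\Z\bigO^\times_{k,S(E)})$ — the reverse inclusion requires the Popescu-type argument that $|\mu_k|b\in\exprod^s_\Z\bigO^\times_{k,S(E)}$ forces $\delta_{T,k}\,b\in\exprod^s_\Z\bigO^\times_{k,S(E),T}$ for every admissible $T$, which you do not address.
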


\begin{proof} 
To prove claim (a) we must show that for each $a$ in $\fL^s_K$ and each $T$ in $\mathscr{P}_E^{\rm ad}$ one has $\delta_T \nu_{E / K} (a) \in \bidual^s_{\Z [\cG_E]} \cO_{E, S(E), T}^\times$. 
Since $S(K)\subseteq S(E)$ and $\mu_K \subseteq \mu_E$, one has $\mathscr{P}_E^{\rm ad} \subseteq \mathscr{P}_K^{\rm ad}$. In addition, for each $T$ in $\mathscr{P}_E^{\rm ad}$ one has that $\cO^\times_{E, S(K), T}$ is $\Z$-torsion free and hence reflexive. By Lemma \ref{algebraic-lemma-injection} below we therefore have that
\[
\nu_{E / K} (\delta_{K, T} a) = \delta_{E, T} \nu_{E / K} (a) \in \bidual^s_{\Z [\cG_E]} \cO^\times_{E, S(E), T},
\]
as required to prove claim (a). \medskip \\
Claim (b) is straightforward to check from the definitions (see also \cite[Rk.\@ 4.14]{BKS}). \medskip \\
To prove claim (c), let $a \in \fL_K^s$ be an element with the property that $\nu_{E / K} (a) = N \cdot x$ for some $x \in \fL_E^s$. Then $b \coloneqq \nu_{E / K}^{-1} (x)$ is an element of $\Q \exprod^s_{\Z [\cG_K]} \cO^\times_{K, S(K)}$ and we now claim that $\delta_T b$ belongs to $\bidual^s_{\Z [\cG_K]} \cO^\times_{K, S(K), T}$ for every set $T$ in $\mathscr{P}_E^{\rm ad}$.
\\
If $T$ belongs to $\mathscr{P}_E^{\rm ad}$, then $\delta_{T, E} x \in \big (\bidual^s_{\Z [\cG_E]} \cO^\times_{E, S(E), T} \big)^{\gal{E}{K}}$ and so Lemma \ref{algebraic-lemma-injection} implies that $\delta_{T, K} b \in \bidual^s_{\Z [\cG_K]} \cO^\times_{K, S(E), T}$. 
Moreover, since the cokernel of the inclusion $\cO^\times_{K,S(K), T} \to \bigO_{K, S(E), T}^\times$ is torsion-free, the natural restriction map $(\bigO_{K, S(E), T}^\times)^\ast \to (\cO^\times_{K,S(K), T})^\ast$ is surjective and so one has 
\[ \bidual^s_{\Z [\cG_K]} \cO^\times_{K, S(K), T} = \Big (\Q \exprod^s_{\Z [\cG_K]} \cO^\times_{K, S(K)} \Big) \cap \Big ( \bidual^s_{\Z [\cG_K]} \cO^\times_{K, S(E), T} \Big)\]
by the argument of \cite[Lem.\@ 4.7\,(ii)]{BKS}.
Thus, $b$ is contained in $\bidual^s_{\Z [\cG_K]} \cO^\times_{K, S(K), T}$ and hence satisfies the defining condition of $\fL^s_K$ at all sets $T$ in $\mathscr{P}_E^{\rm ad}$. Since $\Ann_{\Z [\cG_K]} (\mu_K)$ is generated by $\delta_{T, K}$ with $T$ ranging over $\mathscr{P}_E^{\rm ad}$, we deduce that $b$ belongs to the lattice
\[
\widetilde{\fL}^s_K \coloneqq \big \{ c \in \Q \exprod^s_{\Z [\cG_K]} \cO^\times_{K, S(K)} \mid \Ann_{\Z [\cG_K]} (\mu_K) \cdot c \in \bidual^s_{\Z [\cG_K]} \cO^\times_{K, S(K)} \big \}.
\]
The equation $a = N \cdot b$ therefore shows that $a$ is divisible by $N$ in $\widetilde{\fL}^s_K$. Now, $\widetilde{\fL}^s_K$ is a finitely generated $\Z$-module and so the claim follows by taking $\mathcal{N}$ to be the finite set of integers that $a$ is divisible by in $\widetilde{\fL}^s_K$.
\medskip \\
To prove claim (d) we take $a \in (\fL^s_E)^{\cG_E}$ and set $b \coloneqq \nu_{E / k}^{-1} (a)$, which is a well-defined element of $\Q \exprod^s_\Z \cO^\times_{k, S(E)}$ because $\nu_{E / k}$ defines an isomorphism $\Q \exprod^s_\Z \cO^\times_{k, S(E)} \cong \big( \Q \exprod^s_{\Z [\cG_E]} \cO^\times_{E, S(E)} \big)^{\cG_E}$. \\
Given any $T \in \mathscr{P}^\mathrm{ad}_E$, one has that $\delta_{T, E} a$ belongs to $\big ( \bidual^s_{\Z [\cG_E]} \cO^\times_{E, S(E), T} \big)^{\cG_E}$ and hence, by Lemma \ref{algebraic-lemma-injection}, the element $\delta_{T, k} b$ is contained in $\exprod^s_\Z \cO^\times_{k, S(E), T}$. 
In particular, $| \mu_k | a \in \exprod^s_\Z (\cO_{k, S(E)}^\times)^{\ast \ast}$. Conversely, if $|\mu_k| a \in \exprod^s_\Z (\cO_{k, S(E)}^\times)^{\ast \ast}$, then the argument of \cite[Thm.\@ 5.5.1\,(3)]{Popescu02} shows that $\delta_T a \in \exprod^s_\Z \cO_{k, S(E), T}^\times$ for all $T \in \mathscr{P}_k^\mathrm{ad}$. As in the proof of (a), we then conclude that $\nu_{E / k} (a)$ belongs to $(\fL^s_E)^{\cG_E}$, as required. 
\end{proof}

The following algebraic result is an analogue of \cite[Prop.\@ A.4]{sbA}.

\begin{lem} \label{algebraic-lemma-injection}
Let $G$ be a finite abelian group and $M$ a finitely generated $\Z[G]$-module. Then, for each  subgroup $H \subseteq G$, there exists an  isomorphism of $\Z[G/H]$-modules
\[
\bidual^s_{\Z [G / H]} (M^{\ast \ast})^H \stackrel{\simeq}{\longrightarrow} \Big ( \bidual^s_{\Z [G]} M \Big)^H
\]
that is induced by the assignment $\NN^s_{H} a \mapsto \NN_H a$ on $\Q \exprod^s_{\Z [G / H]} M^H \to \Q \exprod^s_{\Z [G]} M$.
\end{lem}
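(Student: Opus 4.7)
The plan is first to reduce to the case where $M$ is reflexive, since $\bidual^s_R$ of a finitely generated $R$-module depends only on its reflexive hull (because $M^*\cong (M^{**})^*$ canonically), so replacing $M$ by $M^{**}$ changes neither side of the claimed isomorphism. Under this reduction, it suffices to construct and verify a natural isomorphism $\bidual^s_{\Z[G/H]} M^H \xrightarrow{\simeq} (\bidual^s_{\Z[G]} M)^H$ whose underlying map on rationalised exterior powers satisfies the prescribed formula.

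To set up the map, I would work first at the rational level: decomposing both $\bar\QQ\otimes_\Z\exprod^s_{\Z[G/H]} M^H$ and $\bar\QQ\otimes_\Z\exprod^s_{\Z[G]} M$ via the primitive idempotents $e_\chi$ of $\bar\QQ[G]$, the former is naturally a direct summand of the latter, supported precisely on those characters $\chi$ of $G$ that are trivial on $H$, because $e_\chi(\bar\QQ M^H)=e_\chi(\bar\QQ M)$ when $\chi|_H=\mathbf{1}$ and vanishes otherwise. Scaling the resulting inclusion by $|H|^{1-s}$ defines a canonical $\QQ[G/H]$-linear injection
\[
\nu_H\:\QQ\exprod^s_{\Z[G/H]} M^H \longrightarrow \QQ\exprod^s_{\Z[G]} M.
\]
Using the identities $\NN_H=|H|e_H$ and $\NN_H^s=|H|^{s-1}\NN_H$ in $\Z[G]$, one checks directly on $\chi$-isotypic components that $\nu_H\circ\NN^s_H$ coincides with multiplication by $\NN_H$ on $\QQ\exprod^s_{\Z[G]} M$, which is exactly the content of the assignment $\NN^s_H a\mapsto \NN_H a$ in the statement.

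The technical heart of the proof is to verify that $\nu_H$ restricts to a bijection between $\bidual^s_{\Z[G/H]} M^H$ and $(\bidual^s_{\Z[G]} M)^H$. The key algebraic ingredient is the natural isomorphism of $\Z[G/H]$-modules
\[
\Hom_{\Z[G]}(M,\Z[G])^H \;\stackrel{\simeq}{\longrightarrow}\; \Hom_{\Z[G/H]}(M_H, \Z[G/H]),
\]
obtained by restricting a $\Z[G]$-linear homomorphism $\phi$ to $M^H$ (where its image automatically lies in $\Z[G]^H=\NN_H\Z[G]$, since $h\phi(m)=\phi(hm)=\phi(m)$ for $m\in M^H$), combined with the identification $\NN_H\Z[G]\cong\Z[G/H]$ via $\NN_Hg\mapsto gH$ and the observation that the restricted map factors canonically through the coinvariants $M_H$. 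Invoking reflexivity of $M$ then gives the dual identification $(M^H)^*_{\Z[G/H]}\cong (M^*)_H$, and these combine to produce an isomorphism between $\exprod^s_{\Z[G/H]}(M^H)^*_{\Z[G/H]}$ and the corresponding piece of $\exprod^s_{\Z[G]} M^*$; taking $\Z[G/H]$-duals then yields the desired comparison between the two exterior biduals, and compatibility with $\nu_H$ is verified on the $\QQ$-level by decomposition into $\chi$-isotypic components.

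The main obstacle will be the careful bookkeeping required to ensure that these Hom- and exterior-power identifications are compatible with the $\bidual^s$-construction at the integral level, so that no extraneous $\Z$-torsion is introduced or lost when passing from the Hom-identification $(M^*)^H\cong (M_H)^*_{\Z[G/H]}$ to its reflexive-dual companion, and to confirm that the abstract isomorphism produced by chasing through these identifications does indeed coincide with the concrete $\nu_H$ prescribed by the statement.
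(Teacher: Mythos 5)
Your rational set-up (the idempotent decomposition, the map $\nu_H$ scaled by $|H|^{1-s}$, and the check that $\nu_H\circ\NN^s_H$ is multiplication by $\NN_H$) is the easy half of the statement and is not where its content lies; the lemma is an assertion about integral lattices, and it is exactly at your ``technical heart'' that the argument breaks down. The identification you invoke there is false: for reflexive $M$ it is not true in general that $(M^H)^\ast_{\Z[G/H]}\cong(M^\ast)_H$. Take $G=H$ cyclic of order $p$ and $M=\Z[G]/(\NN_G)$. This module is reflexive (it is $\Z$-free, hence maximal Cohen--Macaulay over the one-dimensional Gorenstein ring $\Z[G]$, and one checks directly that $M\to M^{\ast\ast}$ is bijective), yet $M^H=0$, so $(M^H)^\ast=0$, while $M^\ast\cong I_G$ (the augmentation ideal) and $(M^\ast)_H\cong\Z/p$ (compute $H_0(G,I_G)$ from $0\to I_G\to\Z[G]\to\Z\to0$). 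What is true is only that $((M^\ast)_H)^\ast\cong(M^{\ast\ast})^H=M^H$, so $(M^H)^\ast$ is the reflexive hull $((M^\ast)_H)^{\ast\ast}$, which differs from $(M^\ast)_H$ in general by torsion and, after killing torsion, by a finite-index discrepancy. (Your description of the ``key ingredient'' also conflates restriction to $M^H$ with factoring through $M_H$: the correct construction applies $\phi\in(M^\ast)^H$, whose image lies in $\Z[G]^H\cong\Z[G/H]$, to all of $M$ and factors it through $M_H$; the displayed isomorphism is nevertheless correct.)

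The more serious point is that the remaining step is not bookkeeping but is the whole lemma. From $\Hom_{\Z[G]}(-,\Z[G])^H=\Hom_{\Z[G]}(-,\NN_H\Z[G])$, adjunction and base change of exterior powers one gets cleanly that $(\bidual^s_{\Z[G]}M)^H\cong\Hom_{\Z[G/H]}\bigl(\exprod^s_{\Z[G/H]}(M^\ast)_H,\Z[G/H]\bigr)$, so what must be proved is precisely that replacing $(M^\ast)_H$ by its reflexive hull $(M^H)^\ast$ does not change the $\Z[G/H]$-dual of the $s$-th exterior power. Duals of exterior powers are not insensitive to finite-index changes of the underlying lattice (a functional on a finite-index sublattice need not extend), so this requires a genuine argument that your outline does not supply and that cannot be verified ``on the $\Q$-level''. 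This is where the paper's proof does the real work: it takes a free presentation $F_1\to F_0\to M^\ast\to 0$, so that $M^{\ast\ast}=\ker(F_0^\ast\to F_1^\ast)$, and applies \cite[Lem.~B.12]{Sakamoto20} to this sequence and to its $H$-invariants, expressing both $\bidual^s_{\Z[G/H]}(M^{\ast\ast})^H$ and $(\bidual^s_{\Z[G]}M)^H$ as kernels of explicit maps between exterior powers of the free modules $F_i^\ast$ and $(F_i^\ast)^H$; there the comparison $(\exprod^s_{\Z[G]}F_0^\ast)^H\cong\exprod^s_{\Z[G/H]}(F_0^\ast)^H$, $\NN^s_Ha\mapsto\NN_Ha$, is immediate because $(F_0^\ast)^H=\NN_HF_0^\ast$ is $\Z[G/H]$-free. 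You would need to reproduce this mechanism, or otherwise control the finite-index discrepancy; as written, the integral isomorphism remains unproved.
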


\begin{proof}
We may assume that $s > 0$. Choose a free presentation $F_1 \to F_0 \to M^\ast \to 0$ of the linear dual $M^\ast$ of $M$, where $F_0$ and $F_1$ are finitely generated free $\Z [G]$-modules. Dualising this presentation then gives an exact sequence
\begin{equation*} \label{free-presentation-M}\begin{tikzcd}
0 \arrow{r} & M^{\ast \ast} \arrow{r} & F_0^\ast \arrow{r} & F_1^\ast.
\end{tikzcd} \end{equation*}
Upon applying the general result of Sakamoto \cite[Lem.\@ B.12]{Sakamoto20} to both this sequence and its $H$-invariants, we therefore  obtain a composite isomorphism of the required form 
\begin{align} \label{line-1}
\bidual^s_{\Z [G / H]} (M^{\ast \ast})^H & \cong 
\ker \Big \{ \exprod^s_{\Z [G / H]} (F_0^\ast)^H \to (F_0^\ast)^H \otimes_{\Z [G / H]} \exprod^{s - 1}_{\Z [G / H]} (F_1^\ast)^H \Big \} \notag\\
& \cong \ker \Big \{ \Big ( \exprod^s_{\Z [G]} F_0^\ast \Big)^H \to \Big ( F_0^\ast \otimes_{\Z [G]} \exprod^{s - 1}_{\Z [G]} F_1^\ast \Big)^H \Big \} \notag\\
    & \cong  \Big ( \bidual^s_{\Z [G]} M \Big)^H ,\notag
\end{align}
where the second isomorphism is induced by the isomorphism $\exprod^s_{\Z [G / H]} (F_0^\ast)^H \cong \big ( \exprod^s_{\Z [G]} F_0^\ast \big)^H $ that sends each $\NN_H^s a$ to $\NN_H a$. 
\end{proof}

\begin{rk} \label{lattice lemma2 S-version rk}
Let $\mathcal{S}$ be a finite set of prime numbers.  
Then the proof of Lemma \ref{lattice lemma2} shows that all of the stated claims remain valid for the $\Z_\mathcal{S} [\cG_K]$-lattices $\Z_\mathcal{S} \fL_K^s$ if in claim (c) one restricts to integers outside $\mathcal{N}$ that are coprime to $\mathcal{S}$. 
\end{rk}

\subsubsection{Integrality restrictions}

Let $\cR$ be a subring of $\R$ and $\cX $ a subset of $\Omega (k)$. We now introduce  several $\cR\llbracket\cG_{\cK}\rrbracket$-submodules of $\ES^{\bm{r}, \cX}_k (\R)$ that play an important role in our theory (and are in part motivated by the properties of the `Rubin-Stark Euler system' discussed in the next section).   

We start by specifying the notion of `integrality' that is  central to our approach.

\begin{definition} \label{definition-integral-Euler-systems}
A system $c$ in $\ES^{\bm{r}, \cX}_k (\R)$
is said to be \emph{$\cR$-integral} if one has 
\begin{equation}\label{int res cond}
c_E \in \cR \cdot \fL_E^{\bm{r}(E)} \quad \text{ for all } E \in \cX.
\end{equation}
We write
$\ES^{\bm{r}, \cX}_k (\cR)$ for the $\cR\llbracket\cG_{\cK}\rrbracket$-submodule of $\ES^{\bm{r}, \cX}_k (\Q)$ comprising $\cR$-integral systems. \\
(If $\cX = \Omega(k)$, then we suppress explicit reference to $\cX$ in the notation.) 
\end{definition}

In most cases, the module $\ES_k^{\bm{r}} (\cR)$ contains a non-zero submodule that is elementary in nature. To describe the submodule, we let $\p$ be a place of $k$ of residue characteristic $p$ and $E$ a field in $\Omega (k)$ with $S(E) = \{\p\}$. In this setting, Lemma \ref{lattice lemma2}\,(d) implies that the map $\nu_{E / k}$ induces an isomorphism
\[
|\mu_k|^{-1}\cdot \exprod^{\bm{r}(E)}_{\Z} (\cO^\times_{k, \{ \p \}})^{\ast \ast} \stackrel{\simeq}{\longrightarrow} 
(\fL_E^{\bm{r}(E)})^{\cG_E}. 
\]
Set $k(\p^\infty) \coloneqq \bigcup_{n \in \N} k (\p^n)$ and write $t_\p$ for the cardinality of $(\cG_{k (\p^\infty)})_{\mathrm{tor}}$. For each $E\in \Omega(k)$ and each element $a$ of the lattice  $|\mu_k|^{-1}\cdot \exprod^{\bm{r}(E)}_{\cR} (\cR \cO^\times_{k, \{ \p \}})^{\ast \ast}$, we set %
\[
c_\p (a)_E \coloneqq \begin{cases}
[E : k]^{-1} \cdot t_\p \cdot \nu_{E / k} (a) & \text{ if } E \subseteq k (\p^\infty), \\
0 & \text{ otherwise.}
\end{cases}\]
Then, if either $p$ is a unit in $\cR$ or the extension $k(\p^\infty) / k$ is finite, the family 
\[ c_\p (a) \coloneqq (c_\p (a)_E)_{E\in \Omega(k)}\]
belongs to $\ES_k^{\bm{r}}(\cR)$. In this way one obtains a canonical homomorphism of $\cR \llbracket\cG_{\cK}\rrbracket$-modules 
\begin{equation}\label{local es map}
\prod_{\p} |\mu_k|^{-1} \exprod^{\bm{r} (k(\p))}_{\cR} (\cR \cO^\times_{k, \{ \p \}})^{\ast \ast} \to 
\ES_k^{\bm{r}} (\cR), \quad
(a_\p)_\p \mapsto \Big ( \prod_\p c_\p (a_\p) \Big), 
\end{equation}
where the product is over all $\p$ in $S_\fin(k)$ that divide a prime $p \in \cR^\times$ or are such that $k(\p^\infty) / k$ is finite. For any system in the image of this map, the components are trivial except possibly on fields of prime-power conductor and there are no non-trivial distribution relations between components at fields of coprime conductors. This fact motivates the following definition.  

\begin{definition}\label{local construction} The image $\ES_k^{{\bm{r}}}(\cR)^{\mathrm{iso}}$ of the map (\ref{local es map}) is the $\cR \llbracket\cG_{\cK}\rrbracket$-module of \emph{isolated ($\cR$-integral) Euler systems}.  \end{definition}

A conceptual description of the module $\ES_k^{{\bm{r}}}(\cR)^{\mathrm{iso}}$ will be given in Lemma \ref{invariant systems description} below. For the moment, we note only that    $\ES_k^{{\bm{r}}}(\cR)^{\mathrm{iso}}$  is non-trivial if $\bm{r} (E) \leq |S_\infty(k) |$ for all $E \in \Omega^{S_\infty (k)} (k)$ and 
there exists a prime number $p \in \cR^\times$ or a place $\p$ in $S_\fin(k)$ for which $k(\p^\infty) / k (1)$ is finite and non-trivial. 

\begin{rk}\label{local vanishing}
The subset of $S_\fin(k)$ comprising those $\p$ for which $k(\p^\infty) / k (1)$ is a finite non-trivial extension is empty if and only if $k$ is either $\Q$ or imaginary quadratic. To explain this, 
we fix a prime number $p$ that splits completely in $k$ and let $\p$ denote a place of $k$ above $p$. 
Global class field theory then identifies $\gal{k (\p^\infty)}{k (1)}$ with the quotient of $\Z_p^\times$ by the image of the global units of $k^\times$. If $k$ is not $\Q$ or an imaginary quadratic field, then the global units are infinite and so said quotient must be finite. This combines with Lemma \ref{rubin lemma}\,(b) to imply that $k(\p^\infty) / k (1)$ is finite and non-trivial if $\p$ lies above a prime number $p$ that splits completely in $k (\mu_\ell, (\cO_k^\times)^{1 / \ell})$ for some prime number $\ell \neq p$. 
On the other hand, if $k$ is either $\Q$ or imaginary quadratic, then every extension $k (\p^\infty)/k$ is infinite.
\end{rk} 

The following submodule of $\ES_k (\cR) = \ES_k ^{\bm{r}_\mathrm{max}}(\cR)$ will also play an important role in our theory. 

\begin{definition}\label{sym def} An \emph{($\cR$-integral) symmetric Euler system} is a family $c$ in $\ES_k(\cR)$ that has the following property: there exists an element $c_k \in \cR \exprod^{r_k}_\Z \bigO_{k}^\times$ (the `initial value' of $c$) such that for all $E \in \Omega(k)$ and  $\p \in S_\fin(k) \cap S(E)$ there is, in $\bigoplus_{i = 0}^\infty \big ( \cR \exprod^i_{\Z} \bigO_{k, S(E)}^\times \big)$, an equality 
\[
 \NN^{r_E}_{E / k} (c_E) = \Big (\prod_{v \in S(E) \setminus \{ \p \}} (1 - \Frob_v^{-1})\Big) \cdot \Ord_\p^{-1} (c_k),
\]
where 
\[ \Ord_\p \:  \R \exprod^{r_k + 1}_\Z \bigO_{k, \{ \p \}}^\times \to  \R \exprod^{r_k}_\Z \bigO_{k}^\times\]
is the isomorphism induced by the normalised valuation $\ord_\p \: \bigO_{k, \{ \p \}}^\times \to \Z$ at $\p$.  The collection $\ES_k (\cR)^{\mathrm{sym}}$ of such systems is an $\cR \llbracket\cG_{\cK}\rrbracket$-submodule of $ \ES_k (\cR)$.
\end{definition}

\begin{rk}  Fix $c$ in $\ES_k(\cR)$, a finite place $\p$ of $k$ and a field $E \in \Omega$ with $E \subseteq k (\p^\infty)$. Then the element $\NN^{r_E}_{E / k} (c_E)$ of $  \R \exprod^{r_k + 1}_\Z \bigO_{k, \{ \p \}}^\times$ is independent of the choice of $E$, and $c$ is symmetric (in the above sense) if and only if $\Ord_\p(\NN^{r_E}_{E / k} (c_E)$) is independent of $\p$ .
\end{rk}

The following consequence of the Cebotarev's Density Theorem for symmetric systems is important in the sequel.  

\begin{lem}\label{ivc versus local}\label{no isolated system satisfies initial value} 
If $\cR$ denotes either $\Z_\cS$ for a finite set of prime numbers $\cS$ or $\Z_p$ for a prime number $p$, then $\ES_k(\cR)^{\mathrm{iso}} \cap \ES_k(\cR)^{\mathrm{sym}} = \{0\}$. \end{lem}

\begin{proof} It is enough to show that for every system $c$ in $\ES_k(\cR)^{\mathrm{iso}} \cap \ES_k(\cR)^{\mathrm{sym}}$ the initial value $c_k$ vanishes. Indeed, if $\p$ is any place of $k$ and $E \subseteq k (\p^\infty)$ is a finite ramified extension of $k$, then the assumption $c_k = 0$ implies $\NN^{r_E}_{E / k} (c_E) = 0$ and hence that $c_E = 0$ because, by construction, $c_E$ is fixed by $\cG_E$. \\
Now, if we suppose there exists a finite place $\p$ of $k$ that neither divides a prime in $\mathcal{S}$ nor is such that $k(\p^\infty) / k$ is finite, then by definition $c_{k (\p)} = 0$ and so the initial value condition implies that $c_k = 0$. We may therefore assume that all but finitely many places of $k$ are such that $k(\p^\infty) / k$ is a finite extension. \\
Then, since $c$ is isolated, the element $c_{k (\p)}$ is fixed by $\cG_{k (\p)}$ for every such $\p$ and so the element 
\[
(\nu_{k(\p) / k} \circ \Ord_\p^{-1} )(c_k) = (-1)^{r_k} \cdot ( \nu_{k (\p) / k}\circ \NN^{r_k + 1}_{k(\p) / k}) (c_{k (\p)})
= (-1)^{r_k} \cdot [k(\p) : k] c_{k (\p)}
\]
is divisible by $[k (\p) : k]$ in $\fL_{k (\p)}$. It now follows from Lemma \ref{lattice lemma2}\,(d) that $\Ord_\p^{-1}(c_k)$ is divisible by $[k(\p) : k]$ in 
$|\mu_k|^{-1} \exprod^{r_k + 1}_{\Z_\mathcal{S}} \Z_\mathcal{S} \cO_{k, \{ \p\}}^\times$, and hence that $c_k$ is divisible by $[k (\p) : k]$ in $|\mu_k|^{-1} \exprod^{r_k}_{\Z_\mathcal{S}} \Z_\mathcal{S} \cO_k^\times$. Let $p$ be a prime number that is not invertible in $\cR$. 
It then suffices to show the degrees $[k(\p) : k]$ are divisible by an unbounded power of $p$ (as $\p$ varies).\\
To do this, we fix a natural number $n$ and consider the finite Galois extension $L$ of $k$ obtained by adjoining $p^n$-th roots of all elements in $\cO_k^\times$. Then it is enough to note that, by Cebotarev's Density Theorem, there exist infinitely many places $\p$ of $k$, not lying above $p$, that are completely split in $L$ and, for any such place, the degree of $k (\p)/k(1)$ is divisible by $p^n$ (by Lemma \ref{rubin lemma}). This concludes the proof of the Lemma.
\end{proof}

To describe another distinguished family of Euler systems, we fix finite extensions $L$ and $K$ of $k$ in $\cK$ with $K \subseteq L$ and $V_L = V_K$, and set $H \coloneqq \gal{L}{K} \subseteq \cG_L$. We also fix a place $v$ in $S(L)\setminus S_\infty(k)$ that splits completely in $K$ and is {\em tamely ramified} in $L$ and a place $w = w_K$ of $K$ above $v$. We write $\rec_v \: K^\times \to H$ for the composite of the canonical embedding of $K$ into its completion $K_w$ at $w$ and the local reciprocity map $K^\times_w \to H_w \subseteq H$, where $H_w$ denotes the decomposition group of $w$ inside $H$. Writing $I(H)$ for the kernel of the projection map $\Z [\cG_L] \to \Z [\cG_K]$, we thereby  obtain a canonical homomorphism  of $\ZZ[\cG_K]$-modules 
\begin{equation}\label{rec-prime def}
\Rec'_v \: \cO_{K, S(K)}^\times \to \faktor{I (H)}{I (H)^2}, 
\quad a \mapsto \sum_{\sigma \in \cG_K} (\rec_v (\sigma a) - 1) \sigma^{-1} 
\end{equation}
and hence also an induced composite homomorphism  
\begin{align*} \Rec_v : \bidual^{r_K + 1}_{\Z [\cG_K]} \cO_{K, S(L), T}^\times \xrightarrow{\Rec'_v}&\, \big (\bidual^{r_K}_{\Z [\cG_K]} \cO_{K, S(L), T}^\times\big )\otimes_{\Z[\cG_K]}( I (H)/I (H)^2)\\
\xrightarrow{\nu_{L/K}}&\, \big ( \bidual^{r_K}_{\Z [\cG_L]} \cO_{L, S(L),T}^\times ) \otimes_{\Z [\cG_L]} ( \Z [\cG_L] / I (H)^{2})\end{align*}
 \\
in which $\nu_{L/K}$ is induced by the isomorphism $\bidual^{r_K}_{\Z [\cG_K]} \cO_{K, S(L), T}^\times 
\stackrel{\simeq}{\to} \big( \bidual^{r_K}_{\Z [\cG_L]} \cO_{L, S(L), T}^\times \big)^H$ from Lemma \ref{algebraic-lemma-injection} and  the inclusion $I(H)\subset \Z[\cG_L]$. 

We further recall from \cite[Lem.\@ 5.1~(iii)]{Rub96} that the map 
 \[
\Ord_v \: K^\times \to \Z [\cG_K], \quad a \mapsto \sum_{\sigma \in \cG_K} \ord_{w} (\sigma a) \sigma^{-1}
\]
induces an isomorphism (which we denote by the same symbol) 
\[
\Ord_v : e_K \Q \exprod_{\Z [\cG_K]}^{r_K + 1} \cO_{K, S(L)}^\times 
\stackrel{\simeq}{\to} 
e_K \Q \exprod_{\Z [\cG_K]}^{r_K} \cO_{K, S(K)}^\times. 
\]
We finally 
write $P_{L / K,\{v\}}$ for the Euler factor $\prod_{v' \in S(L) \setminus ( S(K) \cup \{v\})} (1 - \Frob_{v'}^{-1})$. 

\begin{definition}\label{con def} Fix a finite set of prime numbers $\cS$. Then a \emph{($\Z_{\cS}$-integral) congruence Euler system} is a family $c$ in $\ES_k(\ZZ_\cS)$ that has the following property: for all data $L/K, T$ and $v$ as above and every prime $p \not \in \cS$ that divides $2 d_k$ (with $d_k$ the absolute discriminant of $k$), the element $\Ord_v^{-1} (\delta_{K,T}c_{K})$ belongs to $\bidual^{r_K + 1}_{\Z_p [\cG_K]} \Z_p \cO_{K, S(L), T}^\times$ and is such that  
\begin{multline*}
\delta_{L, T}\sum_{\sigma \in H} \sigma c_{L} \otimes \sigma^{-1} = (-1)^{r_K} \cdot  (\Rec_v\circ \Ord_v^{-1}) \big ( 
P_{L / K, \{v\}}\cdot \delta_{K, T}c_{K} \big ) \\
\text{in}\quad \big( \bidual^{r_K}_{\Z [\cG_L]} \cO_{L, S(L),T}^\times  \big) \otimes_{\Z [\cG_L]} ( \Z[\G_L]/ I(H)^2) \otimes_\Z \Z_p.\end{multline*}
The collection $\ES_k(\ZZ_\cS)^{\mathrm{con}}$ of all such systems is a $\ZZ_\cS \llbracket \cG_{\cK} \rrbracket$-submodule of $\ES_k(\ZZ_\cS)$. 
\end{definition}

\begin{rk}
    Whilst congruence relations play an important role in early articles concerning (rank one) Euler systems, such as Thaine \cite{thaine}, it was subsequently shown  by Rubin \cite[Ch.~IV, \S~8]{Rubin-euler} that a weaker Iwasawa-theoretic form of these congruences can be directly deduced from Euler distribution relations. However, distribution relations on their own are not sufficient for our theory (see, for example, Remark \ref{scarcity remarks}\,(d)) and the congruences described above provide an appropriate replacement in higher rank for the congruences used by Thaine. The precise form of these congruences is motivated by conjectures of Mazur--Rubin and Sano (see Lemma \ref{RS-properties-lemma}\,(iii) and Remark \ref{MRS remark}) and the role that they play in our approach is described in Proposition \ref{mrs and soogils argument}.
\end{rk}

\subsection{The Scarcity Conjecture}

\subsubsection{The Rubin--Stark Euler system} \label{RS section}

Let $\mathscr{S}$ denote the set $S_\infty(k) \cup S_\fin(k)$ of all places of $k$. Then, by fixing a bijection $\mathscr{S} \cong \N$ we may regard $\mathscr{S}$ as a totally ordered set $(\mathscr{S},\preceq) = \set{v_i}_{i \in \N}$ so that $v_i \preceq v_j$ if and only if $i \leq j$. We choose this bijection in such a way that the first $|S_\infty(k)|$ places in $\mathscr{S}$ are the places in $S_\infty(k)$. Throughout this article all exterior powers $\exprod_{v \in \Sigma}$ indexed over a given finite set of places $\Sigma$ of $k$ will be arranged with respect to the ordering $\preceq$. We also fix, for each place $v \in \mathscr{S}$, an extension $\overline{v}$ of $v$ to our fixed choice of algebraic closure $\overline{\Q}$ of $\Q$. For any subfield $K \subseteq \overline{k}$ we write $v_K$ for the restriction of $\overline{v}$ to $K$, and also set
\[
S^\ast (K) \coloneqq S (K) \cup S_\infty (k).
\]
Let $K$ be a finite abelian extension of $k$ and assume to be given disjoint finite sets of places $\Sigma$ and $T$ of $k$ such that $\Sigma$ contains $S^\ast (K)$. Then, for any character $\chi \in \widehat{\cG_K}$, the \emph{$T$-modified $\Sigma$-imprimitive Dirichlet $L$-series} for $K / k$ and $\chi$ is defined by setting 
\[
L_{k, \Sigma, T} (\chi, s) = \prod_{v \in T} (1 - \chi (\Frob_v) \NN v^{1 - s}) \cdot \prod_{v \not \in \Sigma} (1 - \chi (\Frob_v) \NN v^{- s})^{-1},
\]
where $s$ is a complex variable of real part $\text{Re} (s) > 1$. It is well-known that $L_{k, \Sigma, T} (\chi,s)$ admits a meromorphic continuation to $\C$. We recall from \cite[Ch.\@ I, Prop.\@ 3.4]{Tate}
that the order of vanishing of $L_{k, \Sigma, T} (\chi, s)$ at $s = 0$ is given by the number $r_\Sigma (\chi)$ defined in (\ref{order-of-vanishing-formula}).\\
We moreover define the leading term of the $T$-modified $\Sigma$-imprimitive equivariant $L$-series of $K / k$ to be
\[
\theta^\ast_{K / k, \Sigma, T} (0) = \sum_{\chi \in \widehat{\cG_K}} \big ( \lim_{s \to 0} s^{- r_\Sigma (\chi)} L_{k, \Sigma, T} (\chi, s) \big) e_{\chi^{-1}} 
\]
and note $\theta^\ast_{K / k, \Sigma, T} (0)$ belongs to $\R [\cG_K]^\times$. 
Finally, we recall that the Dirichlet regulator map 
\[
\lambda_{K, \Sigma} \: \bigO_{K, \Sigma}^\times \to \R \otimes_\Z X_{K, \Sigma}, \quad
a \mapsto - \sum_{w \in \Sigma_K} \log |a|_w \cdot w
\]
induces an isomorphism of $\R[\cG_K]$-modules 
\begin{equation} \label{dirichlet-regulator-isomorphism}
\lambda^{(r)}_{K, \Sigma}: \R \otimes_\Z \exprod^r_{\Z [\cG_K]} \bigO_{K, \Sigma}^\times \stackrel{\simeq}{\longrightarrow}
\R \otimes_\Z \exprod^r_{\Z [\cG_K]} X_{K, \Sigma},
\end{equation}
that will often also be abbreviated to $\lambda_{K, \Sigma}$ (if no confusion is possible).   

\begin{definition}\label{Rubin--Stark-definition}\
 
\begin{liste}
    \item Let $K$ be any finite abelian extension of $k$, $V \subsetneq \Sigma$ a finite set of places that split completely in $K / k$, set  $r \coloneqq | V|$ and fix a place $\mathfrak{p} \in \Sigma \setminus V$. The $r$-th order \emph{Rubin--Stark element} for the data $K / k, \Sigma, T$ and $V$ is the unique element $\varepsilon^{V}_{K / k, \Sigma, T}$ of $\R \otimes_\Z \exprod^r_{\Z [\cG_K]} \bigO_{K, \Sigma}^\times$ such that
\[
\lambda_{K, \Sigma} ( \varepsilon^{V}_{K / k, \Sigma, T}) = 
e_{K, \Sigma, r} \theta^\ast_{K / k, \Sigma, T} (0) \cdot \exprod_{v \in V} (v_K - \mathfrak{p}_{K}),
\]
where $e_{K, \Sigma, r}$ denotes the sum of $e_\chi$ over all characters $\chi \in \widehat{\cG_K}$ with $r_\Sigma (\chi) = r$. (This element $\varepsilon^V_{K / k, \Sigma, T}$ does not depend on the choice of place $\mathfrak{p} \in \Sigma \setminus V$ -- see \cite[Prop.~3.3]{Sano2015}.)
\item The \emph{Rubin--Stark system} for $k$ is  the family
\[
\varepsilon_k = ( \varepsilon_{K/k})_K \in \prod_{K \in \Omega (k)} \R \otimes_\Z\exprod^{r_K}_{\Z [\cG_K]} \bigO_{K, S(K)}^\times,
\]
where we set $\varepsilon_{K / k} \coloneqq \varepsilon^{V_K}_{K / k, S^\ast (K), \emptyset}$.
\end{liste}
\end{definition}

\begin{bspe1} \label{Rubin--Stark-examples}
In several cases, the above definition can be made more explicit for $r \coloneqq r_K$.
\begin{liste}
\item (\textit{Cyclotomic units}) Take $k$ to be $\Q$, $K$ to be a finite real abelian extension of $\Q$, and $V$ to be $S_\infty (\Q) = \{ v_0 \}$. Then one has  
\[
\varepsilon_{K / \Q} = \frac12 \otimes N_{\Q (\xi_m) / K} (1 - \xi_m) 
\quad \in \Q \otimes_\Z \bigO^\times_{K, S(K)},
\]
Here $m = m_K$ is the conductor of $K$ and $\xi_m = \iota^{-1} ( e^{2 \pi i / m})$ where $\iota \: \overline{\Q} \hookrightarrow \C$ is the embedding corresponding to the choice of place $\overline{v_1}$ fixed at the beginning of the section. (See \cite[Ch.~III, \S\,5]{Tate} for a proof of this fact.)

\item (\textit{Stickelberger elements}) Let $k$ be a totally real field, $K$ a finite abelian CM extension of $k$, and $V = \emptyset$. In this setting the Rubin--Stark element is given by
\[
\varepsilon_{K / k} = \theta_{K / k, S^\ast (K), \emptyset} (0)
\coloneqq \sum_{\chi \in \widehat{\cG_K}} L_{k, \Sigma, T} (\chi, 0) e_{\chi^{-1}} 
. 
\]
\item  (\textit{Elliptic units}) Let $k$ be an imaginary quadratic field and $K \in \Omega(k)$. Fix a place $\p \in S (K)$ and write $\mathfrak{f} = \p^n$ for a power of $\p$ large enough so that the natural map $\bigO_k^\times \to ( \bigO_k/\mathfrak{f})^\times$ is injective. 
Write $\mathfrak{m}$ for the conductor of $K$ and let $\a \subsetneq \bigO_k$ be an auxiliary prime ideal coprime to $6 \mathfrak{f} \m$.
Then the Rubin--Stark Conjecture holds for 
$E$ (see, for example, \cite[Ch.~IV, Prop.~3.9]{Tate}) with the elliptic unit
\[
\varepsilon_{K / k} = (\Frob_\a - \NN \a )^{-1} \cdot \NN_{k (\ff \m) / K} ( \psi (1; \ff \m, \a))
\in \Q \otimes_\Z \bigO_{K,S(K)}^\times.
\]
(This follows from Kronecker's second limit formula; see, for instance,\@  \cite[Lem.~2.2\,e)]{Fla09}).
\end{liste}
\end{bspe1}

In the next result we record several key properties of the Rubin--Stark system. In claim (ii) of this result we refer to the central conjecture formulated by Rubin in \cite{Rub96} and in claim (iii) to the `refined class number formula' that was conjectured independently by Mazur and Rubin in \cite{MazurRubin} and by Sano in \cite{Sano}. 

\begin{lem} \label{RS-properties-lemma} 
\begin{itemize}
\item[(i)] The system $\varepsilon_k$ is symmetric and its initial value $\varepsilon_{k , k}\in \R \exprod^{r_k}_{\Z} \bigO_{k}^\times$ is such that 
\[
\lambda_{k, S_\infty (k)} ( \varepsilon_{k , k}) = 
 -\zeta^\ast_{k} (0) \cdot  \bigwedge_{v \in S_\infty (k) \setminus \{ v_1 \}} (v - v_1) 
 \quad \in \R \exprod^{r_k}_{\Z} X_{k, S_\infty (k)},
\]
with $\zeta^\ast_k (0) = \theta_{k / k, S_\infty (k)}^\ast (0)$ the leading term of the Dedekind $\zeta$-function of $k$.
\item[(ii)] The system $\varepsilon_k$ is $\Z$-integral if the Rubin--Stark Conjecture \cite[Conj.\@ B$'$]{Rub96} for $L/k$ is valid for all $L$ in $\Omega$.
\item[(iii)] The system $\varepsilon_k$ is a congruence system if the Mazur--Rubin--Sano Conjecture \cite[Conj.\@ 5.4]{BKS} for $L/k$ is valid for all $L$ in $\Omega$.
\end{itemize}
\end{lem}

\begin{proof} The fact that $\varepsilon_k$ validates the distribution relations in Definition \ref{euler-systems-definition-1}, and hence belongs to  $\ES_k (\R)$, is proved by Rubin in \cite[Prop.\@ 6.1]{Rub96} (see also \cite[Prop.\@ 3.5]{Sano}). To prove claim (i), it therefore suffices to show that, for every field $E \in \Omega(k)$ and any choice of place $\p \in S_\fin(k) \cap S(E)$, one has 
\[
 \NN^{r_E}_{E / k} (\varepsilon_{E / k}) = \Big (\prod_{v \in S(E) \setminus \{ \p \}} (1 - \Frob_v^{-1})\Big) \cdot \Ord_\p^{-1} (\varepsilon_{k, k})
\]
in $\bigoplus_{i = 0}^\infty \big ( \R \exprod^i_{\Z} \bigO_{k, S(E)}^\times \big)$. We also note that this condition is satisfied trivially except in the case that the conductor of $E$ is a power of a prime $\p$ and $V_E = S_\infty (k)$.\\
It is therefore enough to show that the specified element $\varepsilon_{k,k}$ has the required property in the case that the only finite place contained in $S^\ast (E)$ is $\p$ and $V_E = S_\infty (k)$. 
If we write $v^\ast \: \R Y_{k, S^\ast (E)} \to \R$ for the dual map of a place $v \in S^\ast (E)$ (considered as an element of $Y_{k, S^\ast (E)}$), then the map $\exprod_{v \in S_\infty (k) \setminus \{ v_1 \}} v^\ast$ defines an isomorphism $\R \exprod^{r_k}_{\Z} X_{k, S_\infty (k)} \cong \R$ with the property that
\[
\big ( \exprod_{v \in S_\infty (k) \setminus \{ v_1 \}} v^\ast \big) \big( 
\zeta^\ast_{k} (0) \cdot  \bigwedge_{v \in S_\infty (k) \setminus \{ v_1 \}} (v - v_1)
\big) = \zeta^\ast_k (0).
\]
It therefore suffices to show that
\begin{align*}
    \Big(\big ( \exprod_{v \in S_\infty (k) \setminus \{ v_1 \}} v^\ast \big) \circ \lambda_{k,S_\infty(k)}^{(r_k)} \circ \Ord_\p \circ \NN^{r_E}_{E / k} \Big)( \varepsilon_{E / k}) = -\zeta^\ast_k (0)
\end{align*}
To do this, we first note that, by \cite[Prop.\@ 6.1]{Rub96}, one has $\NN^{r_E}_{E / k} ( \varepsilon^{V_E}_{E / k, S^\ast(E), \emptyset} )= \varepsilon^{V_E}_{k / k, S^\ast(E), \emptyset}$, which is the unique element of $\R \exprod^{r_E}_{\Z} \bigO_{k, S^\ast(E)}^\times$ such that
\begin{align*}
\lambda_{k, S^\ast(E)}^{(r_E)} ( \varepsilon^{V_E}_{k / k, S^\ast(E), \emptyset}) & = \zeta^\ast_{k, S^\ast(E)} (0) \cdot \bigwedge_{v \in S_\infty(k)} (v - \p) \\
& = - \zeta^\ast_{k, S^\ast(E)} (0) \cdot ( \p - v_1) \wedge (v_2 - v_1) \wedge \dots \wedge (v_{r_k+1} - v_1).
\end{align*}
Moreover, it is easy to see that 
 $(\log \NN \fp) \cdot \ord_\p = \p^\ast \circ \lambda_{k, S^\ast(E)}$.
 In addition, for any subset $M \subseteq S^\ast (E) \setminus \{ v_1\}$ one has $(\exprod_{v \in M} v^\ast) \circ \lambda^{(|M|)}_{k, M \cup \{v_1\}} = \exprod_{v \in M} (v^\ast \circ \lambda_{k, M \cup \{ v_1\}})$
 and, for any $M'$ containing $M$, the restriction of $\lambda_{k, M'}$ to $\R \cO^\times_{k, M}$ is equal to $\lambda_{k, M \cup \{ v_1\}}$. It follows that 
\begin{align*}
(\exprod_{v \in S_\infty (k) \setminus \{ v_1 \}} v^\ast ) \circ \lambda^{(r_k)}_{k, S_\infty (k)} \circ \Ord_\p & = \ord_\p \wedge \big( \bigwedge_{v \in S_\infty (k) \setminus \{ v_1 \}} (v^\ast \circ \lambda_{k, S^\ast (E)}) \big)  \\
& = (\log \NN \fp)^{-1} \cdot  (\p^\ast \circ \lambda_{k, S^\ast(E)}) \wedge \big( \bigwedge_{v \in S_\infty (k) \setminus \{ v_1\}} (v^\ast \circ \lambda_{k, S^\ast (E)}) \big)  \\
& = (-1)^{r_k} \cdot (\log \NN \fp)^{-1} \cdot \bigwedge_{v \in S^\ast (E) \setminus \{v_1\}} (v^\ast \circ \lambda_{k, S^\ast (E)}) \\
& = (-1)^{r_k} \cdot (\log \NN \fp)^{-1} \cdot \big( \bigwedge_{v \in S^\ast (E) \setminus \{ v_1\}} v^\ast \big) \circ \lambda_{k, S^\ast (E)}^{(r_E)}.
\end{align*}
We may thus calculate that
\begin{align*}
  & \phantom{\,=\,}  \big(\big ( \exprod_{v \in S_\infty (k) \setminus \{ v_1 \}} v^\ast \big) \circ \lambda_{k,S_\infty(k)}^{(r_k)} \circ \Ord_\p \circ \NN^{r_E}_{E / k} \big)( \varepsilon_{E / k})\\
& =  (-1)^{r_k} \cdot (\log \NN \fp)^{-1}
 \cdot \big( \bigwedge_{v \in S^\ast (E) \setminus \{ v_1\}} v^\ast \big)\big(  - \zeta^\ast_{k, S^\ast (E)} (0) \cdot ( \p - v_1) \wedge (v_2 - v_1) \wedge \dots \wedge (v_{r_k + 1} - v_1) \big) \\
& = (-1)^{r_k} \cdot (-1)^{r_k} \cdot (-1) \cdot (\log \NN \p)^{-1} \cdot \zeta^\ast_{k, S^\ast (E)} (0)  \\
& = - \zeta^\ast_k (0),
\end{align*}
as required to prove claim (i).  

Claims (ii) and (iii) follow directly from the statements of the respective conjectures. 
\end{proof}

\begin{rk}\label{MRS remark} The strong restrictions imposed on $v$ and $p$ in Definition \ref{con def}, and the fact that only a single place in $S(L)\setminus S_\infty(k)$ is considered, means that the conditions  required to ensure $\varepsilon_k$ belongs to $\ES_k(\ZZ)^{\mathrm{con}}$ are much weaker than are the general properties predicted by Mazur--Rubin and Sano. This fact plays a key role in later sections. \end{rk}

\subsubsection{Statement of the Scarcity Conjecture} 

For any 
subset $\cX$ of $\Omega (k)$ and any finite set $\mathcal{S}$ of prime numbers, we consider the $\Z_\mathcal{S} \llbracket \cG_{\cK} \rrbracket$-submodules of $\ES^{\cX}_k(\Z_\mathcal{S})$ that are obtained by setting 
\[ \ES^{\cX}_k(\Z_\mathcal{S})^\mathrm{sym} \coloneqq \varrho^\cX(\ES_k(\Z_\mathcal{S})^\mathrm{sym})
\quad \text{ and } \quad
\ES^{\cX}_k(\Z_\mathcal{S})^\mathrm{con} \coloneqq \varrho^\cX(\ES_k(\Z_\mathcal{S})^\mathrm{con})
.\] 

In particular, we note that Lemma \ref{RS-properties-lemma} implies, modulo conjectures of Rubin and of Mazur-Rubin-Sano, that the `$\cX$-restricted Rubin--Stark system' 
\[ \varepsilon_k^\cX \coloneqq \varrho^\cX (\varepsilon_k) = ( \varepsilon_{E / k})_{E \in \cX}\]
belongs to both $\ES^{\cX}_k(\Z_\mathcal{S})^\mathrm{sym}$ and $\ES^{\cX}_k(\Z_\mathcal{S})^\mathrm{con}$. 

We now state the central conjecture of this article. This conjecture simultaneously continues ideas of Coleman, of Rubin and of Sano and three of the current authors (for more details see Remarks \ref{etnc cons}\,(b) and \ref{comp bdss} below).

\begin{conj}[The `Scarcity Conjecture'] \label{scarcity-conjecture} 
For each finite set of prime numbers $\mathcal{S}$, and each subset $\cX$ of $\Omega (k)$ that is disjoint from $\Omega^\emptyset (k)$,  one has 
\[
\ES_k^\cX (\Z_\mathcal{S})^\mathrm{sym} \cap 
\ES_k^\cX (\Z_\mathcal{S})^\mathrm{con}
= \Z_\mathcal{S} \llbracket \cG_{\cK} \rrbracket\cdot \varepsilon_k^\cX.
\] 
\end{conj}

\begin{rk}\label{scarcity remarks}\
\begin{liste}  
\item The requirement that $\cX$ is disjoint from $\Omega^\emptyset (k)$ (and hence that $\cX$ contains no field $K$ with $r_K = 0$) is forced by the fact Euler systems of rank zero and of positive rank are seemingly of an essentially different nature. In particular, the recent work of Sakamoto in \cite{Sakamoto22} suggests many Euler systems of rank zero do not belong to the submodule generated by the system of Stickelberger elements discussed in Example \ref{Rubin--Stark-examples}(b). 
\item If $\cX\cap \Omega^{S_\infty (k)} (k) = \emptyset$, then (as a direct consequence of the definition of symmetric systems) one has 
$\ES^{\cX}_k(\Z_\mathcal{S})^\mathrm{sym} = \ES_k^\cX (\Z_\cS)$ and so Conjecture \ref{scarcity-conjecture} predicts that the $\Z_\mathcal{S} \llbracket \cG_{\cK} \rrbracket$-module $\ES_k^\cX (\Z_\mathcal{S})^\mathrm{con}$ is generated by $\varepsilon_k^\cX$.
\item If $\cS$ contains all prime numbers that divide $2 d_k$, then $\ES_k (\Z_\cS)^\mathrm{con} = \ES_k (\Z_\cS)$ and so Conjecture \ref{scarcity-conjecture} predicts that the $\Z_\mathcal{S} \llbracket \cG_{\cK} \rrbracket$-module $\ES_k^\cX (\Z_\mathcal{S})^\mathrm{sym}$ is generated by $\varepsilon_k^\cX$.
\item If $k = \Q, \cS = \emptyset$ and $\cX = S_\infty(\Q) = \{\infty\}$, then (as a special case of (c)) Conjecture \ref{scarcity-conjecture} predicts  $\ES_\Q^{\{\infty\}}(\Z)^\mathrm{sym}$ is generated over $\Z \llbracket \cG_{\cK} \rrbracket$ by the cyclotomic system $\varepsilon_\Q^{\{\infty\}}$ described in Example \ref{Rubin--Stark-examples}(a). In Theorem \ref{thm-scarcity-rank-one} this prediction is proved after replacing $\ZZ$ by $\ZZ\left[1/2\right]$. In addition, in the supplementary article \cite{scarcity2}, we resolve the remaining $2$-primary difficulties in order to fully verify the prediction, show that a system in $\ES_\Q^{\{\infty\}}(\Z)$ belongs to $\Z \llbracket \cG_{\cK} \rrbracket\cdot \varepsilon_\Q^{\{\infty\}}$ if and only if it verifies the classical congruences considered by Thaine in \cite{thaine} and prove that the set $\ES^{\{\infty\}}_\Q(\Z)\setminus \Z \llbracket \cG_{\cK} \rrbracket\cdot \varepsilon_\Q^{\{\infty\}}$ is non-empty. 
\item The theory developed below will in fact suggest the possibility of formulating a stronger version of Conjecture \ref{scarcity-conjecture} (see  Remark \ref{tempting conjecture}).
\end{liste}
\end{rk}

The interest of Conjecture \ref{scarcity-conjecture} will be explained, at least partly, by the main result of the next section. Then, in the remainder of the article, we shall develop, and apply, a general strategy for the proof of the conjecture (and hence of the results discussed in the Introduction).

\section{Scarcity and Tamagawa numbers}\label{consSC}

\subsection{Statement of the main result} \label{conj-implies-etnc-section}

For a finite group $\Delta$ we write $e_\Delta$ for the idempotent $|\Delta|^{-1} \sum_{\delta \in \Delta} \delta$ of the group ring $\Q[\Delta]$. For each subset $V$ of $S_\infty ( k)$, and any finite abelian extension $K$ of $k$, we then define an idempotent of $\Q [\cG_K]$ by setting 
\begin{equation}\label{epsKV def} \epsilon_{K, V} \coloneqq 
\prod_{v \in (S_\infty \setminus V)} (1 - e_{\cG_{K, v}})
\cdot \prod_{v \in V} e_{\cG_{K, v}}.\end{equation}
For any subset $\mathcal{V}$ of the power set $\mathcal{P} (S_\infty (k))$ of $S_\infty (k)$ we then define 
\[ \epsilon_{K, \mathcal{V}} \coloneqq \sum_{V \in \mathcal{V}} \epsilon_{K, V}.\]
For convenience, in the case $\mathcal{V} = \{ V_K \}$ we also use the abbreviation  
\begin{equation}\label{epsK def} \epsilon_K \coloneqq \epsilon_{K,\{ V_K \}} = \prod_{v \in S_\infty \atop{|\cG_{K, v}| \not= 1}} (1 - e_{\cG_{K, v}})\end{equation} 
(and we note that, in general, this element differs from the idempotent $e_K$ defined in (\ref{eK def})). 

\begin{remark}\label{explicit epsilon description} We record several useful properties of the above elements. 

\begin{liste}
\item For each $V$ one can check that $\epsilon_{K,V} = \sum_{\chi}e_\chi$, where $\chi$ runs over all characters in $\widehat{\cG_K}$ with the property that, for each $v \in S_\infty$, one has $\chi(\cG_{K,v}) = 1$ if and only if $v \in V$. This description implies that $\epsilon_{K,V}$ is orthogonal to $\epsilon_{K,V'}$ for any other subset $V'$ of $S_\infty(k)$ and hence that, for each set $\cV$, the element $\epsilon_{K,\cV}$ is an idempotent. 

\item If $V_K = S_\infty$ (as is the case, for example, if $k$ is totally imaginary or $K$ is totally real), then $\epsilon_K = 1$.  If $k$ is totally real and $K$ is CM, then $V_K = \emptyset$ and $\epsilon_K = (1 - \tau)/2$, where $\tau$ denotes the element of $\cG_K$ induced by complex conjugation. In general, it is clear that $\epsilon_K$ belongs to $2^{- |S_\infty|+r_K}\! \cdot\!\Z [\cG_K]$.

\item If $K'$ is any finite abelian extension of $k$ that contains $K$, then, for each set $\cV$, the projection map $\QQ[\cG_{K'}]\to \QQ[\cG_K]$ sends $\epsilon_{K',\cV}$ to $\epsilon_{K,\cV}$.
\end{liste}
\end{remark}

In the sequel we write $\mathrm{TNC}(h^0 (\Spec K), \epsilon_{K, \mathcal{V}}\Z_\mathcal{S} [\cG_K])$ to refer to Kato's `equivariant Tamagawa Number Conjecture' for the pair $(h^0 (\Spec K), \epsilon_{K, \mathcal{V}}\Z_\mathcal{S} [\cG_K])$. 

In this section we shall prove the following result.

\begin{thm} \label{scarcity-implies-etnc}
Let $K$ be a finite abelian extension of $k$ and fix a subset $\cV$ of $\mathcal{P} (S_\infty (k))$ that does not contain the empty set. Then Conjecture \ref{scarcity-conjecture} for the pair $(\Omega^\cV (k), \mathcal{S})$ implies $\mathrm{TNC}(h^0 (\Spec K), \epsilon_{K, \mathcal{V}}\Z_\mathcal{S} [\cG_K])$.
\end{thm}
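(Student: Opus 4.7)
The plan is to exploit the existence of a canonical integral Galois-cohomological Euler system $c^{\mathrm{coh}}_k \in \ES_k(\mathbb{Z})$ (constructed in \cite{sbA}), whose values at each finite abelian $E/k$ encode the cohomologically defined ``zeta element'' whose comparison with the analytic leading-term element $\varepsilon_{E/k}$ constitutes the eTNC at level $E$. Concretely, by existing reformulations of the eTNC (in terms of Rubin--Stark elements), the statement $\mathrm{TNC}(h^0(\Spec K), \epsilon_{K, \mathcal{V}}\mathbb{Z}_\mathcal{S}[\cG_K])$ is equivalent to the assertion that the $\epsilon_{K, \mathcal{V}}$-components of $c^{\mathrm{coh}}_K$ and $\varepsilon_{K/k}$ differ by multiplication by a unit in $\epsilon_{K,\mathcal{V}}\mathbb{Z}_\mathcal{S}[\cG_K]$. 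The strategy is therefore to extract such a unit relation directly from the Scarcity Conjecture, with the analytic class number formula pinning down the unit property.

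To execute this, I would first form the $\mathcal{V}$-projection $c := \varrho^{\mathcal{V}}(c^{\mathrm{coh}}_k) \in \ES_k^{\mathcal{V}}(\mathbb{Z}_\mathcal{S})$ and apply Conjecture \ref{scarcity-conjecture} to obtain a decomposition
\[
c = \lambda \cdot \varepsilon_k^{\mathcal{V}} + c^\circ
\]
with $\lambda \in \mathbb{Z}_\mathcal{S}\llbracket\cG_{\cK}\rrbracket$ and $c^\circ \in \ES_k^{\circ,\mathcal{V}}(\mathbb{Z}_\mathcal{S})$. The next step is to eliminate $c^\circ$: both $c^{\mathrm{coh}}$ and the Rubin--Stark system $\varepsilon_k$ satisfy the initial value condition (for $\varepsilon_k$ this is Lemma \ref{RS-properties-lemma}(b), and the analogous property for $c^{\mathrm{coh}}$ is a standard feature of its construction from the Bloch--Kato exponential data), which after absorbing the action of $\lambda$ on $\varepsilon_k^{\mathcal{V}}$ appropriately shows that the residual isolated system $c^\circ$ also satisfies the initial value condition; by Lemma \ref{ivc versus local} one then forces $c^\circ = 0$. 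Evaluating the resulting identity $c = \lambda \cdot \varepsilon_k^{\mathcal{V}}$ at $K$ and projecting to the $\epsilon_{K,\mathcal{V}}$-component yields the identity
\[
\epsilon_{K,\mathcal{V}} \cdot c^{\mathrm{coh}}_K = \lambda_K \cdot \varepsilon_{K/k}
\]
in $\epsilon_{K,\mathcal{V}}(\mathbb{Z}_\mathcal{S} \cdot \fL_K)$, where $\lambda_K$ denotes the image of $\lambda$ in $\epsilon_{K,\mathcal{V}}\mathbb{Z}_\mathcal{S}[\cG_K]$.

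Finally, one must verify that $\lambda_K$ is a unit. For each complex character $\chi$ of $\cG_K$ with $e_\chi \cdot \epsilon_{K,\mathcal{V}} \neq 0$, Definition \ref{Rubin--Stark-definition} gives that $e_\chi \cdot \varepsilon_{K/k}$ is a non-zero multiple of $L^*_{k, S(K), \emptyset}(\chi^{-1}, 0)$ times a regulator term; on the other hand, the $\chi$-component of $c^{\mathrm{coh}}_K$ has, via the analytic class number formula applied to $\chi$, exactly the same archimedean absolute value, up to matching regulator factors. This comparison forces $\chi(\lambda_K)$ to be a non-zero algebraic number for each such $\chi$, so that $\lambda_K \in (\epsilon_{K,\mathcal{V}}\mathbb{Q}[\cG_K])^\times$; integrality of both sides of the above identity then promotes this to a unit in $\epsilon_{K,\mathcal{V}}\mathbb{Z}_\mathcal{S}[\cG_K]$, which is precisely the content of the eTNC in the required formulation. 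The main obstacle I anticipate is the rigorous elimination of the isolated component $c^\circ$ — in particular when $K$ itself has prime-power conductor, where $c^\circ_K$ need not vanish a priori — and the precise translation of the resulting scalar relation into the determinantal statement of the eTNC; both points hinge on a careful combined use of Lemma \ref{ivc versus local} and the compatibility of the $\nu_{E/K}$-maps of Lemma \ref{lattice lemma2} with the $\epsilon_{K,\mathcal{V}}$-projection.
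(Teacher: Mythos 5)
Your overall strategy is the same as the paper's: take the cohomologically defined systems (in the paper these are the systems $\varrho^{\mathcal{V}}(\im\Theta_k)$ of Proposition \ref{basic-Euler-systems}), use Conjecture \ref{scarcity-conjecture} to write them as $\lambda\cdot\varepsilon_k^{\mathcal{V}}$ plus an isolated system, and kill the isolated component by the initial value condition and Lemma \ref{ivc versus local} (your IVC claim for the cohomological system is exactly the final assertion of Proposition \ref{basic-Euler-systems}, so that part is sound). However, your last two steps contain genuine gaps. First, the unit property of $\lambda_K$: comparing archimedean absolute values of $\chi$-components can at best show $\chi(\lambda_K)\neq 0$, i.e.\ $\lambda_K\in(\epsilon_{K,\mathcal{V}}\Q[\cG_K])^{\times}$, and the step ``integrality of both sides promotes this to a unit in $\epsilon_{K,\mathcal{V}}\Z_{\mathcal{S}}[\cG_K]$'' is simply false: if $c_K=\lambda_K\varepsilon_{K/k}$ with both sides in the lattice, nothing prevents $\lambda_K$ from being a non-unit (e.g.\ $p\lambda_K$ would satisfy the same constraints). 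What is actually needed is the content of Proposition \ref{analztic-class-number-formula-argument}: one fixes a prime $p\notin\mathcal{S}$, uses Roiter's Lemma to choose $\fz_K$ generating the determinant module up to prime-to-$p$ index, and shows via Artin induction and a norm computation that the analytic class number formula forces the containment $\Z_p\cdot\im\Theta^{r}_{K/k,\Sigma,T}\subseteq\Z_p[\cG_K]\varepsilon^V_{K/k,\Sigma,T}$ to be an equality; the unit property is a $p$-adic statement for every $p\notin\mathcal{S}$, which no archimedean comparison can see.

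Second, the translation of the scalar relation into $\mathrm{TNC}(h^0(\Spec K),\epsilon_{K,\mathcal{V}}\Z_{\mathcal{S}}[\cG_K])$ is not a formality and is not captured by a single identity at the top field $K$. The eTNC at the idempotent $\epsilon_{K,\mathcal{V}}$ involves every character $\chi$ in its support, each with its own vanishing order $r_\chi=r_{\Sigma(K)}(\chi)$ and splitting set $V_\chi$ (generally different from $V_K$, and $K$ itself need not even lie in $\Omega^{\mathcal{V}}(k)$), so the relevant Rubin--Stark elements are the $\varepsilon^{V_\chi}_{K^{\ker\chi}/k,\Sigma(K)}$ at subfields, not $\varepsilon_{K/k}$. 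In the paper this is the second half of the proof of Proposition \ref{every-basic-RS-implies-etnc}: one must first enlarge $K$ and introduce the auxiliary fields $K_\chi$ (to neutralise trivial zeros at unramified characters by a prime with full decomposition group), use the relation $z^{\mathrm b}_E=q_E\varepsilon_{E/k}$ at the fields $E=K_\chi$ (so the Euler-system distribution relations across the whole family are essential, not just the value at $K$), and then verify the criterion of \cite[Prop.\ 2.5]{BKS2} by explicit descent computations with the maps $\Ord_{E,W}$, the $\Delta$-isomorphisms and $i_{K/K^{\ker\chi}}$ (Lemma \ref{commutative-diagrams-lemma}) together with the functoriality $\Ord_{W}(\varepsilon^{V_\chi}_{\cdot,\Sigma(K)})=\pm\varepsilon^{V_\chi}_{\cdot,\Sigma(K)\setminus W}$ from \cite[Prop.\ 3.6]{Sano}. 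Your proposed toolkit (Lemma \ref{ivc versus local} and the $\nu_{E/K}$-maps of Lemma \ref{lattice lemma2}) does not suffice for this reduction, and the claimed ``existing reformulation'' of the eTNC as a unit relation between $c^{\mathrm{coh}}_K$ and $\varepsilon_{K/k}$ at the single level $K$ is not available at this generality; establishing it is precisely the work done in \S\,\ref{basic-eulers-section} and Proposition \ref{every-basic-RS-implies-etnc}.
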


\begin{rk}\label{etnc cons}\begin{liste} 
\item The conjecture $\mathrm{TNC}(h^0 (\Spec K), \epsilon_{K, \mathcal{V}}\Z_\mathcal{S} [\cG_K])$ was formulated (up to an ambiguity of signs) by Kato in \cite{kato93a, kato93b} and is stated precisely in, for example, \cite[Conj.\@ 3.1]{BKS}.    

\item The phrase `eTNC($\mathbb{G}_m$) for $K/k$' that is used in the Introduction refers to the conjecture $\mathrm{TNC}(h^0 (\Spec K), \Z[\cG_K])$. The validity of this conjecture has been shown to imply a wide range of more explicit conjectures including the Rubin--Stark Conjecture, the `refined class number formula' conjectured by Mazur and Rubin and Sano, the `integral Gross--Stark conjecture' (from \cite{Gross88}), the `Lifted Root Number Conjecture' of Gruenberg, Ritter and Weiss \cite{GRW}, and the central conjecture of Chinburg in \cite{Chinburg}. Details of these, and other similar, deductions can be found, for example, in \cite{BurnsFlach98} and \cite{BKS}.
\end{liste}
\end{rk}

\subsection{The analytic class number formula}\label{acnf section} 

As a first step in the proof of Theorem \ref{scarcity-implies-etnc}, we will establish, in Proposition \ref{analztic-class-number-formula-argument} below, a useful reinterpretation of the analytic class number formula in our setting. Before doing so, however, it is convenient to review the properties of an important family of complexes. 

For any finite abelian extension $E / k$, finite set of places $\Sigma \supseteq S^\ast (E)$, and finite set $T$ of places of $k$ that is disjoint from $\Sigma$, we  use the $T$-modified `Weil-\`etale cohomology' complex $\text{R} \Gamma_{c, T} ( ( \bigO_{E, \Sigma})_\mathcal{W}, \Z)$ of the constant sheaf $\Z$ that is constructed in \cite[Prop.~2.4]{BKS}, and consider its linear dual  
\[
C^\bullet_{E, \Sigma, T} = \text{R} \Hom_\Z ( \text{R} \Gamma_{c, T} ( ( \bigO_{E, \Sigma})_\mathcal{W}, \Z), \Z) [-2].
\] 
Whenever $T = \emptyset$ we will suppress the respective subscript in the notation. 
Lemma \ref{properties-weil-etale} below is taken from \cite[Prop.\@ 3.1]{bdss} and sets out the essential properties of the complex $C^\bullet_{E, \Sigma, T}$. 

For a commutative noetherian ring $R$ we write $D(R)$ for the derived category of $R$-modules and $D^{\mathrm{perf}}(R)$ for the full triangulated subcategory of $D(R)$ comprising complexes that are `perfect' (that is, isomorphic in $D(R)$ to a bounded complex of finitely generated projective $R$-modules). 

\begin{lem} \label{properties-weil-etale}
    For any data $E / k$, $\Sigma$ and $T$ as above, the complex $C^\bullet_{E, \Sigma, T}$ belongs to $D^{\mathrm{perf}}(\Z [\cG_E])$ and has all of the following properties.
    \begin{liste}
    \item The complex $C^\bullet_{E, \Sigma, T}$ is acyclic outside degrees zero and one, and there are canonical identifications of $\Z [\cG_E]$-modules
    \[
    H^0 ( C^\bullet_{E, \Sigma, T}) = \cO^\times_{E, \Sigma, T}, 
    \qquad 
    H^1 (C^\bullet_{E, \Sigma, T})_\tor = \Cl_{E, \Sigma, T} 
    \quad \text{ and } \quad
    H^1 (C^\bullet_{E, \Sigma, T})_\tf = X_{E, \Sigma}. 
    \]
    Here we write $\Cl_{E, \Sigma, T}$ for  
    the  quotient of the group of fractional ideals of $\mathcal{O}_{E,\Sigma}$ that are prime to $T_E$ by the subgroup of principal ideals with a generator congruent to $1$ modulo all places in $T_E$ (so $\Cl_{E, \Sigma, T}$ is the `$\Sigma_E$-ray class group mod $T_E$' of $E$). 
    \item Let $T'$ be a finite set of places of $k$ that contains $T$ and is disjoint from $\Sigma$, and write $\mathbb{F}_{E, T' \setminus T}^\times$ for the $\cG_E$-module $\bigoplus_{w \in (T' \setminus T)_E} ( \cO_E/\p_w)^\times$, where $\p_w$ is the prime ideal of $\cO_E$ corresponding with the place $w$. Then 
    there exists a canonical exact triangle in $D^\mathrm{perf} (\Z [\cG_E])$
    \begin{cdiagram}
    C^\bullet_{E, \Sigma, T'} \arrow{r} & C^\bullet_{E, \Sigma, T} \arrow{r} & \mathbb{F}_{E, T' \setminus T}^\times [0] \arrow{r} & 
    \phantom{X}.
    \end{cdiagram}%
    \item Let $\Sigma'$ be a finite set of places of $k$ that contains $\Sigma$ and is disjoint from $T$. Then there exists a canonical exact triangle in $D^\mathrm{perf} (\Z [\cG_E])$ 
    \begin{equation} \label{triangle-changing-S}
    \begin{tikzcd}
    C^\bullet_{E, \Sigma, T} \arrow{r} & C^\bullet_{E, \Sigma', T} \arrow{r} & 
    \bigoplus_{v \in \Sigma' \setminus \Sigma} \Big [
     \Z [\cG_E] \xrightarrow{1 - \Frob_v^{-1}} \Z [\cG_E]
    \Big ] \arrow{r} & \phantom{X},
    \end{tikzcd} 
    \end{equation}
    where each complex that occurs in the direct sum is concentrated in degrees zero and one.
    \item If $F$ is a finite abelian extension of $k$ with $E \subseteq F$ and $S(F) \subseteq \Sigma$, then there exists a natural isomorphism $\,C^\bullet_{F, \Sigma, T} \otimes^\mathbb{L}_{\Z [\cG_F]} \Z [\cG_E] \cong C^\bullet_{E, \Sigma, T}\,$ in $D^\mathrm{perf} ( \Z [\cG_E])$.
    \end{liste}
\end{lem}

\begin{rk} \label{T-modification Fitt remark}
For any set $T \in \mathscr{P}^\mathrm{ad}_E$, the $\Z [\cG_E]$-module $\mathbb{F}_{E, T}^\times$ that occurs in Lemma \ref{properties-weil-etale}\,(b) lies in an exact sequence of $\Z [\cG_E]$-modules of the form 
\begin{cdiagram}
0 \arrow{r} & \bigoplus_{v \in T} \Z [\cG_E] \arrow{rr}{( x\, \mapsto\,  x\cdot\delta_{\{ v\}})_{v}} & & \bigoplus_{v \in T} \Z [\cG_E] \arrow{r} &
\mathbb{F}_{E, T}^\times \arrow{r} & 0,
\end{cdiagram}
(cf.\@ \cite[(4.16)]{Chinburg1985}). This sequence implies that $\Fitt^0_{\Z [\cG_E]} (\mathbb{F}_{E, T}^\times) = \Z [\cG_E] \cdot \delta_{T, E}$.
\end{rk}

Fix an integer $r \geq 0$. As before, we write $e_{E, \Sigma, r}$ for the idempotent of $\Q [\cG_E]$ which is defined as the sum
of all primitive orthogonal idempotents $e_\chi$ associated to characters $\chi \in \cG_E$ such that $r_{\Sigma} (\chi ) = r$. For any primitive summand $e_\chi$ of $e_{E, \Sigma, r}$ we then have that
\[
e_\chi \C X_{E, \Sigma} \cong e_\chi \C Y_{E, V_\chi} \cong e_\chi \C [\cG_E]^{r},
\]
where (with $\p$ a fixed finite place as in Definition \ref{Rubin--Stark-definition})
\[
V_\chi = \begin{cases}
\{ v \in \Sigma \mid \chi (v) = 1 \} & \text{ if } \chi \neq \bm{1}_E, \\
\Sigma \setminus \{ \mathfrak{p} \} & \text{ if } \chi = \bm{1}_E \text{ and } S_\infty (k) \cup \set{\mathfrak{p}} \subseteq \Sigma, \\
\Sigma \setminus \{ v_1 \} & \text{ if } \chi = \bm{1}_E \text{ and } S_\infty (k) = \Sigma.
\end{cases}
\] 
In particular, $e_{E, \Sigma, r} \Q X_{E, \Sigma}$ is a free $e_{E, \Sigma, r}\Q [\cG_E]$-module of rank $r$ (for which we have fixed a basis by virtue of our fixed choice of extension to $E$ for every place of $k$ at the beginning of \S\,\ref{RS section} and, in the case of the trivial character, also of the places $v_0$ and $v_1$). 
Given this, we consider the following composite `projection' map 
\begin{align} 
\nonumber 
\Theta^r_{E / k, \Sigma, T} \: \Det_{\Z [\cG_E]} ( C^\bullet_{E, \Sigma, T}) & \hookrightarrow 
\Det_{\Q [\cG_E]} (\Q C^\bullet_{E, \Sigma, T}) \\ \nonumber 
& \stackrel{\simeq}{\longrightarrow}
\Det_{\Q [\cG_E]} ( \Q H^0 (C^\bullet_{E, \Sigma, T} ) ) \otimes_{\Q [\cG_E]} \Det^{-1}_{\Q [\cG_E ]}( \Q H^1 (C^\bullet_{E, \Sigma, T} ) ) \\ \nonumber 
& \stackrel{\cdot e_{E, \Sigma, r}}{\longrightarrow} 
\Big ( e_{E, \Sigma, r} \Q \exprod^r_{\Z[\cG_E]} \bigO^\times_{E, \Sigma} \Big)
\otimes_{\Q [\cG_E]}  
\Big( e_{E, \Sigma, r} \Q \exprod^r_{\Z[\cG_E]}  X_{E, \Sigma}^\ast \Big) \\ \label{projection-map}
& \stackrel{\simeq}{\longrightarrow}  e_{E, \Sigma, r} \Q \exprod^r_{\Z [\cG_E]} \bigO_{E, \Sigma}^\times, 
\end{align}
where the second map is the passage-to-cohomology map, the third arrow is multiplication by the idempotent $e_{E, \Sigma, r}$, and the last map is induced by our fixed choice of basis for the $e_{E, \Sigma, r}\Q [\cG_E]$-free module
$e_{E, \Sigma, r} \Q X_{E, \Sigma}$. 

In the sequel we fix, for each prime $p$, an algebraic closure $\overline{\Q_p}$ of $\Q_p$ and write $\C_p$ for its completion. 

\begin{prop} \label{analztic-class-number-formula-argument}
Fix a finite abelian extension $K$ of $k$ and a subset $V \subsetneq \Sigma$ of cardinality $r$ that comprises places splitting completely in $K$. Then, for each prime $p$, the following assertions are equivalent:
\begin{liste}
    \item $\Z_p \cdot\im (\Theta^r_{K / k, \Sigma, T}) = \Z_p [\cG_K]\cdot \varepsilon_{K / k, \Sigma, T}^V$,
    \item $\Z_p\cdot\im (\Theta^r_{K / k, \Sigma, T}) \subseteq \Z_p [\cG_K]\cdot \varepsilon^V_{K / k, \Sigma, T}$,
   \item $\Z_p \cdot \im (\Theta^r_{K / k, \Sigma, T}) \supseteq \Z_p [\cG_K]\cdot \varepsilon_{K / k, \Sigma, T}^V$.
\end{liste}
\end{prop}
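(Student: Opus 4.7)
The implications $(i) \Rightarrow (ii)$ and $(i) \Rightarrow (iii)$ are immediate, so the main content lies in the converse implications. The key input is the equivariant analytic class number formula which, in this setting, yields the rational equality
\[
\Q \cdot \im \Theta^r_{K/k, \Sigma, T} \;=\; \Q[\cG_K]\cdot \varepsilon^V_{K/k, \Sigma, T}
\]
as $\Q[\cG_K]$-submodules of $e\Q \exprod^r_{\Z[\cG_K]}\cO^\times_{K,\Sigma}$, where $e := e_{K, \Sigma, r}$. This is an essentially formal consequence of the definitions: a $\Z[\cG_K]$-generator of $\Det_{\Z[\cG_K]}(C^\bullet_{K, \Sigma, T})$ is mapped by $\Theta^r$, via the Dirichlet regulator and the projection onto the $e$-component, to the leading term $\theta^\ast_{K/k, \Sigma, T}(0) \cdot \bigwedge_{v \in V}(v_K - \mathfrak{p}_K)$, which by the defining property of the Rubin--Stark element equals $\lambda^{(r)}_{K, \Sigma}(\varepsilon^V_{K/k, \Sigma, T})$ up to a unit of $e\Q[\cG_K]$.

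Setting $M_1 := \Z_p \cdot \im \Theta^r_{K/k, \Sigma, T}$ and $M_2 := \Z_p[\cG_K] \varepsilon^V_{K/k, \Sigma, T}$, I next use that $\Z_p[\cG_K]$ is semi-local to conclude that the invertible module $\Det_{\Z_p[\cG_K]}(\Z_p C^\bullet_{K, \Sigma, T})$ is free of rank one; any choice of generator $z$ then yields a $\Z_p[\cG_K]$-generator $\mathfrak{a} := \Theta^r(z)$ of $M_1$. Identifying $e\Q_p\exprod^r_{\Z[\cG_K]}\cO^\times_{K, \Sigma}$ with the rank-one free $e\Q_p[\cG_K]$-module via the basis $\varepsilon^V$, one obtains $\mathfrak{a} = u \cdot \varepsilon^V$ for a uniquely determined $u \in (e\Q_p[\cG_K])^\times$, and both $M_1$ and $M_2$ are cyclic $\Z_p[\cG_K]$-modules with common annihilator $\Ann_{\Z_p[\cG_K]}(e)$, corresponding respectively to the principal fractional ideals $e\Z_p[\cG_K]\cdot u$ and $e\Z_p[\cG_K]$ of the subring $e\Z_p[\cG_K] := \Z_p[\cG_K]\cdot e \subseteq e\Q_p[\cG_K]$. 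In these terms, $(ii)$ reads $u \in e\Z_p[\cG_K]$, $(iii)$ reads $u^{-1} \in e\Z_p[\cG_K]$, and $(i)$ reads $u \in (e\Z_p[\cG_K])^\times$.

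To close the equivalences I would exploit the determinantal origin of $M_1$ (as the image of the invertible $\Z_p[\cG_K]$-module $\Det_{\Z_p[\cG_K]}(\Z_p C^\bullet_{K, \Sigma, T})$) in combination with a Fitting ideal calculation: the $\Z_p[\cG_K]$-Fitting ideals of $M_1$ and $M_2$ both coincide with $\Ann_{\Z_p[\cG_K]}(e)$, and this, together with the fact that the associated fractional ideals in $e\Q_p[\cG_K]$ are principal, forces any one-sided inclusion between $M_1$ and $M_2$ to be an equality. The main technical obstacle in this plan, I expect, is precisely this last step: the primitive idempotents $e_\chi$ entering the definition of $e$ need not lie in $\Z_p[\cG_K]$, so delicate bookkeeping is required to relate the integral structure of $\Z_p[\cG_K]$ to that of the semi-simple algebra $e\Q_p[\cG_K]$ and, in particular, to upgrade mere membership in $e\Z_p[\cG_K]$ to invertibility.
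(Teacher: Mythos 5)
Your reduction is, up to notation, the same one the paper makes: choose a generator $\fz$ of the free rank-one module $\Z_p\cdot\Det_{\Z[\cG_K]}(C^\bullet_{K,\Sigma,T})$, write $\Theta^r_{K/k,\Sigma,T}(\fz)=u\cdot\varepsilon^V_{K/k,\Sigma,T}$ with $u\in (e\Q_p[\cG_K])^\times$ (where $e=e_{K,\Sigma,r}$), and translate (i), (ii), (iii) into $u\in(e\Z_p[\cG_K])^\times$, $u\in e\Z_p[\cG_K]$, $u^{-1}\in e\Z_p[\cG_K]$ respectively. One caveat at the start: the asserted rational equality $\Q\cdot\im\Theta^r_{K/k,\Sigma,T}=\Q[\cG_K]\varepsilon^V_{K/k,\Sigma,T}$ is \emph{not} a formal consequence of the definitions -- over $\Q$ it is precisely the rationality statement in Stark's conjecture, which is open in general. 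This is repairable, since under either hypothesis (ii) or (iii) the two $\Q_p[\cG_K]$-spans are forced to coincide, and that is all your bookkeeping actually uses; but it should not be presented as definitional.

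The genuine gap is the closing step. Knowing that $M_1$ and $M_2$ are cyclic with the same annihilator $\Ann_{\Z_p[\cG_K]}(e)$, hence the same Fitting ideals and even abstractly isomorphic, does \emph{not} force a one-sided inclusion between them to be an equality: the containment $p\,e\Z_p[\cG_K]\varepsilon\subsetneq e\Z_p[\cG_K]\varepsilon$ is strict although both sides have identical Fitting ideals, i.e. $u=pe$ satisfies (ii) but not (i). No purely algebraic manipulation of the integral structure of $e\Z_p[\cG_K]$ (nor the ``determinantal origin'' of $M_1$, which only records that $M_1$ is cyclic) can exclude this; the arithmetic input that does is the analytic class number formula, which your sketch never invokes -- note the proposition's very name. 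The paper's route is: by Lemma \ref{norm-map-Lemma}\,(a) it suffices to show $\NN_{\Z_p[\cG_K]e/\Z_p}(u)\in\Z_p^\times$; since the set of characters supporting $e$ is stable under $G_\Q$, the Artin induction theorem expresses a power of this norm as a product over subgroups $H\leq\cG_K$ of factors $\NN_{\Z_p[\cG_{K^H}]/\Z_p}(\pi_{K/K^H}(x))^{m_H}$, and each factor is a unit because, under the semilinear map $\mathcal{F}$ of Lemma \ref{norm-map-Lemma}, it compares two $\Z_p$-bases of $\Z_p\Det_{\Z}(C^\bullet_{K^H,\Sigma,T})$: the one coming from $\fz_{K^H}$ and the one furnished by the leading term $\zeta^\ast_{K^H,\Sigma,T}(0)$, which is a basis precisely by the classical analytic class number formula for the field $K^H$ (after first reducing to $T$ admissible via multiplication by $\delta_{T'}$). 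Without this, or an equivalent analytic input, the implications (ii) $\Rightarrow$ (i) and (iii) $\Rightarrow$ (i) cannot be obtained.
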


\begin{proof}
We shall show that condition (b) implies (a). The converse is clear, and the proof that (c) implies (a) is analogous. \\
We first note that if $T'$ is a finite set of places which is both disjoint from $\Sigma$ and such that $T'' \coloneqq T \cup T'$ is admissible, then $\delta_{T'} = \delta_{T', K}$ is a non-zero divisor. Since $\varepsilon_{K / k, \Sigma, T''}^V = \delta_{T'} \varepsilon_{K / k, \Sigma, T}^V$ and $\im (\Theta^r_{K / k, \Sigma, T''}) = \delta_{T'}\cdot \im (\Theta^r_{K / k, \Sigma, T})$ (the latter as a consequence of the exact triangle in Lemma \ref{properties-weil-etale}\,(b) and Remark \ref{T-modification Fitt remark})
we may therefore assume that $T$ is admissible.
In this case, it is known that the complex $\Z_p \otimes^\mathbb{L}_\Z C^\bullet_{K, \Sigma, T}$ admits a representative of the form $[P \to P]$ with $P$ a free $\Z_p [\cG_K]$-module of finite rank (cf.\@ \cite[\S\,5.4]{BKS}). Moreover, $\Z_p \cdot \Det_{\Z [\cG_K]} ( C^\bullet_{K, \Sigma, T})$ is a free $\Z_p [\cG_K]$-module of rank one. We fix a basis $\fz_K$ of the latter and set $z^\mathrm{b}_K \coloneqq \Theta^r_{K / k, \Sigma, T} (\fz_K)$.  \smallskip \\
By assumption, we may write $z^\mathrm{b}_K = q_K \cdot \varepsilon^V_{K / k, \Sigma, T}$ for some $q_K \in \Z_p [\cG_K]$.
By construction of the map $\Theta^r_{K / k, \Sigma, T}$, the element $z^\mathrm{b}_K$ is annihilated by the idempotent $1 - e_{K, \Sigma, r}$. Since the same is true for the Rubin--Stark element $\varepsilon_{K / k, \Sigma, T}^V$, it suffices to prove that $q_K e_{K, \Sigma, r} $ is a unit in $\Z_p [\cG_K] e_{K, \Sigma, r}$. \\
As before we write $\lambda_{K, \Sigma}$ for the $\Sigma$-Dirichlet regulator map of $K$. Let $\varpi = \varpi_{K / k, \Sigma, T}$ be the unique element of $\C_p\Det_{\Z [\cG_K]} (C^\bullet_{K, \Sigma, T})$ that corresponds with the leading term $\theta^\ast_{K / k, \Sigma, T} (0)$ under the isomorphism $\vartheta_{\C_p [\cG_K], \lambda_{K, \Sigma}}$ defined in Lemma \ref{norm-map-Lemma} below. In particular, one has an equality $(\C_p \otimes_\Z \Theta^r_{K / k,\Sigma, T}) (\varpi) = \varepsilon_{K / k, \Sigma, T}^V$.

Let $x \in \C_p [\cG_K]$ be the unique element such that $\varpi = x \fz_K$. By construction, $(q_K - x) \fz_K$ is contained in the kernel of $\Theta^r_{K / k,\Sigma, T}$ and so the injectivity of $\Theta^r_{K / k,\Sigma, T}$ on the $e_{K, \Sigma, r}$-part implies that $q_K e_{K, \Sigma, r}  = x e_{K, \Sigma, r}$. It therefore suffices to show that $x e_{K, \Sigma, r}$ is a unit in $\Z_p [\cG_K] e_{K, \Sigma, r}$. \medskip \\
To do this, we shall show that the norm $\NN_{\Z_p [\cG_K] e_{K, \Sigma, r} / \Z_p} (xe_{K, \Sigma, r})$ is a unit in $\Z_p$. Given this, 
Lemma \ref{norm-map-Lemma}\,(a) then implies the claim. \\
Note that the set $\Upsilon_{K, \Sigma, r}$ of all characters $\chi \in \widehat{\cG_K}$ with the property that $r_\Sigma (\chi) = r$ is stable under the action of the absolute Galois group $G_\Q$, whence $\sum_{\chi \in \Upsilon_{K, \Sigma, r}} \chi$ is a rational valued character. 
By the Artin induction theorem (see \cite[Ch.\@ II, Thm.\@ 1.2]{Tate}) there thus exists a natural number $m$ and for each subgroup $H$ of $\cG_E$ an integer $m_H$ such that
\[
m \cdot \sum_{\chi \in \Upsilon_{K, \Sigma, r}} \chi = \sum_{H} m_H \cdot \mathrm{Ind}_H^{\cG_K} (\bm{1}_H)
= \sum_{H} m_H \cdot \big ( \sum_{\chi (H) = 1} \chi \big).
\]
Writing 
$\pi_{K / K^H} \: \Z_p [\cG_K] \to \Z_p [\cG_{K^H}]$ for the natural restriction map, we then deduce that
\[
\NN_{\Z_p [\cG_K] e_{K, \Sigma, r} / \Z_p} (xe_{K, \Sigma, r})^m = \big( \prod_{\chi \in \Upsilon_{K, \Sigma, r}} \chi (x) \big)^m = \prod_H \NN_{\Z_p [\cG_{K^H}] / \Z_p} ( \pi_{K / K^H} (x) )^{m_H}
\]
and it therefore suffices to show that each factor $\NN_{\Z_p [\cG_{K^H}] / \Z_p} ( \pi_{K / K^H} (x) )$ is a unit in $\Z_p$. \medskip \\
By construction, $\pi_{K^H / K}(x) \fz_{K^H}$ is the unique element of $\C_p \Det_{\Z [\cG_{K^H}]} (C^\bullet_{K^H, \Sigma, T})$ that corresponds with the leading term $\theta^\ast_{K^H / k, \Sigma, T}$ via the map $\vartheta_{C^\bullet_{K^H, \Sigma, T}, \lambda_{K^H, \Sigma}}$ defined in Lemma \ref{norm-map-Lemma} below. It then follows from Lemma \ref{norm-map-Lemma} that 
\begin{equation} \label{element-determinant}
\mathcal{F}_{\C_p [\cG_{K^H}] / \C_p} ( \pi_{K^H / K}(x) \fz_{K^H, T}) = \NN_{\C_p [\cG_{K^H}] / \C_p} (\pi_{K^H / K}(x)) \cdot \mathcal{F}_{\Z_p [\cG_{K^H}] / \Z_p} ( \fz_{K^H, T})
\end{equation}
corresponds, by \cite[Ch.\@ IV, \S\,1, Prop.\@ 1.8]{Tate}, under $\vartheta_{\tilde{C}^\bullet_{K^H, \Sigma, T}, \lambda_{K^H, \Sigma}}$ with
\[
\NN_{\C_p [\cG_{K^H}] / \C_p} ( \theta^\ast_{K^H / k, \Sigma, T} (0)) = \theta^\ast_{K^H / K^H, \Sigma, T} (0) = \zeta_{K^H, \Sigma, T}^\ast (0),
\]
where $\zeta_{K^H, \Sigma, T}^\ast (0)$ denotes the leading term of the $\Sigma$-truncated and $T$-modified Dedekind $\zeta$-function of $K^H$ at $s = 0$.
The analytic class number formula therefore implies that the element in (\ref{element-determinant}) is a basis of $\Z_p \Det_{\Z} (C^\bullet_{K^H, \Sigma, T})$. On the other hand, since $\fz_{K^H, T}$ is a basis of $\Z_p \Det_{\Z [\cG_{K^H}]} (C^\bullet_{K^H, \Sigma, T})$, the element $\mathcal{F}_{\Z_p [\cG_{K^H}] / \Z_p} ( \fz_{K^H, T})$ is a $\Z_p$-basis as well (by Lemma \ref{norm-map-Lemma}\,(b)). We therefore deduce that $\NN_{\Z_p [\cG_{K^H}] / \Z_p} (\pi_{K^H / K}(x))$ is a unit in $\Z_p$, as claimed. 
\end{proof}

In the following general algebraic result we fix a commutative Noetherian ring $R$ and a finitely generated free $R$-algebra $A$. We write $\NN_{A / R} \: A \to R$ for the `norm' map that sends each $a$ in $A$ to the determinant of the $R$-linear endomorphism $x\mapsto ax$ of $A$. 

\begin{lem} \label{norm-map-Lemma}
Suppose to be given a perfect complex $C^\bullet$ of $A$-modules of the form
$P_0 \longrightarrow P_1$, where $P_0$ and $P_1$ are finitely generated free $A$-modules, and $P_0$ is placed in degree $0$. Then the following claims are valid. 
\begin{liste}
\item An element $a \in A$ is a unit if and only if $\NN_{A / R} (a)$ is a unit in $R$. 
\item There is an $\NN_{A / R}$-semilinear map $\mathcal{F}_{A / R} \: \Det_A (C^\bullet) \to \Det_R (\widetilde{C}^\bullet)$, where in the last term $\widetilde{C}^\bullet$ indicates $C^\bullet$, regarded as a complex of $R$-modules. The map $\mathcal{F}_{A / R}$ sends each $A$-basis of $\Det_A (C^\bullet)$ to an $R$-basis of $\Det_R (\widetilde{C}^\bullet)$.  
\item Assume that $R$ and $A$ are semi-simple algebras and that $\lambda \: H^0 (C^\bullet) \cong H^1 (C^\bullet)$ is an isomorphism of $A$-modules.  Define the isomorphism
\[  \vartheta_{C^\bullet, \lambda} \:  \Det_A (C^\bullet) \cong \Det_A (H^0 (C^\bullet)) \otimes_A \Det_A (H^1 (C^\bullet))^{-1} \cong A,\]
where the first map is the canonical `passage-to-cohomology' isomorphism and the second the composite of $\Det_A(\lambda)\otimes_A1$ and the evaluation map on $\Det_A (H^1 (C^\bullet))$. Then there exists a commutative diagram
\begin{cdiagram}
\Det_A (C^\bullet) \arrow{r}{\vartheta_{C^\bullet, \lambda}} \arrow{d}[left]{\mathcal{F}_{A / R}} & A \arrow{d}{\NN_{A / R}} \\
\Det_R (\widetilde{C}^\bullet) \arrow{r}{\vartheta_{\widetilde{C}^\bullet, \lambda}} & R.
\end{cdiagram}%
\end{liste}
\end{lem}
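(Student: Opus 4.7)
The plan is to deduce (a) from elementary linear algebra, to prove (b) by constructing $\mathcal{F}_{A/R}$ from a single algebraic identity relating $A$-determinants and $R$-determinants, and to derive (c) by tracing the passage-to-cohomology and $\lambda$-evaluation isomorphisms through that same identity.

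For part (a), left multiplication $\rho_a \: A \to A$, $x \mapsto ax$, is an $A$-module map with $\det_R(\rho_a) = \NN_{A/R}(a)$. Now $a$ is a unit in $A$ if and only if $\rho_a$ is bijective (equivalently, an $A$-module automorphism); since $A$ is finitely generated and free over $R$, bijectivity of an $R$-linear endomorphism of $A$ is equivalent to its $R$-determinant being a unit in $R$, and the claim follows.

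For part (b) the crucial ingredient is the standard identity
\[ \det_R(\phi) = \NN_{A/R}(\det_A(\phi)) \]
valid for every $A$-linear endomorphism $\phi$ of a finitely generated free $A$-module. After fixing once and for all an $R$-basis of $A$, any $A$-basis of a free $A$-module $F$ of finite rank induces an $R$-basis of $\widetilde{F}$. I define $\mathcal{F}_{A/R}$ on each graded piece $\Det_A(F_i) \to \Det_R(\widetilde{F}_i)$ by sending the resulting wedge-of-basis elements to each other; the identity above shows that the assignment is independent of the chosen $A$-basis, is $\NN_{A/R}$-semilinear, and carries $A$-bases to $R$-bases. Tensoring the two graded pieces together yields a map
\[ \mathcal{F}_{A/R} \: \Det_A(F_0) \otimes_A \Det_A(F_1)^{-1} \longrightarrow \Det_R(\widetilde{F}_0) \otimes_R \Det_R(\widetilde{F}_1)^{-1} \]
with the required properties.

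For part (c), the semisimplicity of $A$ (and a fortiori of $R$) makes $H^0(C^\bullet)$ and $H^1(C^\bullet)$ projective $A$-modules and allows the exact sequences $0 \to H^0(C^\bullet) \to F_0 \to I \to 0$ and $0 \to I \to F_1 \to H^1(C^\bullet) \to 0$ (with $I = \im \partial_0$) to split simultaneously as sequences of $A$- and of $R$-modules. Consequently, the passage-to-cohomology isomorphisms over $A$ and over $R$ are compatible with $\mathcal{F}_{A/R}$, and the claimed commutativity reduces to the analogous statement for $\Det_A(\lambda) \: \Det_A(H^0(C^\bullet)) \to \Det_A(H^1(C^\bullet))$ paired with evaluation. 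After choosing direct complements that exhibit the $H^j(C^\bullet)$ as summands of free $A$-modules, this reduces in turn to the identity of part (b). The main obstacle I anticipate is bookkeeping: verifying the base-change identity $\det_R = \NN_{A/R} \circ \det_A$ at the level of generality needed, and matching signs and duality identifications through the passage-to-cohomology and evaluation maps so that the output of $\vartheta_{\widetilde{C}^\bullet,\lambda}$ is exactly $\NN_{A/R}$ applied to the output of $\vartheta_{C^\bullet,\lambda}$.
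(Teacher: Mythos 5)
Your proposal is correct and follows essentially the same route as the paper, which proves (a) by citing Bourbaki (Ch.\@ III, \S\,9.6, Prop.\@ 3) and obtains (b) and (c) by ``explicit computation'' from Bourbaki's Prop.\@ 6 --- precisely the identity $\det_R(\phi) = \NN_{A/R}(\det_A(\phi))$ on which your construction of $\mathcal{F}_{A/R}$ and your reduction of (c) rest. You have simply written out the computation that the paper leaves to the cited reference.
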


\begin{proof} 
The first claim is a restatement of \cite[Ch.\@ III, \S\,9.6, Prop.\@ 3]{Bourbaki}.
The second and third claims are both derived via an explicit computation from the general result of  \cite[Ch.\@ III, \S\,9.6, Prop.\@ 6]{Bourbaki}. \end{proof} 

\subsection{The proof of Theorem \ref{scarcity-implies-etnc}}

\subsubsection{Commutative diagrams in Galois cohomology}\label{basic-eulers-section}

We now suppose to be given finite sets of places $U \subseteq U'$ of $k$ that each contain the set $S^\ast(E) = S_\infty(k) \cup S(E/k)$. We then  define an isomorphism of $\ZZ[\cG_E]$-modules
\begin{align*}
    \Delta_{E,U',U} : \Det_{\ZZ[\cG_E]}(C^\bullet_{E,U', T}) &\xrightarrow{\sim} \Det_{\ZZ[\cG_E]}(C^\bullet_{E, U, T}) \otimes_{\ZZ[\cG_E]} \bigotimes_{v \in U' \setminus U}  \Det_{\ZZ[\cG_E]}\Big [
     \Z [\cG_E] \stackrel{1 - \Frob_v^{-1}}{\longrightarrow} \Z [\cG_E]
    \Big ]\\
    &\xrightarrow{\sim} \Det_{\ZZ[\cG_E]}(C^\bullet_{E, U, T}),
\end{align*}
where the first map is induced by the exact triangle in Proposition \ref{properties-weil-etale}\,(c) and the second by trivialising the latter determinant with respect to the identity.

Given a further finite abelian extension $F$ of $k$ with $E \subseteq F$, we then write 
\begin{equation} \label{definition-transition-map}
i_{F / E} \: \Det_{\Z [\cG_F]} ( C^\bullet_{F, S^\ast (F), T} ) \to \Det_{\Z [\cG_E]} ( C^\bullet_{E, S^\ast (E), T} )
\end{equation}
for the composite homomorphism of $\Z [\cG_F]$-modules
\begin{align}
\Det_{\Z [\cG_F]} ( C^\bullet_{F, S^\ast (F), T} ) & \longrightarrow \Det_{\Z [\cG_F]} ( C^\bullet_{F, S^\ast (F), T} ) \otimes_{\Z [\cG_F]} \Z [\cG_E] 
\label{descent-isom-1} \\
& \stackrel{\simeq}{\longrightarrow} \Det_{\Z [\cG_E]} ( C^\bullet_{F, S^\ast (F), T} \otimes^\mathbb{L}_{\Z [\cG_F]} \Z [\cG_E] ) 
\label{descent-isom-2} \\
& \stackrel{\simeq}{\longrightarrow} \Det_{\Z [\cG_E]} ( C^\bullet_{E, S^\ast (F), T} ) 
\label{transition-map-1}\\
& \stackrel{\simeq}{\longrightarrow} \Det_{\Z [\cG_E]} ( C^\bullet_{E, S^\ast (E), T} )
\notag.
\end{align}
Here the first map is the canonical projection, the second is induced by the base change property of the determinant functor, the third is the isomorphism in Proposition \ref{properties-weil-etale}\,(d) and the final map is $\Delta_{E,S^\ast(F), S^\ast(E)}$.\medskip \\
In the sequel a key role will be played by the functoriality of the projections $\Theta^{r_E}_{E / k, S^\ast (E), T}$ with respect to the various maps that occur in the descent homomorphisms $i_{F / E}$. This behaviour is explicitly described in the following result. 

\begin{lem} \label{commutative-diagrams-lemma}
Let $E$ and $F$ be finite abelian extensions of $k$ such that $E \subseteq F$. Let $V' \subseteq S^\ast (F)$ be the set of places which split completely in $E / k$ and write $r' = | V'|$ for its cardinality. For simplicity, we also put $V = V_E$, $r = r_E$, and $W = V' \setminus V$. 
\begin{liste}
\item The following diagram commutes:
\begin{cdiagram}
\Det_{\Z [\cG_F]} (C^\bullet_{F, S^\ast (F), T} ) \arrow{rr}{\Theta^{r'}_{F, S^\ast (F), T}} \arrow{d} & & \Q \exprod^{r'}_{\Z [\cG_F]} \bigO_{F, S^\ast (F), T}^\times
 \arrow{d}{\NN^{r'}_{F / E}} \\
\Det_{\Z [\cG_{E}]} (C^\bullet_{E, S^\ast (F), T}) \arrow{rr}{\Theta^{r'}_{E, S^\ast (F), T}} & & \Q \exprod^{r'}_{\Z [\cG_{E}]} \bigO_{E, S^\ast (F), T}^\times \;,
\end{cdiagram}%
where the left-hand vertical map is the composition of (\ref{descent-isom-1}), (\ref{descent-isom-2}), and (\ref{transition-map-1}).
\item Assume $F \in \Omega (k)$ and $E \in \Omega (k) \cup \{ k \}$, and put $\omega^\ast_{ F / E} = (-1)^{r |W|}$ if $E \in \Omega (k)$ and $\omega_{ F / k}^\ast = -1$.
The following diagram commutes:
\begin{cdiagram}
\Det_{\Z [\cG_{E}]} (C^\bullet_{E, S^\ast (F), T}) \arrow{rr}{\Theta^{r'}_{E, S^\ast (F), T}} \arrow[d, "\Delta_{E,S^\ast(F), W'}", "\sim"' slantedswap] & & \Q \exprod^{r'}_{\Z [\cG_{E}]} \bigO_{E, S^\ast (F), T}^\times \arrow{d}{\omega_{F / E}^\ast \cdot\Ord_{E,W}} \\
\Det_{\Z [\cG_{E}]} (C^\bullet_{E, W', T}) \arrow{rr}{\Theta^r_{E, W', T}} & & \Q \exprod^{r}_{\Z [\cG_{E}]} \bigO_{E, S^\ast (F) \setminus W, T}^\times \; ,
\end{cdiagram}%
where $W' \coloneqq S^\ast(F)\setminus W$ and 
$\mathrm{Ord}_{E,W}$ denotes the exterior product over $v$ in $W$ of the maps 
\[
\Ord_{E,v} \: \bigO_{E, S^\ast (F), T}^\times \to \Z[\cG_{E}], \quad a \mapsto \sum_{\sigma \in \cG_{E}} \ord_{v_{E}} (\sigma a) \sigma^{-1}.
\]
\item Suppose that $E \neq k$ and that $U \subseteq U'$ are finite sets of places of $k$ containing $S^\ast(E)$. Then the following diagram commutes: 
\begin{cdiagram}
       \Det_{\Z [\cG_{E}]} ( C^\bullet_{E, U', T} ) \arrow{rr}{\Theta^{r}_{E /  k, U', T}} \arrow[d,"\Delta_{E, U',U}", "\sim" slantedswap] & & \Q \exprod^r_{\Z [\cG_{E}]} \bigO^\times_{E, U', T}  \\
       \Det_{\Z [\cG_E]} ( C^\bullet_{E, U, T} ) \arrow{rr}{\Theta^r_{E / k, U, T}}  & &  \Q \exprod^r_{\Z [\cG_{E}]} \bigO^\times_{E, U, T} \arrow{u}[right]{\prod (1 - \Frob_v^{-1})} \; ,
\end{cdiagram}%
where the product on the right hand side ranges over all places in $U' \setminus U$.
\item Suppose that $U \subseteq U'$ are finite sets of places of $k$ containing $S^\ast (F)$. Then the following diagram commutes:
\begin{cdiagram}
\Det_{\Z [\cG_{F}]} ( C^\bullet_{F, U', T} ) \arrow{rr}{\Delta_{F, U',U}} \arrow{d} &  &
\Det_{\Z [\cG_{F}]} ( C^\bullet_{F, U, T} ) \arrow{d} \\
    \Det_{\Z [\cG_{E}]} ( C^\bullet_{E, U', T} ) \arrow{rr}{\Delta_{E, U',U}} &  &
\Det_{\Z [\cG_{E}]} ( C^\bullet_{E, U, T} ),
\end{cdiagram}
where the vertical arrows are the composition of the relevant instances of (\ref{descent-isom-1}), (\ref{descent-isom-2}), and (\ref{transition-map-1}).
\end{liste}
\end{lem}

\begin{proof}
For parts (a) and (c) see the proof of \cite[Thm.\@ 3.8]{bdss}. \\ 
 To prove part (b), we recall that the left hand vertical map in the given square is induced by the exact triangle (\ref{triangle-changing-S}) in Proposition \ref{properties-weil-etale}\,(c) applied with the sets $\Sigma' = S^\ast (F)$ and $\Sigma = S^\ast (F) \setminus W$.
To prove commutativity, we may first base change to $\Q [\cG_E]$. Then, by the definition of the projection maps $\Theta^{r'}_{E / k, S^\ast (F), T}$ and $\Theta^{r}_{E / k, S^\ast (F) \setminus W, T}$ (which involves passing to cohomology and trivialising the top degree cohomology), it suffices to show that the composite homomorphism 
\begin{align*}
e_{E} \Q \exprod^{r'}_{\Z [\cG_{E}]} \cO^\times_{E, S^\ast (F), T} & = 
e_{E} \Det_{\Q [\cG_{E}]} ( \Q H^0 ( C^\bullet_{E, S^\ast(F), T})) \\
 & \longrightarrow e_{E} \Det_{\Q [\cG_{E}]} ( \Q H^0 ( C^\bullet_{E, S^\ast (F) \setminus W, T})) = e_{E} \Q \exprod^{r}_{\Z [\cG_{E}]} \cO^\times_{E, S^\ast (F) \setminus W, T}, 
\end{align*}
that is induced by the exact triangle (\ref{triangle-changing-S})
along with the trivialisation
\[
\Det_{\Z [\cG_{E}]} \big ( \Big [
\Z [\cG_{E}] \stackrel{0}{\longrightarrow} \Z [\cG_{E}]
\Big ]  \big )
\cong \Z [\cG_{E}]
\]
for all $v \in W$, coincides with the map $\omega^\ast_{F / E}\cdot\Ord_W$ that is defined in the statement of part (b).
To do this, we note that the Artin--Verdier duality theorem  identifies $C^\bullet_{E,S}$ with the complex that is  denoted by  $\Psi_S$ in \cite[\S 3.2]{BurnsFlach98}. In particular, the argument of \cite[Prop.~3.2]{BurnsFlach98} implies  the long exact sequence in cohomology of the triangle  (\ref{triangle-changing-S}) splits into the two short exact sequences 
\begin{cdiagram}[row sep=tiny]
0 \arrow{r} & \Q \cO^\times_{E, S^\ast (F) \setminus W, T} \arrow{r} & \Q \cO^\times_{E, S^\ast (F), T} \arrow{r}{f} & \Q Y_{E, W} \arrow{r} & 0 \\
0 \arrow{r} & \Q X_{E, S^\ast (F) \setminus W} \arrow{r} & \Q X_{E, S^\ast (F)} \arrow{r} & 
\Q Y_{E, W} \arrow{r} & 0,
\end{cdiagram}%
in which the map $f$ is given by
\[
f \: \cO^\times_{E, S^\ast (F), T} \to Y_{E, W}
\cong \bigoplus_{v \in W} \Z [\cG_{E}], \quad a \mapsto \sum_{v \in W} \ord_w (\sigma a) \sigma^{-1}.
\]
Given this description, the required claim follows by a straightforward, and explicit, calculation (just as in the argument of \cite[Lem.\@ 4.2, 4.3]{BKS}). \\
The commutativity of the diagram in (d) follows from a standard functoriality property of the determinant functor and the fact that the triangles (\ref{triangle-changing-S}) and isomorphisms in Proposition \ref{properties-weil-etale}\,(d) combine into a canonical commutative diagram
\begin{cdiagram}[row sep=small]
C^\bullet_{F, U, T} \arrow{r} \arrow{d} & C^\bullet_{F, U', T} \arrow{r} \arrow{d} & \bigoplus_{v \in U' \setminus U} 
 \Big [
     \Z [\cG_F] \stackrel{1 - \Frob_v^{-1}}{\longrightarrow} \Z [\cG_F]
    \Big ] \arrow{r} \arrow{d} & \phantom{X} \\
C^\bullet_{E, U, T} \arrow{r} & C^\bullet_{E, U', T} \arrow{r} & \bigoplus_{v \in U' \setminus U} 
\Big [
    \Z [\cG_E] \stackrel{1 - \Frob_v^{-1}}{\longrightarrow} \Z [\cG_E]
    \Big ] \arrow{r} & \phantom{X}. 
\end{cdiagram}
This proves the claimed result. 
\end{proof}

\subsubsection{Completion of the proof}

The link between the constructions in \S\,\ref{basic-eulers-section} and the theory of integral Euler systems is provided by the following result.  

\begin{prop} \label{basic-Euler-systems}
The collection of morphisms $(\Theta^{r_E}_{E / k, S^\ast (E)})_{E\in \Omega(k)}$ induces a homomorphism of $\Z\llbracket\cG_\cK\rrbracket$-modules
\[ \Theta_{k} \: \varprojlim_{E \in \Omega (k)} \Det_{\Z [\cG_E]} (C^\bullet_{E, S^\ast (E)} ) \to 
\ES_k (\Z)^{\mathrm{sym}} \cap \ES_k (\Z)^{\mathrm{con}},\]
where the transitions morphisms in the limit are the maps $i_{F/E}$  defined in (\ref{definition-transition-map}) for $E\subseteq F$. 
\end{prop}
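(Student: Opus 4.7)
The plan is to show that for any compatible family $\mathfrak{z} = (\mathfrak{z}_E)_{E \in \Omega(k)}$, i.e.\ one satisfying $i_{F/E}(\mathfrak{z}_F) = \mathfrak{z}_E$ whenever $E \subseteq F$, the collection $c_E \coloneqq \Theta^{r_E}_{E/k, \Sigma(E)}(\mathfrak{z}_E)$ defines a $\Z$-integral Euler system of maximal rank that moreover satisfies the initial value condition. The verification splits into three parts: integrality of each $c_E$, the distribution relations between $c_E$ and $c_F$ for $E \subseteq F$, and the construction of a common initial value $c_k$.

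For integrality $c_E \in \fL_E^{r_E}$, I will proceed via $T$-modifications. For each $T \in \mathscr{P}_E^{\rm ad}$, the exact triangle of Lemma \ref{properties-weil-etale}(b), applied with our $T$ in the role of $T'$ and with $\emptyset$ in the role of the lemma's $T$, canonically identifies $\Det_{\Z[\cG_E]}(C^\bullet_{E, \Sigma(E)})$ with $\delta_{T, E} \cdot \Det_{\Z[\cG_E]}(C^\bullet_{E, \Sigma(E), T})$, using that the characteristic class of $\mathbb{F}_{T_E}^\times[0]$ is $\delta_{T, E}$. This produces a canonical preimage $\mathfrak{z}_E^T \in \Det_{\Z[\cG_E]}(C^\bullet_{E, \Sigma(E), T})$ of $\mathfrak{z}_E$, and the functoriality of the projections $\Theta$ in $T$ then yields $\delta_{T, E} \cdot c_E = \Theta^{r_E}_{E/k, \Sigma(E), T}(\mathfrak{z}_E^T)$. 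Since the complex $C^\bullet_{E, \Sigma(E), T}$ is perfect of amplitude $[0, 1]$ with torsion-free $H^0 = \cO_{E, \Sigma(E), T}^\times$, a Sakamoto-type argument parallel to the one used in Lemma \ref{algebraic-lemma-injection} shows that $\Theta^{r_E}_{E/k, \Sigma(E), T}$ factors through $\bidual^{r_E}_{\Z[\cG_E]} \cO_{E, \Sigma(E), T}^\times$; hence $\delta_{T, E} c_E$ lies in this bidual for every $T \in \mathscr{P}_E^{\rm ad}$, and so $c_E \in \fL_E^{r_E}$.

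For the Euler distribution relations, fix $E \subseteq F$ in $\Omega(k)$ and chain together parts (a) and (c) of Lemma \ref{commutative-diagrams-lemma}. Denote by $i_{F/E}^\circ$ the composite of the first three maps in the construction of $i_{F/E}$ (that is, the arrows (\ref{descent-isom-1}), (\ref{descent-isom-2}), (\ref{transition-map-1})), landing in $\Det_{\Z[\cG_E]}(C^\bullet_{E, \Sigma(F)})$. Part (a), with rank parameter $r_F$, gives $\NN^{r_F}_{F/E}(c_F) = \Theta^{r_F}_{E, \Sigma(F)}(i_{F/E}^\circ(\mathfrak{z}_F))$. Part (c), applied with $U = \Sigma(E)$ and $U' = \Sigma(F)$, then rewrites this as $\prod_{v \in S(F) \setminus S(E)}(1 - \Frob_v^{-1}) \cdot c_E$, via the factorisation $i_{F/E} = \Delta_{E, \Sigma(F), \Sigma(E)} \circ i_{F/E}^\circ$ together with the hypothesis $i_{F/E}(\mathfrak{z}_F) = \mathfrak{z}_E$. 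This completes the case $r_F = r_E$. When $r_F < r_E$, both sides of (\ref{es dist rel}) vanish in their respective graded pieces: the right-hand side by Remark \ref{arch euler factors}, and the left-hand side by a parallel application of Lemma \ref{commutative-diagrams-lemma}(b) that isolates the $\Ord$-contributions at the extra split places and exhibits a zero Euler factor at a split archimedean place as the source of vanishing.

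For the initial value condition, define $c_k \in \Q \exprod^{r_k}_\Z \cO_k^\times$ by first identifying $\mathfrak{z}_E \otimes_{\Z[\cG_E]} 1$ with an element of $\Det_\Z(C^\bullet_{k, \Sigma(E)})$ via Lemma \ref{properties-weil-etale}(d), and then applying Lemma \ref{commutative-diagrams-lemma}(b) with $E = k$ and $W = S(E)$ to strip off the $\Ord$-contributions at the finite places of $S(E)$, producing a canonical element of $\Q \exprod^{r_k}_\Z \cO_k^\times$. Independence of this construction from the choice of $E$ follows from the inverse limit compatibility together with Lemma \ref{commutative-diagrams-lemma}(c), and the target identity
\[
\NN^{r_E}_{E/k}(c_E) = \prod_{v \in S(E) \setminus \{\p\}}(1 - \Frob_v^{-1}) \cdot \Ord_\p^{-1}(c_k)
\]
then follows by chaining all three parts of Lemma \ref{commutative-diagrams-lemma} with $E = k$ and $W = S(E) \setminus \{\p\}$. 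The main technical obstacle is the rank-mismatch case $r_F < r_E$ in the distribution relations: verifying the vanishing of $\NN^{r_F}_{F/E}(c_F)$ in the correct graded piece requires careful tracking of how the idempotents $e_{F, \Sigma(F), r_F}$ and $e_{E, \Sigma(E), r_E}$ interact under restriction of characters from $\cG_F$ to $\cG_E$, and is the only step that cannot be handled by a direct invocation of the commutative diagrams of Lemma \ref{commutative-diagrams-lemma}.
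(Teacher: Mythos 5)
Your overall architecture matches the standard argument (which the paper largely outsources to the earlier preprint it cites, proving only the initial value condition directly, via exactly the diagram you use in the non-degenerate case), but there is a concrete gap at the rank-mismatch step, and the tool you point to cannot close it. When $r_F < r_E$ the ``extra'' split places lie in $V_E \setminus V_F$ and are archimedean, whereas the maps $\Ord_{E,v}$ in Lemma \ref{commutative-diagrams-lemma}\,(b) are built from $\ord_{v_E}$ and are only defined at finite places; so there is no ``parallel application of (b)'' available, and you yourself concede the step is not handled by the commutative diagrams. What does work is the character support of the image of $\Theta^{r_F}_{F/k,\Sigma(F)}$: by construction $c_F = e_{F,\Sigma(F),r_F}\, c_F$; if $\chi \in \widehat{\cG_F}$ factors through $\cG_E$ and $\chi \neq \mathbf{1}$, then $\chi$ is trivial on the decomposition group of every place of $V_E \subseteq \Sigma(F)$, so $r_{\Sigma(F)}(\chi) \geq r_E > r_F$, while for $\chi = \mathbf{1}$ one has $r_{\Sigma(F)}(\mathbf{1}) = |\Sigma(F)| - 1 \geq |S_\infty(k)| > r_F$ (since $\Sigma(F)$ contains a finite place); hence $e_\chi c_F = 0$ for every $\chi$ trivial on $\gal{F}{E}$, so $\NN_{\gal{F}{E}} c_F = 0$ and therefore $\NN^{r_F}_{F/E}(c_F) = 0$ by the injectivity of $\nu_{F/E}$ and Lemma \ref{lattice lemma2}\,(b).

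The same archimedean issue undermines your uniform treatment of the initial value condition: taking $W = S(E)\setminus\{\p\}$ (or $W = S(E)$) in Lemma \ref{commutative-diagrams-lemma}\,(b) is only legitimate when all these places are finite, i.e.\ when $V_E = S_\infty(k)$; and when $|S(E)| > 1$ the right-hand side of the initial value identity is identically zero (every Euler factor $1 - \Frob_v^{-1}$ vanishes over $k$), so the actual content there is the vanishing $\NN^{r_E}_{E/k}(c_E) = 0$ --- again a degenerate-rank statement, not a diagram chase. The clean route, and the one the paper takes, is: once the distribution relations are established, Lemma \ref{faithful-lem}\,(a) gives this vanishing whenever $V_E \neq S_\infty(k)$ or $|S(E)| > 1$, leaving only the case $V_E = S_\infty(k)$, $S(E) = \{\p\}$, where your chaining of Lemma \ref{commutative-diagrams-lemma}\,(a) and (b) (with $W = \{\p\}$ and $c_k \coloneqq -\Theta^{r_k}_{k/k, S_\infty(k)}(\fz_k)$, the sign coming from $\omega^\ast_{E/k} = -1$) is precisely the paper's argument. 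Your integrality sketch via $T$-modification is acceptable as an outline (it is essentially the cited argument), but as written the proposal leaves the two vanishing statements above unproved, and these are exactly the delicate points of the proposition.
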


\begin{proof} Existing results in the literature imply $\im(\Theta_k)$ is contained in $\ES_k (\Z)^{\mathrm{con}}$. More precisely, the argument of \cite[Thm.\@ 3.8\,(i)]{bdss} shows that every $c$ in $\im(\Theta_k)$ satisfies the Euler system distribution relations; any such $c$ also satisfies the integrality condition (\ref{int res cond}) with $\cR = \Z$  because the proof of Theorem \@ 3.8\,(ii) of \textit{loc.\@ cit.\@} implies $\im(\Theta^{r_E}_{E / k, S^\ast (E)})\subseteq \mathfrak{L}_E$ for every $E$ in $\Omega(k)$; finally, the argument of \cite[Thm.\@ 5.16]{BKS} shows that any such $c$ validates all of the congruences that occur in the Mazur--Rubin--Sano Conjecture. 

It therefore suffices for us to fix a system $c$ in $\im(\Theta_k)$ and show that it is symmetric. To do this, we fix a field $E \in \Omega (k)$. We note first that if either $V_E \neq S_\infty (k)$ or $|S(E)| > 1$, then $\NN_{E / k}^{r_E} (c_E) = 0$ by Lemma \ref{faithful-lem}\,(a) and so the explicit condition in Definition \ref{sym def} is satisfied trivially in this case. We may therefore assume in the sequel that both $V_E = S_\infty (k)$ and $S(E) = \{ \p \}$ for some finite place $\p$. \\
By definition, there exists a system $\fz = (\fz_E)_E \in \varprojlim_{E \in \Omega (k)} \Det_{\Z[\cG_E]} (C^\bullet_{E, S^\ast (E)})$ such that one has $\Theta^{r_E}_{E / k, S^\ast (E)} (\fz_E) = c_E$. Observe that $\fz$ also uniquely specifies an element $\fz_k$ of $\Det_\Z (C^\bullet_{k, S_\infty (k)})$. 
Now, we have a commutative diagram
\begin{cdiagram}[column sep = 4.5em]
\Det_{\Z[\cG_E]} (C^\bullet_{E, S^\ast (E)}) \arrow{r}{\Theta^{r_E}_{E / k, S^\ast (E)}} 
\arrow{d} & \bidual^{r_E}_{\Z [\cG_E]} \cO^\times_{E, S (E)} \arrow{d}{\NN^{r_E}_{E / k}} \\
\Det_\Z (C^\bullet_{k, S^\ast (E)}) \arrow{r}{\Theta^{r_E}_{k / k, S^\ast (E)}} \arrow{d}[left]{\Delta_{k, S^\ast (E), S_\infty (k)}} & 
\exprod^{r_E}_\Z \cO^\times_{k, \{ \p \}} \arrow{d}{- \Ord_\p} \\
\Det_\Z (C^\bullet_{k, S_\infty (k)}) \arrow{r}{\Theta^{r_E}_{k / k, S_\infty (k)}} & 
\exprod^{r_E - 1}_\Z \cO^\times_k,
\end{cdiagram}%
in which the upper and lower squares are the relevant instances of the diagrams in Lemma \ref{commutative-diagrams-lemma}\,(a) and Lemma \ref{commutative-diagrams-lemma}\,(b). From the commutativity of this diagram, it then follows that $c$ satisfies the condition in Definition \ref{sym def} with its initial value $c_k$ equal to $\Theta_{k/k,S_\infty(k)}^{r_k}(\fz_k)$.
\end{proof}

If we now assume Conjecture \ref{scarcity-conjecture} to be valid for the pair $(\cX, \cS)$, then every system $c$ in $\varrho^\cX (\im (\Theta_k))$ can be written in the form $c = q \cdot \varepsilon^\cX_k$ for some $q \in \Z_\mathcal{S} \llbracket \cG_\cK \rrbracket$. 
This shows that Conjecture \ref{scarcity-conjecture} gives rise to an  inclusion $\varrho^\cX (\im (\Theta_k)) \subseteq \Z_\mathcal{S} \llbracket \cG_\cK \rrbracket \varepsilon_k^\cX$.
Theorem \ref{scarcity-implies-etnc} therefore now follows immediately upon applying the following result with $\cX$ taken to be the set $\Omega^\cV (k)$ (which is easily seen to satisfy the hypotheses required by the result) for every prime number $p$ outside $\cS$.

\begin{prop} \label{every-basic-RS-implies-etnc}
Let $K / k$ be a finite abelian extension, $p$ a prime number and $\cV$ a subset of $\mathcal{P} (S_\infty (k))$. Let $\cX$ be a subset of $\Omega (k)$ that  satisfies the following condition: for each intermediate field $L$ of $K / k$ such that  $V_L \in \cV$, one has
\begin{enumerate}[label=$\bullet$]
    \item $L \in \cX$ if $L / k$ is ramified,
    \item $k(\p)L \in \cX$ for almost all $\p \in S_\fin(k)$ if $L / k$ is unramified.
\end{enumerate}
Then, if $\varrho^\cX (\im( \Theta_k)) \subseteq \Z_p \llbracket \cG_{\cK} \rrbracket \varepsilon^\cX_k$, the conjecture  $\mathrm{TNC} ( h^0 (\Spec K), \epsilon_{K, \mathcal{V}} \Z_p [\cG_K])$ is valid.
\end{prop}

\begin{proof}
For each $\chi$ in $\widehat{\cG_K}$, we denote the subfield $K^{\ker(\chi)}$ of $K$ by $K(\chi)$. We also write $\Upsilon_{K, \mathcal{V}}^\diamond$ for the subset of $\widehat{\cG_K}$ comprising characters $\chi$ with $\chi (\epsilon_{K, \mathcal{V}}) \neq 0$. 

Then for each $\chi \in \Upsilon^\diamond_{K, \mathcal{V}}$, the properties in Remark \ref{explicit epsilon description}\,(a) and (c) combine to imply $V_{K(\chi)}$ belongs to $ \cV$. In particular, if $K(\chi)/k$ is unramified, then we can fix 
 a (finite) prime $\p_\chi$ of $k$ that is inert in the (cyclic) extension $K(\chi)$ and is such that the compositum of $K(\chi)$ and $k(\p_\chi)$ belongs to $\cX$. Then, for each  $\chi$ in $\Upsilon_{K, \mathcal{V}}^\diamond$, the extension of $k$ that is defined by setting 
\[
    K_\chi \coloneqq \begin{cases} K(\chi) &\text{if $\chi$ is ramified},\\
        K(\chi)k(\p_\chi) &\text{otherwise,}\end{cases}\]
belongs to $\cX$. 

We now write $N$ for the compositum of $K$ and the fields $K_\chi$ for each $\chi$ in $\Upsilon_{K, \mathcal{V}}^\diamond$, set $\Gamma \coloneqq \cG_N$ and regard $\epsilon_{K,\mathcal{V}}$ as an idempotent of $\QQ[\Gamma]$ in the obvious way (this idempotent does not usually identify with $\epsilon_{N,\mathcal{V}}$ but this fact is not important in the sequel). 

Then, by the known functoriality properties of the equivariant Tamagawa Number Conjecture (cf.\@ \cite[Prop. 4.1]{BurnsFlach01}), it is sufficient to show that the inclusion $\varrho^\cX (\im( \Theta_k)) \subseteq \Z_p \llbracket \cG_{\cK} \rrbracket \varepsilon^\cX_k$ implies the validity of $\mathrm{TNC} ( h^0 (\Spec N), \epsilon_{K, \mathcal{V}} \Z_p [\Gamma])$.  

Then, since the $\Z[\Gamma]$-module $\Det_{\Z [\Gamma]} (C^\bullet_{N, S^\ast (N)})$ is locally free, Roiter's Lemma \cite[(31.6)]{CurtisReiner} allows us to choose an element $\fz_N$ of $ \Det_{\Z[\Gamma]} (C^\bullet_{N, S^\ast (N)})$ that generates a (free, rank one) $\Z[\Gamma]$-submodule of finite prime-to-$p$ index. In particular, if we set $D^\bullet_{N,S^\ast(N)} \coloneqq \Z_p \otimes_\ZZ C^\bullet_{N,S^\ast(N)}$, then the image $1\otimes \fz_N$ of $\fz_N$ in the free $\ZZ_p[\Gamma]$-module $\Z_p \otimes_\ZZ \Det_{\Z [\Gamma]} (C^\bullet_{N, S^\ast (N)}) \cong
\Det_{\Z_p [\Gamma]} (D^\bullet_{N,S^\ast(N)})$ is a basis.

Now, since the transition maps $i_{F / E}$ are surjective (by construction), we can lift $\fz_N$ to an element $a$ of the corresponding inverse limit $\varprojlim_{E \in \Omega (k)} \Det_{\Z [\cG_E]} (C^\bullet_{E, S^\ast (E)} )$. This element $a$ then gives rise, by Proposition \ref{basic-Euler-systems}, to an Euler system $z^\text{b} \coloneqq \Theta_{k} (a)$. In particular, if we assume that  $\varrho^\cX (\im( \Theta_k))$ is contained in $\Z_p \llbracket \cG_{\cK} \rrbracket \varepsilon^\cX_k$, then there exists an element $q = (q_E)_E$ of $\Z_p \llbracket \gal{\cK}{k} \rrbracket$ such that $\varrho^\cX (z^\text{b}) = q \cdot \varepsilon^\cX_k$. More concretely, therefore, this element $q$ would satisfy %
\begin{equation} \label{defining-property}
z^\text{b}_E = q_E \cdot \varepsilon_{E / k}
\quad \text{ for every } E \in \cX. 
\end{equation}

We claim that these equalities imply that the element $\epsilon_{K, \mathcal{V}}\cdot q_N$ is a unit of $\epsilon_{K, \mathcal{V}}\Z_p[\Gamma]$. To show this, we recall that for each $\chi$ in $\Upsilon_{K, \mathcal{V}}^\diamond$ the field $K_\chi$ is a subfield of $N$. 

 This implies that the element $1 \otimes z^\text{b}_{K_\chi}$ is equal to the image of the $\Z_p [\Gamma]$-basis $1\otimes \fz_N$ of $\Det_{\Z_p [\Gamma]} (D^\bullet_{N, S^\ast (N)})$
under the composite homomorphism
\begin{cdiagram}
\Det_{\Z_p [\Gamma]} (D^\bullet_{N, S^\ast (N)}) 
\arrow[rr, "i_{N / K_\chi}",twoheadrightarrow] &&
\Det_{\Z_p [\cG_{K_\chi}]} (D^\bullet_{K_\chi, S^\ast (K_\chi)}) 
\arrow{rr}{\Theta^{r_{K_\chi}}_{K_\chi / k, S^\ast (K_\chi)}}
& &
\Z_p \fL_{K_\chi}.
\end{cdiagram}%
It therefore follows from the equality (\ref{defining-property}) (with $E = K_\chi$) that one has
\begin{equation} \label{for-later-reference}
    ( \Theta^{r_{K_\chi}}_{K_\chi / k, S^\ast (K_\chi)} \circ i_{N / K_\chi})( \fz_N) = z^\mathrm{b}_{K_\chi} = q_{K_\chi} \cdot \varepsilon_{K_\chi / k}.
\end{equation}
This combines with the implication (b) $\Rightarrow$ (a) of Proposition \ref{analztic-class-number-formula-argument} (with $K$ replaced by $K_\chi$ and $V$ by $V_{K_\chi})$ to imply that the elements $z^\mathrm{b}_{K_\chi}$ and $\varepsilon_{K_\chi / k}$ generate the same $\Z_p [\cG_{K_\chi}]$-submodule of $\Z_p \fL_{K_\chi}$. In particular, since this submodule is free as an $e_{K_\chi}\Z_p[\cG_{K_\chi}]$-module, we deduce that the element $e_{K_\chi}\!\cdot q_{K_\chi}$ is a unit in $e_{K_\chi}\Z_p[\cG_{K_\chi}]$. In addition, since $r_{S^\ast (K_\chi)} (\chi) = r_{K_\chi}$, the orthogonality relations of characters imply that $\chi (e_{K_\chi}) = 1$ and so the element $\chi (q_N) = \chi (q_{K_\chi})$ is a unit in $\Z_p [\im \chi]$. 
 
We have, by now, shown that $\chi (q_N)$ is a unit in $\Z_p [\im (\chi)]$ for every $\chi$ in  $\Upsilon_{K, \mathcal{V}}^\diamond$. Since the set $\Upsilon^\diamond_{K, \mathcal{V}}$ is stable under the action of $G_\Q$ on $\widehat{\Gamma}$, we can therefore deduce that the element 
 \[
 \NN_{\epsilon_{K, \mathcal{V}}\Z_p [\Gamma] / \Z_p} ( \epsilon_{K, \mathcal{V}}\cdot q_N) = 
 \prod_{\chi \in \Upsilon^\diamond_{K, \mathcal{V}}} \chi ( q_N) 
 = \prod_{\chi \in \Upsilon^\diamond_{K, \mathcal{V}} / \sim} \NN_{\Z_p [\im (\chi)] / \Z_p} ( \chi (q_N))
 \]
 is a unit in $\Z_p$. By Lemma \ref{norm-map-Lemma}\,(a) this in turn proves that $\epsilon_{K, \mathcal{V}}\cdot q_N$ is a unit in $\epsilon_{K, \mathcal{V}}\Z_p[\Gamma]$ as claimed above.

This fact implies that $\epsilon_{K, \mathcal{V}} q_N^{-1} \fz_N$ is an $\epsilon_{K, \mathcal{V}}\Z_p[\Gamma]$-basis of $\epsilon_{K, \mathcal{V}}\cdot \Det_{\Z_p [\Gamma]} (D^\bullet_{N, S^\ast (N)})$. To prove the Proposition it therefore suffices, by \cite[Prop.\@ 2.5]{BKS2}, to verify for every character $\chi \in \Upsilon_{K, \mathcal{V}}^\diamond$ that the composite homomorphism 
\begin{align*}
e_\chi \C_p \Det_{\Z_p [\Gamma]} (D^\bullet_{N, S^\ast (N)}) 
& \xrightarrow{f_{N / K(\chi)}}
e_\chi \C_p \Det_{\Z_p [\cG_{K(\chi)}]} (D^\bullet_{K(\chi), S^\ast (N)}) \\
& \xrightarrow{\Theta^{r_\chi}_{K(\chi)/ k, S^\ast (N)}}
e_\chi \C_p \exprod^{r_\chi}_{\Z_p [\cG_{K(\chi)}]} U_{K(\chi), S^\ast (N)}
\end{align*}
sends $e_\chi q_N^{-1} \fz_N$ to $e_\chi \varepsilon^{V_\chi}_{K(\chi)/ k, S^\ast (N)}$. Here $V_\chi$ is any choice of subset of $S^\ast (N)$ of cardinality  $r_\chi \coloneqq r_{S^\ast (N)} (\chi)$ that only contains places that split completely in $K(\chi)$, and the map $f_{N / K(\chi)}$ appearing above is induced by the composite of (\ref{descent-isom-1}), (\ref{descent-isom-2}), and (\ref{transition-map-1}). 
\smallskip \\
To verify this we set $W \coloneqq V_\chi\setminus V_{K_\chi}$ and $m \coloneqq |W|$, and claim that it suffices to demonstrate the equality
\begin{align}\label{etnc-needed-equality}
    (\Ord_{K(\chi), W} \circ \Theta^{r_\chi}_{K(\chi)/k,S^\ast(N)} \circ f_{N/K(\chi)})(e_\chi q_N^{-1}\fz_N) = (-1)^{mr_{\chi}}\cdot e_\chi \varepsilon_{K(\chi)/k, S^\ast(N)\setminus W, \varnothing}^{V_{K_\chi}}.
\end{align}
Indeed, by the result of \cite[Prop.~3.6]{Sano} one knows that
\begin{equation*} \label{ord-relation}
\Ord_{K(\chi), W} (\varepsilon^{V_\chi}_{K(\chi)/ k, S^\ast (N), \emptyset}) = (-1)^{mr_{\chi}} \cdot \varepsilon^{V_{K_\chi}}_{K(\chi)/ k, S^\ast (N) \setminus W, \emptyset}.
\end{equation*}
In particular, the injectivity of the map $\Ord_{K(\chi), W}$ on the $e_{K(\chi)/k, S^\ast(N), \varnothing}$-isotypic component of $\bidual_{\ZZ_p[\cG_{K(\chi)}]}^{r_\chi} U_{K(\chi), S^\ast(N)}$ (as is proved in \cite[Lem.\@ 5.1\,(i)]{Rub96}) then combines with the previous two equalities to imply the claim.

Turning now to the verification of (\ref{etnc-needed-equality}), we first note (\ref{for-later-reference}) implies that one has 
\begin{equation}\label{first last one}
 ( \Theta^{r_{K_\chi}}_{K_\chi / k, S^\ast (K_\chi)} \circ i_{N / K_\chi})( q_N^{-1} \fz_N) = \varepsilon_{K_\chi / k}.
\end{equation}
Using Lemma \ref{commutative-diagrams-lemma}, we may therefore calculate that
\begin{align} \nonumber
    & (\Theta^{r_{K_\chi}}_{K(\chi) 
    ,\Pi (K (\chi))} \circ \Delta_{K (\chi), S^\ast (N), S^\ast (K_\chi)} \circ f_{N / K (\chi)})(e_\chi q_N^{-1}\fz_N)\\ \nonumber
     = \, & ( \Theta^{r_{K_\chi}}_{K (\chi), S^\ast (K_\chi)} \circ \Delta_{K (\chi), S^\ast (N), S^\ast (K_\chi)}   \circ  f_{K_\chi / K (\chi)} \circ f_{N / K_\chi})(e_\chi q_N^{-1}\fz_N)\\  \nonumber
     = \, & ( \Theta^{r_{K_\chi}}_{K (\chi), S^\ast (K_\chi)} \circ  f_{K_\chi / K (\chi)} \circ \Delta_{K_\chi, S^\ast (N), S^\ast (K_\chi)}  \circ f_{N / K_\chi})(e_\chi q_N^{-1}\fz_N)\\  \nonumber
    =\, &(\NN_{K_\chi/K(\chi)}^{r_{K_\chi}}\circ\Theta^{r_\chi}_{K_\chi, S^\ast (K_\chi)} \circ i_{N/K_\chi})(e_\chi q_N^{-1}\fz_N)\\  \nonumber
    =\, &\NN_{K_\chi/K(\chi)}^{r_{K_\chi}}(e_\chi\varepsilon_{K_\chi / k})\\
    =\, &e_\chi\varepsilon_{K(\chi)/k, S^\ast (K_\chi),\emptyset}^{V_{K_\chi}}.  \label{last brick equality}
\end{align}
Here the first equality follows from the fact that $f_{N / K (\chi)} = f_{K_\chi / K (\chi)} \circ f_{N / K_\chi}$, the second from Lemma \ref{commutative-diagrams-lemma}\,(d), the third from Lemma \ref{commutative-diagrams-lemma}\,(a) and the definition of the map $i_{N / K_\chi}$, the fourth from (\ref{first last one}),and the last from the properties of Rubin--Stark elements. \\
To proceed, we set $W' = S^\ast (N) \setminus W$ and calculate
\begin{align*}
   & \phantom{=}\; (-1)^{m r_\chi}  \cdot (\Ord_{K (\chi),W} \circ \Theta^{r_\chi}_{K(\chi)/ k, S^\ast (N)} \circ f_{N / K(\chi)}) ( e_\chi q_N^{-1} \fz_N)\\
   &= (\Theta_{K(\chi), W'}^{r_{K_\chi}} \circ \Delta_{K(\chi), S^\ast (N), W'} \circ f_{N/K(\chi)})(e_\chi q_N^{-1}\fz_N)\\
   &= (\Theta_{K(\chi), W'}^{r_{K_\chi}} \circ \Delta_{K(\chi), S^\ast (K_\chi),W'} \circ \Delta_{K(\chi), S^\ast (N),S^\ast (K_\chi)} \circ f_{N/K (\chi)})(e_\chi q_N^{-1}\fz_N)\\
   &= \big ( \prod_{v \in W' \setminus S^\ast (K_\chi)} ( 1 - \chi ( \Frob_v)^{-1} ) \big) \cdot
   (\Theta_{K (\chi), S^\ast (K_\chi)}^{r_{K_\chi}} \circ \Delta_{K(\chi), S^\ast (N),S^\ast (K_\chi)} \circ f_{N/K (\chi)})(e_\chi q_N^{-1}\fz_N)\\
   &=  \big ( \prod_{v \in W' \setminus S^\ast (K_\chi)} ( 1 - \chi ( \Frob_v)^{-1} ) \big) \cdot e_\chi\varepsilon_{
    K(\chi)/k,S^\ast (K_\chi),\emptyset}^{V_{K_\chi}} \\
    & = e_\chi\varepsilon_{
    K(\chi)/k,W',\emptyset}^{V_{K_\chi}}.
\end{align*}
Here the first equality follows via an application of Lemma \ref{commutative-diagrams-lemma}\,(b), the second by the definition of the respective 
$\Delta$ homomorphisms, the third by appealing to Lemma \ref{commutative-diagrams-lemma}\,(c), the fourth by (\ref{last brick equality}), and the last by properties of Rubin--Stark elements. 

This concludes the proof of (\ref{etnc-needed-equality}), and hence of Proposition \ref{every-basic-RS-implies-etnc}. 
\end{proof}

\begin{rk}\label{comp bdss} The results of this section show that the conjecture formulated by Sano and three of the present authors in \cite{bdss} cannot be valid. To discuss this we write $k^{\rm s}$ for the maximal abelian extension of $k$ in which all places in $S_\infty = S_\infty(k)$ split completely, $\Omega$ for the collection $\Omega^{S_\infty}(k)$ of finite ramified extensions of $k$ in $k^{\rm s}$, $\mathcal{R}^{\rm s}$ for the algebra $\Z\llbracket\Gal(k^{\rm s}/k)\rrbracket$ and $\mathcal{A}^{\rm s}$ for the ideal $\varprojlim_{E \in \Omega}\mathrm{Ann}_{\Z[\cG_E]}(\mu_E)$ of $\mathcal{R}^{\rm s}$, where the limit is taken with respect to the natural projection maps $\Z[\cG_{E'}] \to \Z[\cG_E]$ for fields $E \subseteq E'$ in $\Omega$. Then the central conjecture (Conjecture 2.5) of loc.\@ cit.\@ predicts
 the $\mathcal{R}^{\rm s}$-module $\ES_k$ of Euler systems defined in \cite[Def.~2.3]{bdss} is such that $\mathcal{A}^{\rm s}\cdot \ES_k \subseteq \mathcal{A}^{\rm s}\cdot \varrho^{S_\infty}(\im (\Theta_k))$. However, the validity of this prediction is not consistent with the result of Proposition \ref{basic-Euler-systems} for two reasons: firstly, the intersection $\mathcal{A}^{\rm s}\cdot (\ES_k\cap \ES_k^{S_\infty}(\ZZ)^{\mathrm{iso}})$ is in general non-zero (and, in this regard, recall Lemma \ref{ivc versus local}) and, secondly, the final observation in Remark \ref{scarcity remarks}(d) implies that, in general, there are systems in $\mathcal{A}^{\rm s}\cdot \ES_k$ that are not  symmetric. 
\end{rk}

\section{The theory of Euler limits}\label{elimits}

In this section we develop a general theory of `Euler limits' that is vital for our approach to the analysis of Euler systems.  

\subsection{Definitions and examples}

Throughout this section, we fix a number field $k$ and, for convenience,  write $\Omega$ instead of $\Omega (k)$ for the set of finite abelian extensions of $k$ that are ramified at (at least) one finite place. Similarly, we abbreviate $S_\infty (k)$ and $S_p (k)$ to $S_\infty$ and $S_p$ respectively. We also fix a subring $\cR$ of $\C$ with field of fractions $\cQ$.

\begin{definition}
Given a subset $\cX$ of $\Omega$, we define a \emph{triangular system} on $\cX$ to consist of the following data for every pair of fields $E$ and $F$ in $\cX$ with $E \subseteq F$:
\begin{itemize}
\item[$\bullet$] an $\cR [\cG_F]$-module $M_{F,E}$; 
\item[$\bullet$] maps of $\cR[\cG_F]$-modules  $\rho_{F / E} \: M_{F,F} \to M_{F, E}$ and $j_{F / E} \: M_{E,E} \to M_{F, E}$.
\end{itemize}
In the case $E = F$, we usually abbreviate $M_{F,E}$ to $M_E$ and refer to it as a \emph{diagonal term} of the system. 
\end{definition}
\vskip0.1truein

\begin{bspe1} \label{triangular-system-examples} Triangular systems arise naturally in several ways. \

\begin{liste}
\item \textit{(Projective systems)} Each projective system $(\widetilde{M}_F, \widetilde{\rho}_{F / E})_{F \in \cX}$ of $\cR [\cG_F]$-modules gives rise to a triangular system by setting  $M_{F, E} \coloneqq \widetilde{M}_E$ for each $E \subseteq F$ and taking $\rho_{ F / E}$ to be $\widetilde{\rho}_{F / E}$ and $j_{F / E}$ to be the identity map on $\widetilde{M}_E$.

\item \textit{(Biduals of units)} For any rank function $\bm{r}$ one obtains a triangular system on $\cX$ by setting, for each $E$ and $F$ in $\cX$ with $E\subseteq F$,
\[
M_{F, E} \coloneqq 
\begin{cases}
\cR \fL_E^{\bm{r} (E)} \quad & \text{ if } E = F, \\
\bigoplus_{i = 0}^\infty \cQ \exprod^i_{\Z [\cG_E]} \cO_{E, S(F)}^\times \quad & \text{ if } E \neq F, 
\end{cases}
\]
and by taking $\rho_{F / E}$ to be the map induced by the norm $ \cO_{F, S(F)}^\times \to  \cO_{E, S(F)}^\times$ and $j_{F/E}$ to be the (injective) map induced by the inclusion $\cO_{E, S(E)}^\times \subseteq \cO_{E, S(F)}^\times$. 
\item \textit{(Exterior powers of duals)} 
For any non-negative integer $t$, and any pair of fields $E, F \in \cX$ with $E \subseteq F$, the composite homomorphism  
\[ (\bigO_{F, S(F)}^\times)^\ast \to \Hom_{\ZZ[\cG_F]}(\bigO_{E, S(E)}^\times,\ZZ[\cG_F]^{\gal{F}{E}}) \to (\bigO_{E, S(E)}^\times)^\ast,\]
where the first map is restriction and the second is induced by sending  $\NN_{\gal{F}{E}}$ to $1$, induces a homomorphism of $\ZZ[\cG_F]$-modules 
$\Phi^t_{F / E} \: \exprod^t_{\Z [\cG_F]} (\bigO_{F, S(F)}^\times)^\ast \to \exprod^t_{\Z [\cG_E]} (\bigO_{E, S(E)}^\times)^\ast$.
Given a rank function $\bm{t}$, we therefore obtain a triangular system by setting
\[
M_{F, E} \coloneqq \begin{cases}
\exprod^{\bm{t}(E)}_{\cR [\cG_E]} (\cR \cO^\times_{E, S(E)})^\ast \quad &\text{ if } E = F, \\
\bigoplus_{i = 0}^\infty \exprod^{i}_{\cR [\cG_E]} (\cR \cO^\times_{E, S(E)})^\ast \quad &\text{ if } E \neq F,
\end{cases}
\]
and by taking $\rho_{F / E}$ to be $\Phi^{\bm{t}(E)}_{F / E}$ and $j_{F / E}$ to be the natural inclusion map. 
\end{liste}
\end{bspe1}

We next associate to each triangular system a natural notion of limit.  

\begin{definition}\label{S euler def} Fix a finite set $\Pi$ of places of $k$ and, for each field $E$ in $\Omega$, set 
\[ \Pi(E) \coloneqq \Pi\cup S(E) = \Pi \cup S_\ram( E / k).\]
(Note, in particular, that $\Pi(E) = S(E)$ if $\Pi = \emptyset$ and  $\Pi(E) = S^\ast(E)$ if $\Pi = S_\infty(k)$.) 
\begin{liste}
\item The $\Pi$-relative \emph{Euler factor} for an extension $F / E$ of fields in $\Omega$ is the element 
\[
P_{F / E, \Pi} \coloneqq 
\prod_{v \in S (F) \setminus \Pi(E)} (1 - \Frob_v^{-1}) \in \Z [\cG_E].\]
(In particular, if $\Pi = \emptyset$, then $P_{F / E, \Pi}$ is the standard factor $\prod_{v \in S (F) \setminus S(E)} (1 - \Frob_v^{-1})$ that also occurs in the definition of an Euler system.)  
\item Let $\calT \coloneqq \{(M_{F, E}, \rho_{F / E}, j_{F / E})\}_{E, F \in \cX}$ be a triangular system on a subset $\cX$ of $\Omega$. Then the  $\Pi$-relative \emph{Euler limit}\index{Euler limit} of $\calT$ is the $\cR \llbracket \cG_{\cK} \rrbracket$-submodule  
\[
\Big \{ (a_F)_F \in \prod_{F \in \cX} M_{F}
\; \big | \;  \rho_{F / E} (a_F) = P_{F / E, \Pi} \cdot j_{F / E}(a_E)
\text{ for all } E, F \in \cX \text{ with } E \subseteq F
\Big \}\]
of the direct product $\prod_{F \in \cX} M_{F}$. 
We denote this submodule by $\eullim{}{\Pi}\calT$ or, more simply (when data defining the system $\calT$ is clear from context), by $
\eullim{F \in \cX}{\Pi}
M_{F}$.
\end{liste}
\end{definition}

In the rest of this section we fix a set of places $\Pi$ as above. In addition, for a (not necessarily finite) extension of fields $E \subseteq F$ that are abelian over $k$, we write %
\[ \pi_{F / E} \: \C \llbracket \cG_F \rrbracket \to \C \llbracket \cG_E \rrbracket\]
for the  natural restriction map (and we use the same notation to denote the restriction of $\pi_{F/E}$ to $R\llbracket \cG_F\rrbracket$ for any subring $R$ of $\C$). 

\begin{bspe1}\label{es remark} The concept of Euler limit incorporates several natural constructions.\ 
\begin{liste}
\item \textit{(Euler systems)} 
If we take $\cX = \Omega$ and $\Pi = \emptyset$, then elements of the Euler limit associated to the triangular system on $\cX$ described in  Example \ref{triangular-system-examples}\,(b) are Euler systems of rank $\bm{r}$ that are $\cR$-integral in the sense of Definition \ref{definition-integral-Euler-systems}.
\item \label{rk reduction rem} \textit{(Perrin-Riou functionals)} 
Take $\cX  = \Omega$ and $\Pi  = \emptyset$.
If we define the module of \textit{Perrin-Riou functionals} $\mathrm{PR}_k^{\bm{t}} (\cR)$ to be  the Euler limit of the triangular system defined in Example \ref{triangular-system-examples}\,(c) with transition maps $\rho_{F / E} \coloneqq P_{F / E, \emptyset} \cdot \Phi_{F / E}^{\bm{t} (E)}$, then, for any pair of rank functions $\bm{t}$ and $\bm{s}$ with $\bm{s}(E) \geq \bm{t} (E)$ for all $E \in \Omega (k)$, and any subring $\cR$ of $\C$, the argument of Perrin-Riou in \cite[Lem.\@ 1.2.3]{PerrinRiou98} (see also \cite[\S\,6]{Rub96}) shows that the assignment 
$((f_E)_E,(c_E)_E) \mapsto (f_E(c_E))_E$
 induces a homomorphism of $\cR\llbracket\cG_\cK\rrbracket$-modules 
\[
\mathrm{PR}_k^{\bm{t}} ( \cR) \otimes_{\cR\llbracket\cG_\cK\rrbracket} \ES^{\bm{s}}_k ( \cR) \to \ES^{\bm{s} - \bm{t}}_k (\cR).\]
This method of `rank-reduction' has been used in the literature to obtain useful classical (rank-one) Euler systems from higher-rank Euler systems.  
\item \textit{(Inverse limits)} 
Each projective system $(M_F, \rho_{F / E})_{F \in \cX}$ of $\cR[\cG_F]$-modules gives rise to an associated projective system $(M'_F, \rho'_{F / E})_{F \in \cX}$ in which one has $M'_F \coloneqq M_F$ and  $\rho'_{F/E} \coloneqq P_{F / E, \Pi} \cdot\rho_{F / E}$ for all $E \subseteq F$. The identity map on $\prod_{F \in \cX} M_F$ restricts to induce an embedding of the inverse limit $\varprojlim_{F \in \cX} M_F$ into the Euler limit $\eullim{F \in \cX}{\Pi} M'_F$ of the triangular system associated to $(M'_F, \rho'_{F / E})_{F \in \cX}$ by  Example \ref{triangular-system-examples}\,(a). 
\item \textit{(Idempotent projections)} 
Let $(M_F, \rho_{F / E})_{F \in \cX}$ be a projective system of $\Z$-torsion-free $\cR[\cG_F]$-modules and
recall the idempotents 
\[ e_{F, \Pi} \coloneqq e_{F, S_\infty(k)\cup \Pi(F), r_F}\]
from Definition \ref{Rubin--Stark-definition}. (We note, in particular, that if $\Pi=\emptyset$, then $e_{F,\Pi}$ is equal to the idempotent $e_{F}$ defined in (\ref{eK def}).) We define a triangular system $(M'_{F, E}, \rho'_{F / E}, j_{F / E})_{F \in \cX}$ by setting
\[
M'_{F, E} \coloneqq 
\begin{cases}
e_{F, \Pi} M_F \quad & \text{ if } F = E, \\
\cQ \otimes_\cR M_{E} & \text{ if } F \neq E,
\end{cases}
\]
and $\rho'_{F / E}\coloneqq P_{F / E, \Pi} \cdot \rho_{F / E}$, and taking $j_{F / E}$ to be the natural inclusion of $e_{F, \Pi} M_F$ into $\cQ \otimes_{\cR} M_F$. We then write $\eullim{F \in \cX}{\Pi} e_{F, S} M_F$ for the Euler limit of the triangular system $(M'_{F, E}, \rho'_{F / E}, j_{F / E})_{F \in \cX}$. A straightforward calculation shows that for fields $E$ and $F$ in $\Omega$ with $E \subseteq F$ one has  
$P_{F / E,\Pi} \cdot ( \pi_{F / E} (e_{F, \Pi}) - e_{E, \Pi}) = 0$. This fact implies that the assignment 
\[ 
\varprojlim_{F \in \cX} M_F \to\eullim{F \in \cX}{\Pi} e_{F, \Pi} M_F, \quad (m_F)_F \mapsto (e_{F,\Pi} m_F)_F
\]
is a well-defined map of $\cR \llbracket \cG_{\cK} \rrbracket$-modules.  
\end{liste}
\end{bspe1}

In the next two sections we shall prove the existence of natural exact sequences relating respectively to `restriction' and `localisation' functors that are associated to the Euler limits of triangular systems. 

\subsection{The $p$-adic restriction exact sequence}

\subsubsection{Statement of the $p$-adic restriction sequence}
In the sequel we will consider subsets $\cX$ of $\Omega$ that are `large' in the sense that they satisfy the following hypothesis.

\begin{hypothesis}[Closure Hypothesis] \label{hyp X} 
    Fix a subset $\mathcal{V}$ of $\mathcal{P}(S_\infty (k))$ and let $\cX$ be a subset of $\Omega^\cV (k)$ that has  the following two closure properties:
    \begin{romanliste}
        \item \textit{(Composita)} 
        If $K \in \cX$ and $E \in \Omega^{S_\infty (k)} (k)$, then   $KE \in \cX$.
        \item \textit{(Subfields)} 
        If $K \in \cX$, then for every  intermediate field $L$ of $K / k$ with $V_L \in \cV$, one has 
       \begin{itemize}
           \item $L \in \cX$ if $L / k$ is ramified,
           \item $k(\p)L \in \cX$ for almost all $\p \in S_\fin(k)$ if $L / k$ is unramified. 
       \end{itemize}
    \end{romanliste}
\end{hypothesis}

\begin{bspe1}\label{cX examples} 
The following two families of extensions satisfy Hypothesis \ref{hyp X} (for suitable sets $\cV$) and will play an important role in the sequel. 
\begin{liste}
\item Let $\cV$ be any subset of $\mathcal{P} (S_\infty (k))$. Then the set $\cX = \Omega^\cV (k)$ satisfies Hypothesis \ref{hyp X} with respect to $\cV$.
    \item Let $k$ be a totally real field. Then the set $\cX$ of all finite abelian CM extensions of $k$ that are ramified at some finite prime satisfies Hypothesis \ref{hyp X} with respect to  $\cV = \{ \emptyset \}$. 
\end{liste}
\end{bspe1}

For each finite set of places $\Sigma$ of $k$, we write $\cX_\Sigma$ for the subset of $\cX$ comprising fields that are ramified at every place in $\Sigma$. We then have a natural `restriction' map on Euler limits

\begin{equation}\label{restriction def}
\res^\Pi_\Sigma \: \eullim{F \in \cX}{\Pi} M_F \to \eullim{F \in \cX_\Sigma}{\Pi} M_F.\end{equation}

\begin{rk} For $E \in \cX$ and $F \in \cX_\Sigma$ with $E \subseteq F$ (if such $F$ exists), the defining relation of the Euler limit relative to $\Pi \cup \Sigma$ has no Euler factor for the extension $E / F$. In many cases, therefore, the limit $\eullim{F \in \cX_\Sigma}{\Pi}M_{F}$ can be regarded as a submodule of $ \eullim{F \in \cX}{\Pi \cup \Sigma} M_{F}$.
\end{rk}

In the next result we shall establish an important property of the maps $\res^\Pi_\Sigma$ for triangular systems that satisfy the following natural hypothesis.

\begin{hypothesis} \label{inj-hypothesis}
Let $p$ be a prime number. 
The data $\{ (M_{F, E}, \rho_{F / E}, j_{F / E}) \}_{E, F \in \cX}$ constitutes a triangular system in which the transition maps $j_{F / E}$ are injective (and will be suppressed in the notation) and, for all pairs  of fields $E \subseteq F$ contained in $\Omega$, there exists an injective homomorphism of $\cG_E$-modules 
    $\iota_{F / E} \: M_E \to  M_{F}^{\Gal(F/E)}$,
    that has both of the  following properties: 
\begin{romanliste}
    \item the restriction of the composite $\iota_{F / E} \circ \rho_{F / E}$ to the full pre-image $M'_F$ in $M_{F}$ of $M_{E}$ under $\rho_{F/E}$
    factors through multiplication by $\NN_{\gal{F}{E}}$, 
    \item there exists an integer $N$, depending only on $M_E$, such that for every $m$ in $M_E$ and every integer $t$, one has $m\in p^{t}\cdot M_E$ if $\iota_{F/E}(m) \in p^{t+N}\cdot M_F$.  
\end{romanliste}
\end{hypothesis}

\begin{bspe1}\label{inj examples} Hypothesis \ref{inj-hypothesis} is satisfied in several natural cases with $\mathcal{R}$
 a Dedekind domain and $p$ a prime number that is not a unit in $\cR$. For example, one may take $\cR$ to be $\Z_p$ or $\Z_\mathcal{S}$ with a finite set of prime numbers $\cS$ that does not contain $p$. 
\begin{liste}
\item Hypothesis \ref{inj-hypothesis}\,(ii) is satisfied in this case if, for each $E$, the module $M_E$ is a $\Z$-torsion-free and there exists a natural number $N$, depending only on $E$, such that, for every $F$, the $p$-part of the order of  $\coker(\iota_{F/E})_{\mathrm{tor}}$ divides $p^N$. To justify this, set $\iota \coloneqq \iota_{F/E}$ and note that if $\iota(m) = p^t \cdot y$ for some integer $t \geq 0$ and $y\in M_F$, then the image of $y$ in $\coker(\iota)$ is annihilated by $p^t$, and hence also, by assumption, by $c p^N$ for some natural number $c$ that is coprime with $p$. If $z$ is the (unique) element of $M_E$ with $\iota(z) = c p^N \cdot y$, then $c p^N\cdot m = p^t \cdot z \in p^t M_E$. Thus, because $M_E$ is $\Z$-torsion-free and $c$ is a unit modulo $p^{t - N}$, we conclude that $m$ belongs to $p^{t - N} M_E$, as required. 
\item The triangular system associated (via Example \ref{triangular-system-examples}\,(a)) to the canonical projective system $(\cR [\cG_E], \pi_{F / E})$ satisfies Hypothesis \ref{inj-hypothesis} with $\iota_{F / E}$ induced by the map $\cR [\cG_E] \to \cR [\cG_F]$ sending each $a$ to $\NN_{\gal{F}{E}}\cdot\tilde a$ for any (and therefore every) $\tilde a\in \cR [\cG_F]$ with $\pi_{F/E}(\tilde a) = a$. 
For this choice, the validity of (i) is clear and the validity of (ii) follows from Example \ref{inj examples}\,(a) and the fact that $\coker (\iota_{F / E})$ vanishes.
\item Lemma \ref{lattice lemma2}\,(c) (see also Remark \ref{lattice lemma2 S-version rk}) shows that the triangular system of biduals in Example \ref{triangular-system-examples}\,(b) satisfies Hypothesis \ref{inj-hypothesis} with $\iota_{F/ E}$ taken to be the map $\nu_{F / E}$ from Lemma \ref{lattice lemma2}. 
\end{liste}
\end{bspe1}

In the sequel we shall use the field 
\begin{equation}\label{angle field} k \langle p \rangle \coloneqq k (\mu_{p^{s + 1}}, (\cO_k^\times)^{1 / p^{\min\{ s, 1\}}}).\end{equation}

The following is the main result of this subsection. 

\begin{thm}[The $p$-adic restriction sequence] \label{p adic restriction sequence}
    We assume to be given the following data: 
\begin{itemize}
\item an odd prime number $p$,
    \item a subset $\cX$ of $\Omega$ that satisfies Hypothesis \ref{hyp X}, 
    \item a triangular system $\{ (M_{F, E}, \rho_{F / E}, j_{F / E})\}_{E, F \in \cX}$ that satisfies Hypothesis \ref{inj-hypothesis} with respect to $p$ and is such that each diagonal term $M_E$ is a $\Z_p [\cG_E]$-lattice,
    \item finite subsets $\Pi$ and $\Sigma$ of $S_\fin (k)$.
\end{itemize}
    For each field $F$ in $\cX$ write $k_{\Pi} (F)$ for the composite of all extensions of $k$ in $k \langle p \rangle$ in which at least one place of $S_p (k) \setminus \Pi (F)$ splits completely. Then the sequence of $\Z_p \llbracket \cG_{\cK} \rrbracket$-modules 
    \begin{cdiagram}
    0 \arrow{r} & \eullim{F \in \cX}{\Pi} M_F^{\gal{F}{F \cap k_{\Pi} (F)}} \arrow{r}{\subseteq} &  
     \eullim{F \in \cX}{\Pi} M_F \arrow{r}{\overline{\res}^\Pi_{\Sigma}} &  \eullim{F \in \cX_{\Sigma}}{\Pi} \big ( M_F / M_F^{\gal{F}{F \cap k_{\Pi} (F)}}\big ),
    \end{cdiagram}%
in which $\overline{\res}^\Pi_{\Sigma}$ denotes the map induced by $\res^\Pi_{\Sigma}$, is exact.
\end{thm}

\begin{rk} The result of Theorem \ref{p adic restriction sequence} is equivalent to asserting that the kernel of $\overline{\res}^\Pi_{\Sigma}$  is independent of the choice of $\Sigma$. In the stated generality, however, it seems more difficult to obtain concrete information about the cokernel of $\overline{\res}^\Pi_{\Sigma}$. 
\end{rk}

\begin{rk} Our methods also prove a variant of Theorem \ref{p adic restriction sequence} for $p = 2$, see Proposition \ref{p adic restriction intermediate step} and Remark \ref{p = 2 remark restriction sequence} for more details.
\end{rk}

The proof of Theorem \ref{p adic restriction sequence} will be given in \S\,\ref{prs proof section} after we first establish several necessary preliminary results. We end this section by recording an important consequence of Theorem \ref{p adic restriction sequence} that will be derived in \S\,\ref{grs proof section}. 

\begin{cor}[The global restriction sequence] \label{global restriction sequence}
    We assume to be given the following data:  
\begin{itemize}
\item a finite set of prime numbers $\cS$,
    \item a subset $\cX$ of $\Omega$ that satisfies Hypothesis \ref{hyp X}, 
    \item a triangular system $\{ (M_{F, E}, \rho_{F / E}, j_{F / E})\}_{E, F \in \cX}$ for which  each diagonal term $M_E$ is a $\Z_\cS [\cG_E]$-lattice and Hypothesis \ref{inj-hypothesis} is satisfied for all but finitely many $p$,
    \item finite subsets $\Pi$ and $\Sigma$ of $S_\fin (k)$.
\end{itemize}
Then the sequence of $\Z_\cS\llbracket \cG_{\cK} \rrbracket$-modules 
    \begin{cdiagram}
    0 \arrow{r} & \eullim{F \in \cX}{\Pi} M_F^{\cG_F} \arrow{r}{\subseteq} &  
     \eullim{F \in \cX}{\Pi} M_F \arrow{r}{\overline{\res}^\Pi_{\Sigma}} &  \eullim{F \in \cX_{\Sigma}}{\Pi} \big ( M_F / M_F^{\cG_F}\big ),
    \end{cdiagram}
    in which $\overline{\res}^\Pi_{\Sigma}$ denotes the map induced by $\res^\Pi_{\Sigma}$, is exact.
\end{cor}

\subsubsection{Constructing extensions with prescribed local behaviour}

A key step in the proof of Theorem \ref{p adic restriction sequence} is provided by the following technical result concerning the existence of cyclic extensions of number fields with certain prescribed local behaviour. 

\begin{prop} \label{ultimate neukirch result} Assume to be given data of the following form:
\begin{itemize}
    \item a prime number $p$, 
    \item a finite abelian extension $K$ of $k$,
    \item a finite set $\cT$ of places of $k$ that contains $S_p (k)$ and is such that the $\cT$-classgroup of $k$ is trivial. 
\end{itemize}
Write $s$ for the greatest integer such that $k$ contains a primitive $p^s$-th root of unity, fix a natural number $n$ with $n > s$, define a finite Galois extension of $k$ by
\[
k (p, \cT, n) \coloneqq 
\begin{cases}
 k (\mu_{p^{s + n}}, ( \cO_{k, \cT}^\times)^{1 / p^s})
\quad & \text{ if } p \text{ is odd},\\
 k (\mu_{p^{s + n + 1}}, ( \cO_{k, \cT}^\times)^{1 / p^{s + 1}})
& \text{ if } p = 2,
\end{cases}
\]
and set 
\[ L = L (p, \cT, n) \coloneqq K \cap k (p, \cT, n).\]
Then, for any element $\sigma$ of $\cG_K$ whose restriction to 
$L$
is trivial, there exists a Galois extension $E = E(p,n,\cT,\sigma)$ of $k$ that has all of the following properties:
\begin{enumerate}[label=(\roman*)]
    \item $\cG_E$ is a cyclic $p$-group,
    \item every place $v \in \cT$ is unramified in $E$ and the order of  $\Frob_v$ on $E$ is at least $p^n$,
    \item $S(E)$ contains no infinite place and at most two finite places, 
    \item for each $v \in S(E)$ the following claims are valid:
    \begin{itemize}
        \item[(a)] the inertia subgroup of $v$ in $\cG_E$ has order at least $p^n$,
        \item[(b)] $v$ is unramified in $K$ and the restriction of $\Frob_v$ to $K$ is equal to $\sigma$.
    \end{itemize}
\end{enumerate}
\end{prop}

To prove this result, we shall use several important results in the theory of embedding problems that are obtained by Neukirch in \cite[\S\,7 and \S\,8]{Neukirch73}. For the convenience of the reader, we shall therefore begin by fixing a natural number $n$, setting 
\[ q \coloneqq p^n\]
and reviewing Neukirch's approach to the problem of determining  whether there exists a cyclic Galois extension of $k$ of degree $q$ that realises a given family of local extensions.

We write $G_k$ for the absolute Galois group of $k$ and $G_{k_v}$ for the decomposition subgroup in $G_k$ of each place $v$ of $k$. Then, for a given collection of places $v$ and a morphism $\varphi_v: G_{k_v} \to \nZ{q}$ for each such $v$, we ask whether there exists a surjective morphism $G_k \to \nZ{q}$ whose restriction to $G_{k_v}$ agrees with $\varphi_v$ for all $v$ in the given set.  \\
To be more precise, we fix a set of places $\cM$ of $k$ that contains both $S_\infty (k)$ and $ S_p (k)$ and a finite subset $\cT$ of $\cM$. We then define 
\[
\Lambda (\cM, \cT) \coloneqq \prod_{v \in \cT} \{ 0 \} \times {\prod_{v \in \cM \setminus \cT}}^{\!\!\!\!'} \Hom ( G_{k_v}, \nZ{q} ) \times \prod_{v \not \in \cM} \Hom_{\mathrm{nr}} (G_{k_v}, \nZ{q})
\]
where $\Hom_{\mathrm{nr}} (G_{k_v}, \nZ{q})$ is the subset of $\Hom (G_{k_v}, \nZ{q})$
comprising all morphisms that are `non-ramified' at $v$ (that is, vanish on the inertia subgroup) and
$\prod'$ is the restricted product (that is, the subset of the cartesian product comprising all elements $(\varphi_v)_v$ with the property that $\varphi_v$ is non-ramified for all but finitely many $v$). We then consider
\[
\Delta (k, \nZ{q})^\cM_\cT \coloneqq
\coker \big ( \Hom (G_k, \nZ{q} ) \to \big ( {\prod_v}' \Hom (G_{k_v}, \nZ{q} ) \big) / \Lambda (\cM, \cT) \big ) 
\]
where in the restricted product $v$ runs over all places of $k$. 

We now assume to be given a morphism $\phi_0 \: G_k \to \nZ{q}$ that is unramified outside $\cM$ and, for each $v \in \cT$, a morphism $\varphi_v \: G_{k_v} \to \nZ{q}$. We then write $\phi_{0, v}$ for the restriction of $\phi_0$ to $\G_{k_v}$ and define $\eta$ to be the class in $\Delta (k, \nZ{q})^\cM_\cT$ of the collection $(\eta_v)_{v}$ that is defined by setting
\begin{equation}\label{obstruction class def}
\eta_v = \begin{cases}
\phi_{0, v} - \varphi_v \quad & \text{ if } v \in \cT,\\
\phi_{0, v} & \text{ if } v \not \in \cT.
\end{cases}
\end{equation}

\begin{rk}\label{interpret obstruction}
The class $\eta$ is a natural `obstruction' to solving the problem at hand since it vanishes if and only if there exists a morphism $\phi \: G_k \to \nZ{q}$ that is unramified outside $\cM$ and, for all $v \in \cT$, coincides with $\varphi_v$ when restricted to $G_{k_v}$. 
\end{rk}

To investigate the vanishing of $\eta$, we introduce the following group of units:
\[
U^\cM_\cT (q) \coloneqq \{ a \in k^\times \mid \forall v \not \in \cM : \ord_v (a) \equiv 0\!\! \mod q, \, \forall v \in \cM \setminus \cT : a \in (k_v^\times)^q \}. 
\]

\begin{lem}[{\cite[Satz (7.1)]{Neukirch73}}] \label{neukirch obstruction lemma 1}
There is a canonical isomorphism
\[
\Delta (k, \nZ{q})^\cM_\cT \stackrel{\simeq}{\to} \Hom ( U^\cM_\cT (q) / (k^\times)^q, \nZ{q} )
\]
that can be explicitly described as follows. Let $f$ be an element of $\Delta (k, \nZ{q})^\cM_\cT$ represented by a family $(f_v)_v$ of morphisms $f_v \: G_{k_v} \to \nZ{q}$. Then the above isomorphism sends $f$ to 
\[
U^\cM_\cT (q) / (k^\times)^q \to \nZ{q}, \quad 
a \mapsto \sum_v f_v ( \rec_v (a)),
\]
where $\rec_v \: k_v^\times \to G_{k_v}^\mathrm{ab}$ is the local reciprocity map and $G_{k_v}^\mathrm{ab}$ is the abelianisation of $G_{k_v}$. 
\end{lem}

We next use Kummer theory to give a convenient Galois-theoretic description of $\Delta (k, \nZ{q})^\cM_\cT$. To do this, we set
\[
k_1\coloneqq k (\mu_q) 
\quad \text{ and } \quad 
k_2 = k^\cM_{2,\cT} \coloneqq k_1 ( \sqrt[q]{a} \mid a \in U^\cM_\cT (q)). 
\]

\begin{rk} \label{functoriality obstruction extension rk}
The Galois extension $k_2/k_1$ is referred to as the `obstruction extension' by Neukirch and has the following functorial behaviour. 
Let $\cM'$ be a set of places of $k$ that is disjoint from $\cM \cup \cT$, and write $H_{\cM'} $ for the subgroup of $\gal{k^\cM_{2,\cT}}{k_1}$ generated by the collection $\{ \Frob_v \mid v \in \cM'_{k_1} \}$. Then, since every place in $\cM'_{k_1}$ splits completely in $k^{\cM \cup \cM'}_{2,\cT}$, one has $k^{\cM \cup \cM'}_{2,\cT} \subseteq ( k^\cM_{2,\cT})^{H_{\cM'}}$ and $\gal{k^{\cM \cup \cM'}_{2,\cT}}{k_1}$ is a quotient of $\gal{k^\cM_{2,\cT}}{k_1} / H_{\M'}$. (In fact, one has equality in both instances if $p$ is odd, cf.\@ \cite[Satz (7.3)]{Neukirch73}.) 
\end{rk}

The following result is a straightforward extension of \cite[Thm.\@ 7.4]{Neukirch73} that incorporates the case $p = 2$.

\begin{lem} \label{neukirch obstruction lemma 2}
The canonical map
\[
\gal{k_2}{k_1} \to \Hom_\Z ( U^\cM_\cT (q) / (k^\times)^q, \mu_q), 
\quad 
\sigma \mapsto \{ a \mapsto \sigma ( \sqrt[q]{a}) / \sqrt[q]{a} \}
\]
is injective. Its cokernel is of order dividing two and vanishes if $p$ is odd.
\end{lem}

\begin{proof} We write $Q$ for the quotient module $\faktor{( U^\cM_\cT( a) \cap (k_1^\times)^q)}{(k^\times)^q} $ , and consider the tautological exact sequence
\begin{cdiagram}
0 \arrow{r} & Q 
\arrow{r} & \faktor{U^\cM_\cT (q)}{(k^\times)^q} 
\arrow{r} & \faktor{U^\cM_\cT (q)}{( U^\cM_\cT(q) \cap (k_1^\times)^q)}
\arrow{r} & 0.
\end{cdiagram}%
Then, since Kummer theory identifies $\gal{k_2}{k_1}$ with $\Hom_\Z ( \faktor{U^\cM_\cT (q)}{( U^\cM_\cT( a) \cap (k_1^\times)^q)}, \mu_q)$, we can take Kummer duals of the above sequence to obtain an exact sequence 
\begin{cdiagram}
0 \arrow{r} & \gal{k_2}{k_1} \arrow{r} & 
\Hom_\Z \big( \faktor{U^\cM_\cT (q)}{(k^\times)^q}, \nZ{q} \big) 
\arrow{r} & \Hom_\Z \big ( Q, \nZ{q} \big) 
\arrow{r} & 0.
\end{cdiagram}%
This sequence directly proves the first claim and also shows that the order of the cokernel of the given map is equal to the order of $Q$. 

To bound this order, we observe that $Q$ identifies with a subgroup of the kernel  
\[
\ker \big ( \faktor{k^\times}{(k^\times)^q} \to \faktor{k_1^\times}{( k_1^\times)^q} \big ) 
\cong H^1 (\gal{k_1}{k}, \mu_q),
\]
and that, by \cite[Satz (4.8)]{Neukirch73}, the latter cohomology group vanishes if either $q$ is odd or if $k \cap \Q (\mu_{2^n})$ is complex. 

It therefore only remains to consider the case that $q  = 2^n$ and $\sqrt{-1}\notin k$. In this case, the inflation-restriction sequence combines with the aforementioned vanishing result to give an isomorphism
\[
H^1 (\gal{k (\sqrt{-1})}{k}, \mu_4) \cong H^1 (\gal{k_1}{k}, \mu_q). 
\]
In addition, since $\gal{k (\sqrt{-1})}{k}$ is cyclic,  a Herbrand quotient argument implies that  
\[
| H^1 (\gal{k (\sqrt{-1})}{k}, \mu_4) | = | \widehat{H}^0 (\gal{k (\sqrt{-1})}{k}, \mu_4) | = 
| \{ \pm 1 \} | = 2, 
\]  
as required to conclude the proof of the second claim.
\end{proof}

The following observation regarding the field $k_2$ defined above will also be useful later on. 

\begin{lem} \label{max ab subext lemma}
Let $q \coloneqq p^n$ with $n$ a natural number as before, and set
\[
k_3 = k_{3, \cT} \coloneqq k ( \mu_{q}, (\cO_{k, \cT}^\times)^{1 / q}).
\]
Then the following claims are valid. 
\begin{liste}
\item One has $k_2\subseteq k_3$, with equality if every place in $\cM \setminus \cT$ splits completely in $k_3$.
\item Write $s$ for the greatest integer such that $k$ contains a primitive $p^s$-th root of unity and assume $n > s$. Then the maximal abelian extension of $k$ in $k_3$ is equal to $k (\mu_{p^{n + s}}, (\cO_{k, \cT}^\times)^{1 / p^s})$ if $p$ is odd or $\sqrt{-1}\in k$, and is otherwise contained in
$k (\mu_{2^{n + s + 1}}, (\cO_{k, \cT}^\times)^{1 / 2^{s + 1}})$. 
\end{liste}
\end{lem}

\begin{proof} To prove the first part of claim (a) it is sufficient, by Kummer theory, to show that the class of any element of $U^\cM_\cT (q)$ in $k^\times / (k^\times)^q$ is represented by an element of $\cO_{k, \cT}^\times$. To do this, we fix  $a$ in $U^\cM_\cT (q)$ and write $Z (a)$ for the (finite) set of places $v$ of $k$ that are outside $\cT$ and satisfy $\ord_v (a) \neq 0$. Then, since the $\cT$-classgroup of $k$ is assumed to be trivial, for each $v \in Z( a)$, one has that the ideal $v$ is equal to an ideal that is only supported at $\cT$ times an element $\pi_v$ of $k^\times$. It follows that $\pi_v$ belongs to $\cO_{k, \cT \cup \{ v \}}^\times$ and satisfies $\ord_v (\pi_v) = 1$.
By definition of $U^\cM_\cT (q)$, one has $\ord_v (a) \equiv 0 \mod q$ for all $v \in Z (a)$ and so $\pi_a \coloneqq \prod_{v \in Z(a)} \pi_v^{- \ord_v (a)}$ belongs to $(k^\times)^q$. The element   $a\cdot\pi_a$ can then be checked to belong to  $\cO_{k, \cT}^\times$ and have the same class as $a$ in $k^\times / (k^\times)^q$. This proves that $k_2$ is contained in $k_3$, as claimed in (a).\\ 
If, in addition, every place in $\cM \setminus \cT$ splits completely in $k_3$, then the definition of $k_3$ implies that every element of $\cO_{k, \cT}^\times$ must belong to $(k_v^\times)^q$ for every such place $v \in \cM \setminus \cT$. We deduce that $\cO_{k, \cT}^\times$ is contained in $U^\cM_\cT (q)$ in this case, and hence that $k_3$ is contained in $k_2$. This proves the second part of claim (a). \\
To prove claim (b), we write $L'$ for the maximal abelian extension of $k$ in $k_3$. 
 Then $L'$ contains $k_1 = k (\mu_q)$ and so, by  Kummer theory, there exists a subgroup $U$ of $(\cO_{k,\cT}^\times \cdot (k_1^\times)^{q})/ (k_1^\times)^{q}$ such that $\gal{L'}{k_1} \cong \Hom ( U, \mu_{q})$.

Since $L'$ is abelian over $k$, the conjugation action of $\cG_{k_1}$ on $\gal{L'}{k_1}$ is trivial. To make this condition explicit, we fix elements  $\tau \in \gal{L'}{k_1}$ and $\rho \in \cG_{k_1}$, a lift $\widetilde{\rho}$ of $\rho$ to $\gal{L'}{k}$ and an element $a$ of $\cO_{k, \cT}^\times$ that represents a class in $U$. Then, for a suitable integer $n (a,\widetilde{\rho})$, one has 
\[ \widetilde{\rho} (\sqrt[q]{a}) = \zeta_{q}^{n (a,\widetilde{\rho})} \sqrt[q]{ \rho (a)} =\zeta_{q}^{n (a,\widetilde{\rho})} \sqrt[q]{a}.\]
In particular, since $\tilde\rho \circ\tau = \tau\circ\tilde\rho$ in $\gal{L'}{k_1}$, one therefore has
\[ \tilde\rho(\tau(\sqrt[q]{a})) = \tau (\widetilde{\rho} ( \sqrt[q]{a})) = \tau ( \zeta_{q}^{n (a,\widetilde{\rho})} \sqrt[q]{a}) 
= \zeta_{q}^{n (a,\widetilde{\rho})}(\tau ( \sqrt[q]{a})/\sqrt[q]{a}) \sqrt[q]{a} 
 =  (\tau ( \sqrt[q]{a})/\sqrt[q]{a})\tilde\rho(\sqrt[q]{a}),
\]
and so the element $\tau ( \sqrt[q]{a}) /\sqrt[q]{a}$ of $\mu_q \subset k_1$ is fixed by $\tilde\rho$. Since this is true for every element $\rho$ of $\cG_{k_1}$, it follows that $\tau (\sqrt[q]{a}) /\sqrt[q]{a}$ belongs to $k^\times$ and hence must be a $p^s$-th root of unity. 

We next write $\langle a \rangle$ for the subgroup of $k_1^\times / (k_1^\times)^q$ that is generated by $a$, and note that Kummer theory gives a natural isomorphism 
\[
\gal{k_1 (\sqrt[q]{a})}{k_1} \stackrel{\simeq}{\to} \Hom ( \langle a \rangle, \mu_{q}), \quad \tau \mapsto \tau (\sqrt[q]{a}) /\sqrt[q]{a}.
\]
In particular, since the above argument shows that the order of every element in the image of this  isomorphism is at most $p^s$, the order of $\langle a\rangle$ is at most $p^s$, and so the element $a^{p^s}$ belongs to the kernel of the natural map
\[
\theta: k^{\times}/(k^{\times})^{q} \to k_1^\times/(k_1^\times)^{q}.
\]
In addition, $\theta$ is injective if either $p$ is odd or $\sqrt{-1}\in k$, and in all other cases the order of $\ker(\theta)$ divides two (cf. the proof of Lemma \ref{neukirch obstruction lemma 2}).  

For simplicity, we now assume that we are in the first of these cases (and merely note that the second case can be dealt with in the same fashion).

In this case, the above argument shows that $a^{p^s}$ belongs to $(k^{\times})^{q}$ and hence that 
\[ a^{p^s} = b^{q}\]
for some element $b$ of $\cO_{k, \cT}^\times$. One therefore has $\sqrt[q]{a} = \zeta\cdot \sqrt[p^s]{ b}$ for a root of unity $\zeta$ of order dividing $qp^{s} = p^{s + n}$ (so that $\zeta\in k (\mu_{p^{s + n}})$), and hence that 
\[ k_1 (\sqrt[q]{a}) \subset k (\mu_{p^{s + n}},  ( \cO_{k, \cT}^\times)^{1 / p^s}).\]
Since the field $L'$ is generated by all such fields $k_1 (\sqrt[q]{a})$, it must therefore be contained in $k_3 \cap k (\mu_{p^{s + n}}, ( \cO_{k, \cT}^\times)^{1 / p^s}) = k(\mu_{p^{n + s}}, ( \cO_{k, \cT}^\times)^{1 / p^s})$, as required to complete the proof. 
\end{proof}
\vskip 0.1truein

\textit{Proof of Proposition \ref{ultimate neukirch result}:} We are now ready to prove Proposition \ref{ultimate neukirch result}, the notation and hypotheses of which we henceforth assume.
In addition, in this argument we take
\[
q \coloneqq \begin{cases}
p^{s + n} \quad & \text{ if } p \text{ is odd},\\
p^{s + n + 1} & \text{ if } p = 2,
\end{cases}
\]
and use the fields 
\[ k_1 \coloneqq k (\mu_q), \quad k_3 \coloneqq k_1( (\cO^\times_{k, \cT})^{1 / q})\quad \text{and}\quad F \coloneqq K \cdot k_3.\]
It is clear that $L \subseteq k_3$, and Lemma \ref{max ab subext lemma}\,(b) implies $K \cap k_3\subseteq L$. Thus, since the restriction to $L$ of the element $\sigma$ fixed in the statement of Proposition \ref{ultimate neukirch result} is trivial, we can use the canonical isomorphism 
\[
\gal{F}{L} \cong \gal{K}{L} \times \gal{k_3}{L}
\]
to choose an element $\overline{\sigma}$ of $\gal{F}{L}$ whose restrictions to $K$ and $k_3$ are respectively equal to $\sigma$ and the identity automorphism. By Cebotarev's Density Theorem, applied to the Galois extension $F / k$, we can then choose a place $\q$ of $k$ with the property that $\overline{\sigma}$ is equal to $\Frob_\mathfrak{Q}$ for some place $\mathfrak{Q}$ of $F$ lying above $\q$. This condition ensures that $\q$ splits completely in $k_3$ and that the restriction of $\Frob_\q$ to $K$ is equal to $\sigma$. In particular, 
by Lemma \ref{rubin lemma} (and the definition of $k_3$), it follows that the degree of $k ( \q)/ k(1)$ is divisible by $q$.
Now, since $\gal{k ( \q)}{k(1)}$ is isomorphic to a quotient of the unit group $( \cO_k / \q)^\times$, it is cyclic and so this argument implies $\cG_{k (\q)}$ contains an element of order $q$. This fact in turn implies $k (\q)$ contains a subfield $E_0$ that is a cyclic degree $q$ extension of $k$ and so gives rise to a surjective homomorphism
\[
\phi_0 \: G_k \to \gal{E_0}{k} \cong \nZ{q}.
\]
For each place $v$ we denote the restriction of $\phi_0$ to $G_{k_v}$ by $\phi_{0, v}$. 

We next write $\cT'$ for the subset of $\cT$ comprising all places $v$ whose decomposition group $D_v (E_0 / k)$ in $\cG_{E_0}$ has order less than $p^n$ (and so is identified, under the isomorphism $\gal{E_0}{k} \cong \nZ{q}$ fixed above, with a subgroup of $(q p^{-n +1} \Z) / (q \Z)$ 
). We note, in particular, that for each $v \in \cT'$ the restricted morphism $\phi_{0, v}$ is a composite of the form 
\[
\phi_{0, v} \: G_{k_v} \to D_v (E_0 / k) \hookrightarrow \faktor{(q p^{- n + 1} \Z)}{q \Z }
\subseteq \nZ{q}. 
\]

For each place $v$ in $\cT$ we now define a morphism $\varphi_v$ in the following way: 

\begin{itemize}
\item if $v \in \cT\setminus \cT'$, then we set $\varphi_v \coloneqq \phi_{0, v}$; 
\item if $v \in \cT'$, then we take $\varphi_v$ to be the composite 
\[ \cG_{k_v} \to \gal{k^{\mathrm{nr}, n}_v}{k_v} \cong \faktor{( q p^{-n}\Z)}{q \Z} \subseteq \nZ{q},\]
where $k^{\mathrm{nr},n}_v$ denotes the unique non-ramified extension of $k_v$ of degree $p^{n}$ and the first arrow is the canonical projection. 
\end{itemize}

This data gives rise to an obstruction class $\eta = (\eta_v)_{v}$ in the sense of 
(\ref{obstruction class def}) for which one has 
\begin{equation}\label{obs spec} \eta_v = \begin{cases} \phi_{0, v} - \varphi_v, &\text{ if $v \in \cT'$}\\
0, &\text{ if $v \in \cT\setminus\cT'$}\\
 \phi_{0,v}, &\text{ if $v \notin\cT$.}
                          \end{cases}\end{equation}

To analyse this class we set $\cM \coloneqq \cT \cup \{ \q \}$. We also note that, for the fixed  choice of $\q$, one has that the field $k_2 = k^\cM_{2, \cT} := k_1 ( U^\cM_\cT (q)^{1 / q})$  
defined earlier coincides with $k_3$ by Lemma \ref{max ab subext lemma}(a).\\  
Now, since  
 the explicit definition of each of the morphisms $\phi_{0,v}$ and $\varphi_v$ implies $\eta$ is divisible by $2$ in $\Delta (k, \Z / q \Z)^\cM_\cT$, it belongs to the
image of the injective map 
\[ \gal{k_3}{k_1} = \gal{k_2}{k_1} \to \Delta (k, \faktor{\Z}{q \Z})^\cM_\cT\]
defined in Lemma \ref{neukirch obstruction lemma 2}.  Writing $\tau$ for the pre-image of $\eta$ under this map, we claim next that $\tau$ acts as the identity on $L$. 

To verify this, we first show that every element of $\gal{k_3}{k_1}$ of order dividing $p^n$ acts as the identity on $L$. To do this, we fix $\sigma\in \gal{k_3}{k_1}$ with $\sigma^{p^n}$ trivial and $a \in \mathcal{O}_{k,\cT}^\times$. Then, since $L$ is contained in the field $k(p,\cT,n)$ that is generated over $k_1$ by elements of the form $\sqrt[t]{a}$ with $t \coloneqq q p^{-n}$, it suffices to show $\sigma (\sqrt[t]{a}) =\sqrt[t]{a}$. Let $\xi_a$ be the unique $q$-th root of unity with the property that $\sigma ( \sqrt[q]{a}) = \xi_a \sqrt[q]{a}$. Then, since $\xi_a\in k_1$, one has $\sigma(\xi_a) = \xi_a$ and so $\sigma^m ( \sqrt[q]{a}) = \zeta_a^m \sqrt[q]{a}$ for all integers $m$. In particular, since $\sigma^{p^n}$ is trivial, the order of $\xi_a$ divides $p^n$ and so we obtain the required equality via the computation
\[
\sigma ( \sqrt[t]{a} ) = ( \sigma ( \sqrt[q]{a}) )^{p^n} = ( \xi_a  \sqrt[q]{a})^{p^n} 
= \sqrt[t]{a}.
\]

To prove $\tau$ acts as the identity on $L$, it is therefore enough to show that $\tau^{p^n}$ is trivial. Note also that, if $p$ is odd, this will be true if $\eta$ is divisible by $p^s$ in $\Delta (k, \Z / q \Z)^\cM_\cT$, whilst if $p=2$ then the cokernel of $\gal{k''}{k'} \to \Delta (k, \Z / q \Z)^\cM_\cT$ has order at most $2$ and so it will be true if $\eta$ is divisible by $2^{s + 1}$. 
 However, since for each $v \in \cT'$ the morphisms $\phi_{0, v}$ and $\varphi_v$ are, by their construction, divisible by $p^s$ (resp.\@ $p^{s + 1}$ if $p = 2$), it is clear that this condition is satisfied. 
 
Next, we observe that Lemma \ref{max ab subext lemma}(b) implies $K \cap k_3 = L$, and so the canonical isomorphism 
\[
\gal{Kk_3}{L} \cong \gal{k_3}{L} \times \gal{K}{L}
\]
implies the existence of a unique element $g$ of $\gal{K k_3}{L}$ that restricts to $k_3$ and $K$ to respectively give $\tau$ and the element $\sigma$ fixed in Proposition \ref{ultimate neukirch result}.

Now, since $g$ belongs to the abelian subgroup $\gal{K k_3}{Lk_1}$ of $\gal{K k_3}{L}$, 
 we can use Cebotarev's Density Theorem to fix a place $\mathfrak{A}$ of $Lk_1$ with all of the following properties: 
\begin{itemize}
\item $\mathfrak{A}$ has absolute degree one;
\item the rational prime lying beneath $\mathfrak{A}$ is unramified in $K k_3$;
\item the restriction of $\Frob_\mathfrak{A}$ to $Kk_3$ is equal to $g$.
\end{itemize}
The place $\a$ of $k$ lying beneath such a place $\mathfrak{A}$ is then unramified in $K k_3$ and such that the restrictions to $K$ and $k_3$ of $\Frob_\a$ are respectively equal to $\sigma$ and $\tau$. 

Thus, if we set $\cM' = \cT \cup \{ \q, \a \}$, then it follows from Remark \ref{functoriality obstruction extension rk} that the obstruction class $\eta$ vanishes in $\Delta (k, \nZ{q})^{\cM'}_\cT$. As a consequence of Remark \ref{interpret obstruction} and the explicit specification (\ref{obs spec}) of $\eta$, we can therefore deduce the existence of a field $E_1$ that has all of the properties (i), (ii), (iii) and (iv)(b) that are stated in Proposition \ref{ultimate neukirch result}. To complete the proof, it therefore suffices to show there exists such a field $E_1$ that also has the property (iv)(a). \\
To do this, we set $h \coloneqq \ord_p ( |\cl_k| )$ and repeat the above argument with $n$ replaced by $n' \coloneqq h + 2 n$ in order to obtain a cyclic extension $E_1$ of $k$ that has properties (i), (ii), (iii) and (iv)(b) with respect to $n'$ (rather than $n$). In particular, for this field there exists a place $v_1$ in $S(E_1)$ such that the inertia subgroup of $v_1$ in $\cG_{E_1}$ has order at least $p^{2n}$. If $S(E_1) = \{v_1\}$, then this field $E_1$ already has all of the 
required properties. 

We can therefore assume that $S(E_1) = \{v_1,v_2\}$ for some place $v_2\not= v_1$. If the inertia subgroup $I(v_2)$ of $v_2$ in $\cG_{E_1}$ has order at least $p^n$, then the field $E_1$ again has all required properties. On the other hand, if the order of $I(v_2)$ is less than $p^n$, then we need only consider its fixed field $E \coloneqq E_1^{I(v_2)}$ in order to obtain a cyclic extension of $k$ that is unramified outside $v_1$, in which $v_1$ has inertia subgroup of order at least $p^n$, and each place in $\cT$ has decomposition group of order at least $p^n$. In this case, therefore, the field $E$ has all of the required properties (i), (ii), (iii) and (iv). \\
This  concludes the proof of Proposition \ref{ultimate neukirch result}. 
\qed 

\subsubsection{Consequences of the Cebotarev Density Theorem} Our proof of Theorem \ref{p adic restriction sequence} will also rely on the technical consequence of  Cebotarev's Density Theorem that is described in the following result. In this result we use the fields $k(p,\cT,n)$ that are defined in the statement of Proposition \ref{ultimate neukirch result}.

\begin{lem} \label{induction-step-revisited}
Fix a prime $p$ and a finite place $\p$ of $k$. Set $\cT \coloneqq S_p (k) \cup \{ \p \}$  and, for any extension $L$ of $k$ define the field 
\[ L (p) \coloneqq L \cap k (p, \cT)\quad \text{with}\quad k (p, \cT) \coloneqq \bigcup_{n \in \N} k(p, \cT, n).\]
Let $\cX$ be a subset of $\Omega$ that satisfies Hypothesis \ref{hyp X} and $\{ (M_{F, E}, \rho_{F / E}, j_{F / E})\}_{E, F \in \cX}$  
a triangular system that satisfies Hypothesis \ref{inj-hypothesis} and is such that each diagonal term $M_E$ is a $\Z_p[\cG_E]$-lattice. Fix an element $(a_L)_L$ of $ \eullim{L \in \cX}{\Pi} M_{L}$ and assume $F$ is a field in $\cX$ that has the following property: 
\begin{itemize}
    \item for any field $L \in \cX$ with $F \subseteq L$ the element $a_L$ is fixed by $\gal{L}{L(p)}$.
\end{itemize}
Then, for any field $K \in \cX$, the element $a_K$ is fixed by $\gal{K}{K(p)}$ provided that all of the following conditions are satisfied: 
\begin{itemize}[label=$\bullet$]
\item[(i)]$k \subseteq K \subseteq F$.
    \item[(ii)] $S(F) \setminus \Pi(K) = \{\p\}$.
    \item[(iii)] $V_K = V_F$.
\end{itemize}
\end{lem}

\begin{proof} Fix a field $K$ as above and an element $\sigma$ of $\gal{K}{K(p)}$ and write $P(\sigma)$ for the subset of $S_\fin(k)\setminus (\Pi(K) \cup S_p (k))$ comprising places $v$ for which the restriction of $\Frob_v^{-1}$ to $K$ agrees with $\sigma$. Then, for any natural number $t$, Proposition \ref{ultimate neukirch result} implies the existence of a finite cyclic $p$-extension $E$ of $k$ that belongs to $\Omega^{S_\infty (k)} (k)$, 
is unramified outside $P(\sigma)$ and is such that the order of the decomposition subgroup of $\p$ in $\cG_E$ is at least $p^t$. 

We note that, since all archimedean  places split in $E / k$, Hypothesis \ref{hyp X}\,(i) guarantees that both of the composite fields
\[ F' \coloneqq EF\quad \text{and} \quad K' \coloneqq EK\]
belong to $\cX$.  \\
Now, by assumption, one has $a_{F'} \in M_{F'}^{\gal{F'}{F'(p)}}$ and so the Euler limit relations combine with the stated conditions (ii) and (iii) to imply that the element 
\[
  (1 - \Frob_\p^{-1}) \cdot a_{K'}  
= P_{F' / K', \Pi} \cdot a_{K'} = \rho_{F' / K'} (a_{F'})
\]
is fixed by the image $\gal{K'}{K'(p)}$ of $\gal{F'}{F'(p)}$ in $\cG_{K'}$.\\
Let us now write $H$ for the intersection of $\gal{K'}{K'(p)}$ and the subgroup of $\cG_{K'}$ generated by $\Frob_\p$. Then any element $\tau$ of $H$ is a power of $\Frob_\p$ and so $(1 - \tau) \cdot a_{K'}$ is fixed by $H$. However, the element $(1 - \tau)\cdot a_{K'}$ is also annihilated by the norm $\NN_{H}$ and hence, 
since $M_{F'}$ is $\Z$-torsion free, must vanish. It follows that $a_{K'}$ is fixed by $H$.\\
Write $M_{K'}'$ for the full pre-image in $M_{K'}$ of $M_{K}$ under $\rho_{K'/K}$. Then, since $(a_L)_L$ belongs to $ \eullim{L\in \cX}{\Pi} M_{L}$, one has $a_{K'}\in M_{K'}'$ and so Hypothesis \ref{inj-hypothesis}\,(i) implies that 
\[ (\iota_{K'/ K} \circ  \rho_{K'/ K})(a_{K'}) = \NN_{\gal{K'}{K}}\cdot f(a_{K'})\]
for a suitable endomorphism $f$ of $M'_{K'}$. Setting 
\[ n \coloneqq |S(K') \setminus \Pi(K)|,\]
we can therefore deduce that the element 
\begin{align*}
    \iota_{K' / K} \big( (1 - \sigma)^n \cdot a_K\big) 
    & = \iota_{K' / K} \big ( P_{K' / K, \Pi} \cdot a_K \big )\\
   & =  (\iota_{K' / K} \circ  \rho_{K' / K}) (a_{K'}) \\
    & = \NN_{\gal{K'}{K}} \cdot f(a_{K'}) \\
    & = \NN_{\gal{K'}{K}/H} \cdot \NN_H \cdot f(a_{K'}) \\
    & = \NN_{\gal{K'}{K}/H} \cdot |H| \cdot f (a_{K'})
\end{align*}
is divisible by $|H|$ in $M_{K'}$. Note that restriction to $E$ induces an isomorphism
\begin{align*}
 \gal{K'}{K' (p)K} & = 
\gal{K'}{K} \cap \gal{K'}{K'(p)} \cong \gal{E}{E \cap K} \cap \gal{E}{E (p)} \\
& = \gal{E}{E(p)(E \cap K)}.
\end{align*}
Now, as $E$ is unramified at $p$, the field $E(p)$ is contained in the maximal unramified extension of $k$ in $k (p, \cT)$. That is, we can find an integer $s$ that only depends on $K$, $k$, $p$ and $\p$ and is such that, for any value of $t\ge s$ the group $\gal{E}{E(p)(E \cap K)}$ must be contained in the unique subgroup of the (cyclic) group $\cG_E$ that has index $p^s$.
 However, by construction, the decomposition group of $\p$ in $\cG_E$ has order at least $p^t$ and so the order of its intersection with $\gal{E}{E(p)(E \cap K)}$ must be at least  $p^{t - s}$. This implies, in particular, that $|H| \geq p^{t - s}$. \\
We now write $e_{\bm{1}}$ for the idempotent in $\Q [\langle \sigma \rangle]$ that is associated to the trivial character of $\langle \sigma \rangle$. Then $1 - e_{\bm{1}}$ is contained in the augmentation ideal of $\Q [\langle \sigma \rangle]$. Since the latter ideal is generated by $1 - \sigma$, it follows that $1 - e_{\bm{1}} = x(1 - \sigma)$ for some $x \in \Q [\langle \sigma \rangle]$, and hence also 
\begin{align*}
    x^{n - 1} (1 - \sigma)^{n} & = (1 - \sigma) \big ( x (1 - \sigma) \big)^{n - 1} = (1-\sigma) (1 - e_{\bm{1}})^{n - 1} \\
    & = (1 - \sigma) (1 - e_{\bm{1}}) = (1 - \sigma).
\end{align*}
Thus, if we now fix a natural number $z$ such that $z\cdot x\in \Z [\langle \sigma \rangle]$, then this computation combines with the previous discussion to imply that the element 
\[ \iota_{K' / K} \big( z^{n - 1} \cdot (1 - \sigma) \cdot a_K\big) = z^{n - 1} \cdot x^{n - 1}\cdot\iota_{K'/K}((1 - \sigma)^{n} \cdot a_K)\]
is divisible by $p^{t - s}$ in $M_{K'}$. 

Now, since $n\le |S(K')\setminus S(K)| \le |S(E)|$,  property (iii) in Proposition \ref{ultimate neukirch result} implies that $n \le 2$. By taking $t$ large, Hypothesis \ref{inj-hypothesis}\,(ii) therefore implies that $z \cdot (1 - \sigma) \cdot a_K = 0$ and hence, as $M_K$ is torsion free,  that $(1 - \sigma) \cdot a_K = 0$. Since $\sigma$ was an arbitrary element of $\gal{K}{K(p)}$, we have therefore shown that $a_K$ is fixed by $\gal{K}{K(p)}$, as required.
\end{proof}

\subsubsection{Reduction steps}

The following result provides a key reduction step in the proof of Theorem \ref{p adic restriction sequence}. 

\begin{prop} \label{p adic restriction intermediate step}
Fix a prime number $p$ and a subset $\cX$ of $\Omega$ that satisfies Hypothesis \ref{hyp X}.
 Let $\{ (M_{F, E}, \rho_{F / E}, j_{F / E})\}_{E, F \in \cX}$ be a triangular system  that satisfies Hypothesis \ref{inj-hypothesis} and is such that each diagonal term $M_E$ is both $\Z$-torsion-free and 
satisfies $\bigcap_{i \in \N} (p^i M_E) = \{ 0 \}$. Let $\Pi$ and $\Sigma$ be finite subsets of $S_\fin (k)$ and let $\cT$ be a finite set of places of $k$ that contains $ \Sigma \cup S_p (k)$ and is such that the $\cT$-class group of $k$ vanishes. For each extension $L$ of $k$ set  
\[ L(p) \coloneqq L \cap k (p, \cT),\]
where the field $k (p, \cT)$ is as defined in Lemma \ref{induction-step-revisited}. Then the following sequence is exact
\[ 0 \longrightarrow \big ( \eullim{F \in \cX}{\Pi} M_F \big)^{\gal{\cK}{k  (p, \cT)}} \xrightarrow{\subseteq} \eullim{F \in \cX}{\Pi} M_F \longrightarrow
\eullim{F \in \cX_{\Sigma}}{\Pi} \big ( M_F / M_F^{\gal{F}{F(p)}}\big ),\]
 where the third arrow is induced by the restriction map $\res^\Pi_{\Sigma}$.
\end{prop}

\begin{proof}
We need to show that if $a = (a_K)_K$ is any element of $\eullim{K \in \cX}{\Pi} M_K$ for which $a_{K'}$ belongs to $M_{K'}^{\gal{K'}{K' (p)}}$ for every $K'$ in $\cX_{\Sigma}$, then one has $a_K \in M_K^{\gal{K}{K (p)}}$ for every $K$ in $\cX$. \\ 
To do this, we fix such an element $a$ and argue by induction on the quantity 
\[ n (K) \coloneqq |\Sigma \setminus S (K)|.\]

If $n(K) = 0$, then $K / k$ is ramified at every place in $\Sigma$ and so belongs to $\cX_{\Sigma}$. In this case, therefore, the given hypothesis on $a$ directly implies that $a_K$ belongs to $M_K^{\gal{K}{K (p)}}$, as required.\\ 
We therefore now assume $n (K) > 0$ and that the assertion holds true for all fields $L \in \cX$ for which one has $n (L) < n (K)$. Then, since $n (K) > 0$, we may fix a (finite) place $\p$ in $\Sigma \setminus S(K)$. We now apply the result of Lemma \ref{quadratic extension lemma} below, with $\Sigma' = \{\p\}$ to obtain a chain of fields in $\Omega$
\[ k = L_0  \subset L_1\subset \cdots \subset L_t \subset  L\]
with $|S(L_i)\setminus S(L_{i-1})| = 1$ for $1\le i\le t$, $S(L)\setminus S(L_t) =\{\p\}$ and $S(L)\cap S_\infty(k) = \emptyset$.
 We  set $F \coloneqq LK$ and $F_i \coloneqq L_i K$  for each $i$ with $1\le i\le t$ and thereby  obtain a chain of fields in $\cX$ 
\[ K = F_0 \subseteq L_1\subseteq \cdots \subseteq F_t \subseteq F\]
with $|S(F_i)\setminus S(F_{i-1})| \leq  1$ for $1\le i\le t$, $S(F)\setminus S(F_t) = \{\p\}$ and $V_K = V_F$.

We shall now argue by a downwards induction on $i$ (for $0\le i\le t$) that for any field $E$ in $\cX$ containing $F_i$ the element $a_{E}$ belongs to $M_{E}^{\gal{E}{E (p)}}$. We note in particular that, since $F_0 = K$, this result for $i=0$ implies the claimed result. 

For the inductive argument we first take a field $E_t$ in $\cX$ that contains $F_t$ and write $E$ for the field $ E_tF= E_tL$. Then $E$ belongs to $\cX$ and $n(E) < n(K)$ since $\p \in S(F) \subseteq S(E)$, and so the inductive hypothesis implies $a_{E}$ belongs to $M_{E}^{\gal{E}{E (p)}}$. 

In addition, if either $S(E_{t}) = S(E)$ or $\p \in \Pi$, then the Euler factor $P_{\Pi, E/ E_t}$ is trivial and so the Euler limit relations directly imply that $a_{E_{t}} = \rho_{E/ E_{t}} ( a_{E})$ is contained in $M_{E_{t}}^{\gal{E_t}{E_t (p)}}$. On the other hand, if $S(E) \setminus \Pi (E_t) = \{ \p \}$ then, as $V_{E_t} = V_E$, we may apply Lemma \ref{induction-step-revisited} (with $F$ and $K$ taken to be $E$ and $E_t$) to deduce that $a_{E_t}$ is again fixed by $\gal{E_t}{E_t (p)}$, as required. 

This verifies the inductive base and then the inductive step is  established by the same argument with the roles of $F$, $F_t$ and $\p$ played by $F_i$, $F_{i-1}$ and the unique place in $S(F_i)\setminus S(F_{i-1})$ (if such a place exists). This proves the claimed result. \end{proof}

\begin{lem} \label{quadratic extension lemma}
Let $\Sigma'$ be a finite subset of $S_\fin(k)$. Then there exists a non-negative integer $t$ and a chain of fields in $\Omega$ 
    \[ k = L_0  \subset L_1\subset \cdots \subset L_t \subset  L\]
such that $|S(L_i)\setminus S(L_{i-1})| = 1$ for $1\le i\le t$, $S(L)\setminus S(L_t) =  \Sigma'$ and $S(L)\cap S_\infty(k) = \emptyset$.
\end{lem}

\begin{proof} For each finite place $v$ of $k$ and natural number $n$, write $U_v^{(n)}$ for the group of units in the ring of integers $\cO_v$ of the completion $k_v$ that are congruent to $1$ modulo $(v\cO_v)^n$. %
 Then Hensel's Lemma implies that for any integer $\ell$ there exists a natural number $n_v$ with the property that $( U_v^{(n_v)})^\ell = U_v^{(n_v + \ord_v (\ell))}$ (for details see \cite[Ch.\@ II, Lem.\@ 3.5]{Neukirch-cft}). 

 We define a conductor $\mathfrak{m} \coloneqq \prod_{v'}v'\prod_v v^{n_v + \ord_v (2)}$ of $k$, where $v'$ runs over all real archimedean places of $k$ and $v$ over places in $S_2(k)$ that are not in $\Sigma'$, and write $k(\mathfrak{m})$ for the ray class field of $k$ of conductor $\mathfrak{m}$. We fix an odd prime $\ell$ that does not ramify in $k$, set $k[\ell] \coloneqq k(\mu_\ell,(\mathcal{O}_{k}^\times)^{1/\ell})$ and consider the diagram
 \begin{cdiagram}[column sep=tiny, row sep=small]
        & k(\mathfrak{m}) \cdot k[\ell] \ar[dash]{dl} \ar[dash]{dr}\\
        k(\mathfrak{m}) & & k[\ell]\\
        & k. \ar[dash]{ul} \ar[dash]{ur}
    \end{cdiagram}%
Then $\ell$ is prime to $|\mu_k|$ so that the maximal abelian extension of $k$ in $k[\ell]$ is $k(\mu_\ell)$ (see the argument of Lemma \ref{max ab subext lemma}(b)) and so is totally ramified at all places in $S_\ell(k)$. One therefore has $k(\mathfrak{m})\cap k[\ell] = k$. 
In particular, for each element $\sigma$ of $\cG_{k(\mathfrak{m})}$ we can use Cebotarev's Density Theorem to fix a place $\p_\sigma$ of $k$ that is outside $\Sigma'$, splits completely in $k[\ell]$ and has Frobenius on $k(\mathfrak{m})$ equal to $\sigma$. We note that the degree of $k(\p_\sigma)/k(1)$ is divisible by $\ell$ (by Lemma \ref{rubin lemma}) and hence that $k(\p_\sigma)/k$ is ramified at $\p_\sigma$ (but not at any archimedean place). \\
Write $k_{\mathfrak{m},1}$ for the subgroup of $k^\times$ comprising elements $x$ with $|x|_v > 0$ for all real places of $k$ and $x\equiv \, 1\, (\mathrm{mod}\,\,
 v^{n_v + \ord_v (2)}\mathcal{O}_v)$ for all $v$ in $S_2(k)$, and recall that the $\m$-ray class group $\cl_{k, \emptyset, \m}$ of $k$ is defined as the group of fractionals ideals that are coprime to $\m$ modulo its subgroup of principal ideals with a generator in $k_{\mathfrak{m}, 1}$. 
Since the set $\Upsilon \coloneqq  \{\p_\sigma \mid \sigma \in \cG_{k(\mathfrak{m})}\}$ generates $\cl_{k, \emptyset, \m}$, we find that the ideal $\prod_{v \in \Sigma'} \p$ is equal to an ideal only supported on places in $\Upsilon$ times an element $x$ of $k_{\m, 1}$. 
This element $x$ therefore is integral outside $\Sigma' \cup \Upsilon$ and, moreover, has all of the following properties:
\begin{enumerate}[label=$\bullet$]
    \item $\ord_\p (x) = 1$ if $\p \in \Sigma'$,
    \item $x \equiv 1 \, (\!\!\!\!\mod v^{n_v + \ord_v (2)})\,$ if $v$ is $2$-adic and $v \not \in \Upsilon$, 
    \item $|x|_v > 0$ if $v$ is a real archimedean place of $k$.
\end{enumerate}
For any such $x$ the quadratic extension $L' \coloneqq k(\sqrt{x})/k$ is unramified outside $S_2(k) \cup \Sigma' \cup \Upsilon$. Further, since (by construction and the earlier observation concerning principal units) $x$ is a square in $k_v^\times$ for all $2$-adic places $v$ outside  $\Sigma'$, any such $v$ splits completely in $L'$ and so $L' / k$ is unramified outside $ \Sigma' \cup \Upsilon$.
 Finally we note that $L' / k$ is ramified at each place $\p$ in $\Sigma'$ because $x$ is a uniformiser in $k_\p$ for all such $\p$. The field $L'$ is therefore such that $\Sigma' \subseteq S(L) \subseteq \Sigma' \cup \Upsilon$. 
 
We now set $t \coloneqq |\cG_{k(\mathfrak{m})}|$ and fix an ordering $\{\p_j: 1\le j\le t\}$ of the places $\{\p_\sigma: \sigma \in \cG_{k(\mathfrak{m})}\}$. We define $L$ to be the compositum of $L'$ and the fields $\{k(\p_j): 1\le j\le t\}$ and for each integer $i$ with $1\le i\le t$ we write $L_i$ for the maximal extension of $k$ in $L$ that is unramified outside $\{\p_j: 1\le j\le i\}$. This gives a chain of fields of the required sort since $S(L_i) = \{\p_j: 1\le j\le i\}$ for $1\le i\le t$ (since each extension $k(\p_j)/k$ is ramified at precisely $\p_j$) and hence both 
$S(L) = \Upsilon \cup \Sigma'$ and $S(L)\setminus S(L_t) = \Sigma'$. 
\end{proof}

In the remainder of the argument we use the field $k\langle p\rangle$ defined in (\ref{angle field}). 

\begin{lem} \label{p adic restriction sequence another variant}
Assume to be given data of the following sort: 
\begin{itemize}
\item an odd prime number $p$,
    \item a subset $\cX$ of $\Omega$ that satisfies Hypothesis \ref{hyp X}, 
    \item a triangular system $\{ (M_{F, E}, \rho_{F / E}, j_{F / E})\}_{E, F \in \cX}$ that satisfies Hypothesis \ref{inj-hypothesis} with respect to $p$ and is such that each diagonal term $M_E$ is a $\Z_p [\cG_E]$-lattice,
    \item finite subsets $\Pi$ and $\Sigma$ of $S_\fin (k)$.
\end{itemize}
Then the sequence of $\Z_p \llbracket \cG_{\cK} \rrbracket$-modules  
    \begin{cdiagram}
    0 \arrow{r} & \eullim{F \in \cX}{\Pi} M_F^{\gal{F}{F \cap k\langle p\rangle}} \arrow{r}{\subseteq} &  
     \eullim{F \in \cX}{\Pi} M_F \arrow{r}{\res^\Pi_\Sigma} &  \eullim{F \in \cX_{\Sigma}}{\Pi} \big ( M_F / M_F^{\gal{F}{F(p)}}\big )
    \end{cdiagram}%
    is exact, where the fields $F(p)$ are as defined in Proposition \ref{p adic restriction intermediate step}. 
\end{lem}

\begin{proof} 
Fix a family $(a_{F'})_{F'}$ in $\ker(\res^\Pi_\Sigma)$ and a field $F$ in $\cX$. Then, by Proposition \ref{p adic restriction intermediate step},  $a_F$ is fixed by $\gal{F}{F (p)}$ where, we recall, $F(p)$ denotes  $F \cap k (\mu_{p^\infty}, (\cO_{k, \cT}^\times)^{1 / p^s})$ for a suitable finite set of places $\cT$ of $k$ that contains $\Sigma \cup S_p  (k)$, and so it suffices to prove $a_F$ is  fixed by the group $\gal{F (p)}{F \cap k\langle p\rangle}$. 
To do this, we note that  Lemma \ref{max ab subext lemma}\,(b) implies that 
\[ F(p)\cap k (\mu_p, (\cO_k^\times)^{1 / p}) = F \cap k (\mu_{p^\infty}, (\cO_{k, \cT}^\times)^{1 / p^s})\cap k (\mu_p, (\cO_k^\times)^{1 / p}) = F \cap k\langle p\rangle\]
and hence that there is an isomorphism of Galois groups
\[
\gal{F (p) ( \mu_p, (\cO_k^\times)^{1 / p})}{(F \cap k\langle p\rangle)} \cong
\gal{F (p)}{F \cap k\langle p\rangle} \times \gal{k (\mu_p, (\cO_k^\times)^{1 / p})}{(F \cap k\langle p\rangle)}. 
\]
For any $\sigma$ in $\gal{F(p)}{F\cap k\langle p\rangle}$ and natural number $d$, we can therefore use Cebotarev's Density Theorem to fix a set $\Sigma' = \Sigma' (d,\sigma) = \{\mathfrak{q}_i\}_{1\le i\le d}$ of $d$ places of $k$ that do not belong to $\Pi \cup \cT$ and are such that every $\q_i$ has the following properties: $\mathfrak{q}_i$ is unramified in $F$ and such that the restriction of $\rm{Frob}_{\mathfrak{q}_i}$ to $F(p)$ is equal to $\sigma^{-1}$, $\mathfrak{q}_i$ is totally split in $k(\mu_p,(\mathcal{O}_k^\times)^{1/p})$ and hence, by Lemma \ref{rubin lemma}, such that the ramification degree $e_i$ of $\mathfrak{q}_i$ in $k(\mathfrak{q}_i)$ is divisible by $p$. \\
Write $F'_d$ for the compositum of $F$ and the fields $k(\mathfrak{q}_i)$ for $1\le i\le d$, and $I$ for the subgroup of $\gal{F'_d}{F}$ generated by the inertia subgroups in $\G_{F'_d}$ of each of the places $\mathfrak{q}_i$.
 Then, since $F'_d (p) / k$ is unramified at every place in $\Sigma'$, $I$ is contained in $\gal{F'_d}{F'_d (p)}$ and so $a_{F'_d}$ is fixed by $I$. In particular, since $S(F'_d)\setminus \Pi (F) = \Sigma'$, the Euler distribution relations combine with Hypothesis \ref{inj-hypothesis}\,(a) to imply that the element 
\begin{align*}
 \iota_{F'_d / F} \big ((1 - \sigma)^{d} \cdot a_F \big ) & = \iota_{F'_d / F} \big( \bigl(\prod_{v \in \Sigma'}(1- \rm {Frob}_{v}^{-1})\bigr)\cdot a_F \big) =   \iota_{F'_d / F} ( P_{F_d'/F, \Pi}\cdot a_F ) \\
& = (\iota_{F'_d / F} \circ \rho_{F_d'/F, \Pi}) (a_{F'_d}) \\
& = p^d\cdot (\iota_{F'_d / F} \circ \rho_{(F_d')^I/F, \Pi}) ( a_{F'_d})
\end{align*}
is divisible by $p^d$ in $M_{F'_d}$. 
It then follows from Hypothesis \ref{inj-hypothesis}\,(ii) that $(1 - \sigma)^{d} \cdot a_F$ belongs to $p^{d - N}\cdot M_F$ for a natural number $N$ that is independent of $d$. 

Now, since $M_F$ is $\ZZ$-torsion-free, the morphism 
\[
M_F \cong \Z [\langle \sigma \rangle] \otimes_{\Z [\langle \sigma \rangle]} M_F 
\longrightarrow \bigoplus_{\chi} (\Z_p [ \chi] \otimes_{\Z [\langle \sigma \rangle]} M_F)_{\mathrm{tf}},
\quad 
\sigma \otimes m \mapsto ( \chi (\sigma) \otimes m)_\chi
\]
is injective, where the sum ranges over all characters of $\langle \sigma \rangle$, we set $\Z_p [\chi] \coloneqq \Z_p [\im (\chi)]$ and write $N_{\mathrm{ tf}}$ for the quotient of a finitely generated $\Z_p[\chi]$-module by its torsion subgroup. It is therefore enough for us to prove that 
$(1 - \chi (\sigma)) \otimes a_F$ vanishes in $(\Z_p [\chi] \otimes_{\Z [\langle \sigma \rangle]} M_F)_{\mathrm{tf}}$ for all such non-trivial characters $\chi$. To do this, we note that, because $\chi$ is a $p$-power order character, the field  $\Q_p(\chi) \coloneqq \Q_p (\im (\chi))$ is a totally ramified extension of $\Q_p$ with valuation ring $\Z_p[\chi]$ and uniformiser $\pi_\chi \coloneqq 1 - \chi (\sigma)$. In particular, one has $p = u\cdot \pi_\chi^{[\Q_p (\chi) : \Q_p]}$ for some unit $u$ of $\Z_p [\chi]$. 
It then follows from $(1 - \sigma)^{d} a_F \in p^{d - N}\cdot M_F$ and $[\Q_p (\chi) : \Q_p] \geq p - 1$ that 
\begin{align*}
(\chi (\sigma) - 1) \otimes a_F & \in \pi_\chi^{(d - N) [\Q_p (\chi) : \Q_p] - (d - 1)}( \Z_p [\chi] \otimes_{\Z [\langle \sigma \rangle]} M_F)_{\mathrm{tf}} \\
& \subseteq \pi_\chi^{d ( p - 2) - N ( p - 1) + 1}( \Z_p [ \chi] \otimes_{\Z [\langle \sigma \rangle]} M_F)_{\mathrm{tf}}.
\end{align*}
Since, if $p$ is odd, the exponent $d ( p - 2) - N ( p - 1) + 1$ is unbounded as $d$ increases, this shows that $(\chi (\sigma) - 1) \otimes a_F $ is divisible by an arbitrarily large power of $\pi_\chi$ in the $\Z_p [\chi]$-lattice $(\Z_p [\chi] \otimes_{\Z [\langle \sigma \rangle]} M_F)_{\rm{tf}}$. 
It follows that the element $(\chi (\sigma) - 1) \otimes a_F $ of $(\Z_p [\chi] \otimes_{\Z [\langle \sigma \rangle]} M_F)_{\mathrm{tf}}$ vanishes, as required.
\end{proof}

\begin{rk}
\label{p = 2 remark restriction sequence}
If $p = 2$, then the above argument proves the following variant of Lemma \ref{p adic restriction sequence another variant}: for
every element $a = (a_F)_{F \in \cX}$ of $\ker(\res^\Pi_\Sigma)$ and every field $F$ in $\Omega$ one has $a_F = \sigma^2(a_F)$ for all $\sigma$ in   
$\gal{F}{F \cap k (\mu_{2^{s + 1}}, (\cO_{k}^\times)^{1 / 2})}$.  
\end{rk}

\subsubsection{The proof of Theorem \ref{p adic restriction sequence}}\label{prs proof section} 

To complete the proof of Theorem \ref{p adic restriction sequence} we will need the following two lemmas. 

\begin{lem} \label{vanishes on p ramified fields}
    Let $\cX$ be a subset of $\Omega$ that satisfies Hypothesis \ref{hyp X} and fix a triangular system $\{ ( M_{F, E}, \rho_{F / E}, j_{F / E} ) \}_{E, F \in \cX}$ that satisfies Hypothesis \ref{inj-hypothesis} with respect to the prime number $p$. Suppose we have $\bigcap_{ i\in \N} p^i M_E = \{ 0 \}$ for every $E \in \cX$. Fix finite subsets $\Pi$ and $\Sigma$ of $S_\fin (k)$. Let $H$ be an open subgroup of $\cG_{\cK}$. \\
    If $a = (a_E)_E$ is an element of $\eullim{E \in \cX}{\Pi} M_E$ with the property that $a_E$ is fixed by $H$ whenever $E$ belongs to $\cX_\Sigma$, then $a_F$ vanishes whenever $\Pi (F)$ contains $S_p (k)$. 
\end{lem}

\begin{proof}
    Fix a $F$ in $\cX$ with $S_p (k)\subseteq \Pi (F)$ and for every natural number $n$  write $F_n$ for the $n$-th layer of the cyclotomic $\Z_p$-extension $F_\infty$ of $F$.\\
    Then, by Lemma \ref{quadratic extension lemma} (with $\Sigma' = \Sigma$), there  exists an extension $L$ of $k$ such that $\Sigma\subseteq S(L)$ and $S(L)\cap S_\infty(k) = \emptyset$. In particular, for every $n \in \N$ the field $L_n \coloneqq L \cdot F_n$ belongs to $\cX_\Sigma$ and so, by assumption, the element $a_{L_n}$ is fixed by $H$. Since $S (L_n) \setminus \Pi (F_n) = S (L) \setminus \Pi (F)$, the Euler limit relations then show that, for every $n$, the element 
\[
\big( \prod_{v \in S (L) \setminus \Pi (F)} (1 - \Frob_v) \big) \cdot a_{F_n} = \rho_{L_n / F_n} (a_{L_n})
\in M_{F_n}
\]
is fixed by $H$.

Now, since $F_{n+1}/ F_n$ is unramified outside $S_p (k)$, one has $P_{F_{n + 1} / F_n, \Pi} = 1$ and so the Euler limit relations imply the family $a_{F_\infty} \coloneqq (a_{F_n})_{n \in \N}$ defines an element of $\varprojlim_{n \in \N} (\Z_p \otimes_\Z M_{F_n})$, where the limit is taken with respect to the maps $\rho_{F_{n + 1}/F_n}$.\\
In addition, if we fix any non-trivial element $\gamma$ of the open subgroup $\gal{F_\infty}{F} \cap H$ of $\gal{F_\infty}{k}$, then the last displayed equation implies that $a_{F_\infty}$ is annihilated by the element $(\gamma - 1) \cdot \prod_{v \in S(L) \setminus \Pi (F)} (1 - \Frob_v)$ of $\Z_p \llbracket \cG_{F_\infty} \rrbracket$.
In particular, since each of the elements $\gamma$ and $\Frob_v$ for $v \in S (L) \setminus \Pi (F)$ generates an open subgroup of $\cG_{F_\infty}$, we may apply Lemma \ref{some-strange-lemma} below to deduce that $(a_{F_n})_{n \in \N}$ vanishes, and hence that  $a_F$ vanishes, as claimed.
\end{proof}

\begin{lem} \label{some-strange-lemma}
Fix a field $K \in \cX$ and set $K_\infty \coloneqq K k_\infty $, where $k_\infty$ is any $\Z_p$-extension of $k$ in which no finite place splits completely. 
Let $\{ (M_{K_m}, \varphi_{K_m / K_n} ) \}_{n \geq 0}$ be a projective system of $\Z_p [\cG_{K_n}]$-modules (where $K_n$ denotes the $n$-th layer of $K_\infty / K$) that satisfies Hypothesis \ref{inj-hypothesis} with respect to $p$ and $\bigcap_{i \in \N} p^i M_{K_n} = \{ 0 \}$ for every $n \in \N$. Then, for any generator $\gamma$ of an open subgroup of $ \cG_{K_\infty}$, the element  $\gamma - 1$ acts injectively on $\varprojlim_n M_{K_n}$.
\end{lem}

\begin{proof} 
Fix an element $a = (a_n)_n$ of this limit such that $(\gamma - 1) a = 0$. Then each $a_n$ is fixed by the restriction of $\gamma$ to $K_n$. \\
Now fix an integer $N \geq 0$. By assumption, $L \coloneqq K_\infty^{\langle \gamma \rangle}$ is a finite extension of $K$ and so $ K_N L$ is a finite extension of $k$ as well, hence $K_N L = K_n$ for some $n$. In addition, the discussion above implies that, for any $m \geq n$, the element $a_{m}$ is invariant under the action of $\gamma$, hence contained in $M_{K_m}^{\gal{K_m}{K_n}}$. Hypothesis \ref{inj-hypothesis}\,(i) therefore implies that
\[
\iota_{K_m / K_n} (a_n) = (\iota_{K_m / K_n} \circ \varphi_{K_m / K_n}) (a_m) = \NN_{\gal{K_m}{K_n}} \cdot f (a_m) = [K_m : K_n] \cdot f(a_m)
\]
for a suitable endomorphism $f$ of $M_{K_m}$. This shows  $\iota_{K_m / K_n} (a_n)$ 
is divisible by $[K_m : K_n] = p^{m - n}$ in $M_{K_m}^{\gal{K_m}{K_n}}$. Since $p^{m-n}$ is unbounded as $m$ tends to infinity, this fact combines with Hypothesis \ref{inj-hypothesis}\,(ii) to imply $a_n$ is divisible by an unbounded power of $p$ 
in the $\Z_p [\cG_{K_n}]$-lattice $M_{K_n}$, and hence that $a_n$ must vanish. In particular, $a_N = 0$ and, since $N$ was chosen arbitrarily, it follows that $a = 0$, as claimed. 
\end{proof}

\medskip
\textit{Proof of Theorem \ref{p adic restriction sequence}:} In the setting of Theorem \ref{p adic restriction sequence}, we fix a family $a = (a_{F'})_{F'}$ that belongs to $\ker(\res^\Pi_\Sigma)$.
It then follows from Lemma \ref{p adic restriction sequence another variant} that $a$ is fixed by $\gal{\cK}{k \langle p \rangle}$.
Since the latter is an open subgroup of $\cG_\cK$,  Lemma \ref{vanishes on p ramified fields} implies that $a_F$ vanishes for every $F$ in $\Omega$ for which $\Pi (F)$ contains $S_p (k)$. \\
To proceed, we now fix a field $F$ in $\Omega$ and write 
\[ k^{\split} \coloneqq k_{\Pi}^{\split} (F)
\quad \text{and}\quad  k^\unr  \coloneqq  k_{\Pi}^\unr (F) \]
for the composites of all extensions of $k$ in $k\langle p\rangle$ in which at least one place of $S_p (k) \setminus \Pi (F)$ splits completely, respectively is non-ramified. We note, in particular, that
\[ k(F) \coloneqq k_\Pi (F)\subseteq k^\split  \subseteq k^\unr.\] 
We have already observed that $a_F$ vanishes if $F$ is ramified at every place in $S_p (k) \setminus \Pi$, so for the remainder of this argument we may assume that at least one place in $S_p (k) \setminus \Pi$ is unramified in $F$. 
It follows that $F \cap k \langle p \rangle$ is contained in $k^\mathrm{nr} (F)$ and hence that $a_F$ is fixed by $\gal{F}{F \cap k^\mathrm{nr}}$.\\
To prove that $a_F$ is fixed by $\gal{F}{F \cap k^\split}$ it is enough to choose  an arbitrary element $\sigma$ of $\gal{F}{F \cap k^\split}$ and show that $(\sigma  - 1) a_F$ vanishes, or equivalently (by the above observation) that $e_\chi (\sigma  - 1) a_F$ vanishes for every character $\chi$ of $\cG_F$ 
 that factors through $\cG_{F \cap k^\unr}$ but not through $\cG_{F \cap k^\split}$.  \\
We fix such a $\chi$ and claim that there can exist no place $v \in S_p (k) \setminus \Pi$ of $k$ that is both unramified in $F$ and such that $\chi (\Frob_v) = 1$. Indeed, since any such place $v$ splits completely in the fixed field $F_\chi$ of $\ker(\chi)$ in $F$, one would have $F_\chi \cap k^\unr (F) \subset F \cap k^\split$ which contradicts the fact that $\chi$ does not factor through  $\cG_{F \cap k^\split}$. We may therefore assume that $\chi (\Frob_v) \neq 1$ for every $v$ in $S_p (k) \setminus \Pi (F)$.\\ 
Fix an integer $n$ that is large enough to ensure the $n$-th layer $F_n$ in the cyclotomic $\Z_p$-extension of $F$ is such that $S(F_n) = S_p(k)$. Then the $\Pi$-relative Euler system relations imply that 
\[
\chi ( P_{F_n / F,\Pi}) \cdot e_\chi \cdot a_F =
e_\chi \cdot P_{F_n / F,\Pi} \cdot a_F
= e_\chi\cdot \rho_{F_n / F} (a_{F_n}) = 0.\]
In particular, since the element 
\[ \chi ( P_{F_n / F,\Pi}) = \prod_{v \in S_p (k) \setminus \Pi (F)} (1 - \chi (\Frob_v)^{-1})\]
is non-zero, the above equality implies that the element $e_\chi a_F$ vanishes, as required. 
\medskip \\
This completes the proof of Theorem \ref{p adic restriction sequence}.
\qed

\subsubsection{The proof of Corollary \ref{global restriction sequence}}\label{grs proof section} In the setting of Corollary \ref{global restriction sequence}, we suppose to be given an element $a = (a_F)_F$ of $\eullim{F \in \cX}{\Pi} M_F$ with the property that $a_F$ is fixed by $\cG_F$ whenever $F$ belongs to $\cX_\Sigma$. 

Then, by assumption, we may choose a large enough odd prime number $p$ such that all of the following hold: $p$ does not belong to $\cS$, $p$ does not ramify in $k$ and Hypothesis \ref{inj-hypothesis} holds for the triangular system $\{ (M_{F, E}, \rho_{F / E}, j_{F / E}) \}_{E, F \in \cX}$. In particular, $\{ (\Z_p \otimes_\Z M_{F, E}, \rho_{F / E}, j_{F / E}) \}_{E, F \in \cX}$ is then a triangular system that satisfies Hypothesis \ref{inj-hypothesis} and is such that each diagonal term $M_E$ is a $\Z_p [\cG_E]$-lattice.
\smallskip\\
In addition, Lemma \ref{vanishes on p ramified fields} implies that the element $a_F$ is fixed by $\cG_F$ whenever $F$ belongs to $\cX_{S_p(k)}$ and so $a$ belongs to the kernel of the map $\res^\Pi_{S_p(k)}$ that occurs in Theorem \ref{p adic restriction sequence} for the triangular system $\{ (\Z_p \otimes_\Z M_{F, E}, \rho_{F / E}, j_{F / E}) \}_{E, F \in \cX}$. We may therefore apply the latter result to deduce that each element $a_F$ is fixed by the group $\gal{F}{F \cap k_{\Pi} (F)}$. 

Now, the assumption that $p$ does not ramify in $k$ implies that the extension $k (\mu_p) / k$ is totally ramified at all $p$-adic places of $k$ and hence that  $k_{\Pi} (F) = k$ whenever $S_p (k) \setminus \Pi (F)$ is non-empty. For any such $F$, the element $a_F$ is therefore fixed by $\cG_F$. On the other hand, if $S_p(k) \subseteq \Pi (F)$, then we have already seen above that $a_F$ is fixed by $\cG_F$. 

We have therefore proved that the element $a_F$ is fixed by $\cG_F$ for every $F$ in $\cX$, as required to prove Corollary \ref{global restriction sequence}.
\qed

\subsection{The localisation exact sequence} \label{localisation sequence section}

In this section we consider the case that the subset $\Pi$ of $S_\fin(k)$ that is fixed above is empty. We assume to be given both a projective system $\{ (M_F, \varphi_{F / E}) \}_{F \in \cX}$ and a triangular system $\{ (N_{F, E}, \rho_{F / E}, j_{F / E})\}_{E, F \in \cX}$ of $\cR[\cG_F]$-modules. With $e_F$ the idempotent of $\Q[\cG_F]$ defined in (\ref{eK def}), we also assume to be given a family of injective $\cR [\cG_F]$-module homomorphisms $\beta_F \: e_{F} M_F \to N_F$ that gives rise to an (injective) homomorphism of $\cR \llbracket \cG_{\cK} \rrbracket$-modules of the form 
\begin{equation}\label{linking hom}
\beta = (\beta_F)_F \: \eullim{F \in \cX}{\emptyset} e_{F} M_F \to \eullim{F \in \cX}{\emptyset} N_F,
\end{equation}
where the left hand Euler limit is as defined in Example \ref{es remark}\,(d).  

Our aim is then to investigate the composite homomorphism 
\begin{equation}\label{beta ast def} \beta_\ast \: \varprojlim_{F \in \cX} \epsilon_F M_F \to \eullim{F \in \cX}{\emptyset} e_{F} M_F
\xrightarrow{\beta} \eullim{F \in \cX}{\emptyset} N_F,
\quad
(m_F)_F \mapsto (e_{F} m_F)_F \mapsto \beta ((e_{F} m_F)_F) \end{equation}
under the assumption that $(M_{F}, \varphi_{F / E})_{F\in \cX}$ satisfies the following technical hypothesis. 

\begin{hypothesis}\label{rigidity hyp}
For each $E \subseteq F$, there exist injective maps $\iota_{F/E} \: M_E \to M_F$ such that 
$(M_F,\iota_{F/E})_{F \in \cX}$ is an inductive system of $\cR[\cG_{\cK}]$-modules for which both of the  following conditions are  satisfied:
    There exists a natural number $s$ (independent of $F/E$) such that one has both 
    \[ \iota_{F / E} \circ \varphi_{F / E} = \NN_{\gal{F}{E}}^s \,\,\text{  and }\,\, \varphi_{F / E} \circ \iota_{F / E} = [F : E]^s.\]
\end{hypothesis}

\begin{bsp}\label{rigidity-hyp-example-group-rings}
For any Dedekind domain $\cR$, the discussion of Example \ref{inj examples}\,(b) shows that the canonical projective system $(\cR [\cG_E], \pi_{F / E})$ satisfies Hypothesis \ref{rigidity hyp} with $s=1$.
\end{bsp}

We recall the idempotents $\epsilon_K$ defined in (\ref{epsK def}) and write  $\Upsilon^\diamond_K$ for the set of all characters $\chi$ in $\widehat{\cG_K}$ for which $\chi (\epsilon_K) \neq 0$. In other words, $\Upsilon^\diamond_K$ is the set of all characters $\chi$ such that for every place $v \in S_\infty (k)$ one has $\chi (v) = 1$ if and only if $v \in V_K$. We note, in particular, that the set $\Upsilon_{K}$ defined in \S\ref{ES-Gm-section} is contained in $\Upsilon^\diamond_K$, and hence that the idempotent $e_K$ is such that $e_{K} \epsilon_K = e_{K}$. 

We can now state the main result of this section.

\begin{thm}\label{rigidity theorem} 
 We assume to be given data of the following sort: 
 \begin{itemize}
 \item a finite set of prime numbers $\cS$,
     \item a subset $V$ of $S_\infty (k)$,
     \item a subset $\cX$ of $\Omega$ that  satisfies Hypothesis \ref{hyp X} with $\cV = \{ V\}$, 
    \item a projective system of $\Z_\mathcal{S} [\cG_F]$-lattices $\{ (M_F, \varphi_{F / E}) \}_{F \in \cX}$ that satisfies  Hypothesis \ref{rigidity hyp},
    \item a triangular system $\{ (N_{F, E}, \rho_{F / E}, j_{F \ E}) \}_{F, E \in \cX}$
     such that each diagonal term is a $\Z_\cS [\cG_E]$-lattice,
    \item a homomorphism $\beta$ of $\cR \llbracket \cG_{\cK} \rrbracket$-modules of the form (\ref{linking hom}). 
\end{itemize}
Then, for any such collection of data, the following sequence is exact:
\begin{cdiagram}
    0 \arrow{r} & \varprojlim_{F \in \cX} \epsilon_F M_F \arrow{r}{\beta_\ast } &  
    \eullim{F \in \cX}{\emptyset}N_F' 
    \arrow{r}{\Delta} & \prod_{p \not \in \mathcal{S}} \Big ( \frac{ \eullim{F \in \cX}{\emptyset} (\Z_p \otimes_\Z N_F)}{\beta_{\ast}^{(p)} (\ker \theta_p)} \Big).
\end{cdiagram}%
Here, by abuse of notation, we write $\beta_\ast$ for the homomorphism that is induced by the map in (\ref{beta ast def}) (and the equalities $e_{F} \epsilon_F = e_{F}$) and we set $N_F' \coloneqq N_F \cap (\Q\otimes_\Z\beta_{F,\ast}(\epsilon_ FM_F))$. We also write $\Delta$ for the map induced by the diagonal map 
\[ \prod_{F \in \cX} N_F \longrightarrow \prod_{F \in \cX} \Big(\prod_{p\notin \mathcal{S}}(\Z_p\otimes_\Z N_F)\Big)\]
(where the internal direct product is over all rational primes $p$ outside $\mathcal{S}$), $\theta_{p}$ for the canonical boundary map 
\[ \varprojlim
_{F \in \cX}
(\Z_p\otimes_\Z \epsilon_F M_F) \to \varprojlim_{n \in \N}
\bigl({\varprojlim}^1_{F} (\epsilon_F M_F)\bigr)[p^n],
\]
and $\beta^{(p)}_{\ast}$ for the map
\[
\varprojlim_{F \in \cX} (\Z_p \otimes_\Z \epsilon_F M_E) \to \eullim{F \in \cX}{\emptyset} (\Z_p \otimes_\Z N_F) 
\]
that is defined by $\Z_p$-linearly extending each map $\beta_{\ast, F} \: \epsilon_F M_F \to N_F$ induced by $\beta_F$. 
\end{thm}

An application of this result will play a key role in the proof of Theorem \ref{rank one intro} from the introduction. Its proof will be completed in the last paragraph of this section after we have first established several necessary  auxiliary results. 

\subsubsection{Generic Euler limits}

In this paragraph we investigate Euler limits that arise from systems of $\cQ[\cG_F]$-modules.  

\begin{proposition}\label{comparison-diamond-projective-limit}
Let $\cX \subseteq \Omega$ be a subset that satisfies Hypothesis \ref{hyp X} with $\cV = \{ V \}$ for some $V \subseteq S_\infty (k)$. Suppose to be given a projective system $\{(M_F, \varphi_{F / E})\}_{F \in \cX}$ of $\cQ[\cG_F]$-modules that satisfies Hypothesis \ref{rigidity hyp}, and take $\Pi = \emptyset$. Then the following claims are valid. 
\begin{liste}
\item Write $\Delta$ for the map $\varprojlim_{F \in \cX} \epsilon_F  M_F \to \prod_{F \in \cX} e_{\bm{1}} e_F M_F$ sending $(m_F)_F$ to $(e_{\bm{1}} e_F m_F)_F$. Then the following sequence of $\cR\llbracket\cG_{\cK}\rrbracket$-modules 
\begin{cdiagram}[column sep=small]
0 \arrow{r} &   \varprojlim_{F \in \cX} \epsilon_F  M_F \arrow{rrrr}{(a_F)_F \mapsto (e_{F} a_F)_F} & & & &
\eullim{F \in \cX}{\emptyset} e_{F} M_F \arrow{rrrr}{(b_F)_F\mapsto (e_{\bm{1}}b_F)_F} & & & & \faktor{ \big( \prod_{F \in \cX} e_{\bm{1}}e_{F}M_F \big) }{\im(\Delta)},
\end{cdiagram}
is exact, where the Euler limit is as defined in Example \ref{es remark}\,(d).
\item Let $\cR \to \cR'$ be an injective morphism of subrings of $\CC$ and set $M_F' \coloneqq \cR'\otimes_\cR M_F$ for each $F \in \cX$. Then the following sequence is exact
\begin{cdiagram}[column sep=small]
0 \arrow{r} & \big (\prod_{F \in \cX}\epsilon_FM_F\big) \cap \big( \varprojlim_{F \in \cX} \epsilon_F  M'_F \big) \arrow{r}{\subseteq} & \varprojlim_{F \in \cX} \epsilon_F  M'_F
\arrow{rrrr}{(a_F)_F \mapsto (e_{F} a_F)_F} & & & & \eullim{F \in \cX}{\emptyset} \big( \faktor{(e_{F} M'_F)}{(e_{F}M_F)} \big),
\end{cdiagram}
where the Euler limit is as defined in Example \ref{es remark}\,(d).
\end{liste}
\end{proposition}
\begin{proof} To prove part (a) we recall (from Example \ref{es remark}\,(d)) that the second arrow is indeed a well-defined homomorphism of $\cR\llbracket\cG_{\cK}\rrbracket$-modules. It is also clear that the image of the second map is contained in the kernel of the third map.  
To prove part (a) it is therefore enough to show that the canonical map
\begin{equation} \label{canonical map}
 \varprojlim_{F \in \cX} \epsilon_F  (1 - e_{\bm{1}}) M_F
 \to 
\eullim{F \in \cX}{\emptyset} (1 - e_{\bm{1}}) e_{F} M_F, 
\quad (a_F)_F \mapsto (e_{F} a_F)_F
\end{equation}
is an isomorphism. To do this, we introduce the following notation: let $H$ be the Hilbert class field of $k$ and write $\widehat{\cG_H} / \sim$ for the set of conjugacy classes of the natural action of $G_\Q$ on  $\widehat{\cG_H}$. For each non-trivial $\chi$ in $\widehat{\cG_H} / \sim$, we  fix a place $\p_\chi$ in $S_\fin(k)$ such that $\chi (\p_\chi) \neq 1$ and [$k(\p_\chi) : H] > 1$. If now $F \in \cX$ is a field and $\chi \in \widehat{\cG_F}$ an unramified character, then we set $F_\chi \coloneqq k(\p_\chi) F^{\ker (\chi)}$. For any ramified character $\chi$ of $\cG_F$ we let $F_\chi \coloneqq F^{\ker (\chi)}$. Then, in all cases, the field $F_\chi$ belongs to $\cX$ as a consequence of Hypothesis \ref{hyp X}\,(ii).\medskip \\
We shall now prove that the assignment
\[
 (a_F)_F \mapsto (\widetilde{a}_F)_F \coloneqq \Big (a_F + \sum_{\chi \in \Upsilon^\diamond_F \setminus (\Upsilon_{F}\cup \{\bm{1}_F\})}  [FF_\chi : F_\chi]^{-s} e_\chi (\varphi_{F F_\chi / F} \circ \iota_{FF_\chi / F_\chi}) (a_{F_\chi}) \Big)_F,
\]
 
defines an inverse to the map (\ref{canonical map}). 
To do this, we first claim that
\begin{equation} \label{character-decomposition}
 (1 - e_{\bm{1}}) a_F = \sum_{\chi \in \Upsilon_{F} \setminus \{ \bm{1}_F \} }  [FF_\chi : F_\chi]^{-s} e_\chi (\varphi_{F F_\chi / F} \circ \iota_{FF_\chi / F_\chi}) (a_{F_\chi}) 
\end{equation}
for every field $F \in \cX$. Let $\chi \in \Upsilon_{F}$ be a non-trivial character and note that
\[ e_\chi a_F = e_\chi \varphi_{F F_\chi / F} (a_{F F_\chi})\]
because the Euler factor $P_{F F_\chi / F,\emptyset}$ is either 1 or $1 - \Frob_{\p_\chi}^{-1}$.
Similarly, since $\chi \in \Upsilon_F$, we also have $\chi (P_{FF_\chi / F_\chi,\emptyset}) \neq 0$ and this implies that 
\[ e_\chi a_{F_\chi} = e_\chi \varphi_{FF_\chi / F_\chi} (a_{FF_\chi}).\]
We may thus calculate
\begin{align} \nonumber
    e_\chi a_F  =\,& e_\chi \varphi_{F F_\chi / F} (  a_{F F_\chi}) \\ \nonumber 
     =\,& [FF_\chi : F_\chi]^{-s} e_\chi \varphi_{F F_\chi / F} ( \NN_{\gal{FF_\chi}{F_\chi}}^s a_{FF_\chi}) \\ \nonumber
    =\,&[FF_\chi : F_\chi]^{-s} e_\chi (\varphi_{F F_\chi / F} \circ \iota_{FF_\chi / F_\chi} \circ \varphi_{FF_\chi / F_\chi}) (a_{FF_\chi})) \\ \label{character-part}
    =\,&[FF_\chi : F_\chi]^{-s} e_\chi (\varphi_{F F_\chi / F} \circ \iota_{F F_\chi / F_\chi}) (a_{F_\chi}),
\end{align}
where the third equality is a consequence of  Hypothesis \ref{rigidity hyp}\,(i). We have hence shown that $\widetilde{a}_F$ is equal to the sum $\sum_{\chi \in \Upsilon^\diamond_F \setminus \{ \bm{1}_F \}} 
[FF_\chi : F_\chi]^{-s} e_\chi (\varphi_{FF_\chi / F} \circ \iota_{FF_\chi / F_\chi}) (a_{F_\chi})$.\\
It is clear by construction that $e_F \widetilde{a}_F = a_F$ and also, given any element $b$ of $\varprojlim_{F \in \cX} (1 - e_{\bm{1}}) \epsilon_F M_F$, that one has $(\widetilde{(e_F b_F)_F})_F = b_F$.
It therefore remains to show that $(\widetilde{a}_F)_F$ defines an element in $\varprojlim_{F \in \cX} (1 - e_{\bm{1}}) \epsilon_F M_F$, and to do this we suppose to be given a pair of fields $E, F \in \cX$ such that $E \subseteq F$. One then has 
\begin{align*}
    \varphi_{F / E} (\widetilde{a}_F) & = \varphi_{F / E} \Big ( 
    \sum_{\chi \in \Upsilon^\diamond_F \setminus \{ \bm{1}_F \} } 
[FF_\chi : F_\chi]^{-s} e_\chi (\varphi_{F F_\chi / F} \circ \iota_{FF_\chi / F_\chi}) (a_{F_\chi})  \Big) \\
& = \sum_{\chi \in \Upsilon^\diamond_F \setminus \{ \bm{1}_F \}} 
[FF_\chi : F_\chi]^{-s} \pi_{F / E}(e_\chi) (\varphi_{FF_\chi / E} \circ \iota_{FF_\chi / F_\chi}) (a_{F_\chi}) \\
& = \sum_{\chi \in \Upsilon^\diamond_F \setminus \{ \bm{1}_F \}} 
[FF_\chi : F_\chi]^{-s} [FF_\chi : EF_\chi]^s \pi_{F / E}(e_\chi) (\varphi_{EF_\chi / E}\circ \iota_{EF_\chi / F_\chi}) (a_{F_\chi}) \\
& = \sum_{\chi \in \Upsilon^\diamond_E \setminus \{ \bm{1}_E \}} 
[EE_\chi : E_\chi]^{-s} e_\chi (\varphi_{EE_\chi / E} \circ \iota_{EE_\chi / E_\chi}) (a_{E_\chi}),
\end{align*}
where the third equality is a consequence of Hypothesis \ref{rigidity hyp}\,(i) and the last line uses the fact  that $\pi_{F / E} (\epsilon_F)= \epsilon_E$ since $E$ and $F$ are both contained in $\cX$.\medskip \\
Turning to part (b), it is enough to prove that 
\begin{equation} \label{aim in this proof}
\Big(\prod_{F\in \cX} ( \epsilon_F M_F + (1 - e_F)\epsilon_F M'_F)\Big) \cap\varprojlim_{F \in \cX} \epsilon_F M'_F \subseteq \prod_{F\in \cX} \epsilon_F M_F. 
\end{equation}
To verify this, we fix a field $F \in \cX$ and introduce the following notation. 
For any field $E \in \Omega$ we let $\Xi (E) \subseteq \widehat{\cG_F}$ be the (possibly empty) subset comprising all characters $\chi$ such that $F_\chi = E$. Note that by construction each field $F_\chi$ depends only on the class of $\chi$ in $\faktor{\Upsilon^\diamond_F}{\sim}$ and hence that
 $\Xi (E)$ is stable under the action of $G_\Q$. This implies that the associated idempotent $e_{\Xi (E)} = \sum_{\chi \in \Xi (E)} e_\chi$ belongs to $\Q [\cG_F]$.\\
To investigate the $F$-component $m_F$ of an element $m = (m_E)_E$ of the left hand side of (\ref{aim in this proof}) we use the decomposition 
\begin{align} \nonumber
    m_F & = 1\cdot m_F = \Big (\sum_{\chi \in \widehat{\cG_F}} e_\chi\Big)\cdot m_F = \sum_{\chi \in \widehat{\cG_F}} e_\chi m_F \\  \nonumber
    & = \sum_{\chi \in \Upsilon_F^\diamond} [FF_\chi : F_\chi]^{-s} e_\chi (\varphi_{F F_\chi / F} \circ \iota_{F F_\chi / F_\chi}) (m_{F_\chi}) \\ \nonumber
    & = \sum_{E \in \cX}\Big(\sum_{\chi\in \Xi (E)}[FE : E]^{-s} e_\chi (\varphi_{FE / F} \circ \iota_{FE / E}) (m_{E}) \Big) \\ \label{lambda-decomposition}
    & = \sum_{E \in \cX} [FE : E]^{-s} e_{ \Xi (E)}  (\varphi_{FE / F} \circ \iota_{FE / E}) (m_{E}),
    \end{align}
where the fourth equality follows from (\ref{character-part}) and the fifth from the fact that each field $F_\chi$ belongs to $\cX$.\medskip \\
We shall now use an induction on the number $n(F)$ of (non-archimedean) prime divisors of the conductor of $F$ to show that $m_F\in \epsilon_F M_F$ for all $F$. \\
Let us first assume that $F$ has prime-power conductor. 
From (\ref{lambda-decomposition}) we see that it is enough to show that $e_{\Xi (E)} m_E \in \epsilon_E M_E$ for all
fields $E$ which are of the form $F = E_\chi$ for some character $\chi \in \Upsilon^\diamond_F$. Fix such a field $E$.
Now, $\chi \in \Upsilon_E$ by the construction of $E = F_\chi$ (which in the case that $\chi$ is unramified involves the choice of a prime ideal with full decomposition group in the kernel field of $\chi$) and so $e_\chi (1 - e_E) \epsilon_E = 0$. By assumption $m_E$ belongs to $\epsilon_E M_E + (1 - e_E) \epsilon_E M'_E$ and so we obtain $ e_{\Xi (E)} m_E \in \epsilon_E M_E$, as claimed. \medskip \\
Now assume to be given a natural number $n$ and suppose that for every field $E$ in $\cX$ such that $n (E) \leq n$ one has that $m_E \in \epsilon_E M_E$. Fix a field $F$ in $\cX$ such that $n (F) = n + 1$.
Let $E$ be of the form $F_\chi$ for some $\chi \in \Upsilon_F^\diamond$. If $\chi$ is ramified, then $F_\chi$ is the kernel field of $\chi$ and, in particular, a subfield of $F$. Clearly, we therefore have $n (E) \leq n(F)$. If $\chi$ is unramified, on the other hand, then $F_\chi$ is defined to be a prime-power conductor field and so $n(E) = 1 < n(F)$. In both cases therefore $n (E) \leq n(F)$. \\
If $n(E) < n(F)$, then, by the induction hypothesis, one has that $m_E \in \epsilon_E M_E$. On the other hand, if $n(E) = n(F)$, and $y_F \in \epsilon_F M_F$ and $i_F \in (1 - e_{F}) \epsilon_F M'_F$ are such that $m_F = y_F + i_F$, then, by reversing the calculation in (\ref{character-part}), one has
\begin{align*}
 [FE : E]^{-s} e_{ \Xi (E)}  (\varphi_{FE / F} \circ \iota_{FE / E}) (m_{E}) & 
 =\sum_{\chi\in \Xi (E)} e_\chi \cdot m_{F} = \sum_{\chi\in \Xi (E)}  e_\chi(y_F + i_F) \\
&  = \sum_{\chi\in \Xi (E)} e_\chi y_F = e_{\Xi(E)} y_F,
\end{align*}
where the third equality is valid since, under the present hypothesis, 
each $\chi$ in $\Xi (E)$ is not trivial on the decomposition group of any prime divisor of the conductor of $F$ so that one has  
 $e_\chi = e_\chi e_{F}$ and hence also $e_\chi (i_F) = 0$. 
 
These observations imply that the element $e_{ \Xi (E)}  (\varphi_{FE / F} \circ \iota_{FE / E}) (m_{E})$ belongs to $\epsilon_EM_E$ for every subfield $E$ of $L$ that is of the form $F_\chi$ for some $\chi \in \Upsilon_F$ and hence, via the decomposition (\ref{lambda-decomposition}), that $m_F$ belongs to $\epsilon_FM_F$, as required to complete the proof of part (b).
\end{proof}

\subsubsection{Profinite completions}\index{profinite completion}
\label{profinite completions section}

In the setting of Theorem \ref{rigidity theorem}, we now assume to be given an element $a$ of the Euler limit $\eullim{F \in \cX}{\emptyset} N_F'$ and consider the modules  
\begin{align*}
M & \coloneqq \varprojlim_{F \in \cX} \epsilon_F M_F,
&  M(a) & \coloneqq \eullim{F \in \cX}{\emptyset} ( \beta_{\ast, F} (\epsilon_F M_F) + \Z_\cS [\cG_F] a_F), & 
X & = X(a) \coloneqq M(a)/\beta_\ast (M). 
\end{align*}

Then, to prove Theorem \ref{rigidity theorem}, we must show that $X$ vanishes if, for all primes $p \not \in \mathcal{S}$ and all fields $F \in \cX$, the element $a_F$ belongs to a suitable submodule of $\Z_p \otimes_\Z N_F$. \medskip \\ 
In this paragraph we use completion functors to study the vanishing of $X$.\\
To be more precise, for any $\ZZ$-algebra $H$ and $H$-module $A$, we shall use the modules 
\[ \widehat{A} \coloneqq \varprojlim_{n \in \N} A/(nA) \quad \text{ and } \quad
\widehat{A}^p \coloneqq \varprojlim_{n \in \N} A/(p^nA).\]
Observe that the assignment $A \mapsto \widehat{A}$ (resp.\@ $A \mapsto \widehat{A}^p$) defines a functor from the category of $H$-modules to the category of $\widehat{H}$-modules (resp.\@ $\widehat{H}^p$-modules). For the reader's convenience, we recall some useful properties of these functors in the following result.  

\begin{lemma}\label{completion-lemma-revisited}  For an $H$-module $A$ and prime $p$ the following claims are valid. 
    \begin{liste}
    \item If $A$ is finitely generated as an abelian group, then the natural maps  $A \otimes_\ZZ \widehat{\ZZ} \to \widehat{A}$ and $\Z_p\otimes_\Z A\to \widehat{A}^p$   are bijective.
     \item{$\widehat{(-)}^p$ is an idempotent functor.}
    \item{If $A$ is $\Z$-torsion-free, then every short exact sequence $0 \to A_1 \to A_2 \to A \to 0$ of $H$-modules induces an exact sequence $0 \to \widehat{A_1} \to \widehat{A_2} \to \widehat{A} \to 0$ of $\widehat{H}$-modules and similarly for the functor $\widehat{(-)}^p$,}
        \item{If $A$ is $\Z$-torsion-free, then so too are the groups $\widehat{A}$ and $\widehat{A}^p$.}
        \end{liste}
\end{lemma}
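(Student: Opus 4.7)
The plan is to treat each claim separately, though several share the theme of reducing to elementary inverse-limit manipulations. Throughout, the key technical inputs are the interaction between the tower $(A/p^nA)_n$ and multiplication by $p$, together with the Mittag-Leffler principle. The module structure over $\widehat{H}$ or $\widehat{H}^p$ will be automatic from the compatibility of the relevant maps with the $H$-action, so I shall focus only on the underlying abelian-group statements.

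For claim (a), the first step will be to invoke the structure theorem for finitely generated abelian groups in order to reduce to the two cases $A = \ZZ$ and $A = \ZZ/m\ZZ$, since both $(-) \otimes_\ZZ \widehat{\ZZ}$ (resp.\ $(-) \otimes_\ZZ \ZZ_p$) and $\widehat{(-)}$ (resp.\ $\widehat{(-)}^p$) commute with finite direct sums. The case $A = \ZZ$ is tautological, and the cyclic case is immediate: the inverse system $\{(\ZZ/m\ZZ)/n(\ZZ/m\ZZ)\}_n$ stabilises to $\ZZ/m\ZZ$, while tensoring $\ZZ/m\ZZ$ with $\widehat{\ZZ}$ yields $\widehat{\ZZ}/m\widehat{\ZZ} \cong \ZZ/m\ZZ$, and likewise with $\ZZ_p$.

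For claim (c), torsion-freeness of $A$ gives $A_1 \cap p^n A_2 = p^n A_1$ inside $A_2$, producing, for each $n$, a short exact sequence of $H$-modules
\begin{equation*}
0 \to A_1/p^n A_1 \to A_2/p^n A_2 \to A/p^n A \to 0,
\end{equation*}
and similarly modulo $n$. Since the transition maps in the system $(A_1/p^n A_1)_n$ are surjective, the Mittag-Leffler condition holds and passage to the inverse limit preserves exactness, proving the claim.

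For claims (b) and (d), the plan is to set $B \coloneqq \widehat{A}^p$, denote by $\pi_n \colon B \to A/p^n A$ the canonical projection, and first establish that $\ker(\pi_n) = p^n B$. The inclusion $p^n B \subseteq \ker(\pi_n)$ is immediate, while the reverse is obtained by choosing, for a given $b = (b_m) \in \ker(\pi_n)$, a compatible system of lifts $c_m \in A/p^{m} A$ with $p^n c_m = b_m$; the compatibility of $(b_m)$ translates into compatibility of $(c_m)$, yielding an element $c \in B$ with $p^n c = b$. This identification gives $B/p^n B \cong A/p^n A$ canonically, and applying $\varprojlim_n$ yields $\widehat{B}^p \cong B$, which is claim (b). For (d), if $b = (b_n) \in B$ satisfies $pb = 0$, then $p b_{n+1} \in p^{n+1} A$, and torsion-freeness of $A$ forces $b_{n+1} \in p^n A$, so $b_n = 0$ for every $n$; thus $B$ has no $p$-torsion, and being a $\ZZ_p$-module it is therefore $\ZZ$-torsion-free. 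For $\widehat{A}$, the Chinese Remainder Theorem gives $A/nA \cong \prod_{p \mid n} A/p^{v_p(n)} A$, hence $\widehat{A} \cong \prod_p \widehat{A}^p$, and torsion-freeness of each factor passes to the product. The one step that will require care, and is the only input not purely formal from the definitions, is the identification $\ker(\pi_n) = p^n B$ used above.
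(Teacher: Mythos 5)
Your treatments of (a), (c) and (d) are sound. For (c) you use exactly the paper's route (the paper extracts the mod-$n$ short exact sequences via the snake lemma where you use the identity $A_1\cap nA_2=nA_1$, but this is the same content, followed by the same Mittag--Leffler step), the reduction in (a) is the standard argument behind the paper's ``well-known'', and your decomposition $\widehat{A}\cong\prod_p\widehat{A}^p$ via CRT for (d) is a legitimate variant of the paper's direct computation with the components $x_{np}$.

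The genuine gap is in (b), at precisely the step you flag: the identity $\ker(\pi_n)=p^n\widehat{A}^p$. Claim (b) is asserted for an \emph{arbitrary} $H$-module $A$, and for general $A$ your lifting argument fails: given $b=(b_m)$ with $b_n=0$ one can indeed choose, for each $m$, some $c_m$ with $p^nc_m=b_m$ in $A/p^mA$, but the compatibility of $(b_m)$ does \emph{not} translate into compatibility of the $(c_m)$ — two admissible choices of $c_m$ differ by a class in $\{a\in A: p^na\in p^mA\}/p^mA$, and when $A$ has unbounded $p$-torsion these discrepancies cannot be removed term by term; one must correct the lifts recursively, which is exactly the content of the theorem of Matlis that the paper cites (equivalently, of the standard proof that completion at a finitely generated ideal is idempotent). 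Your construction can be repaired when $A$ is $\Z$-torsion-free (define $c_m$ from $b_{m+n}$: if $p^na\equiv p^na'\bmod p^{m+n}A$ then $a\equiv a'\bmod p^mA$, so $c_m:=a+p^mA$ is well defined and compatible), but that hypothesis is not available in (b). If you want a direct proof valid for all $A$, one option is: the surjection $A\to p^nA$, $a\mapsto p^na$, induces surjections $A/p^mA\to p^nA/p^{m+n}A$ whose kernels $(A[p^n]+p^mA)/p^mA$ have surjective transition maps, so by Mittag--Leffler the map $\widehat{A}^p\to\varprojlim_m\bigl(p^nA/p^{m+n}A\bigr)$ is surjective; left exactness of $\varprojlim$ applied to $0\to p^nA/p^mA\to A/p^mA\to A/p^nA\to 0$ and cofinality of the two filtrations of $p^nA$ identify the target with $\ker(\pi_n)$, and since the resulting composite $\widehat{A}^p\to\ker(\pi_n)\subseteq\widehat{A}^p$ is multiplication by $p^n$, one gets $\ker(\pi_n)=p^n\widehat{A}^p$. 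Alternatively, simply quote Matlis, as the paper does.
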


\begin{proof} Claim (a) is well-known. In addition, claim (b) is both straightforward to prove directly and also follows immediately from the general result \cite[Th.\@ 15]{matlis} of Matlis (since $\widehat{A}^p$ is equal to the completion of the $\ZZ$-module $A$ at the ideal  generated by $p$).

For both claims (c) and (d), it it enough to consider the functor $A \to \widehat{A}$. To prove claim (c) in this case we note first that, since $A$ is torsion-free, for each natural number $n$ the Snake Lemma applies to the following exact commutative diagram  

 \begin{cdiagram} 0 \arrow[r] & A_1\arrow[r, "\psi"]\arrow[d, "n"] & A_2 \arrow[r, "\phi"]\arrow[d,"n"] & A \arrow[r]\arrow[d,"n"] &0\\
0 \arrow{r} & A_1 \arrow[r,"\psi"] & A_2 \arrow[r,"\phi"] & A \arrow[r] &0 \end{cdiagram}%
to give an exact sequence $0 \to A_1/nA_1 \xrightarrow{\psi/n} A_2/nA_2 \xrightarrow{\phi/n} A/nA \to 0$. It is then enough to note that the latter sequences are compatible (with respect to the natural projection maps) as $n$ varies and that, by the Mittag--Leffler criterion, exactness of the sequences is preserved when one passes to the inverse limit over $n$ since, for each multiple $m$ of $n$, the projection map $A_1/mA_1 \to A_1/nA_1$ is surjective. 

Finally, to prove claim (d) we must show that if $x = (x_n)_n$ is an element of $\widehat{A}$ with the property that $px = 0$ for some prime $p$, then $x=0$. But, since $A$ is torsion-free, for each $n$ the element $x_{np}$ is the image in $A/(npA)$ of an element $\hat x_{np}$ of $nA$. Since $x_n$ is equal to the image of $\hat x_{np}$ in $A/(nA)$ one therefore has $x_n = 0$, as required. \end{proof}

We can now state the main result of this paragraph.

\begin{proposition}\label{profinite-injective-revisited}
    The natural composite homomorphism of $R$-modules
    \begin{align*}
        X \to \widehat{X} \to \prod_{p \not \in \mathcal{S}} \widehat{X}^p,
    \end{align*}
 where the product runs over all rational primes, is injective.
\end{proposition}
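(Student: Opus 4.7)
The plan is as follows. We first dispose of the second map, which is in fact an isomorphism. For any abelian group $A$ and positive integer $n = \prod_i p_i^{a_i}$, the Chinese Remainder Theorem gives a canonical decomposition $A/nA \cong \prod_i A/p_i^{a_i}A$; passing to the inverse limit over $n$ yields the well-known identification $\widehat{A} \cong \prod_p \widehat{A}^p$ where $p$ runs over all primes. Applied to $A = X$, and using that each prime $p \in \mathcal{S}$ is invertible in the $\Z_\mathcal{S}$-algebra $R$ so that $\widehat{X}^p = 0$ for such $p$, this shows that the second map is an isomorphism onto $\prod_{p \notin \mathcal{S}} \widehat{X}^p$. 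The proposition therefore reduces to proving that $X \to \widehat{X}$ is injective, equivalently that $\bigcap_{n \geq 1} nX = 0$.

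To tackle this, for each $F \in \Omega^V$ let $Z_F \subseteq N_F$ denote the image of $\beta_*(M)$ under the $F$-component projection. The induced homomorphism $\pi_F \colon X \to N_F/Z_F$ has image a cyclic $\epsilon_F\Z_\mathcal{S}[\cG_F]$-module $X_F$, generated by $a_F + Z_F$; being a quotient of the finitely generated $\Z_\mathcal{S}$-lattice $N_F$, it is itself finitely generated over the principal ideal domain $\Z_\mathcal{S}$. By the structure theorem for such modules, together with Krull's intersection theorem applied in each localisation $\Z_\mathcal{S}$-at-$p$ for $p \notin \mathcal{S}$, one concludes that $\bigcap_n nX_F = 0$. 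Since $\pi_F(nX) \subseteq nX_F$, any class $\bar{x} \in \bigcap_n nX$ must then vanish under every $\pi_F$.

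The main obstacle is now to show that if $x \in M(a)$ has $x_F \in Z_F$ for every $F \in \Omega^V$, then $x$ lies in $\beta_*(M)$. This is where the hypotheses of the theorem enter decisively. The injectivity of $\beta$ permits one to define $y_F := \beta^{-1}(x_F) \in e_{F,S} M_F$ for each $F$ (a priori in the rationalisation, though membership in $e_{F,S}M_F$ is forced by $x_F \in Z_F$ together with the componentwise injectivity of $\beta$), and Proposition \ref{comparison-diamond-projective-limit}\,(a) identifies $\alpha(M) \subseteq \eullim_{F \in \Omega^V}{}_\emptyset\, e_{F,S}M_F$ as the kernel of an explicit comparison map. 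The Euler factor compatibility inherited from $x \in \eullim N_F'$ then translates directly into the compatibility demanded by this kernel characterisation, yielding $y := (y_F)_F \in \alpha(M)$ and hence $x = \beta(y) \in \beta_*(M)$. The chief technical delicacy is verifying that the kernel condition of Proposition \ref{comparison-diamond-projective-limit}\,(a) really is implied by the componentwise relation alone; this is handled by combining Hypothesis \ref{rigidity hyp} on $(M_F, \varphi_{F/E})$ with the Euler limit relations, which together rigidify the gluing problem enough to force the conclusion.
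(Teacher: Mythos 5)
Your opening reductions are sound and parallel the paper's: the Chinese Remainder Theorem handles the second map, and the finite generation of each $X_F$ over $\Z_\mathcal{S}$ correctly shows that any class in $\bigcap_{n}nX$ has every component $x_F$ lying in $Z_F$. The genuine gap is the step you yourself label the ``main obstacle'': the claim that an element $x$ of $M(a)$ with $x_F \in Z_F$ for every $F$ must lie in $\beta_\ast(M)$. This does not follow from the ingredients you cite. Proposition \ref{comparison-diamond-projective-limit} is proved only for projective systems of $\cQ[\cG_F]$-modules, and its part (a) is an exact sequence whose third term is a genuine obstruction: an element of the Euler limit of the modules $e_{F,S}M_F$ lies in the image of $\varprojlim_F \epsilon_F M_F$ only if its $e_{\bm{1}}$-components satisfy the compatibility measured by $\im(\Delta)$, and this ``initial value''/$\cG_{\cK}$-invariant obstruction is nontrivial in general — it is exactly why isolated systems exist and why Theorem \ref{coleman-over-the-reals}\,(b) carries the extra summand $\ES_k(\cQ)^{\cG_\cK}$. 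Hypothesis \ref{rigidity hyp} together with the Euler-limit relations does not ``force'' this compatibility (also note that $\beta$ is only assumed injective as a map of Euler limits, so componentwise inverses are not available in the abstract setting). Even granting the rational gluing, you would still have to promote componentwise membership in $Z_F$ to membership in $\beta_\ast(\varprojlim_F\epsilon_F M_F)$ integrally; that statement is essentially the implication (ii) $\Rightarrow$ (i) of Theorem \ref{rigidity theorem}\,(b), whose proof in the paper is built on the present Proposition (together with Propositions \ref{p-adic-Euler-limit} and \ref{explicit-interpretation}), and whose conclusion in any case only asserts unique divisibility of the relevant quotient, with a correction term coming from $M(a)^{\cG_\cK}$. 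As written, your argument therefore assumes a statement at least as deep as the theorem being established, and is circular in substance.

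For comparison, the paper's proof keeps more information at the reduction step: using torsion-freeness of $X$ (via Lemma \ref{X-torsion-free}) and Lemma \ref{completion-lemma-revisited}, the snake lemma identifies $\ker(X \to \widehat{X})$ with $\bigl(M(a)\cap \widehat{\beta_\ast(M)}\bigr)/\beta_\ast(M)$, so the element to be analysed comes from an element $m$ of $\widehat{M}$ and hence retains an inverse-limit structure over $\widehat{\Z}$. A componentwise torsion-freeness argument gives $m_E \in \epsilon_E M_E + (1-e_{E})\epsilon_E\widehat{M_E}$, and it is precisely because $m$ lies in $\varprojlim_E \epsilon_E \widehat{M_E}$ that Proposition \ref{comparison-diamond-projective-limit}\,(b) (applied to $\Z \to \widehat{\Z}$) and Lemma \ref{limit-lemma-below} can be invoked to conclude $m \in M$. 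Your reduction discards this inverse-limit structure, leaving the final gluing step out of reach by the route you propose.
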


Before proving this result, we establish a preliminary result.

\begin{lem} \label{X-torsion-free}
If, for all primes $p \not \in \mathcal{S}$, one has  $a \in \beta_\ast (\ker \theta_p)$, then the module $X$ is $\Z$-torsion-free.
\end{lem}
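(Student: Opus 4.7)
The plan is to show that multiplication by each prime $\ell$ is injective on $Y$. For $\ell \in \mathcal{S}$ this is immediate, because $\ell$ is a unit in $R = \epsilon_V \Z_{\mathcal{S}}\llbracket \cG_{\cK^V}\rrbracket$ and $M(a)$, $\beta_\ast(M)$, $M(a)^{\cG_\cK}$ are all $R$-modules. So fix $\ell \notin \mathcal{S}$ and suppose $y \in M(a)$ satisfies $\ell y = \beta_\ast(m) + z$ with $m \in M$ and $z \in M(a)^{\cG_\cK}$. Writing $y = ra + \beta_\ast(n)$ and replacing $m$ with $m - \ell n$, it suffices to show that $ra \in \beta_\ast(M) + M(a)^{\cG_\cK}$ under the hypothesis $\ell ra = \beta_\ast(m) + z$. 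Applying Theorem \ref{inj-thm} to the triangular system associated to $N$ with $\Sigma = S_\ell$, this is in turn reduced to producing $m' \in M$ such that $(ra - \beta_\ast(m'))_F \in N_F^{\cG_F}$ for every $F \in \Omega^V_{S_\ell}$, since the resulting element $ra - \beta_\ast(m')$ will then be $\cG_F$-fixed at every $F \in \Omega^V$ and hence lie in $M(a)^{\cG_\cK}$.

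For the local construction, fix $F \in \Omega^V_{S_\ell}$ and use the hypothesis to pick $\tilde a_F \in \Z_\ell \otimes_\Z \epsilon_F M_F$ with $a_F = \beta_\ast(\tilde a_F)$. The equation $\ell r_F a_F = \beta_\ast(m_F) + z_F$ forces $\beta_\ast(\ell r_F \tilde a_F - m_F) = z_F$; the injectivity and $\cG_F$-equivariance of $\beta_\ast$ on $\Z_\ell$-completions then imply $v_F \coloneqq \ell r_F \tilde a_F - m_F$ lies in $\Z_\ell \otimes (\epsilon_F M_F)^{\cG_F}$. Choosing a lift $v_F' \in (\epsilon_F M_F)^{\cG_F}$ of $v_F$ modulo $\ell$, the containment $m_F + v_F' \in \ell(\Z_\ell \otimes \epsilon_F M_F)$ combines with the flatness identity $\epsilon_F M_F \cap \ell(\Z_\ell \otimes \epsilon_F M_F) = \ell \epsilon_F M_F$ (valid since $\epsilon_F M_F$ is $\Z$-torsion-free) to produce an element $m_F' \in \epsilon_F M_F$ with $m_F + v_F' = \ell m_F'$. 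A short computation using the analogous identity $N_F \cap (\Z_\ell \otimes N_F^{\cG_F}) = N_F^{\cG_F}$ verifies $y_F - \beta_\ast(m_F') \in N_F^{\cG_F}$, and shows the class $\bar m_F' \in \epsilon_F M_F/(\epsilon_F M_F)^{\cG_F}$ to be independent of the lift.

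The main obstacle is to assemble these local classes into a global $m' \in M$. For $E \subseteq F$ in $\Omega^V_{S_\ell}$, a further application of the identity $\ell\epsilon_E M_E \cap (\epsilon_E M_E)^{\cG_E} = \ell(\epsilon_E M_E)^{\cG_E}$ shows $\varphi_{F/E}(m_F') - m_E' \in (\epsilon_E M_E)^{\cG_E}$, so $(\bar m_F')_F$ defines an element of $\varprojlim_{F \in \Omega^V_{S_\ell}} \epsilon_F M_F/(\epsilon_F M_F)^{\cG_F}$. The derived long exact sequence of $0 \to (\epsilon_F M_F)^{\cG_F} \to \epsilon_F M_F \to \epsilon_F M_F/(\epsilon_F M_F)^{\cG_F} \to 0$ lifts this to $\varprojlim_{F \in \Omega^V_{S_\ell}} \epsilon_F M_F$, using on one side the standing assumption $\varprojlim^1_{F \in \Omega^V_{S_\ell}} \epsilon_F M_F = 0$ recorded immediately before the lemma, and on the other the Mittag--Leffler vanishing of $\varprojlim^1_{F \in \Omega^V_{S_\ell}} (\epsilon_F M_F)^{\cG_F}$ (available because each invariant module is a finitely generated $\Z_{\mathcal{S}}$-module, so that transition images stabilise). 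Finally $\Omega^V_{S_\ell}$ is cofinal in $\Omega^V$ -- any $F \in \Omega^V$ embeds in $F \cdot k(\mathfrak{m})$ for $\mathfrak{m} = \prod_{v \in S_\ell \cup (S_\infty \setminus V)} v$ -- so the lift is itself an element of $M$, and the argument concludes via Theorem \ref{inj-thm} as indicated.
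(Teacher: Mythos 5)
There is a genuine gap at the heart of your local construction. To pass from $\beta_\ast(\ell r_F \tilde a_F - m_F) = z_F$ (with $z_F$ fixed by $\cG_F$) to the assertion that $v_F = \ell r_F \tilde a_F - m_F$ lies in $\Z_\ell \otimes (\epsilon_F M_F)^{\cG_F}$, you need the \emph{component} map $\beta_\ast \: \Z_\ell \otimes \epsilon_F M_F \to \Z_\ell \otimes N_F$ to be injective (so that $\beta_\ast((\sigma-1)v_F)=0$ forces $(\sigma-1)v_F=0$). But the only injectivity assumed in the setting of Theorem \ref{rigidity theorem} is that of $\beta$ as a map of Euler limits, and component-wise injectivity genuinely fails in the intended application: there $\beta_\ast$ sends $q \in \epsilon_F\Z_\mathcal{S}[\cG_F]$ to $q\varepsilon_{F/k} = qe_F\varepsilon_{F/k}$, so its kernel contains the nonzero part of $\epsilon_F\Z_\mathcal{S}[\cG_F]$ annihilated by $e_F$ whenever $e_F \neq \epsilon_F$ (which happens for many $F \in \Omega^V_{S_\ell}$, e.g.\ when some character in $\Upsilon_F^\diamond$ is trivial on the decomposition group of a ramified finite place). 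This is precisely why the paper's own argument never works with preimages component by component: it first shows that $\bigl(\eullim{E \in \Omega^V_{S_p}}{\emptyset} N_E\bigr)^{\cG_\cK}$ vanishes (via Lemma \ref{some-strange-lemma}), uses Theorem \ref{inj-thm} to embed $\faktor{M(a)}{M(a)^{\cG_\cK}}$ into the $S_p$-restricted limit, and then invokes Proposition \ref{p-adic-Euler-limit}, which converts that restricted Euler limit into an honest inverse limit where the injectivity of $\beta$ at the level of limits is available; this yields $Y \hookrightarrow \faktor{M^p}{M}$, and the latter has no $p$-torsion because it embeds into $\varprojlim_F \bigl((\faktor{\Z_p}{\Z}) \otimes_\Z \epsilon_F M_F\bigr)$, which is uniquely $p$-divisible.

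Two further points. First, your appeal to Mittag--Leffler for the vanishing of ${\varprojlim}^1_{F \in \Omega^V_{S_\ell}} (\epsilon_F M_F)^{\cG_F}$ is not justified as stated: finite generation over $\Z_\mathcal{S}$ gives the ascending, not the descending, chain condition, so the images of the transition maps need not stabilise (compare the system $\Z \xleftarrow{\cdot 2} \Z \xleftarrow{\cdot 2} \cdots$, whose ${\varprojlim}^1$ is nonzero). Second, your argument leans on the standing hypothesis that ${\varprojlim}^1_{F \in \Omega^V_{S_p}} \epsilon_F M_F$ vanishes; the paper's proof of this lemma avoids that assumption, and indeed must, since the lemma is also used (through Proposition \ref{profinite-injective-revisited} and the divisibility argument in \S\,\ref{proof b and c}) in the proof of Theorem \ref{rigidity theorem}\,(a), where no such vanishing is imposed.
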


\begin{proof}
We fix a prime $p \not \in \cS$ and shall then demonstrate that $X$ has no element of order $p$. By assumption, there exists an element $x^{(p)} = (x^{(p)}_E)_{E \in \cX}$ of $\ker \theta_p \subseteq \varprojlim_{E \in \cX} (\Z_p \otimes_\Z \epsilon_E M_E)$ such that $\beta_\ast^{(p)} (x^{(p)})$ agrees with $a$ inside $\eullim{E \in \cX}{\emptyset} (\Z_p \otimes_\Z N_E)$. In particular, one has that $\beta_{\ast, E} (\epsilon_E M_E) + \Z_\cS [\cG_E] a_E$ is a submodule of $\Z_p \otimes_\Z \beta_{\ast, E} (\epsilon_E M_E) = \beta_{\ast, E}^{(p)} ( \Z_p \otimes_\Z \epsilon_E M_E)$.
One therefore has an injective map
\[
X = \faktor{M (a)}{\beta_\ast (M)} \hookrightarrow 
\faktor{\big (\varprojlim_{E \in \cX} (\Z_p \otimes_\Z \beta_{\ast, E}(\epsilon_E M_E) \big) \big) }{\beta_\ast (M)}
\]
and it is now to prove that the quotient on the right hand side is $p$-torsion-free. 
To do this, we write $M^{(p)}$ for the limit $\varprojlim_{E \in \cX} (\Z_p \otimes_\Z \epsilon_E M_E)$ and first claim that one has $\beta^{(p)}_\ast ( M^{(p)}) = \varprojlim_{E \in \cX} (\Z_p \otimes_\Z \beta_{\ast, E}(\epsilon_E M_E))$. Indeed, since the kernel of $\beta^{(p)}_{\ast, E}$ is a finitely generated $\Z_p$-module and hence a compact Hausdorff space, one has that ${\varprojlim}^1_{E \in \cX} ( \ker \beta^{(p)}_{\ast, E})$ vanishes. Passing to the limit (over $E \in \cX$) of the exact sequences
\begin{cdiagram}
0 \arrow{r} & \ker \beta^{(p)}_{\ast, E} \arrow{r} & \Z_p \otimes_\Z \epsilon_E M_E 
\arrow{r}{\beta_{\ast, E}^{(p)} } & \beta_{\ast, E}^{(p)} ( \Z_p \otimes_\Z \epsilon_E M_E ) \arrow{r} & 0
\end{cdiagram}
then gives an exact sequence
\begin{equation} \label{limit of p-completed beta ast sequence}
\begin{tikzcd}
    0 \arrow{r} & \varprojlim_{E \in \cX} ( \ker \beta^{(p)}_{\ast, E}) \arrow{r} & M^{(p)} \arrow{r}{\beta_\ast^{(p)}} & \varprojlim_{E \in \cX} (\Z_p \otimes_\Z \beta_{\ast, E}(\epsilon_E M_E) ) \arrow{r} & 0
    \end{tikzcd}
\end{equation}
and hence the claimed identfication. To prove that the quotient of $\beta^{(p)}_\ast ( M^{(p)})$ by $\beta_\ast (M)$ is $\Z$-torsion-free, it is enough to prove that the outer terms in the exact sequence
\begin{equation} \label{exact sequence to show torsion free}
\begin{tikzcd}[column sep=tiny]
0 \arrow{r} & \faktor{( \varprojlim_{F \in \cX} \beta_{\ast, F} ( \epsilon_F M_F))}{\beta_\ast (M)}
\arrow{r} & 
\faktor{\big (\beta^{(p)}_\ast ( M^{(p)}) \big) }{\beta_\ast (M)} \arrow{r} & 
\faktor{\big (\beta^{(p)}_\ast ( M^{(p)}) \big) }{( \varprojlim_{F \in \cX} \beta_{\ast, F} ( \epsilon_F M_F))} \arrow{r} & 0
\end{tikzcd}
\end{equation}
are each $p$-torsion-free. As for the module on the left, we first note that passing to the limit (over $F$ in $\cX$) of the exact sequences
\begin{cdiagram}
0 \arrow{r} & \ker \beta_{\ast, F} \arrow{r} & \epsilon_F M_F \arrow{r}{\beta_{\ast, F}}
 & \beta_{\ast, F} (\epsilon_F M_F) \arrow{r} & 0
\end{cdiagram}
combines with the injectivity of $\beta_\ast$ to imply the exactness of the sequence
\begin{cdiagram}
0 \arrow{r} & M \arrow{r}{\beta_\ast} & \varprojlim_{F \in \cX} \beta_{\ast, F} ( \epsilon_F M_F) \arrow{r} & {\varprojlim}^1_{F \in \cX}( \ker \beta_{\ast, F}).
\end{cdiagram}
It is therefore enough to prove that ${\varprojlim}^1_{F \in \cX}( \ker \beta_{\ast, F})$ is $p$-torsion-free. To this end, we recall that each $\beta_{F} \: e_F M_F \to N_F$ is assumed to be injective, which implies that also the induced map $\beta_{F}^{(p)} \: (\Z_p \otimes_\Z e_F M_F) \to (\Z_p \otimes_\Z N_F)$ is injective. The injectivity of the map $\beta^{(p)} \: \varprojlim_{F \in \cX} (\Z_p \otimes_\Z e_F M_F) \to  \eullim{F \in \cX}{\emptyset} (\Z_p \otimes_\Z N_F)$ then combines with Proposition \ref{comparison-diamond-projective-limit}\,(a) to imply that $\beta_{\ast}^{(p)} \: \varprojlim_{F \in \cX} (\Z_p \otimes_\Z \epsilon_F M_F) \to \eullim{F \in \cX}{\emptyset} (\Z_p \otimes_\Z N_F)$ is injective. 
This proves that $\varprojlim_{F \in \cX} (\ker \beta^{(p)}_{\ast, F})$, being the kernel of $\beta_\ast^{(p)}$ by the exact sequence (\ref{limit of p-completed beta ast sequence}), vanishes. Upon noting that $\ker \beta^{(p)}_{\ast, F}$ identifies with $\Z_p \otimes_\Z \ker \beta_{\ast, F}$ (as $\Z_p$ is a flat $\Z$-module) and that, as already observed before, ${\varprojlim}^1_{F \in \cX}( \ker \beta^{(p)}_{\ast, F})$ vanishes, 
passing to the limit (over $F$ in $\cX$) of the exact sequences
\begin{cdiagram}
0 \arrow{r} & \ker \beta_{\ast, F} \arrow{r} &  \Z_p \otimes_\Z \ker \beta_{\ast, F} \arrow{r} & (\Z_p / \Z) \otimes_\Z \ker \beta_{\ast, F} \arrow{r} & 0
\end{cdiagram}%
therefore gives an isomorphism 
\[
\varprojlim_{F \in \cX} \big ( (\faktor{\Z_p}{\Z}) \otimes_\Z \ker \beta_{\ast, F} \big)
\stackrel{\simeq}{\longrightarrow} 
{\varprojlim}^1_{F \in \cX} (\ker \beta_{\ast, F}). 
\]
Since $(\faktor{\Z_p}{\Z}) \otimes_\Z \ker \beta_{\ast, F}$ is $p$-torsion-free for every $F$, this shows that ${\varprojlim}^1_{F \in \cX} \ker \beta_{\ast, F}$ is $p$-torsion-free, as required to conclude that the module on the left in (\ref{exact sequence to show torsion free}) is $p$-torsion-free. \\
Turning now to the module on the right in (\ref{exact sequence to show torsion free}), we pass to the limit (over $F$ in $\cX$) of the exact sequences
\begin{cdiagram}
0 \arrow{r} & \beta_{\ast, F}(\epsilon_F M_F) \arrow{r} &  \Z_p \otimes_\Z \beta_{\ast, F} (\epsilon_F M_F) \arrow{r} & \big ( \Z_p/\Z \big) \otimes_\Z \beta_{\ast, F} (\epsilon_F M_F) \arrow{r} & 0
\end{cdiagram}%
to deduce that the quotient $\faktor{\big (\beta^{(p)}_\ast ( M^{(p)}) \big) }{( \varprojlim_{F \in \cX} \beta_{\ast, F} ( \epsilon_F M_F))}$ identifies with a submodule of the projective limit $\varprojlim_F ( ( \faktor{\Z_p}{\Z}) \otimes_\Z \beta_{\ast, F}(\epsilon_F M_F))$. Since each group $( \faktor{\Z_p}{\Z}) \otimes_\Z \beta_{\ast, F} (\epsilon_F M_F)$ is $p$-torsion-free, said quotient is therefore $p$-torsion-free as well, as required.\\
This concludes the proof of the lemma. 
\end{proof}

We now prove Proposition \ref{profinite-injective-revisited}.
At the outset we note that the natural map $\widehat{X} \to   \prod_{p \not \in \mathcal{S}}  \widehat{X}^p$ is injective by virtue of the Chinese Remainder Theorem, and so it suffices to show that the same is true of the homomorphism $i \: X \to \widehat{X}$. \medskip \\
Given that $X$ is $\Z$-torsion-free (by Lemma \ref{X-torsion-free}), we may appeal to Lemma \ref{completion-lemma-revisited} to obtain a commutative diagram with exact rows
\begin{cdiagram}
0 \arrow{r} & M \arrow{r}{\beta_\ast} \arrow{d}{i_2} & M(a) \arrow{d}{i_1} \arrow{r} & X \arrow{d}{i} \arrow{r} & 0 \\
0 \arrow{r} & \widehat{M} \arrow{r}{\widehat{\beta_\ast}} & M(a)^\wedge \arrow{r} & \widehat{X} \arrow{r} & 0.
\end{cdiagram}%
To proceed, we first note that the map $i_1$, and therefore also $i_2$, is injective. Indeed, the kernel of $\iota_1$ is equal to the intersection $\bigcap_{n \in \N} (n\cdot M(a))$ and so for any element $x \in \ker (i_1)$ one has that, for each field $E \in \Omega^V$, the value $x_E$ is divisible by every natural number. Since $x_E$ is contained in $N_E$, which is a finitely generated $\Z_\cS$-module by assumption, it then follows that $x_E = 0$. \\
By using the maps $i_1$ and $i_2$ we may, and will, identify $M$ and $M(a)$ with their images inside $\widehat{M}$ and $M (a)^\wedge$, respectively. These identifications then combine with the Snake Lemma to induce an isomorphism $\ker (i) \cong ( M(a) \cap \widehat{\beta_\ast} (\widehat{M}))/\beta_\ast (M)$, with the intersection taking place in $M(a)^\wedge$. We are therefore reduced to verifying the equality $\widehat{\beta_\ast} (\widehat{M}) \cap M(a) = \beta_\ast (M)$. \\
For this purpose let $m = (m_n)_n$ be an element of 
$\widehat{M} = \varprojlim_n M/(n M)$ such that  $\widehat{\beta}_\ast (m) = ( \beta_\ast (m_n))_n$ belongs to $M(a)$. 
If we set $M_E (a) = \Z_\mathcal{S} [\cG_E]a_E + \beta_{E, \ast} ( \epsilon_E M_E)$, then by assumption the image $\widehat{\beta_\ast} (m)_E = (\beta_E (e_{E} m_{n, E}))_n$ of $\widehat{\beta_\ast} (m)$ under the natural map $M(a)^\wedge \to M_E (a)^\wedge$ belongs to $M_E (a) \subseteq \Q \otimes_\Z \beta_{E, \ast} (\epsilon_E M_E)$. We may therefore find a  natural number that annihilates the image of  $\widehat{\beta_\ast} (m)_E$ inside the quotient
\[ \beta_{E, \ast} (\epsilon_E M_E)^\wedge/\beta_{E, \ast} (\epsilon_E M_E) \cong \beta_{E, \ast} (\epsilon_E M_E) \otimes_\Z (\widehat{\Z}/\Z).\]
The latter module is however $\Z$-torsion-free and so we deduce that $\widehat{\beta_\ast} (m)_E$ is contained in $\beta_{E, \ast} (\epsilon_E M_E)$. 
By the injectivity of $\beta$, we have therefore proved that $e_{E} m_E$ is contained in $e_E M_E$, which is to say that $m_E \in \epsilon_E M_E + (1 - e_{E})\epsilon_E \widehat{M_E}$. It now remains to prove that
\[
\widehat{M} \cap \big ( \prod_E ( \epsilon_E M_E + (1 - e_{E})\epsilon_E \widehat{M_E}) \big) = M. 
\]
To do this, it is enough to show that any element $m$ of the above intersection belongs to $M$. For any such $m$ we write $(m_E)_E$ for its image under the natural map $\widehat{M} \to \varprojlim_E (\epsilon_E M_E)^\wedge$. Then, by applying Proposition \ref{comparison-diamond-projective-limit}\,(b) to  the ring extension $\Z \to \widehat{\Z}$ and projective system $(\Q \otimes_\Z \epsilon_E M_E, \varphi_{F / E})$, we deduce that the element $(m_E)_E$ of the limit $\varprojlim_E (\Q \otimes_\Z \epsilon_E \widehat{M_E})$ belongs to $\varprojlim_E (\Q \otimes_\Z \epsilon_E M_E)$. By Lemma \ref{limit-lemma-below} below, this then implies that $m$ belongs to $M$, as required.
\qed 

\begin{lem} \label{limit-lemma-below}
Let $\{(Q_E, \rho_{F / E})\}_{E \in \Omega^V}$ be a projective system of finitely generated $ \Z_\mathcal{S} [\cG_E]$-lattices and set $Q \coloneqq \varprojlim_E Q_E$. Then the following sequence is exact:
\begin{cdiagram}
0 \arrow{r} & Q \arrow{r} & \widehat{Q} \arrow{r} & \prod_E \frac{\Q \otimes_\Z \widehat{Q_E}}{\Q \otimes_\Z Q_E}.
\end{cdiagram}%
Here the rightmost arrow is induced by the natural map $\widehat{Q} \to \varprojlim_E \widehat{Q_E}$.
\end{lem}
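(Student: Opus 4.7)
The plan is to split the exactness claim into three independent verifications which I can treat in order of increasing difficulty.

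First, I would verify the injectivity of $Q \hookrightarrow \widehat{Q}$. Each $Q_E$ is a finitely generated $\ZZ_\mathcal{S}$-module that is torsion-free, hence (since $\ZZ_\mathcal{S}$ is a PID) free; the standard Krull intersection  $\bigcap_n n Q_E = 0$ then extends to $Q$: an element $q \in Q$ mapping to $0$ in $\widehat{Q}$ has image in $\bigcap_n n Q_E$ for every $E$, hence vanishes componentwise, hence vanishes. The inclusion of $Q$ into the kernel of the second arrow is then automatic: $q\in Q$ has image in $Q_E$, which already lies in $\Q \otimes_\Z Q_E$.

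Second, I would prove the key pointwise identity $\widehat{Q_E} \cap (\Q \otimes_\Z Q_E) = Q_E$ inside $\Q \otimes_\Z \widehat{Q_E}$. Here I invoke Lemma \ref{completion-lemma-revisited}(a) to identify $\widehat{Q_E}$ with $Q_E \otimes_\ZZ \widehat{\ZZ}$, pick a $\ZZ$-basis of $Q_E$, and reduce the claim to the elementary fact $\widehat{\ZZ} \cap \Q = \ZZ$ inside $\Q \otimes_\ZZ \widehat{\ZZ}$. Consequently, if $x \in \widehat{Q}$ is annihilated by the second arrow, its image $x_E \in \widehat{Q_E}$ lies in $Q_E$, and the resulting family $y \coloneqq (x_E)_E$ is automatically compatible under the transition maps (since it is the image of a single element of $\widehat{Q}$), so $y \in \varprojlim_E Q_E = Q$.

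Third — and this is the main obstacle — I need to identify $x$ with $y$ inside $\widehat{Q}$; equivalently, I need the map $\widehat{Q} \to \prod_{E} \widehat{Q_E}$ to have trivial kernel. Given $\xi = (\xi_n)_n$ in that kernel, I will lift each $\xi_n \in Q/nQ$ to $\tilde\xi_n \in Q$ and observe that $\tilde\xi_{n,E} \in n Q_E$ for every $E$. The torsion-freeness of $Q_E$ provides a unique $q'_E \in Q_E$ with $n q'_E = \tilde\xi_{n,E}$, and a second application of torsion-freeness, applied to
\[
n \rho_{F/E}(q'_F) = \rho_{F/E}(\tilde\xi_{n,F}) = \tilde\xi_{n,E} = n q'_E,
\]
shows $\rho_{F/E}(q'_F) = q'_E$, so that $q' \coloneqq (q'_E)_E$ lies in $Q$ and witnesses $\tilde\xi_n = n q' \in nQ$. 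Thus $\xi_n = 0$ for every $n$, i.e.\ $\xi = 0$.

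Combining the three steps finishes the proof. The first two steps are essentially formal, while the last is the real technical point: it says that although an inverse limit of $\ZZ$-torsion-free modules need not have trivial profinite completion relative to its coordinates, the unique divisibility available inside each lattice $Q_E$ promotes pointwise divisibility by $n$ back to divisibility in $Q$, allowing one to descend from the profinite completion to the actual limit.
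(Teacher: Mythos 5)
Your proof is correct and takes essentially the same route as the paper's: both arguments rest on the intersection identity $\widehat{Q_E} \cap (\Q \otimes_\Z Q_E) = Q_E$ together with the key manoeuvre of promoting componentwise divisibility by $n$ to divisibility inside $Q$ via uniqueness of quotients in the torsion-free lattices $Q_E$, your isolation of the injectivity of $\widehat{Q} \to \prod_E \widehat{Q_E}$ being only a repackaging of the paper's direct verification that $x - m_n \in nQ$ for all $n$. One cosmetic point: since $Q_E$ is finitely generated over $\Z_\mathcal{S}$ but in general not over $\Z$, Lemma \ref{completion-lemma-revisited}\,(a) does not apply verbatim and $Q_E$ has no finite $\Z$-basis; argue instead with a $\Z_\mathcal{S}$-basis and the identification of $\widehat{Q_E}$ with $Q_E \otimes_{\Z_\mathcal{S}} \bigl(\varprojlim_n \Z_\mathcal{S}/n\Z_\mathcal{S}\bigr)$, where $\varprojlim_n \Z_\mathcal{S}/n\Z_\mathcal{S} \cong \prod_{p \notin \mathcal{S}} \Z_p$ intersects $\Q$ in exactly $\Z_\mathcal{S}$ (the paper's own parenthetical ``because $\widehat{\Z} \cap \Q = \Z$'' elides the same point).
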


\begin{proof}
The map $Q \to \widehat{Q}$ is injective because each $Q_E$ is a finitely generated abelian group. To prove exactness of the sequence we therefore fix an element $m = (m_n)_n \in \widehat{Q} = \varprojlim_n Q/(n Q)$ with the property that for each $E$ the image $m_E$ of $m$ under the natural map $\widehat{Q} \to \widehat{Q_E}$ is contained in $\QQ \otimes_\ZZ Q_E$ and then show that $m$ is in fact contained in $Q$.

To do this we note that $\widehat{Q_E} \cap (\Q \otimes_\Z Q_E) = Q_E$ (because $\widehat{\Z} \cap \Q = \Z$) and so the assumption implies that, for every field $E$, there exists an element $x_E \in Q_E$ such that the sequence $m_E$ agrees with $x_E$ in $\widehat{Q_E}$. This is to say that, if $m_E$ is given by the family $(m_{E, n})_n$, then we have $ m_{E, n} - x_E \in n Q_E$ for all natural numbers $n$. Since $Q_E$ is $\Z$-torsion-free, we can therefore write $m_{E, n} - x_E = n z_{E, n}$ for some unique element $z_{E, n} \in Q_E$. \\
We first claim that the elements $(x_E)_E$ define an element of $Q$. For each $n \in \N$ we have that
\begin{align*}
    \rho_{F / E} (x_F) - x_E  &=\, \rho_{F / E} ( m_{F, n} - n z_{F, n}) - x_E\\
     &=\, m_{E, n} - x_E - n \rho_{F / E} (z_{F, n}) \\
     &= \, n (  \rho_{F / E} (z_{F, n}) - z_{E, n}) \in n Q_E.
\end{align*}
However, $Q_E$, being a finitely generated $\Z_\mathcal{S}$-module, has no non-zero divisible elements and so we must have that $\rho_{F / E} (x_F) = x_E$, as claimed. 
This shows that $x \coloneqq (x_E)_E$ defines an element of $Q$, and uniqueness of the elements $z_{n, E}$ implies that the same is true for $(z_{n, E})_E$. We have therefore proved that $x - m_n \in n Q$ for all $n$, which is to say that $x = m$ in $\widehat{Q}$, as desired. This completes the proof of the claimed result. 
\end{proof}

\subsubsection{The proof of Theorem \ref{rigidity theorem}} 

Before turning to the proof of Theorem \ref{rigidity theorem}, we provide the following auxilliary result regarding the compatibility of Euler limits with $p$-adic completions. 

\begin{lem} \label{p completion euler limits compatibility}
Let $\cX$ be a subset of $\Omega$ and $\{ ( C_{E, F}, \rho_{F / E}, j_{F / E} )\}_{F, E \in \cX}$ a triangular system with the property that each map $j_{F / E}$ is injective  and each diagonal term $C_E$ is $p$-torsion-free. 
Then, for any finite subset $\Pi$ of $S_\fin(k)$, the natural map
\begin{align*}
(\eullim{E \in \cX}{\Pi} C_E)^{\wedge, p} \to \eullim{E \in \cX}{\Pi} \widehat{C_E}^{p},
\quad 
\big( (m_{E, n})_{E \in \cX} \big)_{n \in \N} \mapsto 
\big( (m_{E, n})_{n \in \N}\big)_{E \in \cX}.
\end{align*}
is both well-defined and injective.
\end{lem}

\begin{proof} Since the maps $j_{F / E}$ are injective, as shall for brevity omit explicit reference to them. 

We set $C \coloneqq \eullim{E \in \cX}{\Pi} C_E$ and, for every $E \in \cX$, we take the limit (over $n \in \N$) of the natural maps $C / p^n \to C_E / p^n$ to obtain a map
\begin{align}
C^{\wedge, p} \to \widehat{C_E}^{p}.\label{completion-map}
\end{align}
By taking the product of these maps over $E \in \cX$ we then obtain the further homomorphism
\[
C^{\wedge, p} \to \prod_E \widehat{C_E}^{p},
\]
which we claim has image inside $\eullim{E \in \cX}{\Pi} \widehat{C_E}^{p}$.
To show this we suppose to be given an element $m = ([m_n])_n$ of  $C^{\wedge, p}$ and, for every $E \in \cX$, write $m_E$ for the image of $m$ under the map (\ref{completion-map}).
For every $n$ and $F \in \cX$ containing $E$, we have that 
\[
[\rho_{F / E} (m_F) ] =  [\rho_{F / E} (m_{n, F})] = [P_{F / E, \Pi} \cdot m_{n, E}] = [ P_{F / E, \Pi} \cdot m_E]
\quad \text{ in } C_E / p^n
\]
(here $m_{n, F}$ denotes the image of $m_n \in C$ in $C_E$). Since the above equality holds for all $n$, this shows that $\rho_{F / E} (m_F) = P_{F / E, \Pi} \cdot m_E$, as required. \\
To prove injectivity, let us now assume that the system $(m_E)_E$ is trivial in $\eullim{E \in \cX}{\Pi} \widehat{C_E}^{p}$. That is to say that, for every $E \in \cX$, one has that $m_E = 0$, and hence that for each $n$ the element $m_{n, E}$ is divisible by $p^n$ in $C_E$. We can therefore find $z_{n, E}$ such that $m_{n, E} = p^n z_{n, E}$. We now fix a natural number $n$ and claim that the collection $z_n \coloneqq (z_{n, E})_E$ is an element of $C$. Indeed, for every $F \in \cX$ containing $E$, one has that
\[
p^n \cdot \rho_{F / E} (z_{n, F}) = \rho_{F / E} (p^n z_{n, F}) =
\rho_{F / E} ( m_{n, F}) = P_{F / E, \Pi} \cdot  m_{n, E}
= p^n \cdot (P_{F / E, \Pi} \cdot  z_{n, E}). 
\]
Since $C_E$ is assumed to be $p$-torsion-free, we deduce that $\rho_{F / E} (z_{n, F}) = z_{n, E}$, as required.\\
We have therefore proved that one has $m_n = p^n z_n$ in $C$, and this shows that each $m$ is trivial. 
\end{proof}

We are finally in a position to carry out the proof of Theorem \ref{rigidity theorem}. \medskip \\
\textit{Proof (of Theorem \ref{rigidity theorem}):}
At the outset we note that the map $\beta_\ast$ is injective as a consequence of Proposition \ref{comparison-diamond-projective-limit}\,(a), and that the image of $\beta_\ast$ is clearly contained in the kernel of $\Delta$. 
In order to establish Theorem \ref{rigidity theorem}, we therefore need to prove that any element $a$ of the kernel of $\Delta$ belongs to the image of $\beta_\ast$ or, equivalently, that the class of such an element $a$ in the quotient $X$ vanishes. By Proposition \ref{profinite-injective-revisited} it is enough for this purpose to verify that the class of $a$ in $\widehat{X}^p$ vanishes for every $p \not \in \cS$. 
To do this, we first clarify the nature of the map $\theta_p$ that appears in the statement of Theorem \ref{rigidity theorem}. \\
Fix a prime number $p \not \in \cS$ and let $n$ be a natural number. Then, by passing to the limit over $E \in \cX$ of the tautological short exact sequences 
\[ 0 \to \epsilon_E M_E\xrightarrow{\cdot p^n} \epsilon_E M_E \to (\epsilon_E M_E)/p^n \to 0\]
one obtains a canonical short exact sequence 
\begin{equation}\label{key limit seq}
0 \to \epsilon_E M/p^n \to {\varprojlim}_{E \in \cX} \big( \epsilon_E M_E/p^n \big)
\xrightarrow{\theta_{p,n}}  \bigl({\varprojlim}^1_{E \in \cX} (\epsilon_E M_E)\bigr)[p^n] \to 0. 
\end{equation}

We define the map $\theta_{p}$ in Theorem \ref{rigidity theorem}\,(a) to be the composite homomorphism 
\begin{align*}
\varprojlim_{E \in \cX} (\Z_p\otimes_\Z \epsilon_EM_E) 
& \cong \varprojlim_{E \in \cX} \bigl(\varprojlim_n  ((\epsilon_EM_E)/p^n)\bigr) \\
& \cong
\varprojlim_n \bigl(\varprojlim_{E \in \cX} ((\epsilon_EM_E)/p^n)\bigr)\\ 
& \longrightarrow \varprojlim_n\bigl({\varprojlim}^1_{E \in \cX} (\epsilon_EM_E)\bigr)[p^n]
\end{align*}
in which the two isomorphisms are the canonical identifications and the unlabelled arrow is the limit (over $n$) of the maps $\theta_{p,n}$ in (\ref{key limit seq}). \smallskip \\
Having defined $\theta_{p}$, we are now ready to complete the proof of Theorem \ref{rigidity theorem}.
As $a$ is assumed to belong to the kernel of $\Delta$, there exists an element $u^{(p)} = (u^{(p)}_{E})_{E \in \cX}$ of $\ker \theta_p$ such that 
 $a$ is equal to $\beta^{(p)}_\ast ( u^{(p)})$ in $\eullim{F \in \cX}{\emptyset} (\Z_p \otimes_\Z N_F)$. In particular, one has that $a_E$ belongs to the image of $\beta^{(p)}_{\ast, E}$ for all $E \in \cX$, and hence that $\beta^{(p)}_{\ast, E} ( \Z_p \otimes \epsilon_E M_E) + \Z_p [\cG_E] a_E$ coincides with $\beta^{(p)}_{\ast, E} ( \Z_p \otimes_\Z \epsilon_E M_E)$. Lemma \ref{p completion euler limits compatibility} therefore gives an injective map 
 \[
i_2 \: M (a)^{\wedge, p} \hookrightarrow \eullim{E \in \cX}{\emptyset} ( \beta^{(p)}_{\ast, E} ( \Z_p \otimes_\Z \epsilon_E M_E)). 
 \]
 We then obtain a commutative diagram
 \begin{equation} \label{key diagram}
     \begin{tikzcd}
         0 \arrow{r} & \widehat{M}^p \arrow[hookrightarrow]{d}{i_1} \arrow{r}{\widehat{\beta}^p_\ast} & 
         M (a)^{\wedge, p} \arrow[hookrightarrow]{d}{i_2} \arrow{r} & \widehat{X}^p \arrow{r} & 0 \\
         0 \arrow{r} & \varprojlim_{E \in \cX} (\Z_p \otimes_\Z \epsilon_E M_E) 
         \arrow{r}{\beta^{(p)}_{\ast}} & \eullim{E \in \cX}{\emptyset} ( \beta^{(p)}_{\ast, E} ( \Z_p \otimes_\Z \epsilon_E M_E)), 
     \end{tikzcd}
 \end{equation}
 where the top line is exact as a consequence of Lemma \ref{completion-lemma-revisited}\,(c) (this uses that $X$ is $\Z$-torsion-free by Lemma \ref{X-torsion-free}). To prove that the class of $a$ in $\widehat{X}^p$ vanishes it is now enough, by the exactness of the top line in (\ref{key diagram}), that $a$ coincides, in $M (a)^{\wedge, p}$, with an element in the image of $\widehat{\beta}^p_\ast$.\\ 
 The explicit definition of $\theta_{p}$ and the exactness of the sequence (\ref{key limit seq}) combine to imply the existence of a pre-image $x^{(p)} = \big( (x^{(p)}_E)_{E \in \cX}\big))_{n \in \N}$ of $u^{(p)}$ under the natural map $i_1 \: \widehat{M}^p \to \varprojlim_{E \in \cX_{S_p}} (\Z_p \otimes_\Z \epsilon_E M_E)$ that sends $x^{(p)}$ to $\big( (x^{(p)}_E)_{n \in \N} \big))_{E \in \cX}$. 
 By the commutativity of the square in (\ref{key diagram}) we then have that
 \[
 (i_2 \circ \widehat{\beta}^p_\ast ) ( x^{(p)}) 
 = (\beta^{(p)}_\ast \circ i_1) ( x^{(p)}) = \beta^{(p)}_\ast ( u^{(p)}) = i_2 (a).
 \]
 The injectivity of $i_2$ then implies that $a$ is equal to $\widehat{\beta}^p_\ast ( x^{(p)})$, as required.  \\
 This concludes the proof of Theorem \ref{rigidity theorem}.
\qed 

\subsubsection{$p$-adic Euler limits}
In this final paragraph we provide a useful result on $p$-adic Euler limits. 
To state this result we fix a prime $p$ and an isomorphism $\C \cong \C_p$, which we use to regard $\Q_p$ as a subfield of $\C$. We write $S_p$ for the set of $p$-adic places of $k$.

\begin{proposition} \label{p-adic-Euler-limit}
Let $\cX$ be a subset of $\Omega$ that satisfies Hypothesis \ref{hyp X} with $\cV = \{ V \}$ for some $V \subseteq S_\infty (k)$, and
let $\{ (M_F, \varphi_{F / E}) \}_{F \in \cX_{S_p}}$ be a projective system of $\Z_p [\cG_F]$-lattices that satisfies Hypothesis \ref{rigidity hyp}.
If, for each element $m\in M_E\setminus \{0\}$, there exists a natural number $d$ (that depends only on $m$ and $p$) such that the element $\iota_{F / E}(m)$ cannot be divisible in $M_F$ by any power $p^t$ for $t > d$, then the following map is bijective:
\[
\alpha \: \varprojlim_{F \in \cX_{S_p}}\epsilon_F M_F \to \eullim{F \in \cX_{S_p}}{\emptyset} e_{F} M_F,
\quad (m_F)_F \mapsto (e_{F} m_F)_F.
\]
\end{proposition}

\begin{proof}
Assume to be given a $\Z_p$-extension $k_\infty$ of $k$ in which no finite place splits completely (for example, one may take $k_\infty$ to be the cyclotomic $\Z_p$-extension of $k$). Since all infinite places split completely in $k_\infty$, 
Hypothesis \ref{hyp X}\,(i) then implies that, for any field $K \in \cX$, every finite extension of $k$ contained in the composite $K_\infty = K k_\infty$ is also contained in $\cX$. 
We write $K_n$ for the $n$-th layer of $K_\infty / K$ and let $\bLambda_K = \Z_p \llbracket \gal{K_\infty}{k} \rrbracket$ be the relevant (equivariant) Iwasawa algebra. \medskip \\
Fix a field $E \in \cX_{S_p}$ and let $(a_F)_F$ be an element of $\eullim{F \in \cX_{S_p}}{\emptyset} e_F M_F$. Since $S_p \subseteq S (E)$ and $E_\infty / E$ is unramified outside $p$, we have $S(E) = S(E_n)$ for all $n \geq 0$. The defining property of the Euler limit therefore simplifies to $a_{E_n} = \varphi_{E_m / E_n} (a_{E_m})$ for all $m \geq n$. It follows that the family $(a_{E_n})_n$ defines an element of $\varprojlim_n e_{E_n} M_{E_n}$. Now, each term in the exact sequence
\begin{cdiagram}
0 \arrow{r} & (\epsilon_{E_n} M_{E_n}) [e_{E_n}] \arrow{r} & \epsilon_{E_n}  M_{E_n} \arrow{r}{\cdot e_{E_n}} & 
e_{E_n} M_{E_n} \arrow{r} & 0
\end{cdiagram}%
is a finitely generated $\Z_p$-module and hence endowed with the structure of a compact Hausdorff topological group. Consequently, we obtain an exact sequence
\begin{cdiagram}
0 \arrow{r} & \varprojlim_n \big( (\epsilon_{E_n} M_{E_n})[e_{E_n}] \big) \arrow{r} & \varprojlim_n (\epsilon_{E_n}M_{E_n}) \arrow{r} & 
\varprojlim_n (e_{E_n} M_{E_n}) \arrow{r} & 0
\end{cdiagram}%
and now claim that the term on the left hand side vanishes. \\
Since no finite prime splits completely in $E_\infty / E$, we can find an integer $N \geq 0$ such that, for all $n \geq N$, each character $\chi \in \widehat{\cG_{E_n}} \setminus \Upsilon_{E_n}$ factors through $\cG_{E_N}$. For any $\sigma \in \gal{E_n}{E_N}$ and $x \in (\epsilon_{E_n} M_{E_n}) [e_{E_n}]$ we therefore have 
$\sigma \cdot x = \sigma (1 - e_{E_n}) x = (1 - e_{E_n}) x = x$.\\
This shows that $\varprojlim_n \big ( (\epsilon_{E_n} M_{E_n}) [e_{E_n}] \big)$ is contained in the submodule of $\varprojlim_n (\epsilon_{E_n} M_{E_n})$ comprising all elements invariant under the action of $\gal{E_\infty}{E_N}$. By Lemma \ref{some-strange-lemma}, we have that $(\varprojlim_n \epsilon_{E_n} M_{E_n})^{\gal{E_\infty}{E_N}}$ vanishes. It follows that $\varprojlim_n \big ((\epsilon_{E_n }M_{E_n}) [e_{E_n}] \big)$ vanishes as well, as claimed. \\
The discussion above shows the family $(a_{E_n})_n$ converges to a unique element $b_{E_\infty} = (b_{E_n})_n$ in
\begin{equation} \label{useful-isomorphism}
 \varprojlim_n (\epsilon_{E_n }M_{E_n}) \cong \varprojlim_n ( e_{E_n} M_{E_n})
\end{equation}
with the property that $e_{E_n} b_{E_n} = e_{E_n} a_{E_n}$ for all $n$. 
We next claim that the family $b = (b_E)_{E \in \cX_{S_p}}$ defines an element of $\varprojlim_E (\epsilon_E M_E)$. To do this, we suppose to be given a field $F \in \cX_{S_p}$ which contains $E$ and first note that it is enough to prove that $\varphi_{F_\infty /E_\infty} (b_{F_\infty}) = b_{E_\infty}$, where $\varphi_{F_\infty /E_\infty}$ is the map $\varprojlim_n (\epsilon_{F_n} M_{F_n}) \to \varprojlim_n (\epsilon_{E_n} M_{E_n})$ induced by the maps $\varphi_{F_n / E_n}$. Now, for each $m \in \N$ we have
\begin{align*}
\pi_{F_m / E_m} (e_{F_m}) \cdot P_{F_m / E_m, \emptyset} \cdot \varphi_{F_m / E_m} (b_{F_m}) & = P_{F_m / E_m, \emptyset} \cdot \varphi_{F_m / E_m} ( e_{F_m} b_{F_m}) \\
& = P_{F_m / E_m, \emptyset} \cdot \varphi_{F_m / E_m} ( e_{F_m} a_{F_m}) \\
& = \pi_{F_m / E_m}(e_{F_m}) \cdot P_{F_m / E_m, \emptyset} \cdot a_{E_m} \\
& = \pi_{F_m / E_m}(e_{F_m}) \cdot P_{F_m / E_m, \emptyset} \cdot b_{E_m},
\end{align*}
where the last equality uses that $\pi_{F_m / E_m}(e_{F_m}) \cdot e_{E_m} = \pi_{F_m / E_m}(e_{F_m})$. Taking the limit as before, we see that 
\begin{equation} \label{die-wollten-nach-Australien-reisen}
P_{F / E, \emptyset} \cdot \varphi_{F_\infty / E_\infty} (b_{F_\infty}) = P_{F / E, \emptyset} \cdot b_{E_\infty}.
\end{equation}
Since no finite place splits completely in $E_\infty / E$, for each place $v \in S(F) \setminus S(E)$ the associated Frobenius automorphism $\Frob_v$ generates an open subgroup of $\gal{E_\infty}{k}$. Given this, Lemma \ref{some-strange-lemma} asserts that $1 - \Frob_v$, and hence also $P_{F / E, \emptyset}$, acts injectively
on $\varprojlim_n (\epsilon_{E_n} M_{E_n})$. We conclude that $\varphi_{F / E} (b_F) = b_E$, as claimed. \\
To establish the proposition, it now remains to prove that the assignment $a \mapsto b$ defines an inverse $\psi$ to the natural map $\alpha$. Suppose to be given a family $m = (m_F) \in \varprojlim_F (\epsilon_F M_F)$, then our construction yields $e_{E_n} m_{E_n} = e_{E_n} (\psi \circ \alpha)(m)_{E_n}$ for all $n$. The isomorphism (\ref{useful-isomorphism}) hence shows that $m_{E_n} = (\psi \circ \alpha)(a)_{E_n}$ for all $n$. In particular, $m_E = (\psi \circ\alpha)(m)_E$ and so $m = (\psi \circ \alpha)(m)$. 
Conversely, $\alpha \circ \psi = \id$ holds by construction.
\end{proof}

\section{Euler systems and Euler limits}\label{euler systems and limits}

In this section we derive concrete arithmetic consequences of the results on Euler limits that were established above. 

\subsection{The Uniformisation Theorem}\label{uniform sec}

If $\bm{r}$ and $\bm{r}'$ are rank functions for $k$ (as in Definition \ref{rank def}), then we say that `$\bm{r}$ is greater than $\bm{r}'$', respectively `$\bm{r}$ is at most $\bm{r}'$' (and write `$\bm{r} > \bm{r}'$', respectively `$\bm{r} \leq \bm{r}'$') if one has $\bm{r} (E) > \bm{r}' (E)$, respectively $\bm{r} (E) \leq \bm{r}' (E)$, for all $E \in \Omega (k)$. 

Before stating the next result, we also recall that if $\bm{r}$ is the maximal rank function $\bm{r}_\mathrm{max}$ for $k$, then the notation  $\ES^{\bm{r}}_k (\cQ)$ is abbreviated to $\ES_k (\cQ)$. 

\begin{thm} \label{coleman-over-the-reals}
Fix a subfield $\cQ$ of $\C$ such that $\ES_k (\cQ)$ contains the Rubin--Stark system $\varepsilon_k$. Then the following claims are valid.
\begin{liste}
\item For any rank $\bm{r}$ with $\bm{r} > \bm{r}_\mathrm{max}$, the module $\ES^{\bm{r}}_k (\cQ)$ vanishes.
\item For any rank $\bm{r}$ with $\bm{r}\le \bm{r}_\mathrm{max}$, there exists an isomorphism of $\cQ \llbracket \cG_{\cK}\rrbracket$-modules
\[
\mathrm{PR}^{\bm{r}_\mathrm{max} - \bm{r}}_k (\cQ) \stackrel{\simeq}{\longrightarrow} \ES^{\bm{r}}_k (\cQ), 
\quad (f_E)_E \mapsto (f_E ( \varepsilon_{E / k}))_E,
\]
where we use the module of Perrin-Riou functionals defined in Example \ref{es remark}\,\ref{rk reduction rem}. 
\item In maximal rank, one has $\ES_k (\cQ) = \cQ \llbracket \cG_{\cK} \rrbracket \cdot \varepsilon_k + \ES_k (\cQ)^{\cG_{\cK}}.$
\item The $\cQ \llbracket \cG_{\cK} \rrbracket$-module generated by $\varepsilon_k$ is free of rank one and equal to $\ES_k (\cQ)^{\mathrm{sym}}$.
\end{liste}
\end{thm}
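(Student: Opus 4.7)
The plan is to establish part (a) --- the uniformisation isomorphism --- first, and then derive parts (b) and (c) from it via separate analyses.

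The key preparatory observation is that, for each $E \in \Omega(k)$, the Rubin--Stark element $\varepsilon_{E/k}$ is an $e_E\cQ[\cG_E]$-basis of the rank-one free module $e_E \cQ \exprod^{r_E}_{\Z[\cG_E]}\cO^\times_{E,S(E)}$. This follows from combining the Dirichlet regulator isomorphism \eqref{dirichlet-regulator-isomorphism} --- which identifies the target $e_E$-eigenspace-wise with the free rank-one module $e_E \cQ \exprod^{r_E}_{\Z[\cG_E]}X_{E,S(E)}$ --- together with the defining formula exhibiting $\lambda_{E,S(E)}(\varepsilon_{E/k})$ as the product of $\theta^*_{E/k,S(E),\emptyset}(0)$, a unit in $e_{E,S(E),r_E}\cQ[\cG_E]$ by the non-vanishing of leading $L$-values, with the distinguished basis $\exprod_{v\in V_E}(v_E-\mathfrak{p}_E)$. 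Granted this, I would prove part (a) by sending $(f_E)_E$ to the contracted system $(f_E(\varepsilon_{E/k}))_E$; its membership in $\ES^{\bm{r}}_k(\cQ)$ is the general rank-reduction formalism already recorded in Example \ref{es remark}\,(b). For the inverse construction, given $c \in \ES^{\bm{r}}_k(\cQ)$, Lemma \ref{faithful-lem}\,(a) ensures each $c_E$ lies in the $e_E$-eigenspace, so the basis property just established produces a unique $f_E$ with $f_E(\varepsilon_{E/k}) = c_E$; the Perrin--Riou distribution relations for $(f_E)$ will then follow by combining the Euler system relation for $c$, the analogous relation for $\varepsilon_k$ from Lemma \ref{RS-properties-lemma}\,(a), and the behaviour of contraction under norm and restriction encoded in Lemma \ref{algebraic-lemma-injection}.

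For part (b), I would specialise (a) to $\bm{r} = \bm{r}_\mathrm{max}$, which identifies $\ES_k(\cQ)$ with the rank-zero Perrin--Riou module $\mathrm{PR}^0_k(\cQ)$, that is, the Euler limit of the projective system $(\cQ[\cG_F],\pi_{F/E})$. An argument parallel to the proof of Lemma \ref{faithful-lem}\,(a), applied directly to the rank-zero distribution relation $\pi_{F/E}(f_F) = P_{F/E}f_E$, shows that any such family satisfies $f_E = e_Ef_E$, so that $\mathrm{PR}^0_k(\cQ)$ identifies with $\eullim{E \in \Omega(k)}{\emptyset} e_E\cQ[\cG_E]$. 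Proposition \ref{comparison-diamond-projective-limit}\,(a) then exhibits this Euler limit as an extension of $\varprojlim_E \epsilon_E\cQ[\cG_E]$ --- which, when contracted with $\varepsilon_k$, recovers the summand $\cQ\llbracket\cG_\cK\rrbracket\cdot\varepsilon_k$ --- by a quotient supported on the trivial-character components $e_{\bm{1}_E}e_E\cQ[\cG_E]$. Since $\cG_\cK$-invariance on the $e_E$-eigenspace forces support on the trivial character $\bm{1}_E$, which by the argument in Lemma \ref{faithful-lem}\,(a) can occur only when $E$ has prime-power conductor, this quotient contracts onto exactly $\ES_k(\cQ)^{\cG_\cK}$.

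For part (c), freeness of $\cQ\llbracket\cG_\cK\rrbracket\cdot\varepsilon_k$ will follow because any annihilator $q = (q_E)_E$ satisfies $e_Eq_E = 0$ for all $E$ by the basis property, and this together with the compatibility $\pi_{F/E}(q_F) = q_E$ and the fact that every non-trivial character of $\cG_\cK$ belongs to $\Upsilon_E$ for a suitable $E$ (produced as in Lemma \ref{faithful-lem}\,(a) via Chebotarev) forces $\chi(q) = 0$ for all $\chi$, hence $q = 0$. For the characterisation via the initial value condition, I would combine (b) with an argument parallel to Lemma \ref{ivc versus local}: any $c$ satisfying the IVC decomposes as $q\varepsilon_k + z$ with $z$ invariant, and since $\varepsilon_k$ satisfies the IVC by Lemma \ref{RS-properties-lemma}\,(b), so does $z$; invariance then forces each $z_E$ to be $\cG_E$-fixed, and applying the IVC together with Lemma \ref{lattice lemma2}\,(d) as in \textit{loc.\@ cit.} shows that the initial value $z_k$ is divisible by $[E:k]$ for arbitrarily large $E$, hence vanishes, and then $z = 0$ follows. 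The main obstacle I anticipate is verifying the Perrin--Riou distribution relation for the inverse construction in (a), which requires careful bookkeeping of how contraction $f \mapsto f(\varepsilon_{E/k})$ interacts with the norm on $\varepsilon_k$ and the restriction on $f$; the vanishing of invariant IVC systems in (c) is a secondary technical point that appears amenable to the template of Lemma \ref{ivc versus local}.
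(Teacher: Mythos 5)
Your treatment of part (a), of the freeness half of (c), and the overall shape of (b) (splitting off the trivial-character part of the coefficient family and lifting the remainder through Proposition \ref{comparison-diamond-projective-limit}\,(a)) coincides with the paper's own argument. One step in (b), however, is wrong as stated: your identification of $\mathrm{PR}^0_k(\cQ)$ with $\eullim{E \in \Omega(k)}{\emptyset} e_{E}\cQ[\cG_E]$ rests on a mis-remembered distribution relation. By Example \ref{es remark}\,(b) the transition maps of the Perrin--Riou system already carry the Euler factor, so a rank-zero family satisfies $P_{F/E}\,\pi_{F/E}(f_F) = P_{F/E}\,f_E$, not $\pi_{F/E}(f_F) = P_{F/E}\,f_E$, and with the correct relation the claim $f_E = e_E f_E$ fails: the family $(1-e_E)_E$ satisfies the relation, by the identity $P_{F/E}\cdot(\pi_{F/E}(e_F) - e_E) = 0$ recorded in Example \ref{es remark}\,(d), and it is annihilated by contraction against $\varepsilon_k$. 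This is repairable, because all you actually need is that the coefficient family attached to a given Euler system by the inverse construction in (a) is $e_E$-supported, which that construction provides by fiat (and is how the paper proceeds); but the identification of $\mathrm{PR}^0$ with the $e$-restricted Euler limit should be dropped.

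The genuine gap is in the initial-value half of (c). The assertion you rely on --- that a $\cG_\cK$-invariant $\cQ$-rational system satisfying the initial value condition must vanish --- is false, and the divisibility mechanism of Lemma \ref{ivc versus local} is unavailable over a field: that lemma works because Lemma \ref{lattice lemma2}\,(c),(d) place the relevant elements in finitely generated $\Z_{\mathcal{S}}$-lattices, where divisibility by unboundedly many integers forces vanishing, whereas in a $\cQ$-vector space every element is divisible by every nonzero integer. Concretely, the idempotent family $e \coloneqq (e_{\bm{1},\cG_E})_E$ is an element of $\cQ\llbracket\cG_\cK\rrbracket$, and the system $e\cdot\varepsilon_k$ has $E$-component $[E:k]^{-1}(\nu_{E/k}\circ\NN^{r_E}_{E/k})(\varepsilon_{E/k})$; it is $\cG_\cK$-invariant, satisfies the initial value condition with initial value $\varepsilon_{k/k}\neq 0$, and is nonzero at every $E$ with $S(E)=\{\p\}$ and $V_E = S_\infty(k)$ by Lemma \ref{RS-properties-lemma}\,(b). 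So the invariant part $z$ in your decomposition need not vanish, and the proposed argument collapses. What is needed --- and what the paper proves --- is that the $e_{\bm{1}}$-components of the coefficient family $q$ furnished by (a) can be replaced by a constant family: the initial value condition for $c$ and for $\varepsilon_k$ forces $\pi_{E/k}(q_E)\cdot\varepsilon_{k/k} = c_k$ for every $E$ with $S(E)=\{\p\}$ and $V_E=S_\infty(k)$, so that $\pi_{E/k}(q_E)$ is independent of $E$ (the non-vanishing of $\varepsilon_{k/k}$ being the key input), and Proposition \ref{comparison-diamond-projective-limit}\,(a) then produces $q\in\cQ\llbracket\cG_\cK\rrbracket$ with $c = q\cdot\varepsilon_k$; no vanishing of invariant systems enters the argument.
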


\begin{proof} Claim (a) follows directly from Lemma \ref{faithful-lem}\,(b). To prove claim (b), we first note that the Rubin--Stark system $\varepsilon_k$ is an Euler system by Lemma \ref{RS-properties-lemma}, and that the image of the explicit map in claim (b) is contained in  $\ES^{\bm{r}}_k (\cQ)$ (cf.\@ Example \ref{es remark}\,(b)). It therefore suffices to show that every system $c = (c_E)_E \in \ES^{\bm{r}}_k (\cQ)$ arises uniquely in this way. \\
Fix a field $E$ in $\Omega (k)$. Then the $\R [\cG_E]$-module isomorphism $\R \bigO_{E, S(E)}^\times \cong \R X_{E, S^\ast (E)}$ induced by the Dirichlet regulator implies that there exists an (in general, non-canonical) isomorphism of $\cQ [\cG_E]$-modules $\cQ \bigO_{E, S(E)}^\times \cong \cQ X_{E, S^\ast (E)}$, and hence that the $e_E \cQ [\cG_E]$-module  
$e_E \cQ \bigO_{E, S(E)}^\times \cong e_E \cQ X_{E, S^\ast (E)} = e_E \cQ Y_{E, V_E}$ is free of rank $r_E$. It follows that the $e_E \cQ [\cG_E]$-module
\[
e_E\cQ \exprod^{r_E}_{\Z [\cG_E]} \bigO^\times_{E, S(E)} = \exprod^{r_E}_{e_E \cQ [\cG_E]} e_E \cQ \bigO^\times_{E, S(E)}
\]
is free of rank one. We now first claim that the Rubin--Stark element 
$\varepsilon_{E / k}$ is a basis of this free module. To do this, we write $\varepsilon_{E / k} = q \cdot a$ for some $q \in e_E \cQ [\cG_E]$ and $e_E \cQ [\cG_E]$-basis $a$. We then have $e_\chi\cdot q \neq 0$ for all
$\chi \in \Upsilon_E$ by definition of Rubin--Stark elements. Since $\Upsilon_E$ is exactly the set of characters on which $e_E$ is supported, this shows that $q$ is a unit of the ring $e_E \cQ [\cG_E]$ whence the claim. \\
Next we note that, by Lemma \ref{faithful-lem}, the system $c$ is such that $c_E \in e_{E} \cQ \exprod^{\bm{r}(E)}_{\Z [\cG_E]} \cO_{E, S(E)}^\times$, and hence that the assignment $\varepsilon_{E / k} \mapsto c_E$ extends linearly to give a (well-defined) map of $\cQ[\cG_E]$-modules
\[
 e_{E} \cQ \exprod^{r_E}_{\Z [\cG_E]}  \cO_{E, S(E)}^\times \to 
e_{E} \cQ \exprod^{\bm{r}(E)}_{\Z [\cG_E]} \cO_{E, S(E)}^\times .
\]
By Lemma \ref{technical-algebraic-lemma} below, this map can be regarded as an element $f_E$ of $e_{E} \cQ \exprod^{r_E - \bm{r} (E)}_{\Z [\cG_E]} (\cO_{E, S(E)}^\times)^\ast$ and, to complete the proof of claim (b), it suffices to show that these elements combine to define an element
\[
f = (f_E)_E \in \eullim{E \in \Omega (k)}{\emptyset}  e_{E} \cQ \exprod^{r_E - \bm{r} (E)}_{\Z [\cG_E]} (\cO_{E, S(E)}^\times)^\ast,
\]
where the Euler limit is taken with respect to the transition maps $\Phi_{E / E'}$ defined in Example \ref{triangular-system-examples}(c). To see this we note that, for every extension $F/E$, one has 
\begin{align*}
    P_{F / E, \emptyset} \cdot ( \Phi_{F / E} (f_F)) (\varepsilon_{E / k})
     & = ( \Phi_{F / E} (f_F)) \big ( \NN^{r_F}_{F / E} (\varepsilon_{F / k}) \big)
     = \NN^{\bm{r}(F)}_{F / E} (f_F ( \varepsilon_{F / k} )) \\
     & =\NN^{\bm{r}(F)}_{F / E} (c_F) 
     = P_{F / E, \emptyset} \cdot c_E \\
     & =  P_{F / E, \emptyset} \cdot f_E ( \varepsilon_{E / k} ).
\end{align*}
In particular, since $\varepsilon_{E / k}$ is an $e_{E} \cQ [\cG_E]$-basis of $e_E\cQ \exprod^{r_E}_{\Z [\cG_E]} \bigO^\times_{E, S(E)}$, this equality implies the required equality of maps $P_{F/E, \emptyset} \cdot ( \Phi_{F / E} (f_F)) = 
P_{F / E, \emptyset} \cdot f_E$. 

To prove claim (c) it is enough to show that every Euler system $c$ in $\ES_{k} (\cQ)$ belongs to $\cQ \llbracket \cG_{\cK} \rrbracket \cdot \varepsilon_k + \ES_k (\cQ)^{\cG_{\cK}}$. To do this we fix such a $c$ and note that claim (b) implies the existence of a unique element $q = (q_E)_E$ of $\eullim{E \in \Omega (k)}{\emptyset} e_E \cQ [\cG_E]$ with the property $c = q \cdot\varepsilon_k$. We write $q'$ for the element $(e_{\bm{1}} q_E)_E$ of $(\eullim{E \in \Omega(k)}{\emptyset} e_{\bm{1}} \cQ [\cG_E] )^{\cG_{\cK}}$. Then Proposition \ref{comparison-diamond-projective-limit}\,(a) implies that there exists an element $l = (l_E)_E$ of $\cQ \llbracket \cG_{\cK} \rrbracket$ such that $e_{E} l_E = q_E - q'_E$. This in turn implies 
\[
c = (c_E)_E = (q_E \varepsilon_{E / k})_E = l \cdot \varepsilon_k + q' \cdot \varepsilon_k \in \cQ \llbracket \cG_{\cK} \rrbracket \cdot \varepsilon_k + \ES_k (\cQ)^{\cG_\cK},
\]
as required to prove claim (c).
\medskip\\
To prove the first part of claim (d), we suppose to be given an element $q = (q_E)_E$ of $\R \llbracket \gal{\cK}{k} \rrbracket$ that annihilates $\varepsilon_k$, so that one has
\begin{equation} \label{annihilator-equation}
q_E \cdot \varepsilon_{E / k} = 0 \quad \text{ for all } E \in \Omega ( k). 
\end{equation}
Let $E\in \Omega (k)$ be such a field. For any character $\chi \in \widehat{\cG_E}$ we then define a field $E_\chi$ as follows. If $\chi$ is ramified, then we take $E_\chi$ to be the field $E^{\ker(\chi)}$ cut out by $\chi$. If $\chi$ is unramified, then we choose an auxiliary prime ideal $\p$ that has full decomposition group in $\Gal(E^{\ker (\chi)} / k)$ and set $E_\chi \coloneqq E^{\ker(\chi)} \cdot k(\p)$.
Then, in order to show that $q_E = 0$ we may assume, without loss of generality, that $E$ contains $E_\chi$ for all $\chi \in \widehat{\cG_E}$. \\ 
Given any $\chi \in \widehat{\cG_E}$ we have $\chi (q_E) = \chi (q_{E_\chi})$ in $\C$, where,
by a slight abuse of notation, we have written $\chi$ for the $\CC$-linear extensions $\C [\cG_E] \to \CC$ and $\C [\cG_{E_\chi}] \to \CC$ of $\chi$.
Now, $r_{S(E_\chi)} (\chi) = r_{E_\chi}$ and so $e_\chi \varepsilon_{E_\chi / k} \neq 0$ by the definition of Rubin--Stark elements. The equality (\ref{annihilator-equation}) for the field $E_\chi$ therefore implies that $\chi (q_{E_\chi}) = 0$. This shows that $\chi (q_E) = 0$ for all $\chi \in \widehat{\cG_E}$, as is required to conclude that $q_E = 0$.\\
Since the system $ \varepsilon_k$ is symmetric (by Lemma \ref{RS-properties-lemma}(a)), it therefore remains to prove that every system $c$ in $\ES_k (\cQ)^{\mathrm{sym}}$ can be written as $c = q \cdot \varepsilon_k$ for some $q \in \cQ \llbracket \cG_{\cK} \rrbracket$. By (a) we have that $c = q \cdot \varepsilon_k$ for some $q = (q_E)_E \in \eullim{E \in \Omega (k)}{\emptyset} e_E \cQ [\cG_E]$ and by Proposition \ref{comparison-diamond-projective-limit}\,(a) 
it is enough to prove that the family $(e_{\bm{1}} e_E q_E)_E$ belongs to the
image of the map 
\begin{equation}\label{key map} \varprojlim_E \cQ [\cG_E] \to \prod_{E} \cQ [\cG_E], \, \, \, \, (a_E)_E \mapsto (e_{\bm{1}} e_E a_E)_E.\end{equation}
To verify the latter condition, it is enough to consider fields $E$ in $\Omega (k)$ with the property that $V_E = S_\infty (k)$ and $|S(E)| = r_E + 1$ (since, otherwise, one has $e_{\bm{1}} e_E = 0$). We therefore fix such a field $E$ and write $\p$ for the unique place in $S(E)\cap S_\fin(k)$. Then, since both $c$ and $\varepsilon_k$ are symmetric, one has 
\begin{align*}
\pi_{E / k} (q_E) \cdot \varepsilon_{k , k} & = 
\pi_{E / k} (q_E) \cdot (\Ord_\p \circ \NN^{r_E}_{E / k}) ( \varepsilon_{E / k}) \\
& =  (\Ord_\p \circ \NN^{r_E}_{E / k}) ( e_{\bm{1}} q_E \varepsilon_{E / k}) \\
& =  (\Ord_\p \circ \NN^{r_E}_{E / k}) ( e_{\bm{1}} c_E) \\
& = c_k.
\end{align*}
It follows that, if $F$ is any other such field, then $\pi_{E / k} (q_E) \cdot \varepsilon_{k,k} = \pi_{F / k} (q_F) \cdot \varepsilon_{k,k}$, and hence $\pi_{E / k} (q_E) = \pi_{F / k} (q_F)$ since the explicit characterisation of $\varepsilon_{k,k}$ in Lemma \ref{RS-properties-lemma}\,(a) shows that it does not vanish. In particular, if we set $q_k \coloneqq \pi_{E / k} (q_E)$ (which does not depend on the choice of $E$ by the above discussion), then the element $(e_{\bm{1}} q_k)_E$ of $ \varprojlim_E \cQ [\cG_E]$ is a preimage of the element $(e_{\bm{1 }} e_E q_E)_E$ under the map (\ref{key map}). This therefore concludes the proof of claim (d).
\end{proof}

\begin{lem} \label{technical-algebraic-lemma}
Let $R$ be a ring and $F$ a finitely generated free $R$-module of rank $d$. For any integer $0 \leq s \leq d$ we have an isomorphism
\begin{align*}
 \exprod^s_R F^\ast & \to \Hom_R \Big ( \exprod^d_R F, \exprod^{d - s}_R F \Big), \\
  f_1 \wedge \dots \wedge f_s & \mapsto 
  \Big \{ m_1 \wedge \dots \wedge m_d \mapsto 
\sum_{\sigma} \mathrm{sgn} (\sigma) \det( f_i (m_{\sigma (j)}))_{1 \leq i, j \leq s} m_{\sigma (s +1)} \wedge \dots \wedge m_{\sigma (d)}
\Big \},
\end{align*}
where the sum runs over all permutations $\sigma$ of the set  $\{1,2,\dots, d\}$ with the property that both $\sigma (1) < \dots < \sigma (s)$ and $\sigma (s + 1) < \dots < \sigma (d)$.
\end{lem}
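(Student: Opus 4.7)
The plan is to verify that the given formula defines a well-defined homomorphism and then check bijectivity on bases.

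First I would show that for fixed $f_1,\dots,f_s \in F^\ast$, the assignment
\[
(m_1,\dots,m_d) \mapsto \sum_{\sigma \in \mathfrak{S}_{d,s}} \mathrm{sgn}(\sigma) \det\bigl(f_i(m_{\sigma(j)})\bigr)_{1\le i,j\le s}\, m_{\sigma(s+1)}\wedge\cdots\wedge m_{\sigma(d)}
\]
is $R$-multilinear and alternating in $(m_1,\dots,m_d)$, so it factors through $\exprod^d_R F$. The multilinearity is immediate from the multilinearity of the determinant and of the wedge product; for the alternating property one checks that swapping $m_a$ with $m_{a+1}$ changes the sign, by reparametrising the sum over shuffles $\mathfrak{S}_{d,s}$ and using the alternating property of the determinant together with that of $\exprod^{d-s}_R F$. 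An analogous (and simpler) computation shows that the resulting map on $\exprod^d_R F$ is alternating and multilinear in $(f_1,\dots,f_s)$, so it descends to a well-defined $R$-linear map $\Psi\colon \exprod^s_R F^\ast \to \Hom_R(\exprod^d_R F,\exprod^{d-s}_R F)$.

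To prove that $\Psi$ is an isomorphism, I would fix an $R$-basis $e_1,\dots,e_d$ of $F$ with dual basis $e_1^\ast,\dots,e_d^\ast$. Both source and target are then finitely generated free $R$-modules of rank $\binom{d}{s}$: for the source this is standard, while for the target this follows from the canonical isomorphism $\exprod^d_R F \cong R$ (sending $e_1\wedge\cdots\wedge e_d$ to $1$), which identifies $\Hom_R(\exprod^d_R F,\exprod^{d-s}_R F)$ with $\exprod^{d-s}_R F$. For each subset $I=\{i_1<\cdots<i_s\}\subseteq\{1,\dots,d\}$ with complement $I^c=\{j_1<\cdots<j_{d-s}\}$, I would compute
\[
\Psi(e^\ast_{i_1}\wedge\cdots\wedge e^\ast_{i_s})(e_1\wedge\cdots\wedge e_d) = \mathrm{sgn}(\sigma_I)\, e_{j_1}\wedge\cdots\wedge e_{j_{d-s}},
\]
where $\sigma_I\in\mathfrak{S}_{d,s}$ is the unique shuffle with $\sigma_I(\{1,\dots,s\})=I$: all other terms in the defining sum involve a determinant with a repeated column or a column of zeros, hence vanish. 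This shows that $\Psi$ sends the standard basis of $\exprod^s_R F^\ast$ bijectively (up to signs) onto the standard basis of $\Hom_R(\exprod^d_R F,\exprod^{d-s}_R F)$ under the identification above, so $\Psi$ is an isomorphism.

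The main (minor) obstacle is the bookkeeping in verifying that the formula is alternating in the $m_j$: one has to reorganise the sum over $\mathfrak{S}_{d,s}$ after a transposition of $m_a$ and $m_{a+1}$ and track how the shuffles that either separate or keep together the indices $a,a+1$ contribute. Once this combinatorial point is handled, both well-definedness and the basis calculation are routine.
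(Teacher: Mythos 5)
Your proposal is correct. Note that the paper offers no argument at all here — its ``proof'' consists of the single sentence that this is a straightforward exercise left to the reader — so there is nothing to compare against beyond observing that your well-definedness-plus-basis-check argument is exactly the routine verification the authors had in mind, and your basis computation (only the shuffle $\sigma_I$ with $\sigma_I(\{1,\dots,s\})=I$ survives, contributing $\mathrm{sgn}(\sigma_I)\,e_{j_1}\wedge\cdots\wedge e_{j_{d-s}}$) is accurate. One remark that would spare you the combinatorial bookkeeping you flag as the main obstacle: the displayed formula is precisely the composite of the exterior-algebra comultiplication $\exprod^d_R F \to \exprod^s_R F \otimes_R \exprod^{d-s}_R F$, $m_1\wedge\cdots\wedge m_d \mapsto \sum_{\sigma\in\mathfrak{S}_{d,s}}\mathrm{sgn}(\sigma)\,(m_{\sigma(1)}\wedge\cdots\wedge m_{\sigma(s)})\otimes(m_{\sigma(s+1)}\wedge\cdots\wedge m_{\sigma(d)})$, with the canonical pairing $\exprod^s_R F^\ast \otimes_R \exprod^s_R F \to R$, $(f_1\wedge\cdots\wedge f_s, n_1\wedge\cdots\wedge n_s)\mapsto \det(f_i(n_j))$; since both of these are standard well-defined $R$-linear maps, the alternating property in the $m_j$ (and in the $f_i$) comes for free, and only your basis calculation remains.
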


\begin{proof} This is a straightforward exercise that we leave to the reader. 
\end{proof}

\begin{rk}\label{tempting conjecture} In view of Theorem \ref{coleman-over-the-reals}\,(b) it is perhaps  tempting to strengthen Conjecture
\ref{scarcity-conjecture} by predicting that any Euler system of rank at most $\bm{r}_{\mathrm{max}}$ that satisfies an appropriate analogue of the requirement to be a symmetric congruence system should arise from the Rubin--Stark system via rank reduction (as per Example \ref{es remark}\,(b)). We do not discuss this possibility any further here, except to mention that 
conjectures connecting particular examples of Euler systems 
to a higher-rank Euler system have previously appeared in the literature (see, for example, \cite[Conj.\@ 3.5.1]{buyukboduk-et-al}) and have also been verified in special cases (see \cite{buyukboduk-lei-2019}). 
\end{rk}

\subsection{Consequences for integral Euler systems}\label{cons section} In this section we show that the results of Theorems \ref{p adic restriction sequence}, \ref{rigidity theorem} and \ref{coleman-over-the-reals} combine to reduce the proof of Conjecture \ref{scarcity-conjecture} to consideration of the individual components of Euler systems. This result provides us with an effective means of providing evidence in support of Conjecture \ref{scarcity-conjecture} and is proved as Theorem \ref{cons scarcity thm} in \S\,\ref{reduction result}. However, before discussing it we must first prove two auxiliary results concerning the modules of isolated and congruence systems (from Definitions \ref{local construction} and \ref{con def} respectively).

\subsubsection{Isolated and Congruence systems}

The following result gives a useful interpretation of the module of isolated Euler systems. 

\begin{lemma}\label{invariant systems description} 
Let $\cR$ denote either $\Z_\cS$ for a finite set of prime numbers $\cS$ or $\Z_p$ for a prime $p$.
Then, for each choice of $\cX$, one has $\ES^{\cX}_k(\cR)^{\mathrm{iso}}= \ES_k^\cX (\cR)^{\cG_{\cK}}$. 
\end{lemma}

\begin{proof} The explicit construction of isolated systems implies directly that $\ES^{\cX}_k(\cR)^{\mathrm{iso}}$ is contained in $ \ES_k^\cX (\cR)^{\cG_{\cK}}$. It therefore suffices to show that if a system  $c$ in $\ES^{\cX}_k (\cR)$ is $\cG_{\cK}$-invariant, then it is isolated. In addition, Lemma \ref{faithful-lem} implies that the component  $c_F$ of any such $c$ at a field $F$ in $\cX$ vanishes if either $S(F)$  contains an archimedean place or at least two finite places. Since isolated Euler systems have the same property, we can therefore assume in the sequel that $\cX = \Omega^{S_\infty (k) } (k)$.
 
Next we note that, if $F$ belongs to $\cX$ and $E$ is an intermediate field of $F/k$, then the $\cG_{\cK}$-invariance of $c$ implies that 
\[
c_F = [F : E]^{-1} \cdot (\nu_{F / E} \circ \NN^{\bm{r} (F)}_{F / E}) (c_F).
\]
We now fix a place $\p$ in $S_\fin(k)$ and suppose that $F \subseteq k(\p^\infty)$ is a ramified finite extension of $k$. Then the element $a'_\p \coloneqq \NN_{F / k}^{\bm{r} (F)} (c_F)$ is independent of $F$ and the above displayed equality implies that 
\[ c_F = [F : k]^{-1} \nu_{F / k}(a'_\p).\]
Moreover, by Lemma \ref{lattice lemma2}\,(d), the element $a'_\p$ belongs to $|\mu_k|^{-1} \exprod^{\bm{r}(k(\p))}_{\cR} (\cR\cO^{\times}_{k, \{ \p\}})^{\ast \ast}$. In particular, since $c$ is assumed to be fixed by $\cG_{\cK}$, Lemma \ref{lattice lemma2}\,(d) implies that $c_F$ is in the image of $\nu_{F / k}$ and so one also has the equation
\[
a'_\p = [F : k] \cdot \nu_{F / k}^{-1} (c_F).
\]
This shows that $a'_\p$ is divisible by $[F : k]$ in $|\mu_k|^{-1} \exprod^{\bm{r}(k(\p))}_{\cR} (\cR\cO^{\times}_{k, \{ \p\}})^{\ast \ast}$ for every such $F$. Writing $t_\p$ for the cardinality of the finite group $(\cG_{k (\p^\infty)})_{\mathrm{tor}}$, we thus conclude that $c_F = [F : k]^{-1} t_\p \nu_{F / k} (a_\p)$ for some element $a_\p$ of $|\mu_k|^{-1} \exprod^{\bm{r}(k(\p))}_{\cR} (\cR\cO^{\times}_{k, \{ \p\}})^{\ast \ast}$.\\
To complete the proof that $c$ is isolated, it is therefore enough to show that $c_F = 0$ whenever $S(F) = \{\p\}$ for some prime $\p$ in $S_\fin(k)$ the residue characteristic of which is not a unit in $\cR$ (so $\p$ divides no prime in $\mathcal{S}$ if $\cR = \Z_\cS$ or $\p$ is a $p$-adic prime if $\cR = \Z_p$) and is such that the extension $k(\p^\infty) / k$ is infinite. 

To do this we write $p$ for the residue characteristic of such a place $\p$ and fix a $\Z_p$-extension $F_\infty$ of $F$ that is contained in $k(\p^\infty)$. Then, writing $F_n$ for the $n$-th layer of $F_\infty /F$, one has 
\[
\nu_{F_n / F} (c_F) = (\nu_{F_n / F} \circ \NN^{\bm{r} (F)}_{F_n / F}) (c_{F_n}) = \NN_{\gal{F_n}{F}} \cdot c_{F_n} = [F_n : F] \cdot c_{F_n}
\]
and so $\nu_{F_n / F} (c_F)$ is divisible by $[F_n : F] = p^n$ in the lattice $(\cR \fL^{\bm{r}(F)}_{F_n})^{\cG_{F_n}}$. 
Taking account of Lemma \ref{lattice lemma2}\,(c), this then implies that  $c_F$ vanishes, as required.
\end{proof}

The following result explains the significance of congruence systems to our approach and its proof occupies the rest of this section.  

\begin{prop} \label{mrs and soogils argument}
Fix a prime number $p$ and a system $c$ in $\ES_k (\Z_p)^{\mathrm{con}}$ 
that is fixed by an open subgroup of $\cG_\cK$.  
Then, for every $E$ in $\Omega (k)$ with $r_E > 0$, the component $c_E$ is fixed by $\cG_E$. 
\end{prop}

\textit{Proof:} 
At the outset we fix an open subgroup $H$ of $\cG_\cK$ that fixes $c$ and write $N \coloneqq \cK^H$ for its fixed field. We also fix a field $K$ in $\Omega (k)$ with $r_K > 0$ and write $\mathscr{Z}_n$ for the set of non-archimedean places of $k$ that split completely in $N \cdot K(\mu_{p^n}, (\cO_k^\times)^{1 / p^n})$. In addition, we fix a finite set $T$ of places of $k$ that is admissible for $K$ and disjoint from $\mathscr{Z}_1 \cup S_\ram (K / \Q)$. Note that $T$ is then also admissible for all fields $K \cdot k (\q)$ with $\q \in \mathscr{Z}_1$.\\
Set $c_{K, T} \coloneqq \delta_{T, K} \cdot c_K$. Then, since $\delta_{T, K}$ is a non-zero divisor in $\Z_p [\cG_K]$, it is enough to show that $(\sigma  - 1) c_{K, T}$  vanishes for every $\sigma \in \cG_K$. 
For this purpose is suffices to prove that $z \cdot (\sigma  - 1) c_{K, T}$ vanishes for some non-zero integer $z$ because $\Z_p \fL_K^{r_K}$ is $\Z$-torsion free. \\
To do this, it is enough to find a non-zero $z$ such that, for every $f \in \exprod^{r_K - 1}_{\Z_p [\cG_K]} U_{K, S(K), T}^{\ast}$ (with $U_{K, S(K), T} \coloneqq \Z_p \otimes_\Z \cO_{K, S(K), T}^\times$) and $\sigma \in \cG_K$, the element $z (\sigma - 1) f(c_{K, T})$ of $U_{K, S(K), T}^{\ast \ast} \cong U_{K, S(K), T}$ vanishes. Indeed, $U_{K, S(K), T}$ embeds into a free $\Z_p [\cG_K]$-module $P$ of finite rank with $\Z$-torsion-free cokernel (cf.\@ \cite[Rk.\@ 5.11]{BKS}). Every such $f$ can thus be lifted to an element of $\exprod^{r_K - 1}_{\Z_p [\cG_K]} P^\ast$. The claim then implies $z (\sigma  - 1) g(c_{K, T})$ vanishes for every $g$ in $\exprod^{r_K}_{\Z_p [\cG_K]} P^\ast$, and hence that $z (\sigma  - 1) c_{K, T}$ vanishes as an element of $\exprod_{\Z_p [\cG_K]}^{r_K} P$. Since the natural map $\bidual^{r_K}_{\Z_p [\cG_K]} U_{K, S(K), T} \to \exprod_{\Z_p [\cG_K]}^{r_K} P$ is injective by \cite[Lem.~C.1]{Sakamoto20}, we can therefore deduce that $z (\sigma  - 1) c_{K, T}$ vanishes, as required. \\
We now fix $f \in \exprod^{r_K - 1}_{\Z_p [\cG_K]} U_{K, S(K), T}^\ast
= \varprojlim_{n \in \N} \big( \exprod^{r_K - 1}_{\Z [\cG_K]} (\cO^\times_{K, S(K), T})^\ast \big) / p^n
$ and write $f$ as a family $f = (f_n)_{n \in \N}$ of classes represented by elements $f_n$ of $\exprod^{r_K - 1}_{\Z [\cG_K]} (\cO_{K, S(K), T}^\times)^\ast$. 
Note that since $\Z_p$ is $\Z$-flat, one can use \cite[Lem.\@ B.12]{Sakamoto20} to obtain a similar isomorphism
\[
\bidual^{r_K}_{\Z_p [\cG_K]} U_{K, S(K), T} \cong \varprojlim_{n \in \N} \big ( \bidual^{r_K}_{\Z [\cG_K]} \cO_{K, S(K), T}^\times \big) / p^n
\]
that allows us to regard $c_{K, T}$ as a family $c_{K, T} = (c_{K, T}^{(n)})_{n \in \N}$ of classes represented by elements $c_{K, T}^{(n)}$ of $\bidual^{r_K}_{\Z [\cG_K]} \cO_{K, S(K), T}^\times$. 
One then has $z (\sigma  - 1) f (c_{K, T}) = ((\sigma  - 1) f_n (c_{K, T}^{(n)}))_{n \in \N}$ and we need to show that $z (\sigma - 1) f_n (c_{K, T}^{(n)})$, as an element of $K^\times$, is divisible by $p^n$ for each $n$ in order to verify that $z (\sigma  - 1) f (c_{K, T})$ vanishes. \\
Let us therefore now fix $n \in \N$ and show that  $z (\sigma  - 1) f_n (c_{K, T}^{(n)})$ is divisible by $p^n$ for a natural number $z$ that does not depend on $n$. 
To do this we take $z = 2 [N : k]$ and apply the criterion in the following result. 

\begin{lem} \label{hasse principle lemma}
Fix $a$ in $\cO_{K, S(K)}^\times$ with the property that, for every $\q \in \mathscr{Z}_n$ and every $\q$-adic place $\mathfrak{Q}$ of $K$, $a$ is congruent to a $p^n$-th power modulo $\mathfrak{Q}$ when viewed as an element of the valuation ring $\cO_{K_\mathfrak{Q}}$ of the completion $K_\mathfrak{Q}$ of $K$ at $\mathfrak{Q}$ under the canonical embedding $\iota_\mathfrak{Q} \: K \hookrightarrow K_\mathfrak{Q}$. Then, for every $\sigma \in \cG_K$ the element $2 [N : k] (\sigma  - 1) a$ belongs to $ (K^\times)^{p^n}$.
\end{lem}

\begin{proof}
Let $\mathfrak{Q}$ be a place of the stated kind and write $\overline{a}$ for the class of $\iota_\mathfrak{Q} (a)$ in the residue field $\mathbb{F}_\mathfrak{Q} \coloneqq  \cO_{K_\mathfrak{Q}} / \mathfrak{Q} \cO_{K_\mathfrak{Q}}$. Then, by assumption, the polynomial $X^{p^n} - \overline{a} \in \mathbb{F}_\mathfrak{Q} [X]$ has a root in $\mathbb{F}_\mathfrak{Q}$. 
Since the characteristic of $\mathbb{F}_\mathfrak{Q}$ and $p$ are coprime, Hensel's Lemma implies $X^{p^n} - a$ has a root in $\cO_{K_\mathfrak{Q}}$ and hence that  $a$ is a $p^n$-th power in $K_\mathfrak{Q}$. Since $\mathfrak{Q}$ was arbitrary, it follows that $a$ belongs to the kernel of the diagonal map 
\[
 \faktor{K^\times}{(K^\times)^{p^n}} \to \prod_{\mathfrak{Q} \in (\mathscr{Z}_n)_K} \faktor{K_\mathfrak{Q}^\times}{(K_\mathfrak{Q}^\times)^{p^n}}.
\]
We now claim that the kernel of this map only contains classes represented by elements of $K^\times$ that are $p^n$-th powers in $F_n \coloneqq N K ( \mu_{p^n}, (\cO_k^\times)^{1 / p^n})$. To justify this claim, we let $b$ denote an element of $K^\times$ that represents such a class in the kernel of $\Delta$.  
Then every place of $F_n$ lying above a place in $\mathscr{Z}_n$, and hence every place of $k$ that splits completely in $F_n$, splits completely in $F_n (\sqrt[p^n]{b})$. Since every Galois extension is uniquely determined by the set of places that split completely in the extension, it follows that $F_n (\sqrt[p^n]{b})$ is contained in, and hence equal to, $F_n$. In particular, $b$ is a $p^n$-th power in $F_n$.\\
At this stage, we have proved that the class of $a$ belongs to the kernel of the natural map 
$\lambda \: K^\times / (K^\times)^{p^n} \to F_n^\times / ( F_n^\times)^{p^n}$. 
To prove the claimed result, it therefore suffices to show that elements in $\ker(\lambda)$ are annihilated by $2 [N : k] (\sigma - 1)$ for all $\sigma \in \cG_K$. \\
To do this, we note the inflation-restriction sequence identifies $\ker(\lambda)$ with $H^1 (\gal{F_n}{K}, \mu_{p^n})$. In addition, 
setting $K_n \coloneqq K (\mu_{p^n})$, \cite[Satz (4.8)]{Neukirch73} implies that 
\[
H^1 ( \gal{N K_n}{N K}, \mu_{p^n}) =
\begin{cases} 
0 \quad & \text{ if } p \text{ is odd},\\
H^1 ( \gal{N K_2}{N K}, \mu_{4}) & \text{ if } p = 2 ,
\end{cases}
\]
is annihilated by 2. It therefore follows from the exact sequence
\begin{cdiagram}[column sep=small]
0 \arrow{r} & H^1 ( \gal{N K}{K}, \mu_{p^n} \cap NK ) 
\arrow{r} & H^1 ( \gal{N K_n}{K}, \mu_{p^n} ) \arrow{r} & 
H^1 (\gal{N K_n}{NK}, \mu_{p^n} )
\end{cdiagram}%
that $H^1 ( \gal{N K_n}{K}, \mu_{p^n} )$ is annihilated by $2 [NK : K]$ (and hence also by $2 [N : k]$). 
Now, we also have the exact sequence
\begin{cdiagram}[column sep=small]
    0 \arrow{r} & H^1  ( \gal{N K_n}{K}, \mu_{p^n} ) 
    \arrow{r} & H^1 (\gal{F_n}{K}, \mu_{p^n} ) \arrow{r} & H^1 ( \gal{F_n}{N K_n}, \mu_{p^n})
\end{cdiagram}%
and so it suffices to show that $H^1 ( \gal{F_n}{N K_n}, \mu_{p^n})$ is fixed by $\cG_K$ in order to prove the lemma. To this end, we note that the perfectness of the Kummer pairing gives a $\cG_k$-equivariant isomorphism 
\begin{align*}
 H^1 ( \gal{F_n}{N K_n}, \mu_{p^n})
& = \Hom ( \gal{F_n}{N K_n}, \mu_{p^n})\\
& \cong \big ( \faktor{\cO_k^\times \cdot  (N K_n^\times)^{p^n}}{(N K_n^\times)^{p^n}} \big)\\
& \cong  \faktor{\cO_k^\times}{(\cO_k^\times \cap (N K_n^\times)^{p^n})}.
\end{align*}
Since elements in this quotient module are clearly fixed by $\cG_K$, the claimed result follows. 
\end{proof}

Returning now to the proof of Proposition \ref{mrs and soogils argument}, we are reduced to showing that, if we fix a place $\q \in \mathscr{Z}_n$ and set $L = L (\q) \coloneqq K k (\q)$, then $f_n (c_{K, T}^{(n)})$ is a $p^n$-th power modulo any place of $K$ lying above $\q$. 
Since $(f_m (c_{K, T}^{(m)}))_{m \in \N}$ is a compatible family, it is therefore enough to show that $f_m (c_{K, T}^{(m)})$ is a $p^n$-th power for any sufficiently large $m \geq n$. 
\\
Let $\chi$ be a character of $\cG_{L (\q)}$ with $e_\chi \cdot e_{L (\q)} \neq 0$. By construction, $\q$ splits completely in $N$ and so $\chi$ cannot factor through $N \cap L (\q)$. On the other hand, by assumption $c_{L (\q)}$ is fixed by $\gal{ L (\q)}{ L (\q) \cap N}$. We therefore must have that $e_\chi \cdot c_{L (\q)}$ vanishes. This shows that $e_{ L (\q)} c_{L (\q)}$, and hence also $c_{L (\q)}$ by Lemma \ref{faithful-lem}, vanishes. \\ 
In particular, $c_{L, T}$ identifies with a family $(c_{L, T}^{(m)})_{m \in \N}$ in which each $c_{L, T}^{(m)}$ is divisible by $p^m$. We can thus fix an $m$ that is large enough to ensure $\mathcal{N}_H ( c_{L, T}^{(m)}) \coloneqq \sum_{\sigma \in H} \sigma c_{L, T}^{(m)} \otimes \sigma^{-1}$ is trivial when viewed in the finite $p$-group $(\bidual^{r_K}_{\Z [\cG_L]} \cO_{L, S(L), T}^\times ) \otimes_{\Z [\cG_L]} ( I(H) / I(H)^{2})$, where $H \coloneqq \gal{L}{K}$ and $I (H)$ the kernel of $\Z [\cG_L] \to \Z [\cG_K]$. The assumption that $c$ is a congruence system (and so satisfies the congruences in Definition \ref{con def} with $v=\q$ so $P_{L/K,\{v\}} = 1$) then combines with the injectivity of the map
\[
\big ( \bidual^{r_K}_{\Z [\cG_K]} \cO^\times_{K, S(K), T} \big)
\otimes_{\Z [\cG_K]} ( I(H) / I(H)^{2}) 
\to \big ( \bidual^{r_K}_{\Z [\cG_L]} \cO^\times_{L, S(L), T} \big)
\otimes_{\Z [\cG_K]} ( I(H) / I(H)^{2}) 
\]
induced by $\nu_{L / K}$ (cf.\@ \cite[Lem.\@ 2.11]{Sano}) to imply that the element 
\[ (\Rec_\q \circ \Ord_\q^{-1}) (c_{K, T}^{(m)}) \in \big ( \bidual^{r_K}_{\Z [\cG_K]} \cO^\times_{K, S(K), T} \big)
\otimes_{\Z [\cG_K]} ( I(H) / I(H)^{2})\]
vanishes. In the quotient group $I(H)/I(H)^2$ one therefore has 
\begin{align}\label{contain}
\Rec'_\q ( f_n ( c_{K, T}^{(m)})) = &\, (\Rec'_\q \circ f_n \circ \Ord_\q \circ \Ord_\q^{-1})  (c_{K, T}^{(m)}) \notag\\
= &\, \pm (\Ord_\q \wedge f_n) \big( (\Rec_\q \circ \Ord_\q^{-1}) (c_{K, T}^{(m)}) \big) = 0,
\end{align}
where the map $\Rec'_\q$ is as defined in (\ref{rec-prime def}).

To proceed, we write $I_H$ for the augmentation ideal of $\ZZ[H]$. Then the composite isomorphism
\[
\faktor{I (H)}{I (H)^2} \cong \faktor{I_H}{I_H^2} \otimes_\Z \Z [G / H] \cong H \otimes_\Z \Z [G / H]
\]
(in which the first isomorphism is described in \cite[(3)]{Sano} and the second is induced by sending, for each $h \in H$, the element $h - 1$ of $I_H$ to $h$), combines with the equality (\ref{contain}) and the explicit definition of $\Rec'_\q$ to imply that, for some fixed place $\mathfrak{Q}_1$ of $K$ above $\q$ and all $\sigma \in \cG_K$, the element $\rec_{\mathfrak{Q}_1}(\sigma f_m ( c_{K, T}^{(m)}))$ of $H \otimes_\Z \Z_p$ is trivial. 
 It follows that, for every place $\mathfrak{Q}$ of $K$ above $\q$, the element $\rec_{\mathfrak{Q}}( f_m ( c_{K, T}^{(m)}))$ of $H \otimes_\Z \Z_p$ is trivial.\\
Now, the local reciprocity map $\rec_\mathfrak{Q} \: K_\mathfrak{Q}^\times  \to \gal{L_{\mathfrak{Q}'}}{K_\mathfrak{Q}}$ maps $\cO_{K_\mathfrak{Q}}^\times$ onto the inertia subgroup $\mathcal{I}$ of 
$\gal{L_{\mathfrak{Q}'}}{K_\mathfrak{Q}}$ and so
induces an isomorphism 
\[
\faktor{\cO_{K_\mathfrak{Q}}^\times}{\NN_{L_{\mathfrak{Q}'} / K_\mathfrak{Q}} ( \cO_{L_{\mathfrak{Q}'}}^\times )} \stackrel{\simeq}{\to} 
\mathcal{I}.
\]
In addition, class field theory  implies $\NN_{L_{\mathfrak{Q}'} / K_\mathfrak{Q}} ( \cO_{L_{\mathfrak{Q}'}}^\times )$ contains $1 + \mathfrak{Q}^l$ for any natural number $l$ for which the $l$-th upper ramification subgroup of $\gal{L_{\mathfrak{Q}'}}{K_\mathfrak{Q}}$  vanishes. In particular, because  
the quotient of $1 + \mathfrak{Q}$ by $1 + \mathfrak{Q}^m$ is a $q$-group, where $q$ is the residue characteristic of $\mathfrak{Q}$ and hence prime to $p$, the above isomorphism implies $\mathcal{I} \otimes_\Z \Z_p$ is isomorphic to a quotient of the group $\mathbb{F}_{\mathfrak{Q}}^\times \otimes_\Z \Z_p \cong (\faktor{\cO_{K_\mathfrak{Q}}^\times}{(1 + \mathfrak{Q})}) \otimes_\Z \Z_p$. Since $\mathbb{F}_{\mathfrak{Q}}^\times$ is a cyclic group, it therefore follows that $\rec_\mathfrak{Q}$ induces an isomorphism
\[
\mathbb{F}_\mathfrak{Q}^\times \otimes_\Z \big ( \faktor{\Z_p}{|\mathcal{I}| \Z_p} \big)  \stackrel{\simeq}{\to} 
\mathcal{I} \otimes_\Z \Z_p
\]
and hence that the image of $f_m ( c_{K, T}^{(m)})$ in $\mathbb{F}_\mathfrak{Q}^\times \otimes_\Z \big ( \faktor{\Z_p}{|\mathcal{I}| \Z_p} \big)$ must vanish. 

Now, by assumption, $\q$ belongs to $\mathscr{Z}_n$ and so Lemma \ref{rubin lemma}\,(b) implies $p^n$ divides $[k ( \q) : k (1)]$ and hence also $|\mathcal{I}|$. As a consequence, the above observation implies that $f_m ( c_{K, T}^{(m)})$ is a $p^n$-th power in $\mathbb{F}_\mathfrak{Q}^\times$. Since this is true for all $\q$ in $\mathscr{Z}_n$ and all places $\mathfrak{Q}$ of $K$ above $\q$, we can therefore conclude the proof of Proposition \ref{mrs and soogils argument} by applying Lemma \ref{hasse principle lemma}. 
\qed

\subsubsection{A reduction to the Scarcity Conjecture}\label{reduction result}

In the next result we investigate the extent to which global properties of an Euler system can be determined by analysis of its individual components. In particular, in claim (a) we show that Euler systems are uniquely determined, up to multiplication by isolated systems, by their components at certain sparse families of fields. Then in claim (b) we reduce the proof of Conjecture \ref{scarcity-conjecture} to the verification that the components of Euler systems have certain explicit properties. Subsequently, in \S\,\ref{first koly section} and \S\,\ref{iwasawa theory section}, we provide concrete evidence in support of the containments that respectively occur in (b)\,(ii) and (iii) of this result. 

\begin{thm}\label{cons scarcity thm}
Fix a finite set $\cS$ of prime numbers and  a subset $\cX$ of $\Omega (k)$ that satisfies Hypothesis \ref{hyp X}.
Then the following claims are valid. 
\begin{liste}
\item Fix a rank function $\bm{r}$ for $k$ and a finite set of places $\mathcal{M}$ of $k$, and write $\cX_{\mathcal{M}}(k)$ for the subset of $\cX$ comprising fields $E$ which are ramified at all places in $\mathcal{M}$. Then, up to multiplication by isolated systems, each system $c$ in $ \ES_k^{\bm{r}, \cX} (\Z_\mathcal{S})$ is uniquely determined by its values $c_E$ for fields $E$ in $\cX_{\mathcal{M}} (k)$.
\item Assume $\cX$ is contained in $\Omega^V (k)$ for some non-empty subset $V$ of $S_\infty (k)$ and that $\varepsilon^\cX_{k}$ belongs to $ \ES_k^\cX ( \Z_\mathcal{S})^{\mathrm{con}}$.
Then, for every $c$ in $\ES_k^\cX (\Z_\cS)^\mathrm{sym} \cap \ES_k^\cX (\Z_\cS)^\mathrm{con}$, the following assertions are equivalent:
\begin{enumerate}[label=(\roman*)]
    \item The system $c$ belongs to $ \Z_\mathcal{S} \llbracket \cG_{\cK} \rrbracket \varepsilon_k^\cX$. 
    \item For  every field $K$ in $\cX$ one has $c_K \in \Z_\mathcal{S} [\cG_K]\cdot \varepsilon_{K / k}.$
    \item $c_K$ belongs to $\Z_p [\cG_K]\cdot \varepsilon_{K / k}$ for every prime number $p \not \in \mathcal{S}$ and every field $K$ in $ \cX_{S_p}$. 
\end{enumerate}
\end{liste}
\end{thm}

\begin{proof} To prove claim (a) we consider the triangular system $\{ (N_{F, E}, \rho_{F / E}) \}_{E, F \in \cX}$ of Example \ref{triangular-system-examples}\,(b) with  $\cR = \Z_\mathcal{S}$ (so that $N_{F, E} = \Z_\mathcal{S}\fL_E^{\bm{r}(E)}$ and $\rho_{F / E}$ is the relevant norm map for each extension $F/E$). We note that each module $\Z_\mathcal{S} \fL^{\bm{r}(E)}_E$ is a $\Z_\mathcal{S} [\cG_E]$-lattice and that this triangular system validates Hypothesis \ref{rigidity hyp} for every $p \not \in \cS$ by Example \ref{inj examples}\,(c). Given these facts, Theorem \ref{cons scarcity thm}\,(a) follows directly upon applying Corollary \ref{global restriction sequence} to this triangular system. \\
Next we observe that the implications ``(i) $\Rightarrow$ (ii)'' and ``(ii) $\Rightarrow$ (iii)'' in claim (b) are clearly valid, and hence that it suffices to prove that condition (iii) implies (i). 

To do this we assume $c$ satisfies condition (iii) and and claim first that, for each prime $p \not \in \cS$, this implies the existence of an element $q$ of $\Z_p \llbracket \cG_\cK \rrbracket$ such that %
\begin{equation}\label{hilflem} \varrho^{\cX_{S_p}} (c) = \varrho^{\cX_{S_p}} ( q \cdot \varepsilon_{k}).\end{equation}
To justify this we fix, for every $K \in \cX_{S_p}$ an element $q_K^{(p)}$ of $\Z_p [\cG_K]$ with $c_K = q_K^{(p)} \cdot \varepsilon_{K / k}$, and show that the family $\tilde q \coloneqq (q_K^{(p)}e_K)_{K \in \cX_{S_p}}$ defines an element of $\eullim{K \in \cX_{S_p}}{\emptyset}  \Z_p [\cG_K]e_K$, where the Euler limit is defined as in Example \ref{es remark}\,(d). For this purpose we take fields $K$ and $L$ in $\cX_{S_p}$ with $K \subseteq L$  and note that, since $\varepsilon_{L / k} = e_L\cdot \varepsilon_{L / k}$, one has   
\begin{align*}
P_{L / K, \emptyset} \cdot  \pi_{L / K} ( q_L^{(p)}e_L) \cdot \varepsilon_{K / k} & = 
 \pi_{L / K} ( q_L^{(p)}e_L) \cdot \NN^{r_L}_{L / K} (\varepsilon_{L / k})\\
& = \NN^{r_L}_{L / K} ( c_L)\\
& = P_{L / K, \emptyset} \cdot c_K\\
& = P_{L / K, \emptyset} \cdot q_K^{(p)}e_K \cdot \varepsilon_{K / k}.
\end{align*}
Since $\varepsilon_{K / k}$ generates a free $ \Z_p [\cG_K] e_K$-module of rank one, this calculation implies the element $P_{L / K, \emptyset} \cdot ( \pi_{L / K} ( q_L^{(p)}e_L)  -q_K^{(p)}e_K)$ vanishes and hence proves $\tilde q$ belongs to $\eullim{K \in \cX_{S_p}}{\emptyset}  \Z_p [\cG_K]e_K$, as required. We can now apply Proposition \ref{p-adic-Euler-limit} to deduce $\tilde q$ lifts to $\varprojlim_{K \in \cX_{S_p}} \Z_p [\cG_K]$ and hence to $\Z_p \llbracket \cG_{\cK} \rrbracket$, and any lift $q$ of $\tilde q$ to $\Z_p \llbracket \cG_{\cK} \rrbracket$ satisfies the claimed equality (\ref{hilflem}).  

If we now take $\{ (N_{F, E}, \rho_{F / E}) \}_{E, F \in \cX}$ to be the triangular system with $N_{F, E} = \Z_p \fL_E^{\bm{r}(E)}$ and $\rho_{F / E}$ the relevant norm map for each extension $F/E$, then we can reinterpret the equality (\ref{hilflem}) as asserting that $c - q \varepsilon_{k}^\cX$ belongs to the kernel of the  restriction map 
\[
\res_{S_p} \: \eullim{E \in \cX}{\emptyset} N_E \to \eullim{E \in \cX_{S_p}}{\emptyset} N_E.
\]
In particular, since the triangular system under consideration satisfies Hypothesis \ref{inj-hypothesis} for $p$, we can apply Theorem \ref{p adic restriction sequence} (with $\Pi = \emptyset$ and $\Sigma = S_p (k)$) to deduce that, if $p$ is odd, then $c - q \varepsilon_{k}^\cX$ is fixed by $\gal{\cK}{k_p}$, where $k_p$ denotes the composite of all subextensions of the field $k\langle p\rangle$ (from  (\ref{angle field})) in which at least one $p$-adic place splits completely.\\
Now if $p$ does not divide $2 d_k$, then $k \langle p \rangle = k (\mu_p)$ is totally ramified at all places in $S_p(k)$ so $k_p = k$, and hence  $c - q \varepsilon_{k}^\cX$ is fixed by $\cG_\cK$. On the other hand, if $p$ divides $2 d_k$, then Theorem \ref{p adic restriction sequence} (resp.\@, if $p = 2$, Proposition \ref{p adic restriction intermediate step} 
combined with the observation that any subextension of $k (p, \cT)$ that is unramified at a $p$-adic place is finite over $k$) implies $c - q \varepsilon_{k}^\cX$ is fixed by an open subgroup of $\cG_\cK$. Thus, since both $c$ and $\varepsilon_{k}$, and hence also $c - q \varepsilon_{k}^\cX$, are assumed to be congruence systems, we can apply Proposition \ref{mrs and soogils argument} to deduce that $c - q \varepsilon_{k}^\cX$ is also fixed by $\cG_\cK$ in this case. \\
Having proved that $c - q \varepsilon_{k}^\cX$ is fixed by $\cG_\cK$ in all cases, we next observe that $c - q \varepsilon_{k}^\cX$ is a symmetric system (this uses that $\varepsilon_{k}$ is symmetric by Lemma \ref{RS-properties-lemma}\,(i)). It therefore follows from Lemma \ref{ivc versus local} and Lemma \ref{invariant systems description} that $c = q \varepsilon_{k}^\cX$, and hence that 
\begin{equation}\label{hilflem 2} c\in \Z_p \llbracket \cG_\cK \rrbracket \cdot \varepsilon_{k}^\cX.\end{equation}
To proceed, we now consider the projective system $\{ (M_F, \varphi_{F / E})\}_{F \in \cX}$ defined by taking $M_F = \Z_\mathcal{S} [\cG_F]$ and $\varphi_{F / E} = \pi_{F / E}$ (which automatically satisfies Hypothesis \ref{rigidity hyp}, see Example \ref{rigidity-hyp-example-group-rings}). In this case, we then obtain a map $\beta$ of the sort that occurs in Theorem \ref{rigidity theorem}  by means of the assignment
\begin{cdiagram}
\eullim{E \in \cX}{\emptyset} e_{E} \Z_\mathcal{S} [\cG_E] \to \ES_k^\cX (\Z_\mathcal{S}) \subseteq \eullim{E \in \cX}{\emptyset} \Z_\mathcal{S} \fL_E , \quad
(q_E)_E \mapsto (q_E \varepsilon_{E / k})_E.
\end{cdiagram}%

We now note that Theorem \ref{coleman-over-the-reals}\,(b) implies both that this map is injective and further that for each system $c$ in $\ES_k^\cX ( \Z_\mathcal{S})$ there exists a unique element $a = (a_E)_E$ of $\varprojlim_{E \in \cX} e_E \cQ [\cG_E]$ such that $c_E = a_E\cdot\varepsilon_{E / k}$ for every $E$ in $\cX$. 

Given this fact, the condition in (b)(i) of Theorem \ref{cons scarcity thm} follows directly upon combining the containments (\ref{hilflem 2}) with the result of Theorem \ref{rigidity theorem}\,(b) and noting that, in this case, one has $\beta_\ast ( \Z_\mathcal{S} \llbracket \cG_\cK \rrbracket) = \Z_\mathcal{S} \llbracket \cG_\cK \rrbracket\cdot \varepsilon_k^\cX$ and $\beta_\ast^{(p)} (\ker \theta_p) = \Z_p \llbracket \cG_\cK \rrbracket \cdot \varepsilon_{k}^\cX$. This therefore completes the proof that (b)(iii) implies (b)(i). 
\end{proof}

\begin{rk} \label{hilfslem p adic euler limit}
 Fix a prime number $p$, a subset $\cX$ of $\Omega (k)$ that satisfies Hypothesis \ref{hyp X} and an Euler system $c$ in $\ES_k (\Z_p)$. Then the argument used to prove the implication `(iii) $\Rightarrow$ (i)' in Theorem \ref{cons scarcity thm}\,(a) also shows that $\varrho^{\cX_{S_p}} (c)$ belongs to $\Z_p \llbracket \cG_{\cK} \rrbracket \cdot \varepsilon_{k}^{\cX_{S_p}}$ if and only if $c_K$ belongs to $\Z_p [\cG_K]\cdot \varepsilon_{K / k}$ for every field $K$ in $\cX_{S_p}$.  
\end{rk}

\subsection{Integral Euler systems and  Kolyvagin systems}\label{first koly section}

We now explain how the observations made in \S\,\ref{acnf section} can be combined with the theory of higher-rank Kolyvagin systems recently developed by Sakamoto, Sano and the second author to provide concrete evidence in support of the containment in Theorem \ref{cons scarcity thm}\,(b)\,(ii).\smallskip \\
To do this, we fix a field $E$ in $\Omega^{\{S_\infty(k)\}}(k)$ and
an odd prime number $p$.
We write $F$ and $L$ for the maximal extensions of $k$ in $E$ of degree a power of $p$ and coprime to $p$, respectively. We set $G \coloneqq \cG_E$, $P \coloneqq \cG_F$ and $H \coloneqq \cG_L$ and we regard $P$ and $H$ as subgroups of $G$ in the obvious way. In particular, for each character $\chi \: H \to \overline{\Q_p}^\times$, the idempotent $e_\chi \coloneqq |H|^{-1}\sum_{h \in H}\chi(h)h^{-1}$ can be viewed as an element of $\Z_p[\im(\chi)][G]$ and hence acts on the image $1\otimes m$ in $\Z_p[\im(\chi)]\otimes_{\Z}M$ of an element $m$ of a $\ZZ[G]$-module $M$.  

Finally, we write $\omega_p$ for the $p$-adic Teichm\"uller character of $k$.

\begin{thm} \label{bdss-result}
 Fix an abelian extension $E/k$ as above and a homomorphism $\chi \: H \to \overline{\Q_p}^\times$. Assume the Rubin--Stark Conjecture holds for all abelian extensions of $k$ and, in addition, that all of the following conditions are satisfied.
\begin{liste}
\item $E$ contains the Hilbert $p$-class field of $k$;
\item $\chi\not= \omega_p$ and, if $p = 3$, also  $\chi^2\not= \omega_p$;
\item $\chi$ is not trivial on the decomposition subgroup of any place in $S_\ram (F / k)$;
\item $\chi$ is not trivial on the inertia subgroup of at least one place in $S_\ram (E / k)$; 
\end{liste}
Then, for every Euler system $c$ in $\ES_k (\Z)$ one has 
 $e_\chi(1\otimes c_E) \in \Z_p [\im \chi][\cG_E]\cdot (1\otimes \varepsilon_{E / k})$. 
\end{thm}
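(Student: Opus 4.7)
The plan is to bring the theory of higher-rank Kolyvagin systems into play in order to establish the relation in the $\chi$-component. Since we want information only at the single field $E$ (not a compatibility across all of $\Omega^V$), the conclusion of Theorem \ref{cons scarcity thm}\,(b) is not directly applicable; instead we would use the machinery that lies behind it, namely an Euler-system-to-Kolyvagin-system construction, and combine it with a cyclicity statement for the module of Kolyvagin systems.

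First, I would pass to $\chi$-components by extending scalars to $\cO \coloneqq \Z_p[\im \chi]$ and applying $e_\chi$ throughout. Condition (d) guarantees that $E \in \Omega^V$ (so that $\varepsilon_{E/k}$ is a Rubin--Stark element of the expected rank $r_E = |V|$), and conditions (b), (c), (d) together are the standard ``Kolyvagin hypotheses'' at $\chi$: (b) guarantees that $\chi$ avoids the Teichm\"uller character (so that Tate-local duality gives the required freeness of the relevant cohomology groups), (c) ensures that the fixed part of $E^\chi$ at every ramified place is trivial (so that Rubin--Stark elements land in the appropriate integral lattice in the $\chi$-part), while condition (a) and the hypothesis $p$ odd allow us to work over the pro-$p$ part $P$ of $G$ cleanly.

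Next I would invoke the higher-rank Kolyvagin derivative construction. From the Euler system $c$ one obtains, by the usual Kolyvagin-derivative procedure applied at square-free products of admissible primes, a (higher-rank) Kolyvagin system $\kappa(c)$ in the $\chi$-component, and similarly $\kappa(\varepsilon_k)$ from the Rubin--Stark system. The core technical input is the structural theorem for the module of higher-rank Kolyvagin systems proved in the Appendix: under exactly hypotheses (a)--(d), the $\cO[P]$-module of Kolyvagin systems associated to the Selmer structure for $\mathbb{G}_m$ is free of rank one in the $e_\chi$-component, and the image $\kappa(\varepsilon_k)$ of the Rubin--Stark system is a basis (this last point is precisely where the analytic class number formula enters, via Proposition \ref{analztic-class-number-formula-argument}). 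Consequently $\kappa(c) = q \cdot \kappa(\varepsilon_k)$ for a unique $q \in e_\chi \cO[P]$.

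Finally, I would transfer this relation from the Kolyvagin side back to the Euler side at the single ``trivial Kolyvagin level'' (where no Kolyvagin derivative has been taken). At this level the Kolyvagin-system component coincides with $e_\chi(1\otimes c_E)$ (respectively $e_\chi(1\otimes \varepsilon_{E/k})$), so the identity $\kappa(c) = q\cdot \kappa(\varepsilon_k)$ descends to $e_\chi(1\otimes c_E) = q\cdot e_\chi(1\otimes \varepsilon_{E/k})$, which is the required containment. The main obstacle, and the point that will require the most care, is the cyclicity step: one must show, under the genuinely higher-rank hypotheses (c) and (d), that Kolyvagin systems in the $\chi$-part form a free cyclic $\cO[P]$-module generated by the Rubin--Stark Kolyvagin system. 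This is where the analytic class number formula of \S\,\ref{acnf section} interfaces with the Chebyshev/Kolyvagin machinery of the Appendix, and where the other hypotheses (avoidance of Teichm\"uller, non-trivial ramification, Hilbert class field containment) are used to rule out degenerate ``exceptional'' contributions.
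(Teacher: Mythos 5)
Your overall strategy -- control the $\chi$-component of an arbitrary integral Euler system by higher-rank Kolyvagin-system machinery and then identify the resulting cyclic module with the Rubin--Stark span via Proposition \ref{analztic-class-number-formula-argument} -- is indeed the route the paper takes. But your ``core technical input'' is not available in the form you state it, and this is a genuine gap. The Appendix of the paper proves an Iwasawa-theoretic divisibility for $p$-adic representations (Theorem \ref{iwasawa-evidence-theorem}); it does not prove that the module of higher-rank Kolyvagin systems in the $e_\chi$-component is free of rank one with the Rubin--Stark Kolyvagin system $\kappa(\varepsilon_k)$ as a basis. What the Kolyvagin-system machinery actually furnishes here (the paper quotes \cite[Thm.\@ 4.1]{bdss}) is the containment $e_\chi(1\otimes c_E) \in e_\chi\bigl(\Z_p[\im\chi]\otimes_\Z \im(\Theta^{r}_{E/k,S(E)})\bigr)$ for every $c \in \ES_k(\Z)$: the system whose Kolyvagin derivative one can show to be a generator is the one attached to a basis of the Weil-\'etale determinant, not $\kappa(\varepsilon_k)$. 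Asserting that $\kappa(\varepsilon_k)$ generates is tantamount to the $\chi$-part equality $e_\chi\bigl(\Z_p[\im\chi]\otimes\im(\Theta^{r}_{E/k,S(E)})\bigr)=e_\chi\bigl(\Z_p[\im\chi][\cG_E]\cdot(1\otimes\varepsilon_{E/k})\bigr)$, which is exactly what still has to be proved; taking it as a black box begs the question. The correct order is: apply the containment above to $c=\varepsilon_k$ (this is where the assumed Rubin--Stark Conjecture is used, to know $\varepsilon_k\in\ES_k(\Z)$), and only then invoke the equivalence of conditions (i)--(iii) in Proposition \ref{analztic-class-number-formula-argument} to upgrade that containment to the displayed equality, from which the theorem follows for every $c$.

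Two smaller points. First, before any Kolyvagin-system input is used the paper reduces, by means of the Euler-system distribution relations for $c$ and $\varepsilon_k$ and the fact that $[E:L_\chi F]$ is prime to $p$, to the case that $\chi$ is a faithful character of $H$ (replacing $L$ by the field $L_\chi$ cut out by $\chi$ and $E$ by $L_\chi F$); this reduction is what puts hypotheses (b)--(d) into the form required by the cited result and is absent from your outline. Second, condition (d) does not ``guarantee that $E\in\Omega^V$'' -- the hypothesis $E\in\Omega^{\{S_\infty(k)\}}(k)$ is part of the setup; rather, (d) ensures for instance that the kernel field attached to $\chi$ is ramified at a finite place of $k$, so that after the reduction one is still working with a field in $\Omega(k)$ and the relevant component of the systems is non-degenerate.
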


\begin{proof} Write $L_\chi$ for the fixed field of $\ker(\chi)$ in $L$. Then, after taking into account the Euler system distribution relations (for both $c$ and $\varepsilon_k$), it is enough to prove the stated claim after replacing $L$ by $L_\chi$ (and hence $E$ by the compositum $L_\chi F$). In the sequel we will therefore assume that $\chi$ is a faithful character of $H$. 

Then, in this case, the stated conditions imply that all of the hypotheses of \cite[Thm.\@ 4.1]{bdss} are satisfied. Hence, if we set $r \coloneqq |S_\infty(k)|$, then the latter result (which relies on the theory of higher-rank Kolyvagin systems) implies that, for each system $c$ in $\ES_k (\Z)$, there is a containment
\[ e_\chi(1\otimes c_E) \in e_\chi\bigl(\Z_p[\im(\chi)]\otimes_\Z\im (\Theta^r_{E / k, S(E)})\bigr).\]
In addition, by combining this containment in the case $c = \varepsilon_k$ together with the argument of 
Proposition \ref{analztic-class-number-formula-argument} one finds that 
\[ e_\chi\bigl(\Z_p[\im(\chi)]\otimes_\Z\im (\Theta^r_{E / k, S(E)})\bigr) = e_\chi\bigl(\Z_p [\im \chi][\cG_E]\cdot (1\otimes \varepsilon_{E / k})\bigr) \subseteq \Z_p [\im \chi][\cG_E]\cdot (1\otimes \varepsilon_{E / k}).\]

The claimed result now follows directly upon combining this inclusion with the previous containment.
\end{proof}

\section{Iwasawa theory}\label{iwasawa theory section}

The key feature of condition (b)\,(iii) in Theorem \ref{cons scarcity thm} is that it only concerns abelian extensions $K$ of $k$ that are ramified at all $p$-adic places. For this reason, in this section we are able to use techniques of `equivariant' Iwasawa theory to explicitly reinterpret the condition and thereby, in several important cases, deduce its validity from existing results and conjectures. In this way we shall, in particular, complete the proofs of all of the results that are stated in the Introduction. 

\subsection{An Iwasawa-theoretic reduction}\label{iwasawa theory reduction section}

\subsubsection{Statement of the main results}\label{somr section}
For each prime $p$ we fix a subextension $k_\infty = k_\infty^p$ of $\cK/k$ for which the set of places of $k$ that split completely in $k_\infty$ is equal to $S_\infty(k)$ and, in addition, the group 
$\gal{k_\infty}{k}$ is topologically isomorphic to $\Z_p^d$ for an  integer $d > 0$. (For example, one can take $k_\infty$ to be the cyclotomic $\Z_p$-extension of $k$.) 

For any finite abelian extension $E$ of $k$, we set 
\[ E_\infty = E^p_\infty \coloneqq Ek^p_\infty \quad
\text{ and } \quad \bLambda_{p,E} \coloneqq \Z_p \llbracket \gal{E^p_\infty}{k} \rrbracket.\]
For any such $E$, any finite set $\Sigma$ of places of $k$ and any finite subset $T$ of $S_\fin(k)$ that is disjoint from $\Sigma$, we also write $\Cl_{E,\Sigma, T}$ for the $(\Sigma_E, T_E)$-ray class group of $E$ (as discussed in Lemma \ref{properties-weil-etale}\,(a)). For any abelian extension $L$ of $k$ we then define 
\begin{align*}
U_{L, \Sigma, T} & \coloneqq  \varprojlim_E (\Z_p \otimes_\Z \cO^\times_{E, \Sigma, T}),
&  \Cl^p_{L,\Sigma, T}   &\coloneqq \varprojlim_E (\Z_p \otimes_\Z \Cl_{E,\Sigma, T}), \\
X^p_{L, \Sigma}  & \coloneqq \varprojlim_E (\Z_p \otimes_\Z X_{E, \Sigma}), &
Y^p_{L, \Sigma}  &\coloneqq \varprojlim_E (\Z_p \otimes_\Z Y_{E, \Sigma}).
\end{align*}
Here each limit is taken over all finite extensions $E$ of $k$ in $L$  and the respective transition morphisms are induced in the first two cases by the norm maps $\NN_{E'/E}$ and the last two cases by the restriction of places maps.

We note that if $T$ is both disjoint from the set $S_p = S_p (k)$ of $p$-adic places of $k$ and also `admissible' for $E$ (in the sense of \S\,\ref{rubin lattice section}), then any Euler system $c$ in $\ES_k (\Z_\mathcal{S})$ gives rise to a norm-coherent sequence
\[
c_{E_\infty^p, T} \coloneqq (\delta_T \cdot c_F)_F \in 
\varprojlim_F \bidual^{r_E}_{\Z_p [\cG_F]} U_{F, S (E^p_\infty), T} 
\cong \bidual^{r_E}_{\bLambda_{p, E}} U_{E^p_\infty, S (E^p_\infty), T},
\]
where the limit is taken over all fields $F$ that belong to $\Omega (k)$ and are contained in $E^p_\infty$ and is with respect to the norm maps $\NN^{r_E}_{F' / F}$ (here the isomorphism is a consequence of the general observation \cite[Lem.\@ B.15]{Sakamoto20} of Sakamoto). In this context we further recall that any element $\eta$ of $\bidual^{r_E}_{\bLambda_{p, E}} U_{E^p_\infty, S (E^p_\infty), T}$ is by definition a map $\exprod^{r_E}_{\bLambda_{p, E}} U_{E^p_\infty, S(E^p_\infty), T}^\ast \to \bLambda_{p, E}$ and so gives rise to an ideal $\im (\eta)$ of $\bLambda_{p, E}$.
\medskip \\
The following result is the main observation that we shall make in \S\,\ref{iwasawa theory section} and lists a variety of explicit conditions that are sufficient to ensure either the validity of the Tamagawa Number Conjecture or that an Euler system validates the Scarcity Conjecture. In this result we refer to the field $k\langle p\rangle$ defined in (\ref{angle field}). 

\begin{thm} \label{reduction to Iwasawa theory} We assume to be given data of the following sort. 
\begin{enumerate}[label=$\bullet$]
    \item A subset $V$ of $S_\infty (k)$ and a subset $\cX$ of $\Omega (k)$ that satisfies Hypothesis \ref{hyp X} with $\cV = \{ V \}$.
    \item A finite set $\mathcal{S}$ of prime numbers that contains $2$ if $V \neq S_\infty (k)$.
    \item For every prime $p \notin \mathcal{S}$, an extension $k^p_\infty$ of $k$ as specified at the beginning of this section. 
\end{enumerate}
For every field $K$ in $\mathcal{X}$ and every prime $p\notin \mathcal{S}$, the idempotent $\epsilon_K$ identifies with an element of the algebra $\bLambda_{p,K}$, and we assume that the above data satisfies the following two hypotheses:
    \begin{enumerate}[label=(\roman*)]
        \item The Rubin--Stark system $\varepsilon_k^\cX$ is integral outside $\mathcal{S}$ in the sense of Definition \ref{definition-integral-Euler-systems}. 
        \item For every prime $p\notin \mathcal{S}$, every field $K$ in $\cX_{S_p}$ and every height-one prime $\p$ of $\bLambda_{p,K}$ in the support of $\epsilon_K \bLambda_{p,K}$, the $ (\bLambda_{p,K})_\p$-module $\Cl^p (K^p_\infty)_\p$ has finite projective dimension.
            \end{enumerate}
            Then the following claims are valid.
            \begin{liste}
            \item  $\mathrm{TNC} (h^0 (\Spec E), \Z_p [\cG_E]\epsilon_{E, V})$ is valid for every field $E$ in $\cX$ if, for every $K$ and $p$ as in hypothesis (ii) above, there exists a set $T$ in $\mathscr{P}^\mathrm{ad}_K$ such that %
            \[
            \im (\varepsilon_{K^p_\infty, T})^{\ast \ast} 
        \subseteq \Fitt^0_{\bLambda_{p,K}} ( \Cl^p_{K^p_\infty, S (K), T})^{\ast \ast} 
        \cdot \Fitt^0_{\bLambda_{p,K}} (X_{K^p_\infty, S (K)})^{\ast \ast},
            \]
            and, in addition, at least one of the following conditions holds:
            \begin{romanliste}
            \item $p$ does not divide $2 d_k$.
                    \item $V \neq \emptyset$ and $\varepsilon_{k}$ is a congruence system in the sense of Definition \ref{con def}.
                \item $p$ is odd and $\mathrm{TNC} (h^0 (\Spec F), \Z_p [\cG_{F}]\epsilon_{ F, V})$ is valid for every field $F$ that belongs to the subset $\mathcal{Y}$ of $\cX$ that is defined in either of the following ways:
                \begin{itemize}
                    \item $\mathcal{Y}$ comprises all fields $k \langle p \rangle L$ with $L$ a tamely ramified cyclic $p$-extension of $k$ that belongs to $\Omega^{S_\infty} (k)$.
                    \item $\mathcal{Y}$ comprises all finite extensions of $k\langle p\rangle$ in $k\langle p\rangle {k'}_\infty^p$ that belong to $\cX$, with ${k'}^p_\infty$ a $\Z_p$-extension of $k$ in which no finite place splits completely. 
                \end{itemize}
            \end{romanliste}
            \item Assume $V \neq \emptyset$ and $\varepsilon_{k}\in \ES_k^\cX (\Z_\cS)^\mathrm{con}$. Then a system $c$ in $\ES_k^\cX (\Z_\cS)^\mathrm{con} \cap \ES_k^\cX (\Z_\cS)^\mathrm{sym}$ belongs to $\Z_\cS \llbracket \cG_\cK \rrbracket \cdot \varepsilon^{\cX}_k$ provided that, for every $K$ and $p$ as in hypothesis (ii) above, there exists a  set $T$ in $\mathscr{P}^\mathrm{ad}_K$ for which one has  
        \begin{equation} \label{doch-in-Hamburg-auf-der-Elbchausee}
        \im (c_{K^p_\infty, T})^{\ast \ast} 
        \subseteq \Fitt^0_{\bLambda_{p,K}} ( \Cl^p_{K^p_\infty, S (K), T})^{\ast \ast} 
        \cdot \Fitt^0_{\bLambda_{p,K}} (X^p_{K^p_\infty, S (K)})^{\ast \ast}.
            \end{equation}  
            \end{liste}
\end{thm}

This result will be proved in \S\,\ref{reduction to Iwasawa theory proof section}, where we shall also (in Lemma \ref{divisor group lemma})   record some useful facts about the ideals that occur in the inclusions displayed in claims (a) and (b). 

\subsubsection{Preliminary observations} Before proving Theorem \ref{reduction to Iwasawa theory}, it is convenient to take a slightly more general point of view and, for this,  we let $E$ be any finite abelian extension of $k$. 

Then, for a given prime $p$, the ring $\bLambda_{p,E}$ need not be regular. To take account of the difficulties that this causes, we refer to a height-one prime $\p$ of $\bLambda_{p,E}$ as `regular' if the localisation of $\bLambda_{p,E}$ at $\p$ is a regular local ring and we label any height-one prime that is not regular as `singular' (note that this terminology differs slightly from that in  \cite[\S\,3C1]{BKS2} if $\p$ contains $p$). For convenience, we record several general  properties of localisation in such rings that will be useful in later arguments. 

\begin{lemma}\label{localisation lemma} For each prime $p$, the following claims are valid. 
\begin{liste}
\item If $I$ and $J$ are ideals of $\bLambda_{p,E}$, then one has $I^{\ast \ast} \subseteq J^{\ast \ast}$ if and only if $I_\p \subseteq J_\p$ for all prime ideals $\p$ of $\bLambda_{p,E}$ of height at most one.
\item Write $\Lambda_{p, E}$ for the subring $\Z_p\llbracket\gal{E^p_\infty}{E}\rrbracket$ of $\bLambda_{p,E}$. Then each singular prime of $\bLambda_{p,E}$  contains $p$ and is outside the support of any finitely generated $\bLambda_{p,E}$-module that is both torsion, and has vanishing $\mu$-invariant, as a $\Lambda_{p, E}$-module.   
\end{liste}
\end{lemma}

\begin{proof} Claim (a) follows, for example, from a general result of Sakamoto in \cite[Lem.\@ C.13]{Sakamoto20}. 

The first assertion of claim (b) follows from the argument of \cite[Lem.\@ 6.2\,(ii)]{BurnsGreither}) and the second from an application of Nakayama's Lemma (as in \cite[Lem.\@ 6.3]{BurnsGreither}).\end{proof} 

\begin{rk}
In connection with Lemma \ref{localisation lemma}\,(a) we remark that the `reflexive hull' $I^{\ast \ast}$ of an ideal $I$ of $\bLambda_{p, E}$ can naturally be interpreted as the ideal of $\bLambda_{p, E}$ that is obtained as the image of $I^{\ast \ast} \subseteq (\bLambda_{p, E})^{\ast \ast}$ under the evaluation map $(\bLambda_{p, E})^{\ast \ast} \cong \bLambda_{p, E}$.
\end{rk}

We now fix a prime $p$ and, for simplicity, suppress many explicit references to it in the notation of this section (thereby writing $E_\infty$ in place of $E_\infty^p$ etc.) 

We write $S(E_\infty)$ for the set of places of $k$ that ramify in $E_\infty$ and note that, by assumption, $S(k_\infty)$ only contains finite places. 

We also fix a set  $T$ in $\mathscr{P}^\mathrm{ad}_E$ that is disjoint from $S(k_\infty)$ and consider the $p$-completion 
\[ D^\bullet_{E, S(E_\infty), T} \coloneqq \Z_p \otimes_\Z^\mathbb{L} C^\bullet_{E, S(E_\infty), T}\]
of the complex $C^\bullet_{E, S(E_\infty), T}$ introduced in \S\,\ref{acnf section}, as well as the
complex %
\[ D^\bullet_{E_\infty, T} \coloneqq \mathrm{R} \varprojlim_F D^\bullet_{F, S (E_\infty), T},\]
where in the limit $F$ ranges over all number fields contained in $E_\infty / k$ and the transition maps are induced by the relevant instances of the isomorphism in Lemma \ref{properties-weil-etale}\,(d). \\
Then, since each complex $D^\bullet_{F, S(E_\infty), T}$ can be represented by a bounded complex of finitely generated $\Z_p$-modules, and hence of compact Hausdorff spaces, the result of Lemma \ref{properties-weil-etale}\,(a) induces (upon passing to the limit over all intermediate fields of $E_\infty/E$) both an identification $H^0 (D^\bullet_{E_\infty, T}) = U_{E_\infty, S(E_\infty), T}$ and a natural exact sequence
\begin{equation} \label{exact-sequence-H1-limit}
    \begin{tikzcd}
    0 \arrow{r} & \Cl^p_{E_\infty, S(E_\infty), T} \arrow{r} & H^1 (D^\bullet_{E_\infty, T}) \arrow{r} & 
    X^p_{E_\infty, S^\ast (E_\infty)} \arrow{r} & 0.
    \end{tikzcd}
\end{equation}
In addition, since $T$ belongs to  $\mathscr{P}^\mathrm{ad}_E$, for every  finite subextension $E'$ of $E_\infty / E$, the group $U_{E', S(E'), T}$ is $\Z_p$-torsion free  (cf.\@ the general result of \cite[Prop.\@ (1.6.12)]{NSW}). By a well-known argument in homological algebra (see, for example, \cite[Prop.\@ 3.2]{BullachHofer}), it then follows that  $D^\bullet_{E_\infty, T}$ admits a `quadratic standard representative' (in the sense of \cite[\S\,A.2]{sbA}) of the form %
\begin{equation}\label{complex rep} 
P_{E_\infty} \longrightarrow P_{E_\infty},\end{equation}
where $P_{E_\infty}$ is a free $\bLambda_E$-module of finite rank and the first term occurs in degree zero. 
\\
In the next result we shall also use the homomorphism of $\bLambda_E$-modules $\Theta_{E_\infty, T}$ that is defined by means of the following composite (where, for simplicity, we abbreviate $\bLambda_E$ to $\bLambda$)
\begin{align*}
 \epsilon_E \Det_{\bLambda} ( D^\bullet_{E_\infty, T}) 
& \hookrightarrow \epsilon_E Q (\bLambda) \otimes_{\bLambda} \Det_{\bLambda} ( D^\bullet_{E_\infty, T}) \\
& \cong \big ( \epsilon_E Q (\bLambda) \otimes_{\bLambda} 
\Det_{\bLambda} ( H^0 (D^\bullet_{E_\infty, T})) \big)
\otimes_{Q (\bLambda)} 
\big ( \epsilon_E Q (\bLambda) \otimes_{\bLambda} 
\Det_{\bLambda} ( H^1 (D^\bullet_{E_\infty, T})) \big)\\
& \cong \Big ( \epsilon_E Q (\bLambda) \otimes_{\bLambda} \exprod^{r_E}_{\bLambda} U_{K_\infty, S (E_\infty), T} \Big) 
\otimes \Big ( \epsilon_E Q (\bLambda) \otimes_{\bLambda} \exprod^{r_E}_{\bLambda} Y^p_{E_\infty, V_E} \Big)^\ast \\
& \cong  \epsilon_E Q (\bLambda) \otimes_{\bLambda} \exprod^{r_E}_{\bLambda} U_{E_\infty, S (E_\infty), T}.
\end{align*}
Here the second isomorphism is the natural `passage-to-cohomology' map and the last isomorphism is due to our fixed choice of extensions of places in $V_E$ to $E_\infty$ (and hence of isomorphism $Y^p_{E_\infty, V_E} \cong \bLambda_K^{r_E}$). We recall that this map can be explicitly described in terms of certain `rank reduction maps' (see \cite[Lem.\@ A.7\,(i)]{sbA}) and that, by using this explicit description, it can be shown that the image of $\Theta_{E_\infty, T}$ is contained in $\bidual^{r_E}_{\bLambda_E} U_{E_\infty, S (E_\infty), T}$, and that $\Theta_{E_\infty, T}$ agrees with the limit (over all finite layers $F / k$ of $E_\infty / k$) of the maps $\epsilon_E (\Z_p \otimes_\Z \Theta^{r_F}_{F, S (E_\infty), T})$ that are defined in \S\,\ref{basic-eulers-section} (see \cite[Lem.\@ 3.12 and Lem.\@ 3.19]{BullachDaoud} for details).
\medskip \\
The following result will play a key role in the proof of Theorem \ref{reduction to Iwasawa theory}. 

\begin{proposition} \label{elbchausse-implies-eimc}
Fix a prime $p$ and a finite abelian extension $E$ of $k$ with the property that $V_E = S_\infty (k)$ if $p = 2$. 

Then, for every set  $T$ in $\mathscr{P}^\mathrm{ad}_E$ that is disjoint from $S(k^p_\infty)$, the following claims are valid. 
\begin{liste}
\item Let $\fz_{E^p_\infty}$ be an $\epsilon_E \bLambda_{p, E}$-basis of $\epsilon_E \Det_{\bLambda_{p, E}} ( D^\bullet_{E^p_\infty, T})$ and set $z^\mathrm{b}_{E^p_\infty} \coloneqq \Theta_{E^p_\infty,T} ( \fz_{E^p_\infty})$.
Then one has 
\begin{equation} \label{Elbchaussee-for-basic-Eulers}
\im ( z^\mathrm{b}_{E^p_\infty})^{\ast \ast} = \Fitt^0_{\bLambda_{p, E}} ( \Cl^p_{E^p_\infty, S (E^p_\infty), T})^{\ast \ast} 
        \cdot \Fitt^0_{\bLambda_{p, E}} (X^p_{E^p_\infty, S (E^p_\infty)})^{\ast \ast}. 
\end{equation}
\item Assume that, for every height-one prime $\p$ of $\bLambda_{p,E}$ that is contained in the support of $\epsilon_E \bLambda_{p,E}$, the $ (\bLambda_{p,E})_\p$-module $(\Cl^p_{E^p_\infty})_\p$ has finite projective dimension.
\begin{enumerate}[label=(\roman*)]
    \item An element $\eta$ of $(\bidual^{r_E}_{\bLambda_{p,E}} U_{E_\infty^p, S(E^p_\infty), T})[1 - \epsilon_E]$ belongs to  $\im(\Theta_{E^p_\infty, T})$ if and only if
        \begin{equation} \label{doch-in-Hamburg-auf-der-Elbchausee-variante}
        \im (\eta)^{\ast \ast} 
        \subseteq \Fitt^0_{\bLambda_{p,E}} ( \Cl^p_{E^p_\infty, S (E^p_\infty), T})^{\ast \ast} 
        \cdot \Fitt^0_{\bLambda_{p,E}} (X^p_{E^p_\infty, S (E_\infty)})^{\ast \ast}.
            \end{equation}
    \item If the family of Rubin--Stark elements $\varepsilon_{E^p_\infty, T} \coloneqq (\varepsilon^{V_F}_{F / k, S(E^p_\infty), T})_F$ satisfies (\ref{doch-in-Hamburg-auf-der-Elbchausee-variante}), then there exists  an $\epsilon_E \bLambda_{p,E}$-basis $\mathcal{L}_{E^p_\infty / k, T}$ of $\epsilon_E \Det_{\bLambda_{p, E}} (D^\bullet_{E^p_\infty, T})$ such that 
    \[ \Theta_{E^p_\infty, T} (\mathcal{L}_{E^p_\infty / k, T}) = \varepsilon_{E^p_\infty, T}.\]
\end{enumerate}
\end{liste}
\end{proposition}

\begin{proof} 
 As $p$ is fixed during this proof, we drop all adornments $p$ to lighten notation and we further abbreviate $\bLambda_{p,E}$ to $\bLambda$. \\
We then first note that $Y_{E_\infty, V_E}$ and, due to our assumption that $V_E = S_\infty (k)$ if $p = 2$, also $Y_{E_\infty, S_\infty (k) \setminus V_E}$ are both projective $\bLambda$-modules. 
We can therefore find a $\bLambda$-module $Z$ such that the exact sequence (\ref{exact-sequence-H1-limit}) induces an isomorphism
\begin{equation}\label{Z def}
H^1 (D^\bullet_{E_\infty, T}) \cong Z \oplus Y^p_{E_\infty, S_\infty (k) \setminus V_E} \oplus Y^p_{E_\infty, V_E}
\end{equation}
and, setting $S (E_\infty)_\fin \coloneqq S(E_\infty)\cap S_\fin(k)$, an exact sequence
\begin{equation} \label{exact-sequence-Z}
    \begin{tikzcd}
   0 \arrow{r} & \Cl^p_{S(E_\infty), T} (E_\infty) \arrow{r} & 
Z \arrow{r} & X^p_{E_\infty, S(E_\infty)_\fin} \arrow{r} & 0.  
    \end{tikzcd}
\end{equation}
Moreover, by choosing a section to the (surjective) composite map
\[ P_{E_\infty} \to H^1 (D^\bullet_{E_\infty, T}) \to Y^p_{E_\infty, V_E}\]
in which the first map is induced by the representative (\ref{complex rep}) of the complex $D^\bullet_{E_\infty, T}$ and the second by the isomorphism (\ref{Z def}), we may identify $Y^p_{E_\infty, V_E}$ with a free direct summand of $P_{E_\infty}$. In this way, we deduce the existence of a projective, and hence free (since $\bLambda$ is semilocal), $\bLambda$-submodule $P'_{E_\infty}$ of $P_{E_\infty}$ for which there is an isomorphism of $\bLambda$-modules 
\begin{equation}\label{P' iso} P_{E_\infty} \cong P'_{E_\infty} \oplus Y^p_{E_\infty, V_E},\end{equation}
and also an exact sequence of $\bLambda$-modules 
\begin{cdiagram}
0 \arrow{r} & U_{E_\infty, S(E_\infty), T} \arrow{r} & P_{E_\infty} \arrow{r} & P'_{E_\infty} \arrow{r} & Z \oplus Y^p_{E_\infty, S_\infty (k) \setminus V_E} \arrow{r} & 0,
\end{cdiagram}%
in which the third arrow is the composite of the differential of  (\ref{complex rep}) and the projection $P_\infty \to P_\infty'$ induced by (\ref{P' iso}) and the fourth the restriction to $P_{E_\infty}'$ of the map $P_{E_\infty} \to Z \oplus Y^p_{E_\infty, S_\infty (k) \setminus V_E}$ induced by (\ref{complex rep}) and the decomposition (\ref{Z def}). 

By now applying \cite[Lem.\@ 2.7\,(c)]{BullachDaoud} to this sequence  
and recalling that $\Theta_{E_\infty, T}$ can be explicitly described as a rank reduction map (see \cite[Lem.\@ A.7\,(i)]{sbA}),
we derive an equality   
\begin{align} \nonumber
\im ( z^\mathrm{b}_{E_\infty})^{\ast \ast}
& = \Fitt^{0}_\bLambda ( Z \oplus Y^p_{E_\infty, S_\infty (k) \setminus V_E})^{\ast \ast} \\ \label{Fitt-description}
  &      = \Fitt^0_{\bLambda} ( Z)^{\ast \ast} \cdot \Fitt^0_\bLambda (Y^p_{E_\infty, S_\infty(k) \setminus V_E})^{\ast \ast}.
\end{align}
Next we note that if $\p$ is a regular height-one prime of $\bLambda$, then $\bLambda_\p$ is a discrete valuation domain. In particular, since over such a ring initial Fitting ideals are multiplicative on short exact sequences, we may deduce from the above equality and the exact sequence (\ref{exact-sequence-Z}) that
\begin{align*}
\im ( z^\mathrm{b}_{E_\infty})_\p &= \bigl(\im ( z^\mathrm{b}_{E_\infty})^{\ast \ast}\bigr)_\p\\
& = \Fitt^0_{\bLambda} ( \Cl^p_{E_\infty, S (E_\infty), T})_\p 
        \cdot \Fitt^0_{\bLambda} (X^p_{E_\infty, S (E_\infty)_\fin})_\p \cdot \Fitt^0_\bLambda (Y^p_{E_\infty, S_\infty (k) \setminus V_E})_\p \\
        & = \Fitt^0_{\bLambda} ( \Cl^p_{E_\infty, S (E_\infty), T})_\p 
        \cdot \Fitt^0_{\bLambda} (X^p_{E_\infty, S (E_\infty)})_\p,
\end{align*}
where the last equality is true because $S_\infty(k) \setminus V_E = S_\infty(k) \cap S(E_\infty)$ and so there exists a natural exact sequence \begin{equation} \label{exact sequence X term}
\begin{tikzcd}
   0 \arrow{r} &  X^p_{E_\infty, S (E_\infty)_\fin} \arrow{r} & X^p_{E_\infty, S (E_\infty)} \arrow{r} & Y^p_{E_\infty, S_\infty (k) \setminus V_E} \arrow{r} & 0.
   \end{tikzcd}
\end{equation}
Since $Q (\bLambda)$ is a semi-simple ring, similar arguments also show that the above description of $\im ( z^\mathrm{b}_{E_\infty})_\p$ is valid for any prime $\p$ of $\bLambda$ of height zero.  

To consider singular height-one primes of $\bLambda$, we note that the $\bLambda$-module $Y_{E_\infty, S(E_\infty)_\fin}^p$ is isomorphic to the direct sum $\bigoplus_{v} \Z_p \llbracket \faktor{\cG_{E_\infty}}{\cG_{E_\infty, v}} \rrbracket$, where $v$ runs over $S(E_\infty)_\fin$ and $\cG_{E_\infty, v}$ is the decomposition group of $v$ in $\cG_{E_\infty}$. In particular, since no finite place splits completely in $k_\infty / k$, it follows that $Y_{E_\infty, S(E_\infty)_\fin}^p$ is isomorphic to a direct sum of modules that are finitely generated over a power series ring (over $\Z_p$) in at most $d - 1$ variables and therefore, by Lemma \ref{localisation lemma}\,(b), that the localisation of $Y_{E_\infty, S(E_\infty)_\fin}^p$ at any height-one singular prime $\p$ of $\bLambda$-vanishes.\\
This in turn implies that, for any such $\p$, the localisation $(X_{E_\infty, S(E_\infty)_\fin}^p)_\p$ vanishes and hence that the exact sequence (\ref{exact-sequence-Z}) induces a natural isomorphism $ (\Cl^p_{E_\infty, S(E_\infty), T})_\p \cong Z_\p$. 
This isomorphism then combines with the equality (\ref{Fitt-description}) to imply that \begin{align*}
\im ( z^\mathrm{b}_{E_\infty})_\p  & = \Fitt^0_{\bLambda} (\Cl^p_{E_\infty, S(E_\infty), T})_\p \cdot \Fitt^0_\bLambda (Y^p_{E_\infty, S_\infty(k) \setminus V_E})_\p \\
& = 
\Fitt^0_{\bLambda} ( \Cl^p_{E_\infty, S (E_\infty), T})_\p 
        \cdot \Fitt^0_{\bLambda} (X^p_{E_\infty, S (E_\infty)})_\p,
\end{align*}
where the second equality is true since $(X_{E_\infty, S(E_\infty)_\fin}^p)_\p$ vanishes and (\ref{exact sequence X term}) is exact. \\
At this stage we have established the last displayed equality for all primes of $\bLambda$ of height at most one and so it follows from Lemma \ref{localisation lemma}\,(a) that the claimed equality (\ref{Elbchaussee-for-basic-Eulers}) holds.\medskip \\ 
To proceed, we first note that the `only if' part of claim (i) in (b) follows from (a). To prove the `if' part, we observe that the definition of $\Theta_{E_\infty, T}$ ensures that $z^\mathrm{b}_{E_\infty}$ is a generator of the $\epsilon_E Q (\bLambda)$-module spanned by $(\bidual^{r_E}_{\bLambda} U_{E_\infty, S (E_\infty), T})[1 - \epsilon_E]$. Given an element $\eta$ of $(\bidual^{r_E}_{\bLambda} U_{E_\infty, S (E_\infty), T})[1 - \epsilon_E]$, we can therefore fix an element $q$ of $\epsilon_E Q (\bLambda)$ with the property that 
\begin{equation}\label{q def} 
\eta = q \cdot z^\mathrm{b}_{E_\infty}.
\end{equation}
This equality then combines with (\ref{Elbchaussee-for-basic-Eulers}) and the assumed inclusion  (\ref{doch-in-Hamburg-auf-der-Elbchausee-variante}) to imply an inclusion 
\begin{equation}\label{last-displayed-2}
q \cdot \im ( z^\mathrm{b}_{E_\infty})^{\ast \ast} 
 = \im (\eta)^{\ast \ast}
 \subseteq \im ( z^\mathrm{b}_{E_\infty})^{\ast \ast} . 
\end{equation}
We next claim that, for every height-one prime ideal $\p$ of $\bLambda$ that is contained in the support of $\epsilon_E \bLambda$, the ideal $\Fitt^0_{\bLambda} (Z)_\p$ is principal and generated by a non-zero divisor. Indeed, if $\p$ is regular, this is automatically satisfied because $\bLambda_\p$ is a discrete valuation ring. If $\p$ is singular, on the other hand, then the exact sequence (\ref{exact-sequence-Z}) implies that $Z_\p$ is isomorphic to $(\Cl^p_{E_\infty, S (E_\infty), T})_\p$.
 In addition, since we assume that no finite place splits completely in $k_\infty / k$, Lemma \ref{localisation lemma}\,(b) implies that the natural maps  $(\Cl^p_{E_\infty, \emptyset, T})_\p\to  (\Cl^p_{E_\infty, S (E_\infty), T})_\p$ and  $(\Cl^p_{E_\infty, \emptyset, T})_\p \to (\Cl^p_{E_\infty})_\p$ are both bijective 
(cf.\@ the argument of \cite[Lem.\@ 4.11]{BullachDaoud}).
Thus, the claim follows in this case from our assumption that the projective dimension of the $\bLambda_\p$-module  $\Cl^p (E_\infty)_\p$ is finite and hence at most one (as a consequence of the Auslander--Buchsbaum formula since $\bLambda$ is Gorenstein and $\p$ has height one).\\ 
In addition, one has $\Fitt^0_\bLambda (Y^p_{E, S_\infty(k) \setminus V_E}) = \bLambda\epsilon_E$, and so it follows from (\ref{Fitt-description}) that, if $\p$ is in the support of $\epsilon_E \bLambda$, then the ideal $\im ( z^\mathrm{b}_{E_\infty})_\p$ is generated by a non-zero divisor in $\bLambda_\p = (\epsilon_E \bLambda)_\p$. We therefore deduce from (\ref{last-displayed-2}), by cancellation, that $q$ belongs to $\bLambda_\p$. Since, by construction,  $q$ belongs to $\epsilon_E Q (\bLambda)$, the last assertion is also  clear both for height-one primes $\p$ that are not in the support of $\epsilon_E \bLambda$ and for primes of height zero, and so Lemma \ref{localisation lemma}\,(a) implies that $q$ belongs to $(q \bLambda)^{\ast \ast} \subseteq (\epsilon_E \bLambda)^{\ast\ast} = \epsilon_E \bLambda$, as required to prove claim (b)\,(i).\medskip \\
To prove claim (b)\,(ii) we note that, for each finite extension $E'$ of $E$ in $E_\infty$ one has $V_{E'} = V_E$ and hence $r_{E'} = r_E$. Using this fact, we write $z^\mathrm{b}_{E'}$ for the image of $z^\mathrm{b}_{E_\infty}$ under the natural projection map
\[
\bidual^{r_E}_\bLambda U_{E_\infty, S (E_\infty), T} \to \bidual^{r_{E'}}_{\Z_p [\cG_{E'}]} U_{E', S (E_\infty), T}.
\]
Then, by claim (b)\,(i), we know that $\varepsilon^{V_{E'}}_{E' / k, S(E_\infty), T}$ belongs to $\Z_p [\cG_{E'}] \cdot z^\mathrm{b}_{E'} = \Z_p\cdot \im (\Theta^{r_{E'}}_{E', S(E_\infty), T})$ and hence, by Proposition \ref{analztic-class-number-formula-argument}, that $\Z_p\cdot \im (\Theta^{r_{E'}}_{E', S(E_\infty), T}) = \Z_p[\cG_{E'}]\cdot\varepsilon^{V_{E'}}_{E' / k, S(E_\infty), T}$. 

By passing to the limit over all such fields $E'$
and recalling that $\Theta_{E_\infty, T}$ agrees with the limit of the maps $\Z_p \otimes_\Z \Theta^{r_E}_{E', S(E_\infty), T}$, these equalities combine to imply that the image of $\Theta_{E_\infty, T}$ is generated as a $\bLambda$-module by $\varepsilon_{E_\infty, T}$.
As a consequence, the element $q$ that verifies (\ref{q def}) with $\eta$ taken to be $\varepsilon_{E_\infty, T}$ must be a unit of $\epsilon_E\bLambda$. \\
Given this observation, it is then easily checked that the element 
$\cL_{E_\infty / k, T} \coloneqq q^{-1}\cdot \fz_{E_\infty}$ has the properties that are required to validate claim (b)\,(ii).
\end{proof}

\subsubsection{Criteria for the validity of the equivariant Tamagawa Number Conjecture}

We next establish a concrete link between Theorems \ref{cons scarcity thm} and \ref{reduction to Iwasawa theory}\,(a). \\
To do this we fix a subset $\cX$ of $\Omega (k)$ that satisfies Hypothesis \ref{hyp X} with respect to a (singleton) subset $\cV = \{ V \}$ of $\mathcal{P} (S_\infty (k))$. We also fix a prime $p$ and, as in Proposition \ref{basic-Euler-systems}, define a homomorphism of $\Z_p\llbracket\cG_{\cK}\rrbracket$-modules 
\[
\Theta_{k}^p \:  \varprojlim_{E \in \Omega (k)} \Det_{\Z_p [\cG_E]} (\Z_p \otimes_\Z^\mathbb{L} C^\bullet_{E, S^\ast (E)}) \to \ES_k (\Z_p),
\]
where the limit is taken with respect to the maps $i_{F / E}$ defined in (\ref{definition-transition-map}). 
We also use the idempotents $\epsilon_{E, \cV}$ of $\Q[\cG_E]$ defined in (\ref{epsKV def}) and the field $k\langle p\rangle$ defined in (\ref{angle field}).  

\begin{lem} \label{etnc new criterion}
Let $p$ be odd and write $k_p$ for the composite of all subextensions of $k \langle p \rangle$ in which at least one $p$-adic place of $k$ splits completely. 
Then $\mathrm{TNC} ( h^0 (\Spec K), \Z_p [\cG_K]\epsilon_{K, V})$ is valid for all $K \in \cX$ provided that the following two conditions are satisfied:
\begin{enumerate}[label=(\roman*)]
    \item The image of $\Theta^p_k$ is contained in $\Z_p \llbracket \cG_\cK \rrbracket \cdot \varepsilon_{k}^\cX + \ES_k^\cX (\Z_p)^{\gal{\cK}{k_p}}$.

    \item At least one of the following holds: 
    \begin{itemize}
        \item $\mathrm{TNC} ( h^0 (\Spec F), \epsilon_{K, V} \Z_p [\cG_F])$ holds for all extensions of the form $F = k_p \cdot E$ with $E$ a tamely ramified cyclic $p$-extension of $k$ that belongs to $\Omega^{S_\infty (k)} (k)$.
        \item $\mathrm{TNC} ( h^0 (\Spec F), \epsilon_{F, V} \Z_p [\cG_F])$ holds for all extensions of the form $F = k_p \cdot k^p_n$ with $k_p^n$ the $n$-th layer of a $\Z_p$-extension $k^p_\infty$ of $k$ in which no finite place splits completely. 
    \end{itemize}
\end{enumerate}
\end{lem}

\begin{proof} Since $\im(\Theta_k) \subseteq \im (\Theta_k^p)$,  Proposition \ref{every-basic-RS-implies-etnc} implies that $\mathrm{TNC} ( h^0 (\Spec K), \Z_p [\cG_K]\epsilon_{K, V})$ is valid for all $K \in \cX$ provided that $\varrho^\cX (\im (\Theta_k^p))$ is contained in $\Z_p \llbracket \cG_\cK \rrbracket \cdot \varepsilon_{k}^\cX$. \\
To verify the latter inclusion we note that, for every field $E \in \Omega (k)$, the $\Z_p [\cG_E]$-module $\Det_{\Z_p [\cG_E]} (\Z_p \otimes_\Z^\mathbb{L} C^\bullet_{E, S^\ast (E)})$ is free of rank one. Since all of the transition maps $i_{F / E}$ are surjective, it follows that the $\Z_p \llbracket \cG_\cK \rrbracket$-module $\varprojlim_{E \in \Omega (k)} \Det_{\Z_p [\cG_E]} (\Z_p \otimes_\Z^\mathbb{L} C^\bullet_{E, S^\ast (E)})$ is free of rank one (see, for example, the argument of \cite[Prop.\@ 3.7]{bdss}). We may therefore choose a basis $\fz = (\fz_E)_{E \in \Omega (k)}$ of the latter module and write $z^\mathrm{b} $ for the associated Euler system $\Theta^p_k ( \fz)$. Then, by condition (i), there exists an element 
\[ r_p = (r_{p, E})_{E \in \Omega (k)}\in \varprojlim_{E \in \Omega (k)} ( \Z_p [\cG_E]\epsilon_{E, V}) \]
such that $ r_{p, E}\cdot\varepsilon_{E / k} - z^\mathrm{b}_E$ is fixed by $\gal{E}{E \cap k_p}$ for all $E \in \cX$. It therefore suffices to show that either of the conditions stated in (ii) implies that $r_{p, E}\cdot\varepsilon_{E / k} -  z^\mathrm{b}_E$ vanishes for all $E \in \cX$, or equivalently that 
\begin{equation}\label{break down} e_\chi\cdot( r_{p, E}\cdot\varepsilon_{E / k} -  z^\mathrm{b}_E) = 0\end{equation}
for all $E \in \cX$ and all characters $\chi$ of $\cG_E$. \\
Now, since $ r_{p, E}\cdot\varepsilon_{E / k} - z^\mathrm{b}_E$ is fixed by $\gal{E}{E \cap k_p}$, the equality (\ref{break down}) is clear unless $\chi$ factors through $\cG_{k_p}$ and so we may assume that $\chi$ factors through $\cG_{k_p}$. Then, since $e_\chi \cdot ( r_{p, E} \varepsilon_{E / k} -  z^\mathrm{b}_{E})$ vanishes if $r_{p, E_\chi} \varepsilon_{E_\chi / k} -  z^\mathrm{b}_{E_\chi}$ vanishes (cf.\@ the argument of Lemma \ref{faithful-lem}), we can further assume that $E$ is a subfield of $k_p$ and hence that every finite place in $S(E)$ is $p$-adic. 
\\
To proceed, we write $M$ for the subset of $\cX$ comprising all extensions $F$ of 
$E$ for which $\mathrm{TNC} ( h^0 (\Spec F), \Z_p [\cG_F]\epsilon_{F, V})$ is valid and for which one has that 
$\epsilon_{F, \cV} \cdot (r_{p, F} \varepsilon_{F / k} - z^\mathrm{b}_F)$ vanishes.
Then, for any such field $F$ in $M$, the preimage $\mathcal{L}^V_{F / k}$ of $\epsilon_{F, V} \theta^{\ast}_{F / k, S(F)}$ under the isomorphism 
\[ \C_p\otimes_\Z \Det_{\Z [\cG_F]} (C^\bullet_{F, S(F)}) \cong \C_p [\cG_F]\]
that is induced by the Dirichlet regulator $\lambda_{F, S(F)}$ (as in Lemma \ref{norm-map-Lemma}(c)) is a $\Z_p [\cG_F]\epsilon_{F, V} $-basis of $\epsilon_{F, V} \Det_{\Z_p [\cG_F]} (\Z_p \otimes_\Z^\mathbb{L} C^\bullet_{F, S^\ast (F)})$. We can therefore fix a unit $q_{p, F}$ in $(\Z_p [\cG_F]\epsilon_{F, \cV} )^\times$ such that $q_{p, F} \cdot \fz_{F / k} = \cL^\cV_{F / k}$ and, for this unit, one has both  
\begin{equation}\label{units} q_{p, F} \cdot z^\mathrm{b}_F = \epsilon_{F, \cV} \varepsilon_{F / k}
\quad \text{ and } \quad 
\pi_{F / E}(q_{p, F}) \cdot z^\mathrm{b}_E = \epsilon_{E, \cV} \varepsilon_{E / k},
\end{equation}
(cf.\@ \cite[Thm.\@ 5.14]{BKS}). The first of these equations combines with the assumed vanishing of $\epsilon_{F, \cV} \cdot (r_{p, F} \varepsilon_{F / k} - z^\mathrm{b}_F)$ to imply that we have
\[
\epsilon_{F, \cV} \cdot z^\mathrm{b}_F = \epsilon_{F, \cV} \cdot r_{p, F} \varepsilon_{F / k} = r_{p, F} q_{p, F} \cdot z^\mathrm{b}_K.
\]
Since $z^\mathrm{b}_F$ generates a free $e_F \Z_p [\cG_F]$-module of rank one, we deduce that the difference $q_{p, F}^{-1} - r_{p, F}$ is annihilated by $\epsilon_{F, \cV} e_F$. \\
Now, if $\chi$ is a character of $\cG_F$ with $e_\chi \epsilon_{F, V} (1 - e_F) \neq 0$, then $\chi$ must vanish on the decomposition group of at least one finite place in $S(F)$. The element $q_{p, F}^{-1} - r_{p, F}$ of $\Z_p [\cG_F]\epsilon_{F, V} $ is therefore fixed by every element of the subgroup $H_F \coloneqq \bigcap_{v \in S(F) \setminus S_\infty (k)} \cG_{F, v}$ of $\cG_F$, where $\cG_{F, v}$ denotes the decomposition subgroup in $\cG_F$ of each place $v$. As a consequence, the element
\[ \pi_{F / E} ( q_{p, F}^{-1}) - r_{p, E} = \pi_{F / E} ( q_{p, F}^{-1} - r_{p, F}) \]
is divisible by the order of $H_{F,1} \coloneqq H_F \cap \Gal(F/E)$ and so the second equality in (\ref{units}) implies 
\[
 z^\mathrm{b}_E - 
r_{p, E} \cdot \epsilon_{E, \cV} \varepsilon_{E / k}   =  z^\mathrm{b}_E - r_{p, E} \pi_{F / E}(q_{p, E}) \cdot z^\mathrm{b}_E 
= \pi_{F / E}(q_{p, F})(\pi_{F / E}(q_{p, F}^{-1})-r_{p, E})\cdot z^\mathrm{b}_E \] 
is divisible by $|H_{F,1}|$ in the lattice $ \fL_K$. In particular, if we can show that, under either of the conditions stated in (ii), the $p$-part of $|H_{F,1}|$ is unbounded as $F$ ranges over $M$, then we could deduce the required vanishing of $ r_{p, E} \cdot \epsilon_{E, \cV} \varepsilon_{E / k} - z^\mathrm{b}_E$. \\
To do this, we let $n$ be any natural number. Then the result of Proposition \ref{ultimate neukirch result} (with $\cT = S_p (k)$, $K = k_p$ and $\sigma = 1$)  provides a cyclic Galois extension $L_n$ of $k$ in which all $p$-adic places have decomposition group of order at least $p^n$ and all places in $S(L_n)$ are  non-archimedean, totally split in $k_p$ and have inertia subgroup of order at least $p^n$.

In particular, if we assume the first condition in (ii), then $\mathrm{TNC} ( h^0 (\Spec k_p L_n), \Z_p [\cG_{k_p L_n}]\epsilon_{k_p L_n, V})$, and hence also 
$\mathrm{TNC} ( h^0 (\Spec F), \Z_p [\cG_F]\epsilon_{F, V})$ is valid for the compositum $F_n \coloneqq L_n \cdot E$. In addition, $F_n$ belongs to $\cX$ by Hypothesis \ref{hyp X}\,(i). 
To prove that $F_n$ belongs to the set $M$, we need to justify the vanishing of $\epsilon_{F_n, \cV} \cdot (r_{p, F_n} \varepsilon_{F_n / k} - z^\mathrm{b}_{F_n})$ in this case. 
To do this, it suffices to prove that $e_\chi \cdot (r_{p, F_n} \varepsilon_{F_n / k} - z^\mathrm{b}_{F_n})$ for any character $\chi$ of $\cG_F$ with $e_\chi \epsilon_{F, \cX} e_F$. Since $(r_{p, F_n} \varepsilon_{F_n / k} - z^\mathrm{b}_{F_n}$ is fixed by $\gal{F_n}{F_n \cap k_p}$, the required vanishing is valid for any character of $\cG_{F_n}$ that does not factor through $k_p$. If it a character $\chi$ of $\cG_{F_n}$ factors through $k_p$, on the other hand, then by construction it vanishes on the decomposition group of a place in $S(L_n) \subseteq S(F_n)$ and so one has $e_\chi e_{F_n} = 0$. This proves that $F_n$ belongs to $M$, as claimed. \\
Moreover, in this case $H_{F_n,1}$ contains the unique cyclic subgroup of $\gal{F_n}{K}$ that is of order $p^n$ (note that $K / k$ is unramified at any place in $S(E_n)$ and hence that said subgroup of order $p^n$, being contained in the inertia subgroup of such a place, must be contained in $\gal{F_n}{K}$) and so the order of $H_{F_n,1}$ is divisible by an arbitrarily large power of $p$ as $n$ varies, as required.\\
To consider the second condition in (ii), we recall $k^p_\infty$ is assumed to be a $\Z_p$-extension in which no finite place splits completely, and hence that there exists an integer $m$ with the property that every $p$-adic place has full decomposition group in $k_p^\infty k_p / k^p_m k_p$. Thus, if we assume the second condition in (ii), then similar arguments show that $F_n \coloneqq k^p_{n + m} k_p$ is a field in $M$ with the property that $|H_{F_n, 1}|$ is divisible by $p^n$, as required. \\
This concludes the proof of the stated result. 
\end{proof}

\subsubsection{The proof of Theorem \ref{reduction to Iwasawa theory}}
\label{reduction to Iwasawa theory proof section} 

In this section, we prove Theorem \ref{reduction to Iwasawa theory} and also establish some useful facts about the ideals that occur in Theorem \ref{reduction to Iwasawa theory}\,(a) and (b).\smallskip \\
We start by proving Theorem \ref{reduction to Iwasawa theory} and so fix data as in the statement of that result. 

It is convenient to prove claim (b) first and to do this we fix an Euler system $c$ in $\ES^\cX_k (\Z_\mathcal{S})$. We then also fix a prime $p \notin \mathcal{S}$, a field $K$ in $\cX_{S_p(k)}$ and an admissible set $T$ in $\mathscr{P}^\mathrm{ad}_K$ with respect to which $c$ satisfies the condition (\ref{doch-in-Hamburg-auf-der-Elbchausee}). (For brevity, we shall often shorten the notations used below by omitting adornments $K$ or $T$ where, we feel, no confusion is possible.) \\
Then, by Proposition \ref{elbchausse-implies-eimc}\,(a), we know that  $c_{K^p_\infty, T}$ belongs to $\im (\Theta_{K^p_\infty, T})$. In addition, Proposition \ref{elbchausse-implies-eimc}\,(b) combines with the given assumption that the Rubin--Stark system satisfies (\ref{doch-in-Hamburg-auf-der-Elbchausee}) to imply that $\im( \Theta_{K^p_\infty, T})$ is generated over $\bLambda_{p, K}$ by $ \varepsilon_{K_\infty, T}$, and hence that there exists an element $q_{p, K}$ of $\epsilon_K \bLambda_{p, K}$ such that $c_{K^p_\infty, T} = q_{p, K} \cdot \varepsilon_{K^p_\infty, T}$.\\
Taking images under the projection map
 $\bidual^{r_K}_{\bLambda_{p, K}} U_{K^p_\infty, S(K), T} \to \bidual^{r_K}_{\Z_p [\cG_K]} U_{K, S(K), T}$, it follows that  $\delta_{T, K} \cdot c_K = \delta_{T,K} \cdot q_{p, K}\cdot \varepsilon_{K / k}$ and hence, since $\delta_{T, K}$ is a non-zero divisor in $\ZZ_p[\cG_{K}]$, that  
 \[ c_K = q_{p, K} \cdot  \varepsilon_{K / k} \in \Z_p[\cG_K]\cdot \varepsilon_{K / k}.\]
In particular, since this containment is valid for all primes $p$ outside $\mathcal{S}$, we find that condition (b)\,(iii) in Theorem \ref{cons scarcity thm} is satisfied in this case.
The implication `(iii) $\Rightarrow$ (i)' in Theorem \ref{cons scarcity thm} now gives that $c$ belongs to $\Z_\cS \llbracket \cG_\cK \rrbracket \varepsilon_{k}^\cX$, as claimed in Theorem \ref{reduction to Iwasawa theory}\,(b). 
\medskip \\
Turning now to the proof of claim (a) of Theorem \ref{reduction to Iwasawa theory}, we recall that, by Proposition \ref{every-basic-RS-implies-etnc}, it is enough to show that if $c$ is any Euler system that belongs to the image of the map $\Theta_k$ defined in Proposition \ref{basic-Euler-systems}, then  $\varrho^\cX (c)$ belongs to $\ZZ_p \llbracket\cG_\cK\rrbracket \cdot\varepsilon_k^\cX$.
In particular, it is sufficient to verify the latter inclusion for every system $c$ in the image of $\Theta^p_k$. 
\\
As a first step in this direction we note the exact triangle in Lemma \ref{properties-weil-etale}\,(b)
combines with Remark \ref{T-modification Fitt remark} to imply that, for any finite subextension $F$ of $K^p_\infty / k$, one has 
$\im (\Theta_{F, S(K), T}) = \delta_{T, F} \cdot \im (\Theta_{F, S(K), \emptyset})$. In particular, the $p$-adic completion of this module contains $c_{F, T}$ for every such $F$ and, by taking the limit over such $F$, we deduce that the element $c_{K^p_\infty, T}$ belongs to $\im (\Theta_{K^p_\infty, T})$. \\
Next we recall that, as observed earlier, the assumed validity of (\ref{doch-in-Hamburg-auf-der-Elbchausee}) for the Rubin--Stark system implies that $\im (\Theta_{K^p_\infty, T})$ is equal to $\bLambda_{p, K} \cdot \varepsilon_{K_\infty, T}$, and hence, by an argument similar to above, that $c_K$ belongs to $\Z_p [\cG_K]\cdot \varepsilon_{K / k}$. \\
By Remark \ref{hilfslem p adic euler limit}, one therefore has that $c$ belongs to $\Z_p \llbracket \cG_\cK \rrbracket \varepsilon_{k}^\cX + \ES_k^\cX (\Z_p)^{\gal{\cK}{k \langle p \rangle}}$.
\medskip\\
In the remainder of this argument we now explain how this containment combines with either of the conditions stated in Theorem \ref{reduction to Iwasawa theory}\,(a) to imply the inclusion
$c \in \Z_p \llbracket \cG_\cK \rrbracket \varepsilon_{k}^\cX$ that is required to complete the proof of the claim.\\
If $p \nmid 2 d_k$, as assumed in condition (a)\,(i) of Theorem \ref{reduction to Iwasawa theory}, then $k \langle p \rangle = k$ and so $c$ belongs to $\Z_p \llbracket \cG_\cK \rrbracket \varepsilon_{k}^\cX + \ES_k^\cX (\Z_p)^{\cG_\cK}$.
Since $c$ is symmetric by Proposition \ref{basic-Euler-systems}, we deduce from Lemma \ref{ivc versus local} that we must in fact have that $c$ belongs to $\Z_p \llbracket \cG_\cK \rrbracket \varepsilon_{k}^\cX$, as required. \\
Let us next suppose that condition (a)\,(ii) is valid. That is, $V \neq \emptyset$ and $\varepsilon_{k}$ is a congruence system. In this case we may use Proposition \ref{mrs and soogils argument} to deduce that also in this case $c$ belongs to $\Z_p \llbracket \cG_\cK \rrbracket \varepsilon_{k}^\cX + \ES_k^\cX (\Z_p)^{\cG_\cK}$. Since $c$ is assumed to be symmetric, the same argument as above then shows that $c$ belongs to $\Z_p \llbracket \cG_\cK \rrbracket \varepsilon_{k}^\cX$. This proves the claim in the cases of conditions (i) and (ii). \\
Lastly, if we assume condition (iii), then the containment $\im \Theta^p_k \subseteq \Z_p \llbracket \cG_\cK \rrbracket \varepsilon_{k}^\cX + \ES_k^\cX (\Z_p)^{\gal{\cK}{k \langle p \rangle}}$ directly combines with Lemma \ref{etnc new criterion} to also imply the claim in this case,
thereby concluding the proof of Theorem \ref{reduction to Iwasawa theory}. 
\qed
\medskip \\
The next result establishes some useful facts about the ideals in Theorem \ref{reduction to Iwasawa theory}\,(a) and (b). 

\begin{lem}\label{divisor group lemma} 
Let $E$ be a finite abelian extension of $k$, $p \not \in \mathcal{S}$ a prime number, $T \in \mathscr{P}^\mathrm{ad}_E$ an admissible set disjoint from $S(k^p_\infty)$, and $c$ an Euler system in $\ES_k (\Z_\mathcal{S})$. Then the following claims are valid.  
\begin{liste}
\item $\Fitt^0_{\bLambda_{p, E}} (X^p_{E^p_\infty,S (k^p_\infty) })^{\ast \ast} \cdot \im (c_{E^p_\infty, T})^{\ast \ast} \subseteq \Fitt^0_{\bLambda_{p, E}} (X^p_{E^p_\infty, S(E^p_\infty)})^{\ast \ast}$.
\item If $|S(k^p_\infty) | = 1$, then $\im (c_{E^p_\infty, T})^{\ast \ast} \subseteq \Fitt^0_{\bLambda_{p, E}} (X^p_{E^p_\infty, S(E^p_\infty)})^{\ast \ast}$.
\item If the $\Z_p$-rank of $\cG_{E^p_\infty}$ is at least two, and every place in $S(k^p_\infty)\cap S_\fin(k)$ is finitely decomposed in $k^p_\infty / k$, then $\Fitt^0_{\bLambda_{p, E}} (X_{E^p_\infty, S (k^p_\infty)})^{\ast \ast} = \bLambda_{p, E}$. 
\item If the $\Z_p$-rank of $\cG_{E^p_\infty}$ is equal to one, and the module $(\Cl^p_{E^p_\infty, S(k^p_\infty)})^{\gal{E^p_\infty}{E_n}}$ is finite for every natural number $n$ (here $E_n$ denotes the $n$-th layer of $E_\infty^p / E$), then   
\begin{multline*} \Fitt^0_{\bLambda_{p, E}} ( \Cl^p_{E^p_\infty, S (E^p_\infty), T})^{\ast \ast} 
        \cdot \Fitt^0_{\bLambda_{p, E}} (X^p_{E^p_\infty, S (E^p_\infty)})^{\ast \ast}\\
        = \Fitt^0_{\bLambda_{p, E}} ( \Cl^p_{E^p_\infty, S (E^p_\infty), T})^{\ast \ast} 
        \cap \Fitt^0_{\bLambda_{p, E}} (X^p_{E^p_\infty, S (E^p_\infty)})^{\ast \ast}. \end{multline*}
\end{liste}
\end{lem}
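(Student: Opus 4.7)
All four claims reduce to analysing the behaviour of $\im(c_{E_\infty, T})^{\ast\ast}$ against the Fitting ideals of the modules $X_{E_\infty, S}$, $Y_{E_\infty, S}$ and $A_{S, T}(E_\infty)$, and the common starting point will be the canonical short exact sequence of $\bLambda_E$-modules
\[
0 \to X_{E_\infty, S(k_\infty)} \to X_{E_\infty, S(E_\infty)} \to Y_{E_\infty, S(E_\infty) \setminus S(k_\infty)} \to 0,
\]
together with the multiplicativity of Fitting ideals on short exact sequences (valid in reflexive closure over the Krull structure of $\epsilon_E \bLambda_E$). This multiplicativity reduces claim (a) to the containment $\im(c_{E_\infty, T})^{\ast\ast} \subseteq \Fitt^0_{\bLambda_E}(Y_{E_\infty, S(E_\infty) \setminus S(k_\infty)})^{\ast\ast}$, from which claims (b)--(d) will follow by auxiliary vanishing or pseudo-nullity considerations.

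\textbf{Proof of the reduced form of (a), and of (b).} Since reflexive ideals of $\bLambda_E$ are determined by their localisations at height-one primes, the above containment may be verified at each such prime $\mathfrak{p}$. If $\mathfrak{p}$ lies outside the support of $Y_{E_\infty, S(E_\infty) \setminus S(k_\infty)}$, the right-hand side localises to the unit ideal and there is nothing to prove. Otherwise, there must exist $v \in S(E_\infty) \setminus S(k_\infty)$ whose decomposition ideal $(\sigma - 1 : \sigma \in \cG_{E_\infty, v})$ is contained in $\mathfrak{p}$; passing to the norm-coherent component $\delta_T c_F$ along a sub-tower of $E_\infty$ in which $v$ becomes unramified, the Euler system distribution relation~(\ref{es dist rel}) applied to $c$ forces every $\bLambda_E$-linear functional evaluated on $c_{E_\infty, T}$ to be divisible by the Euler factor $1 - \Frob_v^{-1}$, which generates the localisation of the relevant augmentation ideal at $\mathfrak{p}$. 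Claim (b) is then immediate: when $|S(k_\infty)| = 1$ the unique place of $S(k_\infty)$ has open decomposition group in $\cG_{E_\infty}$ (no finite place splits completely in $k_\infty / k$), hence $X_{E_\infty, S(k_\infty)}$ is pseudo-null over $\bLambda_E$, and the Flach criterion cited in the proof of Lemma~\ref{elbchausse-implies-eimc} gives $\Fitt^0_{\bLambda_E}(X_{E_\infty, S(k_\infty)})^{\ast\ast} = \bLambda_E$.

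\textbf{Claims (c) and (d).} For (c), the hypotheses force each $\cG_{E_\infty, v}$ with $v \in S(k_\infty)$ to be open in $\cG_{E_\infty}$, so $(\sigma - 1 : \sigma \in \cG_{E_\infty, v})$ is an ideal of height $\mathrm{rk}_{\Z_p}(\Gamma_E) \geq 2$ in $\bLambda_E$. Consequently each $Y_{E_\infty, \{v\}}$, and hence $X_{E_\infty, S(k_\infty)}$ via the augmentation sequence, is pseudo-null over $\bLambda_E$, so the cited Flach criterion again trivialises the reflexive Fitting ideal. Claim (d) exploits the fact that in the rank-one case $\bLambda_E$ is $2$-dimensional, so the product of two reflexive ideals equals their intersection precisely when no height-one prime of $\bLambda_E$ lies in the support of both; the hypothesis that $A_{S(k_\infty)}(E_\infty)^{\Gamma_E^n}$ is finite for all $n$ (a Gross--Kuz'min-type condition) is arranged precisely to eliminate from the support of $\Fitt^0_{\bLambda_E}(A_{S(E_\infty), T}(E_\infty))^{\ast\ast}$ the height-one primes coming from decomposition groups at places $v \in S(k_\infty)$, whereas the support of $\Fitt^0_{\bLambda_E}(X_{E_\infty, S(E_\infty)})^{\ast\ast}$ is concentrated on precisely those primes.

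\textbf{Anticipated main obstacle.} The technical heart of the proof is the local-at-$\mathfrak{p}$ step in part (a): extracting the Euler-factor divisibility from the Euler system distribution relation purely at the level of the Iwasawa algebra $\bLambda_E$, without losing the reflexive precision, requires handling places $v$ whose decomposition groups may fail to be open, and carefully tracking the $T$-modifier $\delta_T$ through the passage from the concrete exterior biduals $\bidual^{r_E}_{\bLambda_E} U_{E_\infty, S(E_\infty), T}$ to the ideal $\im(c_{E_\infty,T})^{\ast\ast}$. Similar delicate bookkeeping will be needed in (d) to pin down the exact heights of the supports involved and to verify that the Gross--Kuz'min hypothesis does eliminate the predicted height-one primes from $\Fitt^0_{\bLambda_E}(A_{S(E_\infty),T}(E_\infty))^{\ast\ast}$.
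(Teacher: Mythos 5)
Your treatments of (a), (c) and (d) are essentially the paper's: (a) is reduced via the displayed exact sequence and submultiplicativity of Fitting ideals to $\im(c_{E_\infty,T})_\mathfrak{p} \subseteq \Fitt^0_{\bLambda_E}(Y_{E_\infty, S(E_\infty)\setminus S(k_\infty)})_\mathfrak{p}$, which is then checked at height-one primes in the support of $Y$ (where $p$ is invertible, so one may decompose by characters of the torsion subgroup and extract the Euler factors from the norm relation down to the subfield where $v$ is unramified); (c) is exactly the paper's pseudo-nullity observation; and for (d) your support-disjointness idea matches the paper's Nakayama argument, except that the relevant fact is that \emph{every} height-one prime in the support of $X_{E_\infty, S(E_\infty)}$ contains an element $\gamma^{p^n}-1$ (because every finite place of $S(E_\infty)$, not only those of $S(k_\infty)$, is finitely decomposed in $k_\infty/k$), and the finiteness hypothesis then removes all such primes from the support of $A_{S(E_\infty),T}(E_\infty)$.

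However, your proof of (b) has a genuine gap. Openness of the decomposition group of the unique place $\wp \in S(k_\infty)$ only gives that $X_{E_\infty,S(k_\infty)}$ is a finitely generated $\Z_p$-module; over $\bLambda_E$ this is pseudo-null only when $\mathrm{rk}_{\Z_p}(\Gamma_E) \geq 2$, whereas (b) carries no rank hypothesis and is applied precisely in rank-one situations (e.g.\ $k=\Q$ with the cyclotomic $\Z_p$-extension, in Theorem \ref{thm-scarcity-rank-one}). There, if $\wp$ splits completely in $E_\chi$ with decomposition field $E_{\chi,n}$ in $E_{\chi,\infty}$, then $e_\chi\Fitt^0_{\bLambda_E}(X_{E_\infty,S(k_\infty)})_{\mathfrak{p}_\chi}$ is generated by $\gamma^{p^n}-1$, which is not a unit; so your deduction $\Fitt^0_{\bLambda_E}(X_{E_\infty,S(k_\infty)})^{\ast\ast} = \bLambda_E$ fails, and (b) cannot be obtained from (a) in this way. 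The real content of (b) is a divisibility statement about the Euler system itself: since $1-\Frob_\wp^{-1}$ annihilates the level-$n$ value (as $\wp$ splits completely in $E_{\chi,n}$), the norm relations place $c_{E_{\chi,\infty},T}$ in the kernel of the codescent map to level $n$, and this kernel is identified in \cite{BullachDaoud} with $(\gamma^{p^n}-1)\cdot\bidual^{r}_{\Lambda_\chi}U_{E_{\chi,\infty},S(E_{\chi,\infty})}$; hence $\im(c_{E_{\chi,\infty},T})_{\mathfrak{p}_\chi} \subseteq (\gamma^{p^n}-1)(\Lambda_\chi)_{\mathfrak{p}_\chi} = \Fitt^0_{\bLambda_E}(X_{E_\infty,S(k_\infty)})_{\mathfrak{p}_\chi}$, which together with the local analysis from (a) yields (b). This ``trivial-zero'' divisibility of the Euler system is the essential arithmetic input and is entirely absent from your argument.
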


\begin{proof}
As $p$ is fixed in this proof, we often suppress explicit reference to it in the notation, and we also abbreviate $\bLambda_{p, E}$ to simply $\bLambda$. \\
The exact sequence
\begin{cdiagram}
0 \arrow{r} & X^p_{E_\infty,S (k_\infty)} \arrow{r} & 
X^p_{E_\infty, S (E_\infty)} \arrow{r} & 
Y^p_{E_\infty, S (E_\infty) \setminus S(k_\infty)} \arrow{r} & 0
\end{cdiagram}%
reduces part (a) to the claim that, for each height one prime $\p$ of $\bLambda$, one has 
\[ \im (c_{E_\infty, T})_\p
        \subseteq   \Fitt^0_{\bLambda} (Y^p_{E_\infty, S (E_\infty) \setminus S(k_\infty)})_\p.\]
To verify this, we write $\Delta$ for the (finite) torsion subgroup of $\cG_{E_\infty}$ and fix a splitting of groups $\cG_{E_\infty} \cong \Delta \times \Gamma$. We note that if $\p$ belongs to the support of $Y^p_{E_\infty, S (E_\infty) \setminus S(k_\infty)}$, then $p \not \in \p$ and there exists a character $\chi$ in $\widehat{\Delta}$ and a height-one prime $\wp_\chi$ of the ring $\Lambda_\chi \coloneqq \Z_p [\im \chi] \llbracket{ \Gamma \rrbracket}$ such that $\bLambda_\p = \Lambda_{\chi,\wp_\chi}$ (cf.\@ \cite[\S\,3C1]{BKS2}). \\
Setting $E_\chi \coloneqq E^{\ker(\chi)}$ and $L \coloneqq E_\infty^{\Gamma}$, the elements  $\NN_{\gal{L}{E_\chi}}$ and $e_\chi$ are units in $\bLambda_\p$ and so 
\[
c_{E_\infty, T} \in \bLambda_\p \cdot \NN_{\gal{L}{E_\chi}} c_{E_\infty, T}  =  \Lambda_{\chi,\wp_\chi} \cdot P_{L / E_\chi, \emptyset} \cdot c_{E_{\chi, \infty}, T},
\]
where the Euler factor $P_{L / E_\chi, \emptyset}$ is as defined in Definition \ref{S euler def}\,(a).
This required inclusion is therefore true since %
\[\Lambda_{\chi,\wp_\chi}\cdot P_{L / E_\chi, \emptyset} =   e_\chi \Fitt^0_{\Lambda_\chi} (Y^p_{E_{\chi, \infty}, S(E_\infty) \setminus S(k_\infty)})_{\wp_\chi} = \Fitt^0_{\bLambda} (Y^p_{E_\infty, S (E_\infty) \setminus S(k_\infty)})_\p.\]
By the above discussion, it is sufficient to prove, for each $\chi \in \widehat{\Delta}$, that the ideal $\im (c_{E_{\chi, \infty}, T})_{\wp_\chi}$ is contained in $e_\chi \Fitt^0_{\Lambda_\chi} ( X^p_{E_{\chi, \infty}, S (k_\infty)})_{\wp_\chi}$ in order to establish claim (b). 
Note that the module $e_\chi X^p_{E_{\chi, \infty}, S (k_\infty) \setminus S_\infty}$ vanishes if the unique $p$-adic place $\wp \in S (k_\infty) \setminus S_\infty$ satisfies $\chi (\wp) \neq 1$ and so we may therefore assume that $\wp$ is completely split in $E_\chi / k$.

We now write $n$ for the unique integer with the property that $E_{\chi, n}$ is the decomposition field of $\wp$ in $E_{\chi, \infty} / k$. The ideal $\Fitt^0_{\Lambda_\chi} ( X^p_{E_{\chi, \infty}, S (k_\infty)})_{\wp_\chi}$ is then generated by $\gamma^{p^n} - 1$, where $\gamma$ is any choice of topological generator of $\Gamma_{E_\chi} \coloneqq \gal{E_{\chi, \infty}}{E_\chi}$. It therefore suffices to prove that $c_{E_{\chi, \infty}, T}$ is divisible by $\gamma^{p^n} - 1$ in $\bidual^{r_{E_\chi}}_{\Lambda_\chi} U_{E_{\chi, \infty}, S (E_{\chi, \infty})}$. To do this, we observe that 
        \[
        \NN_{E_{\chi, m} / E_{\chi, n}} (c_{E_{\chi, m}, T}) = (1 - \Frob^{-1}_\wp) \cdot c_{E_{\chi, n}} = 0
        \]
        for all $m > n$ since $\wp$ is assumed to split completely in $E_{\chi, n} / k$. This shows that $c_{E_{\chi, \infty}, T}$ is in the kernel of the natural codescent map
        \[
        \bidual^{r_{E_\chi}}_{\Lambda_\chi} U_{E_{\chi, \infty}, S (E_{\chi, \infty}), T} \to \bidual^{r_{E_\chi}}_{\Z_p [\cG_{E_\chi, n}]} U_{E_{\chi,n}, S(E_{\chi, \infty}), T}
        \]
        which, by the argument of \cite[Thm.\@ 3.8\,(b)]{BullachDaoud}, is equal to $(\gamma^{p^n} - 1) \bidual^{r_{E_\chi}}_{\Lambda_\chi} U_{E_{\chi, \infty}, S (K_\chi, p), T}$. This completes the proof of (b). \medskip \\
To prove (c), it is enough to note that the given assumption ensures that $X^p_{E_\infty, S (k_\infty)}$ is a finitely generated $\Z_p$-module and hence pseudo-null as a $\bLambda$-module if $\mathrm{rk}_{\Z_p} (\cG_{E_\infty}) > 1$. \medskip \\
Regarding claim (d), we first observe that, by Lemma \ref{localisation lemma}\,(a), it suffices to verify the stated equality after localisation at each height-one prime $\fp$ of $\bLambda$. 

To do this we note that, since no finite place splits completely in $k_\infty / k$, the $\bLambda$-module $X^p_{E_\infty, S(E_\infty)}$ is annihilated by an element of the form $\gamma^{p^n} - 1$, where $\gamma$ denotes a choice of topological generator of $\Gamma$. From this, it follows that any prime $\fp$ in the support of $X^p_{E_\infty, S(E_\infty)}$ must contain an element of the form $\gamma^{p^n} - 1$.

We fix such a prime $\p$. Then, since the stated assumption implies that the kernel and cokernel of multiplication by $\gamma^{p^n} - 1$ on $\Cl^p_{E_\infty, S (k_\infty)}$, and hence also on $\Cl^p_{E_\infty, S (E_\infty), T}$ (cf.\@ \cite[Lem.\@ 4.5]{BullachHofer}), are finite, it follows that $\gamma^{p^n} - 1$ acts bijectively on $(\Cl^p_{E_\infty, S (E_\infty), T})_\p$. Hence, since $\gamma^{p^n} - 1$ belongs to $\p$, the tensor product $(\Cl^p_{E_\infty, S (E_\infty), T})_\p \otimes_{\bLambda_\p} (\bLambda_\p/(\p \bLambda_\p))$ vanishes. By appealing to Nakayama's Lemma, this in turn implies that $(\Cl^p_{E_\infty, S (E_\infty), T})_\p$ vanishes. 

This argument shows that any height-one prime $\p$ of $\bLambda$ that belongs to the support of $X^p_{E_\infty, S(E_\infty)}$ cannot also belong to the support of $\Cl^p_{E_\infty, S (E_\infty), T}$. The displayed equality in claim (d) 
follows easily from this fact. 
\end{proof}

\begin{rk}\label{gross-kuzmin-remark} The module $(\Cl^p_{E^p_\infty, S(k^p_\infty)})^{\gal{E^p_\infty}{E}}$ that occurs in Lemma \ref{divisor group lemma} (d) is known to be finite if either $E$ is an abelian extension of $\Q$ or if a single place of $E$ ramifies in $E^p_\infty$, and also in several  other situations including cases in which $k$ is imaginary quadratic (cf.\@ \cite[Rks.\@ 4.4 and 4.13]{BullachHofer} for an overview of results). 
Further, if $k^p_\infty$ is the cyclotomic $\Z_p$-extension of $k$, then the Gross--Kuz'min Conjecture predicts that this module is always finite.  
\end{rk}

\subsection{Results in rank zero and the minus part of Kato's Conjecture}\label{proof of B section}

In this section we focus on the case of CM extensions of totally real fields and, in particular, prove claim \,(a) of Theorem \ref{intro etnc} in the Introduction. 

To do this we fix a (finite, abelian) CM extension $K$ of a totally real field $k$. We write $K^+$ for the maximal totally real subfield of $K$ and $\tau$ for the (unique) non-trivial element of $\gal{K}{K^+}$. We then write $e^-$ for the idempotent $(1-\tau)/2$ and define the \textit{minus part} of a $\Z [\cG_K]$-module $M$ by setting  
 \[ M^- \coloneqq (\Z [1/2] [\cG_K]e^-)\otimes_{\Z [\cG_K]}M .\]
We note that the assignment $M \mapsto M^-$ gives an exact functor from the category of $\Z[\cG_K]$-modules to the category of modules over the ring $\Z[\cG_K]^- = \Z [1/2] [\cG_K]e^-$. 

In this section, for each prime $p$ we always take the field $k_\infty^p$  (as fixed in \S\ref{somr section}) to be the cyclotomic $\Z_p$-extension of $k$. For each abelian extension $E$ of $k$ and each non-negative integer $n$ we write $E_n$ for the unique intermediate field of $E_\infty^p/E$ of degree $p^n$.  

We shall prove claim (a) of Theorem \ref{intro etnc} by combining Theorem \ref{reduction to Iwasawa theory}\,(a) with the following seminal result of Dasgupta and Kakde. 

\begin{thm}[Dasgupta--Kakde] \label{dasgupta-kakde}
    Fix an extension $K/k$ as above, an odd prime $p$ and 
    a set $T$ in $\mathscr{P}^\mathrm{ad}_K$ that is disjoint from $S_p (K)$. Then one has 
    \begin{equation} \label{limit-of-strong-brumer-stark}
    \theta_{K^p_\infty / k, S^\ast (K^p_\infty), T} (0) \in \Fitt^0_{\bLambda_{K, p}^-} \bigl( \varprojlim_n (\Cl^p_{K^p_n, \emptyset, T})^{\vee, - }\bigr)^\#,
    \end{equation}
    where the superscript $\#$ indicates that $\cG_{K^p_\infty}$ acts via the involution $\cG_{K^p_\infty} \to \cG_{K^p_\infty}$ that sends $\sigma \mapsto \sigma^{-1}$.
\end{thm}

\begin{proof} The verification of the Strong Brumer--Stark Conjecture given in \cite[Cor.\@ 3.8]{DasguptaKakde} implies, for every natural number $n$, a containment
\begin{equation} \label{strong-brumer-stark}
\theta_{K_n^p / k, S^\ast (K^p_\infty), T} (0) \in \Fitt^0_{\Z_p [\cG_{K_n^p}]^-} ( (\Cl^{p}_{K_n^p, \emptyset, T})^{\vee, -})^\#.
\end{equation}
In addition, the natural maps $\Cl^{p, -}_{K^p_n, \emptyset} \to \Cl^{p, -}_{K^p_{n + 1}, \emptyset}$ are injective (by \cite[Prop.\@ 13.26]{Washington}) and so the Pontryagin-dual maps $(\Cl^{p}_{K^p_{n + 1}, \emptyset, T})^{\vee, -} \to (\Cl^{p}_{K^p_n, \emptyset, T})^{\vee, -}$ are surjective. Given this fact, the claimed containment is obtained  by simply passing to the limit over $n$ of (\ref{strong-brumer-stark}) and then taking account of the general result of Greither and Kurihara in \cite[Thm.\@ 2.1]{GreitherKurihara2008}. \end{proof}

A key step in the deduction of Theorem \ref{intro etnc}\,(a) will be  provided by the following technical result. This result reinterprets the containment in Theorem \ref{dasgupta-kakde} in a way that is better suited to our purposes and also, at the same time, verifies condition (ii) of Theorem \ref{reduction to Iwasawa theory} in the relevant case.  

\begin{proposition} \label{everything-is-true-for-minus-parts-and-CM-fields}
Fix a finite abelian CM extension $K$ of a totally real field $k$ and an odd prime $p$. Then the following claims are valid.
\begin{liste}
\item For every height-one prime $\p$ of $\bLambda_{p, K}^-$, the $(\bLambda_{p, K}^-)_\p$-module $(\Cl^{p,-} _{K^p_\infty})_\p$ has projective dimension one.
\item For each set $T$ in $\mathscr{P}^\mathrm{ad}_K$ that is disjoint from $S_p (K)$ one has 
\begin{equation} \label{claim-c-equation}
\theta_{K^p_\infty / k, S^\ast (K^p_\infty), T} (0) \in \Fitt^0_{\bLambda_{p, K}^-} ( \Cl^{p,-}_{K^p_\infty,S (K^p_\infty), T})^{\ast \ast} \cdot \Fitt^0_{\bLambda_{p, K}^-} ( X_{K^p_\infty, S (K^p_\infty)}^{p, -})^{\ast \ast}. 
\end{equation}
\end{liste}
\end{proposition}

\begin{proof} 
Regarding $p$ as clear from context, we abbreviate  $\bLambda_{p,K}, K^p_\infty$ and $k_\infty^p$ to $\bLambda_K, K_\infty$ and $k_\infty$ respectively.\\
To prove claim (a), it is enough to consider a singular  prime $\p$ of $\bLambda_K$. To deal with this case we fix a set $T$ in $\mathscr{P}^{\mathrm{ad}}_K$ that is disjoint from $S_p (k)$.\\ 
We then note first that, since no finite place splits completely in $k_\infty/k$, Lemma \ref{localisation lemma}\,(b) implies  the natural projection maps %
\[ (\Cl^p_{K_\infty})_\p \leftarrow (\Cl^p_{K_\infty,\emptyset, T})_\p   \quad \text{and}\quad (\Cl^p_{K_\infty,\emptyset, T})_\p \to (\Cl^p_{K_\infty,S (K_\infty), T})_\p\]
are both bijective. For the same reason, one finds that the $\p$-localisation of the $\bLambda^-_{K}$-module $X^{p,-}_{K_\infty, S^\ast (K_\infty)} = X_{K_\infty, S (K_\infty) \setminus S_\infty (k)}^{p, -}$ vanishes. Upon taking minus parts of the exact sequence (\ref{exact-sequence-H1-limit}), it follows that the $\bLambda^-_{K,\p}$-module $(\Cl^{p,-}_{K_\infty,S (K_\infty), T})_\p$, and hence also $(\Cl^{p,-}_{K_\infty})_\p$, is isomorphic to $H^1 (D^\bullet_{K_\infty, T})^-_\p$. \\
Next we note that $H^1 (D^\bullet_{K_\infty, T})^-$ is a $\bLambda_K^-$-torsion module and so, by analysing the exact sequence 
\begin{cdiagram}
0 \arrow{r} & U^-_{K_\infty, S (K_\infty), T} \arrow{r} & P_{K_\infty}^- \arrow{r} & P_{K_\infty}^- \arrow{r} & H^1 (D^\bullet_{K_\infty, T})^- \arrow{r} & 0
\end{cdiagram}%
obtained by taking minus parts of the representative (\ref{complex rep}) of the complex $D^\bullet_{K_\infty, T}$, we deduce that $U^-_{K_\infty, S (K_\infty), T}$ is $\bLambda_K^-$-torsion as well. On the other hand, $U^-_{K_\infty, S (K_\infty), T}$ embeds into the $\bLambda_K^-$-torsion free module $P_{K_\infty}^-$ and so must in fact vanish. 

One therefore obtains a short exact sequence of $\bLambda_K^-$-modules
 \begin{equation} \label{free-resolution}
\begin{tikzcd}
    0 \arrow{r} & P_{K_\infty}^- \arrow{r} & P_{K_\infty}^- \arrow{r} & H^1 (D^\bullet_{K_\infty, T})^- \arrow{r} & 0.
\end{tikzcd}%
\end{equation}
This sequence directly implies that the torsion $\bLambda_{K,\p}^-$-module $(\Cl^{p,-}_{K_\infty})_\p\cong H^1 (D^\bullet_{K_\infty, T})^-_\p$ is of projective dimension one, as required to prove claim (a). 
\smallskip \\
To prepare for the proof of claim (b), we first consider an arbitrary  finite abelian CM extension $E$ of $k$, and a set $T$ in $\mathscr{P}^\mathrm{ad}_E$ that is disjoint from $S_p (k)$, and show that, for every height-one prime $\p$ of $\bLambda_E^-$ one has 
\begin{equation} \label{inclusion-for-E}
\theta_{E_\infty / k, S^\ast (E_\infty), T} (0) \in \Fitt^0_{\bLambda_E^-} ( \Cl^{p, -}_{E_\infty, S (E_\infty), T})_\p
\cdot \Fitt^0_{\bLambda_E^-} (Y^{p, -}_{E_\infty, S_p(k)})_\p.
\end{equation}
To show this we observe that, for each $\p$, there exists a natural number $n$, an $(n \times n)$-matrix $A$ with coefficients in $\bLambda_{E,\p}^-$, and an exact sequence of $\bLambda_{E,\p}$-modules of the form 
\begin{cdiagram}
0 \arrow{r} & (\bLambda_{E,\p}^-)^n \arrow{r}{v\mapsto  A \cdot v} & 
(\bLambda_{E,\p}^-)^n \arrow{r} & (\Cl^{p, -}_{E_\infty, \emptyset, T})_\p \arrow{r} & 0.
\end{cdiagram}%
Indeed, if $\p$ is regular, then the existence of such a sequence is clear since $ (\Cl^{p, -}_{E_\infty, \emptyset, T})_\p$ is a finitely generated torsion module over the discrete valuation domain
$\bLambda_{E,\p}$ and, if $\p$ is singular, then it follows by localising the exact sequence (\ref{free-resolution}) (with $K$ replaced by $E$) at $\p$. 

Upon taking $\bLambda_{E,\p}$-linear duals of the above exact sequence one derives an isomorphism of $\bLambda_{E,\p}$-modules 
\[
\alpha ( \Cl^{p, -}_{E_\infty, \emptyset, T} )_\p \cong 
(\bLambda_{E,\p}^-)^{n, \ast}/(A^\# \cdot (\bLambda_{E,\p}^-)^{n, \ast}) \cong \Big((\bLambda_{E,\p}^-)^{n}/(A \cdot  (\bLambda_{E,\p}^-)^n) \Big)^\# \cong (\Cl^{p, -}_{E_\infty, \emptyset, T})_\p^\#,
\]
where we write  $\alpha (- ) \coloneqq \Ext^1_{\bLambda_E} ( - , \bLambda_E)$ for the Iwasawa adjoint of a $\bLambda_E$-module and $A^\#$ for the matrix obtained from $A$ by applying the involution $\#$ to each of its entries. We can then  consider the composite isomorphism of $\bLambda_{E,\p}$-modules
\[ (\Cl^{p, - }_{E_\infty, \emptyset, T})_\p  \cong
\alpha ( \Cl^{p, -}_{E_\infty, \emptyset, T} )^\#_\p \cong (\varinjlim_n \Cl^{p, -}_{E_n, \emptyset, T})^{\vee}_\p \cong (\varprojlim_n (\Cl^{p, -}_{E_n, \emptyset, T})^{\vee })_\p,\]
where the second isomorphism follows from \cite[Prop.\@ 15.34]{Washington} and the third is an easy consequence of  the fact that taking Pontryagin duals is an exact functor. In particular, this isomorphism combines with Theorem \ref{dasgupta-kakde} to imply that, for every height-one prime $\p$ of $\bLambda_E^-$, there is a containment
\begin{equation}\label{almost E inclusion}
\theta_{E_\infty / k, S^\ast (E_\infty), T} (0) \in \Fitt^0_{\bLambda_E^-} ( \Cl^{p, -}_{E_\infty, \emptyset, T})_\p. 
\end{equation}
To derive (\ref{inclusion-for-E}) from here, we note that, since $S_p(k)\subseteq S (E_\infty)$, there exists a canonical exact sequence of $\bLambda_E$-modules 
\begin{cdiagram}
    U_{E_\infty, S (E_\infty), T} \arrow{r} & Y^p_{E_\infty, S_p(k)} \arrow{r} & \Cl^p_{E_\infty, \emptyset, T} \arrow{r} & \Cl^p_{E_\infty, S (E_\infty), T}  \arrow{r} & 0
\end{cdiagram}
that induces, upon taking minus parts, a short exact sequence of $\bLambda^-_E$-modules 
\begin{cdiagram}
    0 \arrow{r} & Y^{p, -}_{E_\infty, S_p(k)} \arrow{r} & \Cl^{p, -}_{E_\infty, \emptyset, T} \arrow{r} & \Cl^{p, -}_{E_\infty, S (E_\infty), T} \arrow{r} & 0.
\end{cdiagram}%
Now, since the torsion $\bLambda_{E,\p}^-$-module $(\Cl^{p, -}_{E_\infty, S (E_\infty), T})_\p$ has projective dimension one (by the above  argument), its zeroth Fitting ideal contains a non-zero divisor.  Given these observations, a general property of Fitting ideals (cf. \cite[Prop.\@ 2.2.3]{Greither04}) combines with the above short exact sequence to imply that 
\[
\Fitt^0_{\bLambda_E^-} ( \Cl^{p, -}_{E_\infty, \emptyset, T})_\p = 
\Fitt^0_{\bLambda_E^-} ( \Cl^{p, -}_{E_\infty, S (E_\infty), T})_\p
\cdot \Fitt^0_{\bLambda_E^-} (Y^{p, -}_{E_\infty, S_p(k)})_\p.
\]
Given this equality, the claimed containment (\ref{inclusion-for-E}) follows directly from (\ref{almost E inclusion}). 

Turning now to the proof of claim (b), we observe that, by Lemma \ref{localisation lemma}\,(a), the containment (\ref{claim-c-equation}) can be verified after localisation at height-one primes of $\bLambda_K^-$. In addition, if $\p$ is a singular height-one prime, then the argument in claim (a) implies that the localisation of $X^{p, -}_{K_\infty, S(K_\infty)}$ at $\p$ vanishes, and hence that its zeroth Fitting ideal over $(\bLambda^-_K)_\p$ is equal to $(\bLambda^-_K)_\p$. In  this case, therefore, the localisation at $\p$ of the containment (\ref{claim-c-equation}) is a direct consequence of (\ref{inclusion-for-E}) with $E$ taken to be $K$. \\
In the remainder of this argument we may therefore assume that $\p$ is a regular height-one prime of $\bLambda_K^-$. To investigate this case, we fix a group isomorphism $\cG_{K_\infty} \cong \Delta \times \Gamma$ where  $\Delta$ is finite and $\Gamma$ isomorphic to $\Z_p$, and we set $L \coloneqq K_\infty^\Gamma$. For each totally odd character $\chi$ of $\Delta$, we write $L_\chi$ for the kernel field of $\chi$ in $L$ and $\bLambda_\chi$ for the ring $\bLambda_{L_\chi}$. We also note that there exists such a character $\chi$ with the property that $\bLambda_{K,\p}^-$ identifies with the localisation of $\bLambda_{\chi}^-$ at a suitable height-one prime ideal $\wp_\chi$  (cf.\@ \cite[\S\,3C1]{BKS2}).\\
Writing $P_{K_\infty / L_{\chi, \infty}}$ for the Euler factor $ P_{K_\infty / L_{\chi, \infty}, \emptyset}$ in $\bLambda_{\chi}$ that is defined as in Definition \ref{S euler def}\,(a), we claim first that 
\begin{align*}
     (\bLambda_K^-)_\p \cdot \theta_{K_\infty / k, S^\ast (K_\infty), T}  & =  (\bLambda_{\chi}^-)_{\wp_\chi} \cdot 
     P_{K_\infty / L_{\chi, \infty}} \cdot 
     \theta_{L_{\chi, \infty} / k, S^\ast (L_{\chi, \infty}), T}  \\
     &\, \in  P_{K_\infty / L_{\chi, \infty}} \cdot \Fitt^0_{\bLambda_{\chi}^-} 
     ( \Cl^{p, -}_{L_{\chi, \infty}, S (L_{\chi, \infty}), T})_{\wp_\chi}
\cdot \Fitt^0_{\bLambda_{\chi}^-} (Y^{p, -}_{L_{\chi, \infty}, S_p(k)})_{\wp_\chi} \\
 &\, =  \Fitt^0_{\bLambda_{\chi}^-} ( \Cl^{p, -}_{L_{\chi, \infty}, S (K_{\infty}), T})_{\wp_\chi} \cdot \Fitt^0_{\bLambda_{\chi}^-} ( X^{p, -}_{L_{\chi, \infty}, S (K_\infty)})_{\wp_\chi}.
\end{align*}
Here the first equality follows directly from the functorial properties of Dirichlet $L$-series and the containment from (\ref{inclusion-for-E}) with $E = L_\chi$. The second equality is valid because explicit computation (as in \cite[Lem.\@ 5.5]{Flach04}) shows that
\[ 
P_{K_\infty / L_{\chi, \infty}} \cdot \Fitt^0_{\bLambda_{\chi}^-} (Y^{p, -}_{L_{\chi, \infty}, S_p (k)})_{\wp_\chi} = \Fitt^0_{\bLambda_{\chi}^-} (Y^{p, -}_{L_{\chi, \infty}, S (K_{\infty})})_{\wp_\chi} = \Fitt^0_{\bLambda_{\chi}^-} (X^{p, -}_{L_{\chi, \infty}, S (K_{\infty})})_{\wp_\chi}
\]
and, in addition, one has $\Fitt^0_{\bLambda_{\chi}^-} ( \Cl^{p, -}_{L_{\chi, \infty}, S (L_{\chi, \infty}), T})  \subseteq \Fitt^0_{\bLambda_{\chi}^-} ( \Cl^{p, -}_{L_{\chi, \infty}, S (K_{\infty}), T})$ since the natural map  $\Cl^p_{L_{\chi, \infty}, S (L_{\chi, \infty}), T} \to  \Cl^p_{L_{\chi, \infty}, S (K_{\infty}), T}$ is surjective.\\
 To derive the $\p$-localisation of (\ref{claim-c-equation}) from the above containment, it is then enough to note that there are equalities of ideals 
\begin{align*}
    & \phantom{\,=\,} \Fitt^0_{\bLambda_{\chi}^-} ( \Cl^{p, -}_{L_{\chi, \infty}, S (K_{\infty}), T})_{\wp_\chi}
\cdot \Fitt^0_{\bLambda_{\chi}^-} (X^{p, -}_{L_{\chi, \infty}, S (K_\infty)})_{\wp_\chi} \\
    & = \Fitt^0_{\bLambda_{\chi}^-} (H^1 (D^\bullet_{L_{\chi,\infty}, S (K_\infty), T})^-)_{\wp_\chi} \\
    & = \Fitt^0_{\bLambda_K^-} (H^1 (D^\bullet_{K_\infty, S (K_\infty), T})^-)_\p \\
    & = \Fitt^0_{\bLambda_K^-} ( \Cl^{p, -}_{K_\infty, S (K_\infty), T})_\p \cdot \Fitt^0_{\bLambda_K^-} ( X^{p, -}_{K_\infty, S (K_\infty)})_\p.
\end{align*}
Here the first and third equalities follow from the relevant cases of the exact sequence (\ref{exact-sequence-H1-limit}) (and the fact that the primes $\p$ and $\wp_\chi$ are regular). To derive the second equality we note that the result of Lemma \ref{properties-weil-etale}\,(d) induces (upon applying $\Z_p\otimes_\Z-$ and then passing to the limit over intermediate fields $E$ of $K_\infty/k$) an isomorphism $D^\bullet_{K_\infty, S (K_\infty), T} \otimes^\mathbb{L}_{\bLambda_K} \bLambda_{\chi} \cong D^\bullet_{L_{\chi,\infty}, S (K_\infty), T}$ in $D^{\mathrm{perf}}(\bLambda_{\chi})$. This isomorphism in turn induces an isomorphism of $\bLambda_{\chi}$-modules
\[
H^1 (D^\bullet_{K_\infty, S (K_\infty), T}) \otimes_{\bLambda_K} \bLambda_{L_\chi}
\cong H^1 (D^\bullet_{L_{\chi,\infty}, S (K_\infty), T}),
\]
which directly implies the second equality in the above display. \medskip \\
This concludes the proof of claim (b). 
\end{proof}

By combining  Proposition \ref{everything-is-true-for-minus-parts-and-CM-fields} with a particular case of the criterion of Theorem \ref{reduction to Iwasawa theory}\,(a), we can now finally derive the main result of this section (which, we observe, verifies Theorem \ref{intro etnc}(a) in the Introduction). 

\begin{thm}\label{th B(a) is valid} Assume $k$ is a totally real field. Then, for any finite abelian CM extension $E$ of $k$, the conjecture $\mathrm{TNC}(h^0(\Spec E),\mathbb{Z}[\cG_E]^-)$ is valid.
\end{thm}

\begin{proof} At the outset, we note that the Rubin--Stark system $\varepsilon_k^\emptyset$ is $\Z$-integral in the sense of Definition \ref{definition-integral-Euler-systems}. Indeed, given the explicit description of $\varepsilon_k^\emptyset$ that follows, in this setting, from Example \ref{Rubin--Stark-examples}\,(b), this fact is a well-known consequence of work of Deligne and Ribet in \cite{DeligneRibet} (cf.\@ \cite[Prop.\@ 3.7]{Gross88}). \\
Next we fix an odd prime $p$ and write $\mathcal{Y}_p$ for the set of all fields of the form $k\langle p\rangle L$, where $k\langle p\rangle$ is as defined in (\ref{angle field}) and $L$ is a totally real, tamely ramified cyclic $p$-extension of $k$. \\
Then, since $p$ is odd and $k$ is totally real, the field $k \langle p \rangle$ is by its definition contained in $k (\mu_p)$. In particular, every field $F$ in $\mathcal{Y}_p$ is a tamely ramified abelian extension of $k$ for which one has $\epsilon_{F, \emptyset} = e^-$ and so the validity of  $\mathrm{TNC} ( h^0 (\Spec F),  \Z_p [\cG_F]\epsilon_{F,\emptyset})$ is proved by Nickel in \cite[Thm.\@ 2]{nickel2021strong}.\\
Given this fact, the validity of $\mathrm{TNC}(h^0(\Spec E),\mathbb{Z}_p[\cG_E]\epsilon_{E,\emptyset})$ follows directly upon combining Proposition \ref{everything-is-true-for-minus-parts-and-CM-fields} with the criterion of Theorem \ref{reduction to Iwasawa theory}\,(a)\,(iii) with $V=\emptyset$, $\cX$ the set of all finite abelian CM extensions of $k$ (cf.\@ Example \ref{cX examples}\,(b)), $\mathcal{S} = \{2\}$,  $k^p_\infty$ the cyclotomic $\Z_p$-extension of $k$ and $\mathcal{Y} = \mathcal{Y}_p$. \\
Then, since $\mathrm{TNC}(h^0(\Spec E),\mathbb{Z}_p[\cG_E]\epsilon_{E,\emptyset})$ is valid for every odd prime $p$, to deduce the validity of $\mathrm{TNC}(h^0(\Spec E),\mathbb{Z}[\cG_E]^-)$ it is now enough to simply note that $\epsilon_{E, \emptyset} = e^-$. \end{proof}

\begin{rk}
In \cite{JoNi20} Johnston and Nickel use arguments similar to those in the proof of Proposition \ref{everything-is-true-for-minus-parts-and-CM-fields} to deduce the `equivariant Iwasawa Main Conjecture' of Ritter and Weiss from the known  validity of the Strong Brumer--Stark Conjecture.
\end{rk}

\subsection{Results in rank one}

In this section we use the classical theory of (rank-one) Euler systems to investigate the inclusion  (\ref{doch-in-Hamburg-auf-der-Elbchausee}) in cases where both $|S_\infty (k)| = 1$ (so that $k$ is either $\Q$ or an imaginary quadratic field) and $\mathcal{V} = \{ S_\infty (k) \}$. 
In this way we shall, in particular, complete the proofs of Theorems A and B, as stated in the Introduction.

\subsubsection{Statement of the main results}\label{somr rank one}

For a $p$-adic place $\p$ of $k$ we write $k_\infty^\p$ for the maximal $\Z_p$-power extension of $k$ that is unramified outside $\p$.

Then Theorem \ref{reduction to Iwasawa theory} enables us to prove the following result.

\begin{thm} \label{thm-scarcity-rank-one}
Let $\cS$ be a finite set of prime numbers that contains all divisors of $|\mu_k|$. Then 
Conjecture \ref{scarcity-conjecture} for the pair $(\Omega^{S_\infty (k)}(k),\cS)$ is valid in the following cases:
\begin{liste}
\item $k = \Q$,
\item $k$ is an imaginary quadratic field and, for all primes $p \not \in \cS$, all $p$-adic places $\p$ of $k$ and all fields $K$ in $\Omega_{S_p(k)}(k)$  either $\gal{Kk^\p_\infty}{k}$ is $p$-torsion free or the $\Z_p \llbracket \gal{Kk^\p_\infty}{K} \rrbracket$-module $\Cl^p (Kk^\p_\infty)$ has vanishing Iwasawa $\mu$-invariant.
\end{liste}
\end{thm}

\begin{rk}
The hypothesis on $\mu$-invariants in  Theorem \ref{thm-scarcity-rank-one}\,(b) is valid for $(K, p)$ if $p$  splits in $k$. This was proven by Gillard \cite{Gil85} if $p > 3$ and by Oukhaba--Vigui{\'e} \cite{OuVi16} if $p \in \{2, 3 \}$. Moreover, the Theorem of Ferrero--Washington \cite{ferrero-washington} combines with a well-known result of Iwasawa to imply that the relevant $\mu$-invariant (at some prime number $p$) vanishes if the degree $[K : k]$ is a power of $p$ (cf.\@ \cite[Prop.\@ 5.6]{BullachHofer}).
\end{rk}

Before proving Theorem \ref{thm-scarcity-rank-one}, we first use it to verify (a precise version of) Theorem \ref{intro etnc}\,(b).

\begin{thm}\label{precise intro B(ii)}
Let $k$ be an imaginary quadratic field. Then $\mathrm{TNC}(h^0(\Spec E),\mathbb{Z}[1 / 2][\cG_E])$ holds for all abelian extensions $E$ of $k$ with the following property: for all odd primes $p$, all $p$-adic places $\p$ of $k$ and all fields $K$ in $\Omega_{S_p(k)}(k)$  either $\gal{Kk^\p_\infty}{k}$ is $p$-torsion free or the $\Z_p \llbracket \gal{Kk^\p_\infty}{K} \rrbracket$-module $\Cl^p (Kk^\p_\infty)$ has vanishing Iwasawa $\mu$-invariant.
\end{thm}

\begin{proof}
Writing $\infty$ for the unique infinite place of $k$, the Rubin--Stark system $\varepsilon_k^{\{ \infty \}}$ is $\Z$-integral (in the sense of Definition \ref{definition-integral-Euler-systems}) as a consequence of its description in terms of elliptic units recalled in Example \ref{Rubin--Stark-examples}\,(c). One also has $\epsilon_{E, \{ \infty \}} = 1$ for every $E \in \Omega (k)$.\\
Next we fix an odd prime $p$ and write ${k'}_n^p \coloneqq k (\mu_{p^n})$ for the $n$-th layer of the cyclotomic $\Z_p$-extension ${k'}^p_\infty \coloneqq \bigcup_{n \in \N} {k'}_n^p$ of $k$. Then, since $p$ is odd and $\cO_k^\times$ is finite, the field $k \langle p \rangle$ defined in (\ref{angle field}) is abelian over $\Q$. Given this fact, the set $\cY_p$ of finite extensions of $k$ in $k\langle p \rangle \cdot {k'}_\infty^p$ 
only comprises extensions of $k$ that are abelian over $\Q$. In particular, any field $E$ in $\cY_p$ is abelian over $\Q$ and so  $\mathrm{TNC}(h^0(\Spec E),\mathbb{Z}_p [\cG_E])$ is verified by the main result of Greither and the second author in \cite{BurnsGreither}. \\
In light of this, Theorem \ref{thm-scarcity-rank-one}\,(b) combines with Theorem \ref{reduction to Iwasawa theory}\,(a)\,(iii) (with respect to the data $V=\{ \infty \}$, $\cX = \Omega (k)$, $\mathcal{S} = \{ 2 \}$,  $k^p_\infty = k_\infty^\p$ for a choice of $p$-adic place $\p$ of $k$, ${k'}_\infty^p$ the cyclotomic $\Z_p$-extension of $k$, and $\mathcal{Y} = \mathcal{Y}_p$) to imply the validity of $\mathrm{TNC}(h^0(\Spec E),\mathbb{Z}_p [\cG_E])$ for every $E \in \Omega (k)$ and $p \not \in \cS$. This completes the proof the stated result. 
\end{proof}

\begin{rk} As noted in the Introduction, one can also combine Theorem \ref{thm-scarcity-rank-one}\,(a)  with Theorem \ref{scarcity-implies-etnc} to obtain a simpler proof of the known validity of $\mathrm{TNC}(h^0 (\Spec K), R_K [\cG_K])$ for any finite abelian extension $K$ of $k=\Q$, with $R_K = \Z$ if $K$ is real and $R_K = \Z [1 / 2]$ otherwise. 
\end{rk}

\subsubsection{The proof of Theorem \ref{thm-scarcity-rank-one}} 
We begin by establishing a useful technical result.

\begin{lem} \label{T-modification-Iwasawa-theory}
Fix a number field $k$, an Euler system $c$ in $\ES_k (\Z_\mathcal{S})$, a prime number $p$ outside $\mathcal{S}$ and a field $K$ in $\Omega_{S_p(k)}(k)$. Set 
$K_\infty \coloneqq K k^\p_\infty$, and write $\Omega'$ for the set of finite extensions of $K$ in $K_\infty$.  Then the following claims are valid. 
\begin{liste}
    \item For each $E$ in $\Omega'$, write $\mathcal{C}_E$ for the $\Z_p[\cG_E]$-submodule of 
$U_{E, S(E)}$ generated by the set $\{  \delta_{T, E} \cdot c_{E}: T \in \mathscr{P}^{\mathrm{ad}}_E\}$. 
    Then, for any field $E'$ in $\Omega'$ that contains $E$, the field theoretic norm $\NN^{r_E}_{E'/E}$ maps $\mathcal{C}_{E'}$ to $\mathcal{C}_E$.
    \item Set $\mathcal{C}_{K_\infty} \coloneqq \varprojlim_{E\in \Omega'} \mathcal{C}_E$ and $\mu_{K_\infty} \coloneqq \varprojlim_{E\in \Omega'}(\Z_p\otimes_\Z\mu_{E})$, where the transition maps are induced by field-theoretic norms (and, in the first case, the result of claim (a)). Then, for any regular height-one prime $\p$ of 
 $\bLambda$ that belongs to the support of 
 $\bLambda_{p, K} c_{K_\infty, T}$ one has 
 \[
 \Fitt^0_{\bLambda_{p, K}} \big ( \mathcal{C}_{K_\infty}/(\bLambda_{p, K} c_{K_\infty, T})\big)_\p = \delta_{T, K_\infty} \cdot \Ann_{\bLambda_{p, K}} (\mu_{K_\infty})_\p^{-1},
 \]
 where $\delta_{T, K_\infty}$ is the element of $\bLambda_{p, K}$ given by the family $(\delta_{T, E})_{E \in \Omega'}$. 
\end{liste}
\end{lem}

\begin{proof} 
We abbreviate $\bLambda_{p,K}$ to $\bLambda$. For each pair of fields $E$ and $E'$ in $\Omega'$ with $E \subseteq E'$, one has $S(E') = S(E)$ and $r_{E'} = r_E$. This implies that $\NN^{r_E}_{E'/E}(c_{E'}) = c_E$. Since $\mathscr{P}^{\mathrm{ad}}_{E'}$ is a subset of $\mathscr{P}^{\mathrm{ad}}_E$, it is therefore clear that $\NN^{r_E}_{E'/E}$ maps $\mathcal{C}_{E'}$ to $\mathcal{C}_E$, as required to prove claim (a).\\ 
To prove the equality claimed in (b) we recall that, for  $E$ in $\Omega'$, the module $\Ann_{\Z_p [\cG_E]} (\Z_p\otimes_\Z\mu_E)$ is generated over $\Z_p$ by the set $\{ \delta_{T, E} \mid T \in \mathscr{P}^\mathrm{ad}_E \}$ (cf.\@ \S\,\ref{rubin lattice section}). This fact gives rise to an exact commutative diagram of $\Z_p[\cG_E]$-modules in which all vertical maps are surjective:
\begin{cdiagram}[column sep=small]
0 \arrow{r} & \Z_p [\cG_E]  \delta_{T, E} \arrow[twoheadrightarrow]{d}{x \mapsto x\cdot c_E} \arrow{r} &
\Ann_{\Z_p [\cG_E]} (\Z_p\otimes_\Z\mu_E) \arrow{r} \arrow[twoheadrightarrow]{d}{x \mapsto x\cdot c_E} & \Ann_{\Z_p [\cG_E]} (\Z_p\otimes_\Z\mu_E)/(\Z_p [\cG_E]\delta_{T, E}) \arrow{r} \arrow[twoheadrightarrow]{d} & 0\\
    0 \arrow{r} &  \Z_p [\cG_E]  \delta_{T, E} c_E \arrow{r} & \mathcal{C}_E \arrow{r} & 
    \mathcal{C}_E/(\Z_p [\cG_E]  \delta_{T, E} c_E) \arrow{r} & 0.
\end{cdiagram}%
By passing to the limit over $E$ of these diagrams (which preserves exactness since all occurring modules are compact) and then localising at $\p$, we deduce the existence of an exact commutative diagram of  $\bLambda_\p$-modules in which all vertical maps are surjective:
\begin{cdiagram}
    0 \arrow{r} &  \bLambda_\p\delta_{T, K_\infty} \arrow[twoheadrightarrow]{d}{\lambda\delta_T \mapsto \lambda\cdot c_{K_\infty,T}} \arrow{r} & 
    \Ann_{\bLambda} (\mu_{K_\infty})_\p \arrow[twoheadrightarrow]{d}{x \mapsto x\cdot c_{K_\infty,T}} \arrow{r} &  \Ann_{\bLambda} (\mu_{K_\infty})_\p/(\bLambda \delta_{T, K_\infty})_\p \arrow[twoheadrightarrow]{d} \arrow{r} & 0 \\
    0 \arrow{r} & \bLambda_\p c_{K_\infty, T} \arrow{r} & \mathcal{C}_{K_\infty,\p} \arrow{r} & \mathcal{C}_{K_\infty,\p}/(\bLambda_\p c_{K_\infty, T}) \arrow{r} & 0.
\end{cdiagram}%
Now, since $\bLambda_\p$ is a discrete valuation domain, if the module  $(\bLambda c_{K_\infty, T})_\p$ does not vanish, then it is free of rank one and so the first vertical arrow in this diagram is bijective. Similarly, the non-zero submodule $\Ann_{\bLambda} (\mu_{K_\infty})_\p$ of $\bLambda_\p$ is free of rank one and so the second vertical arrow is also bijective. Applying the Snake Lemma to the diagram, we deduce that the third vertical arrow is bijective, and hence that, for any prime $\p$ as in the statement one has  
\begin{align*}
\Fitt^0_\bLambda \big ( \mathcal{C}_{K_\infty}/(\bLambda c_{K_\infty, T}) \big)_\p =& \,  \Fitt^0_{\bLambda_\p} \big ( \mathcal{C}_{K_\infty,\p}/(\bLambda c_{K_\infty, T})_\p \big)\\
=& \, \Fitt^0_{\bLambda_\p} \Big ( \Ann_{\bLambda} (\mu_{K_\infty})_\p/(\bLambda \delta_{T, K_\infty})_\p \Big)
= \delta_{T, K_\infty} \cdot \Ann_{\bLambda} (\mu_{K_\infty})_\p^{-1},
\end{align*}
as claimed.
\end{proof}

We shall now prove Theorem \ref{thm-scarcity-rank-one}. To do this we fix a field $k$ that is either $\Q$ or imaginary quadratic. \\
At the outset we recall that for any such $k$ the module of isolated systems $\ES^{\circ,\cX}_k (\Z_\mathcal{S})$ vanishes (cf.\@ Remark \ref{local vanishing}) and hence that Theorem \ref{thm-scarcity-rank-one} will follow if we can verify that the conditions of Theorem \ref{reduction to Iwasawa theory} are satisfied in this case.

We further recall that the integrality of the Rubin--Stark system $\varepsilon_k$ for such a field $k$ follows from the explicit descriptions recalled in Example \ref{Rubin--Stark-examples}.

Next, we fix a rational prime $p \not \in \cS$ and, if $k$ is imaginary quadratic, an element $\p$ of the set $S_p(k)$ of $p$-adic places of $k$ (this involves a choice if $p$ splits in $k$). As $p$ is fixed, we also occasionally suppress dependency on $p$ in the notation. We take $k_\infty$ to be the cyclotomic $\Z_p$-extension if $k = \Q$ and to be $k_\infty^\p$ if $k$ is imaginary quadratic. We write $\infty$ for the unique archimedean place of $k$ and abbreviate the set of fields  $\Omega_{S_p(k)}^{\{ \infty \}} (k)$ to $\Omega_p(k)$.  

If $k = \Q$, then condition (ii) in Theorem \ref{reduction to Iwasawa theory} holds by the Theorem of Ferrero--Washington \cite{ferrero-washington}. If $k$ is imaginary quadratic, then condition (ii) is satisfied due to the explicit assumption on $k_\infty^\p$ in (b). 
Moreover, condition (i) in (a) is satisfied by assumption on $\cS$.
In this way we are reduced to verifying the containment (\ref{doch-in-Hamburg-auf-der-Elbchausee}).\smallskip \\
To do this, we fix a field $K$ in $\Omega_{p}(k)$ and set $\bLambda \coloneqq \bLambda_K$. Note that if either $k$ is $\Q$, or both $k$ is imaginary quadratic and $p$ splits in $k$, then Remark \ref{gross-kuzmin-remark} implies that the modules $\Cl^p_{S (k_\infty)} (K_\infty)^{\gal{K_\infty}{K_n}}$ are finite for all $n > 0$. In these cases, therefore, an application of claims (a) and (d) of Lemma \ref{divisor group lemma} reduces us to verifying, for all Euler systems $c \in \ES_k (\Z_\mathcal{S})$, all fields $K \in \Omega_{p}(k)$ and (for each $K$) a suitably chosen set $T$ in $\mathscr{P}^\mathrm{ad}_K$, that one has 
\begin{equation} \label{inclusion-to-verify-1}
\im (c_{K_\infty, T})^{\ast \ast} \subseteq \Fitt^0_{\bLambda}  (\Cl^p_{S(K), T} (K_\infty))^{\ast \ast}.
\end{equation}
On the other hand, if $k$ is imaginary quadratic and $p$ does not split in $k$, then the $\Z_p$-rank of $\cG_{K_\infty}$ is two and, by claims  (a) and (c) of Lemma \ref{divisor group lemma}, we are again reduced to verifying the inclusion (\ref{inclusion-to-verify-1}). \\

To verify (\ref{inclusion-to-verify-1}) it is in turn enough, by Lemma \ref{localisation lemma}\,(a), to argue locally at height-one primes of $\bLambda$. At the outset we remark that if $\bLambda$ contains a singular height-one prime, then 
$\gal{Kk^\p_\infty}{k}$ has a non-trivial element of order $p$. In this case, consequently, the given assumption asserts the vanishing of the $\mu$-invariant of $\Cl^p (K k_\infty^\p)$, and hence also of that of $\Cl^p_{S (K), T} (K k_\infty^\p)$. Hence, for any singular height-one prime $\p$ of $\bLambda$, Lemma \ref{localisation lemma}\,(b) implies that the localisation at $\p$ of the right hand side of (\ref{inclusion-to-verify-1}) is equal to $(\bLambda)_\p$ and so the claimed inclusion is clear in this case. 
\smallskip \\
We can therefore assume in the rest of the argument that $\p$ is a regular height-one prime. To proceed in this case, we set $\mathbb{F}_{K_\infty, T}^{\times, p} \coloneqq \varprojlim_E (\Z_p \otimes_\Z \mathbb{F}_{E, T}^\times)$ with the limit ranging over all finite subextensions of $K_\infty / k$ and the transition maps being induced by the relevant norm maps. Then, by applying $\Z_p\otimes_\Z -$ to the exact sequences in  Remark \ref{T-modification Fitt remark} and then passing to the limit over $E$, one computes that 
\begin{equation}\label{fitt equality} \Fitt^0_{\bLambda} (\mathbb{F}_{K_\infty, T}^{\times, p} ) = \bLambda\cdot\delta_{T, K_\infty},\end{equation}
where $\delta_{T, K_\infty}$ denotes the element $(\delta_{T, E})_E$ of $\bLambda$. 
Further, by applying $\Z_p\otimes_\Z -$ to the long exact cohomology sequence of the exact triangle in Lemma \ref{properties-weil-etale}\,(b) and then passing to the limit over all such $E$ of the resulting exact sequences, one obtains an exact sequence of $\bLambda$-modules %
\begin{cdiagram}[column sep=small]
    0 \arrow{r} & U_{K_\infty, S(K), T} \arrow{r} & U_{K_\infty, S(K)} \arrow{r} & \mathbb{F}_{K_\infty, T}^{\times, p} \arrow{r} & 
    \Cl^p_{S(K), T} (K_\infty) \arrow{r} & \Cl^p_{S(K)} (K_\infty) \arrow{r} & 0.
\end{cdiagram}%
Since the general result of Lemma \ref{lemma-image-Fitt} implies that $\Fitt^0_{\bLambda}( U_{K_\infty, S(K), T}/(\bLambda c_{K_\infty, T}))_\p$ is equal to $\im (c_{K_\infty, T})_\p$, this sequence combines with the equality (\ref{fitt equality}) to reduce the proof of (the $\p$-localisation of)  (\ref{inclusion-to-verify-1}) to the proof of an inclusion 
\[
\Fitt^0_{\bLambda} \bigl( U_{K_\infty, S(K)}/(\bLambda c_{K_\infty, T}) \bigr)_\p \subseteq 
\delta_{T, K_\infty} \cdot \Fitt^0_{\bLambda} ( \Cl^p_{S(K)} (K_\infty))_\p.
\]
Before applying Lemma \ref{T-modification-Iwasawa-theory} in this setting, we note that the $\bLambda$-module $\mu_{K_\infty}$ is  pseudo-null. Indeed, $\mu_{K_\infty}$ is finite if either $k$ is $\Q$ (since $K_\infty$ is then totally real) or if $k$ is imaginary quadratic and $p$ is split in $k$ (since $k_\infty / k$ is unramified outside the single place $\p$), whilst if $k$ is imaginary quadratic and $p$ does not split in $k$, then $\mu_{K_\infty}$ is contained in the finitely generated $\Z_p$-module $\Z_p (1)$ and so is pseudo-null since, in this case, the $\Z_p$-rank of $\gal{K_\infty}{k}$ is two. \\
In particular, since the quotient of $\bLambda$ by $\Ann_{\bLambda} (\mu_{K_\infty})$ is isomorphic to $\mu_{K_\infty}$, the pseudo-nullity of the latter module implies that $\Ann_{\bLambda} (\mu_{K_\infty})_\p = \bLambda_\p$. In view of this equality, the result of Lemma \ref{T-modification-Iwasawa-theory}\,(b) implies that the above inclusion is valid provided that the $\bLambda$-module $\mathcal{C}_{K_\infty}$ that occurs in the latter result is such that  
\[
\Fitt^0_{\bLambda} \bigl(U_{K_\infty, S(K)}/\mathcal{C}_{K_\infty} \bigr)_\p \subseteq  \Fitt^0_{\bLambda} ( \Cl^p_{S(K)} (K_\infty))_\p.
\]
To verify this inclusion, we can assume $\p$ is in the support of $\mathcal{C}_{K_\infty}$ (otherwise $(U_{K_\infty, S(K)}/\mathcal{C}_{K_\infty})_\p = (U_{K_\infty, S(K)})_\p$ is a free $\bLambda_\p$-module and so its Fitting ideal vanishes), and hence that the $\bLambda_{\p}$-module  $\mathcal{C}_{K_\infty,\p}$ is free of rank one. Given this, the general result of  Lemma \ref{lemma-image-Fitt} implies  
\begin{equation}\label{next step}\Fitt_{\bLambda}^0\bigl(U_{K_\infty, S(K)}/\mathcal{C}_{K_\infty}\bigr)_\p = \bLambda_{K,\p}\cdot \{ f (x) \mid x \in \mathcal{C}_{K_\infty}, f \in U_{K_\infty, S(K)}^\ast \}.\end{equation}
To analyse this equality we note the argument of Lemma \ref{T-modification-Iwasawa-theory} shows that $x = \lambda\cdot c_{K_\infty}$ for a suitable element $\lambda$ of $\Ann_{\bLambda} (\mu_{K_\infty})$ and then, following Remark \ref{surj of annihilator}, we fix a pre-image $\lambda'$ of $\lambda$ under the projection map $\varprojlim_{E \in \Omega (k)} \Ann_{\Z_p [\cG_E]}(\Z_p\otimes_\Z\mu_E) \to \Ann_{\bLambda} (\mu_{K_\infty})$. \\
We write $\cL$ for the maximal abelian pro-$p$ extension of $k$ in which all archimedean places split completely and, for each field $E$ in $\Omega(k)$ that is contained in $\cL$, we set \[
c'_E \coloneqq \lambda'_{EK}\bigl(\prod_{v \in \Pi(E) \setminus S(EK)} (1 - \Frob_v^{-1})\bigr) \cdot c_{EK},
\]
with $\Pi \coloneqq S_\infty(k)$ (so that $\Pi(E) = S^\ast(E) = S_\infty(k) \cup S(E))$. \\
Then the family 
$c' \coloneqq (c'_E)_E$ can be seen to belong to the module $\ES^1_\Pi (\mathrm{Ind}_{G_k}^{G_K}(\Z_p) (1), \cL)$ of $p$-adic Euler systems introduced in Definition \ref{p-adic es def} and is also such that  
$c'_{k_\infty} = x$. In view of (\ref{next step}), the required inclusion will therefore be true if we can show that for any $p$-adic system $c$ in $\ES^1_\Pi (\mathrm{Ind}_{G_k}^{G_K}(\Z_p) (1),  \cL)$ and any homomorphism $f$ in $U_{K_\infty, S(K)}^\ast $ one has 
\begin{equation}\label{last one}
 f (c_{k_\infty}) \in \Fitt^0_{\bLambda} ( \Cl^p_{K_\infty,S(K)})_\p .
\end{equation}
In the rest of the argument we shall explain how this containment follows from results of Rubin. To do this, we write $\Delta$ for the torsion subgroup $\gal{K_\infty}{k_\infty}$ of $\gal{K_\infty}{k}$ and note that there exists a character 
$\chi$ of $\widehat{\Delta}$ and a height-one prime $\wp_\chi$ of the ring $\Lambda_\chi \coloneqq \bLambda \otimes_{\Z_p [\Delta]} \Z_p [\im \chi]$ such that $\bLambda_\p = (\Lambda_\chi)_{\wp_\chi}$  (cf.\@ \cite[\S\,3C1]{BKS2}). \\
We also recall that, by the argument of Rubin in \cite[Prop.\@ 6.2.1 and Cor.\@ 6.2.2]{Rubin-euler}, the canonical Selmer structure gives rise to a natural isomorphism of dual Selmer modules
\[ \bigl(H^1_{\mathcal{F}^\ast_\mathrm{can}} ( \cO_{k, S (K)}, \bLambda^\vee (1))^\vee\bigr) \otimes_{\bLambda}\Lambda_\chi \cong H^1_{\mathcal{F}^\ast_\mathrm{can}} ( \cO_{k, S (K)}, \Lambda_\chi^\vee (1)(\chi^{-1}))^\vee \cong  H^1_{\mathcal{F}^\ast_\mathrm{can}} ( \cO_{k, S (K)}, \cT_{\chi, k_\infty}^\vee)^\vee,\]
where $\cT_{\chi, k_\infty}$ denotes the induction from $G_{k_\infty}$ to $G_k$ of the $G_k$-representation $\Z_p [\im \chi] (1) (\chi^{-1})$. \\
In particular, if one combines this isomorphism with the canonical  isomorphism 
\begin{equation}\label{rubin iso} \Cl^p_{K_\infty,S(K)} \cong H^1_{\mathcal{F}^\ast_\mathrm{can}} ( \cO_{k, S (K)}, \bLambda^\vee (1))^\vee\end{equation}
(coming, for example, from \cite[Prop.\@ 1.6.1]{Rubin-euler}), then one obtains an  isomorphism of $\bLambda_{\p}$-modules of the form  $(\Cl^p_{K_\infty,S(K)})_\p \cong H^1_{\mathcal{F}^\ast_\mathrm{can}} ( \cO_{k, S (K)},\cT_{\chi, k_\infty}^\vee)^\vee_{\wp_\chi}$. This isomorphism in turn implies an equality of ideals 
\begin{align*}
\Fitt^0_{\bLambda} ( \Cl^p_{K_\infty,S(K)})_\p & = \Fitt^0_{\Lambda_\chi} ( H^1_{\mathcal{F}^\ast_\mathrm{can}} ( \cO_{k, S (K)}, \cT_{\chi, k_\infty}^\vee)^\vee_{\wp_\chi}\\
& = \cchar_{\Lambda_\chi} ( H^1_{\mathcal{F}^\ast_\mathrm{can}} ( \cO_{k, S (K)}, \cT_{\chi, k_\infty}^\vee)^\vee_{\wp_\chi},
\end{align*}
where the second equality follows from the fact that, by the structure theorem for finitely generated modules over a discrete valuation ring $R$ with maximal ideal $\mathfrak{m}$, one has $\Fitt^0_{R} (M) = \mathfrak{m}^{^{\mathrm{length}_{R} (M)}}$ for any finitely generated torsion $R$-module $M$.\\
This last displayed equality reduces the proof of (\ref{last one}) to the verification of an inclusion   
\[
\im (c^\chi) \subseteq \cchar_{\Lambda_\chi} ( H^1_{\mathcal{F}^\ast_\mathrm{can}} ( \cO_{k, S (K)}, \cT_{\chi, k_\infty}^\vee)^\vee,
\]
where $c^\chi$ denotes the $p$-adic Euler system in $\ES^1_\Pi (\cT_\chi, \cL)$ that is obtained from $c$ via twisting by $\chi$ (as in \cite[Prop.\@ 2.4.2]{Rubin-euler}). It is therefore enough to note that this inclusion follows directly upon taking account of the observation of Rubin in \cite[Rem.\@ 2.1.3]{Rubin-euler} and then applying the general result of \cite[Thm.~2.3.3]{Rubin-euler}. \smallskip \\
This completes the proof of Theorem \ref{thm-scarcity-rank-one}. 
\qed

\subsection{Results in higher rank}\label{iwasawa-application-scarcity}

The main result of this section provides further evidence for the inclusion (\ref{doch-in-Hamburg-auf-der-Elbchausee}) in the case that $p$ is odd and $|V_K|>1$. This result complements the evidence for Conjecture \ref{scarcity-conjecture} that is provided by Theorem \ref{bdss-result} and depends on proving a higher-rank analogue of a well-known result in the Iwasawa theory of (rank-one) Euler systems. However, since the latter result is best understood in terms of more general $p$-adic representations we have, for clarity, deferred its treatment to  Appendix \ref{appendix section}.     

Throughout the section we fix an odd prime $p$ and a finite abelian extension $K /k$ of prime-to-$p$ degree for which  $V_K \neq \emptyset$. 
We take the field $k_\infty = k_\infty^p$ to be the cyclotomic $\Z_p$-extension of $k$, set $\Gamma_K \coloneqq \gal{K_\infty}{K}$ and fix a splitting of groups $\cG_{K_\infty} \cong \cG_K \times \Gamma_K$. Via this splitting, we regard $\Z_p [\cG_K]$ as a subring of $\bLambda_K = \bLambda_{p,K}$. In this way, for each $\bLambda_K$-module $M$, and each character $\chi$ in $\widehat{\cG_K}$, we obtain a $\Z_p[\im(\chi)]\otimes_{\Z_p}\bLambda_K$-module by setting  
\[ M^\chi \coloneqq M \otimes_{\Z_p [\cG_K]} \Z_p [\im (\chi)].\]
We write $\omega_{p}$ for the $p$-adic Teichm\"uller character of $k$, regarded as a $\C$-valued character via some fixed isomorphism $\C_p\cong \C$. We then write $\Psi_K$ for the subset of $\widehat{\cG_K}$ comprising all characters $\chi$ that satisfy the following conditions: 
\begin{enumerate}[label=$\bullet$]
    \item $\chi$ is not totally odd;
    \item $U_{K, S(K_\infty)}^\chi$ is $\Z_p$-torsion-free (which is automatic unless $\chi = \omega_{p})$;
    \item if both  $p = 3$ and $\chi^2 = \omega_{p}$, then $\Cl^{(p),\chi}_{K_\infty,S(K_\infty)}$ is finite. 
\end{enumerate}
We note that $\Psi_K$ is closed under the action of the absolute Galois group of $\Q_p$ on $\Hom(\cG_K,\overline{\Q_p})$, and hence that the sum 
\[ e_{\Psi_K} \coloneqq \sum_{\chi\in \Psi_K}e_\chi\]
defines an idempotent of $\Z_p [\cG_K] \subset \bLambda_K$. In addition, the defining assumptions on characters in $\Psi_K$ imply that, for each $n$, the $\Z_p$-module $e_{\Psi_K} U_{K_n, S(K_\infty)}$ is torsion-free and hence, just as at the beginning of \S\,\ref{somr section}, there is a natural isomorphism
\[
\varprojlim_n \big( e_{\Psi_K} \bidual^{r_K}_{\Z_p [\cG_{K_n}]} U_{K_n, S(K_\infty)} \big)
\cong e_{\Psi_K} \bidual^{r_K}_{\bLambda_K} U_{K_\infty, S(K_\infty)}.
\]
Via this isomorphism, any Euler system $c$ in $\ES_k(\Z_\mathcal{S})$ gives rise to an element $e_{\Psi_K} c_{K_\infty}$ of $e_{\Psi_K} \bidual^{r_K}_{\bLambda_K} U_{K_\infty, S(K_\infty)}$, and hence also to an ideal $\im (e_{\Psi_K} c_{K_\infty})$ of $e_{\Psi_K} \bLambda_K.$
    
\begin{thm}\label{iwasawa-scarcity-evidence-proposition}
Fix a system $c$ in $\ES_k(\Z_\mathcal{S})$. Then, for each prime $p$ outside $\mathcal{S}$, one has 
\[  \im( e_{\Psi_K} c_{K_\infty})^{\ast\ast} \subseteq e_{\Psi_K}\cdot \Fitt_{\bLambda_K}^0(\mathrm{Cl}^p_{K_\infty,S(K_\infty)})^{\ast\ast}.\]
In particular, if $p > 3$, $\widehat{\cG}_K$ contains no totally odd characters, $U_{K, S(K_\infty)}$ is $\Z_p$-torsion free, and $\mathrm{Cl}^{p,\chi}_{K_\infty,S(K_\infty)}$ is finite for both $\chi= \mathbf{1}_K$ and $\chi = \omega_{K,p}$, then one has 
 \[ 
 \im(c_{K_\infty})^{\ast\ast} \subseteq  \Fitt_{\bLambda_K}^0(\mathrm{Cl}^p_{K_\infty,S(K_\infty)})^{\ast\ast}.
 \]
\end{thm}
    
\begin{proof} For each character $\chi$ in $\widehat{\cG_K}$ we set $K_\chi \coloneqq K^{\ker(\chi)}$, $\cG_\chi \coloneqq \cG_{K_\chi}$, $\cR_\chi \coloneqq \ZZ_p[\im(\chi)]$ and $\Lambda_\chi \coloneqq \bLambda_K^\chi$. We also define a $p$-adic representation of $G_k$ by setting $\cT_\chi \coloneqq \cR_\chi(1)(\chi^{-1})$.\\
At the outset, we note that the second assertion of the theorem follows immediately from the first assertion and the fact the stated hypotheses imply $\Psi_K = \widehat{\cG}_K$ so that $e_{\Psi_K} = 1$. \\
Concerning the first assertion, it is enough to prove, for each $\chi$ in $\Psi_K$, that the displayed inclusion is valid after replacing $e_{\Psi_K}$ and $\Fitt_{\bLambda_K}^0(\mathrm{Cl}^p_{K_\infty,S(K_\infty)})$ by $e_\chi$ and $\Fitt_{\Lambda_\chi}^0(\rm{Cl}^{p,\chi}_{K_\infty, S(K_\infty)})$ respectively.
Indeed, this is true because we may verify the claimed inclusion after passing from $\bLambda_K$ to its faithfully flat ring extension $\cO \otimes_{\Z_p} \bLambda_K$ with $\cO$ the $\Z_p$-algebra generated by the values of all characters $\chi \in \Psi_K$. \\
In addition, if, for any such $\chi$, the module  $\rm{Cl}^{p,\chi}_{K_\infty,S(K_\infty)}$ is finite, then the required inclusion is obvious since then $\rm{Cl}^{p,\chi}_{K_\infty,S(K_\infty)}$ vanishes after localising at every height one prime of $\Lambda_\chi$. Hence, for the rest of the argument we can, and will, fix a character $\chi$ such that $\chi \neq \omega_{K,p}$ and, if $p = 3$, also $\chi^2 \neq \omega_{p}$.\\
We write $\cL$ for the maximal abelian pro-$p$ extension of $k$ and note that all archimedean places split completely in $\cL$ (as $p$ is odd) and that for any finite extension $E$ of $k$ in $\cL$ the integer $r_{\cT_\chi}$ defined in Hypothesis \ref{hyp T appendix}\,(i) below is equal to $r \coloneqq r_{K_\chi} = r_{EK_\chi}$. \\
With $\ES^r(\cT_\chi, \cL)$ denoting the module of $p$-adic Euler systems  specified in Definition \ref{p-adic es def} (with $\Pi = S_\infty(k) \cup S_p(k)$), we next define a non-zero map of $\ZZ\llbracket\cG_{\cK}\rrbracket$-modules
    \begin{equation}\label{required twist map} 
        (-)^\chi \: \ES_k(\Z_\mathcal{S}) \to
                 \ES^r(\cT_\chi, \cL), \quad  c \mapsto c^\chi \coloneqq  (\mathrm{Tw}_{E,\chi}^r(c_{EK_\chi}))_{E \in \Omega(\cL/k)}
    \end{equation}
in the following way. For each $E$ in $\Omega(\cL / k)$, the ring homomorphism $\Z_p [\cG_\chi] \to \cR_\chi$ induced by $\chi$ gives rise to a map of $\ZZ_p[\cG_E]$-modules
    \begin{align*}
        \mathrm{Tw}_{E,\chi}^r \: \bidual_{\ZZ_p [\cG_{EK_\chi}]}^{r} U_{EK_\chi, \Pi (EK_\chi)}&\to \Big(\bidual_{\ZZ_p[\cG_{EK_\chi}]}^{r} U_{EK_\chi, \Pi (EK_\chi)}\Big) \otimes_{\ZZ_p[\cG_\chi]} \cR_\chi\\
        &\xrightarrow{\simeq} \bidual_{\cR_\chi[\cG_E]}^{r} H^1(\cO_{EK_\chi, \Pi (EK_\chi)}, \ZZ_p(1)) \otimes_{\ZZ_p[\cG_\chi]} \cR_\chi\\
        &\xrightarrow{\simeq} \bidual_{\cR_\chi[\cG_E]}^{r} H^1(\cO_{EK_\chi, \Pi (EK_\chi)}, \cT_\chi)\\
        &\to \bidual_{\cR_\chi[\cG_E]}^{r} H^1(\cO_{E, \Pi(E)}, \cT_\chi).
    \end{align*}
    Here the first isomorphism follows from Kummer theory, the second from the fact $\cR_\chi$ is a flat $\ZZ_p[\cG_\chi]$-module and the last map is induced by
    $\cores_{EK_\chi/E}$. As $E$ varies over $\Omega(\cL / k)$, these maps $\mathrm{Tw}^r_{E,\chi}$ combine to give a morphism of the required form (\ref{required twist map}).\\
The key point now is that the present hypotheses imply $\cT_\chi$ validates Hypothesis \ref{hyp T appendix}: the required conditions (i)-(v) are satisfied as a consequence of \cite[Lem.\@ 5.3]{bss2}, whilst condition (vi) (with $t = 0$) is clear in this case as $k_\infty$ is the cyclotomic $\Z_p$-extension. We may therefore apply Theorem \ref{iwasawa-evidence-theorem} to the Euler system $c^\chi$ and thereby  obtain an inclusion of $\Lambda_\chi$-ideals 
    \begin{align*}
        \im(c^\chi_{k_\infty})^{\ast\ast} \subseteq \Fitt_{\Lambda_\chi}^0(H^1_{\cF^\ast_{\mathrm{can}}}(\cO_{k,\Pi}, \cT^\lor_{\chi, k_\infty}(1))^\lor)^{\ast\ast}.
    \end{align*}
    Given this, the claimed inclusion follows directly from the existence of a canonical isomorphism $\,\rm{Cl}^{p,\chi}_{K_\infty, S(K_\infty)}\cong  H^1_{\cF^\ast_{\mathrm{can}}}(\cO_{k,\Pi}, \cT^\lor_{\chi, k_\infty}(1))^\lor\,$ (as in (\ref{rubin iso})).
    \end{proof}

\begin{rk} Under the stated conditions, the second assertion of Theorem \ref{iwasawa-scarcity-evidence-proposition} combines with Lemma  \ref{divisor group lemma}\,(d) to reduce the verification of   (\ref{doch-in-Hamburg-auf-der-Elbchausee}) to showing, for each $\chi\in \widehat{\cG_K}$, that %
\[ \im(c^\chi_{K_{\infty}})^{\ast\ast} \subseteq
 \Fitt^0_{\Lambda_\chi} (X^\chi_{K_\infty, S(K_\infty)})^{\ast \ast}.\]
This inclusion is, in effect, a lower bound on the `order of vanishing at $\chi$'  of the values of an Euler system in terms of the ramification behaviour of $p$-adic places in $K_\infty$ and can sometimes be interpreted very explicitly. For example, if $k$ is not totally-complex, $\widehat{\cG}_K$ contains no totally-odd characters, $p > 3$ and both  $\rm{Cl}^p_{k_\infty,S(K_\infty)}$ and, for each $n > 0$, also  $(\rm{Cl}^p_{K_\infty,S(K_\infty)})^{\Gamma^n_K}$ are finite, then the  argument of Lemma  \ref{divisor group lemma}\,(a) implies the above inclusion is valid provided that
\begin{equation*}\label{weaker cond} (\gamma-1)^{m(\chi)}\cdot \im(c^\chi_{K_\infty})^{\ast\ast} \subseteq \prod_{v\in S(k_\infty,\chi)}\Lambda_\chi\cdot (\gamma_v-1) \end{equation*}
for every $\chi\in \widehat{\cG_K}$, one has  where $m(\chi) = 1$ if $\chi = \mathbf{1}_K$ and $m(\chi) = 0$ otherwise, $S(k_\infty,\chi)$  denotes the set of $p$-adic places of $k$ that ramify in $k_\infty$ and are totally split in $K^\chi$ and $\gamma_v$ is a generator of the decomposition subgroup of $v$ in $\Gamma_k$. Further, the last displayed inclusion is easily seen to be valid if, for example, $k$ has only one $p$-adic place and so one obtains a full verification of (\ref{doch-in-Hamburg-auf-der-Elbchausee}) in any such case.
\end{rk}

\renewcommand{\thesection}{\Alph{section}}
\renewcommand{\thesubsubsection}{\Alph{section}.\arabic{subsubsection}}
\renewcommand{\thethm}{(\Alph{section}.\arabic{thm})}
\setcounter{section}{0}
\setcounter{thm}{0}

\section{Appendix: Euler systems for $p$-adic  representations}\label{appendix section}

In this section we generalise one of the main results of Mazur and Rubin in \cite{MazurRubin04} concerning the Iwasawa theory of rank-one Euler systems. This result is stated as Theorem \ref{iwasawa-evidence-theorem} and shows that higher-rank Euler systems for a wide class of $p$-adic representations control the structure of Selmer groups in precisely the manner predicted by `Main Conjectures' in this setting.

\subsubsection{Statement of the main result}

We fix a prime number $p$ and a finite extension $\cQ$ of $\QQ_p$ with ring of integers $\cR$, uniformiser $\varpi$, and residue field $\mathbb{k} \coloneqq \cR/\varpi$.
We also assume to be given an abelian 
pro-$p$ extension $\cL$ of $k$ that validates the following hypothesis. 

\begin{hypothesis}[$\mathrm{Hyp}(\cL)$]\text{}
	\begin{enumerate}[label=(\roman*)]
		\item{for almost all primes $\mathfrak{q}$ of $k$, the maximal $p$-power degree extension of $k$ in the ray class field $k(\mathfrak{q})$ is contained in $\cL$;}
		\item{there exists a $\ZZ_p$-extension $k_\infty$ of $k$ in $\cL$ in which no finite place splits completely.}
	\end{enumerate}
\end{hypothesis} 
We write $\Omega (\cL / k)$ for the collection of all finite extensions of $k$ in $\cL$, and for each field $E$ in $\Omega(\cL / k)$ we consider the algebras $\cR_\cL \coloneqq \cR\llbracket{\cG_\cL}\rrbracket$, $\Lambda \coloneqq \cR\llbracket{\cG_{k_\infty}}\rrbracket$ and $\bLambda_E \coloneqq \cR\llbracket{\cG_{Ek_\infty}}\rrbracket$. \\ 
We fix a finitely generated free $\cR$-module $\cT$  that admits a action of $G_k$ that is continuous (with respect to the canonical compact topology on $\mathcal{T}$) and such that the set $S_\mathrm{ram}(\cT)$ of places of $k$ at which $\cT$ is ramified is finite. We also fix a finite set of places $\Pi$ of $k$ containing
\begin{align*}
    S_\mathrm{min}(\cT) \coloneqq S_\infty(k) \cup S_p(k) \cup S_\mathrm{ram}(\cT),
\end{align*}
and for each field $E$ in $\Omega (\cL / k)$ we set  $\Pi (E) \coloneqq \Pi \cup S_\mathrm{ram}(E/k)$. 

\begin{definition}\label{p-adic es def} For each non-negative integer $r$, the $\cR_\cL$-module $\ES^{r} (\cT,\cL) = \ES^{r}_{\Pi}(\cT,\cL)$ of $(\Pi$-imprimitive$)$ Euler systems of rank $r$ for $\cT$ over $\cL$ is the collection of families
\begin{align*}
    (\eta_E)_E \in \prod_{E \in \Omega(\cL/k)} \bidual_{\cR[\cG_E]}^r H^1 (\cO_{E,\Pi(E)}, \cT)
\end{align*}
 with the property that for every pair $E \subseteq E'$ in $\Omega(\cL/k)$ one has
\begin{align}
    \cores_{E'/E}(\eta_{E'}) = \bigl(\prod_{v \in \Pi(E')\setminus \Pi(E)}P_v(\mathcal{T},\Frob_v^{-1})\bigr)(\eta_E),
    \label{p-adic-rep-distribution-relation}
\end{align}
where $P_v(\mathcal{T},X)$ denotes the characteristic polynomial $\det(1-\Frob_v^{-1}X \mid \cT^\ast (1)) \in \cR[X]$. 
\end{definition}

For a $G_k$-module $A$ we write $k(A)$ for the minimal Galois extension of $k$ such that $G_{k(A)}$ acts trivially on $A$. In the sequel we assume $p$ is \textit{odd} and for each non-negative integer $m$ we consider the fields \begin{align*}
    k_{p^m} & \coloneqq k(1;p)k(\mu_{p^m}, (\cO_k^\times)^{1/p^m}),
    & \; k_{p^\infty} & \coloneqq \bigcup_{m > 0} k_{p^m},\\
    k(\cT)_{p^m} & \coloneqq k_{p^m}k(\cT/p^m\cT),
    & k(\cT)_{p^\infty} & \coloneqq k_{p^\infty}k(\cT),
\end{align*}
where $k(1;p)$ is the maximal $p$-extension inside the Hilbert class field of $k$ and $(\cO_{k}^\times)^{1/p^m}$ the subgroup of $\overline{\mathbb{Q}}^\times$ comprising all elements whose $p^m$-th powers belong to $\cO_{k}^\times$.\\
In terms of this notation, we assume in the sequel that $\cT$ satisfies the following hypotheses.

\begin{hypothesis}[$\mathrm{Hyp}(\cT)$]\text{}\label{hyp T appendix} All of the following conditions are satisfied. 
    \begin{enumerate}[label=(\roman*)] 
        \item{The $\cR$-module $Y_k(\cT) \coloneqq \bigoplus_{v \in S_\infty(k)} H^0 (k_v, \cT)$ is non-zero and free.}
        \item{The residual representation $\overline{\cT}\coloneqq \cT \otimes_\cR \mathbb{k}$ is an irreducible $\mathbb{k}[G_k]$-module.}
        \item{There exists $\tau \in G_{(k_{p^\infty}k_\infty)}$ such that $\cT/(\tau-1)\cT$ is a free $\cR$-module of rank one.}
        \item{The cohomology groups $H^1(k(\cT)_{p^\infty}/k, \overline{\cT})$ and $H^1(k(\cT)_{p^\infty}/k, \overline{\cT}^\lor(1))$ vanish.}
        \item{If $p=3$, then $\overline{\cT}$ and $\overline{\cT}^\lor(1)$ have no non-zero isomorphic $\cR[G_k]$-subquotients.}
        \item There exists a (finite) filtration $\{0\} = \cT_t \subset \cT_{t-1} \subset \dots \subset \cT_0 = \cT$ of $\cR\llbracket G_k \rrbracket$-modules where each module $\cT_i/\cT_{i+1}$ is free over $\cR$ and such that the induced action of $G_k$ factors through $\Gal(L_i/ k)$ for some finite extension $L_i$ of $k_\infty$ that is Galois over $k$. 
    \end{enumerate}
\end{hypothesis}

\begin{remark}\label{standard-hypotheses-imply-basic-hypotheses}
The conditions (i)-(v) in Hypothesis \ref{hyp T appendix} are standard in the theory of Euler and Kolyvagin systems (following Mazur and Rubin). Condition (vi) is sufficient for our present purposes and allows an easier approach than is possible for more general representations. 
\end{remark}

We write $\cT_{k_\infty}$ for the induction from $G_{k_\infty}$ to $G_k$ of the representation $\cT$. We recall that, in each degree $m \geq 0$, the Iwasawa cohomology module of $\cT$ over a field $E$ in $\Omega(\cL/k)$ is then the (finitely generated) $\bLambda_E$-module obtained by setting 
\[ H^m_\Iw(\cO_{E,\Pi(E)},\cT) \coloneqq 
H^m (\cO_{E, \Pi (E)}, \cT_{k_\infty} )
=
\varprojlim_{n \in \N} H^m(\cO_{E_n,\Pi(E)}, \cT),\]
where the second equality follows from Shapiro's Lemma (cf.\@ \cite[Lem\@ 5.3.1]{MazurRubin04}), the limit taken over layers $E_n$ of the $\Z_p$-extension $E_\infty \coloneqq E k_\infty$ of $E$ with respect to corestriction maps.
 
In the sequel we will use the fact that, if $\cT$ satisfies $\mathrm{Hyp}(\cT)$, then for each non-negative integer $r$, there exists a canonical isomorphism of $\bLambda_E$-modules of the form 
\begin{align}\label{ryotaro-iso}
    \bidual_{\bLambda_E}^r H^1_{\Iw}(\cO_{E,\Pi(E)}, \cT) \cong \varprojlim_{n \in \N} \bidual_{\cR[\cG_{E_n}]}^r H^1(\cO_{E_n,\Pi(E)}, \cT),
\end{align}
where the limit is with respect to the maps on exterior biduals that are induced by corestriction. (This isomorphism is obtained by applying the general result of \cite[Lem.\@ B.15]{Sakamoto20} to the complex (\ref{iwasawa-complex}) defined below). 

In the sequel we will also use, without further explicit comment, the theory of Selmer structures that is recalled in \cite[\S\,2]{bss2}. In particular, we write  $\cF_\mathrm{can}$ for the canonical Selmer structure  on $\cT_{k_\infty}$.

Taking account of Hypothesis $\mathrm{Hyp}(\cT)\,(i)$ we set  
\[ a = a_\cT \coloneqq \mathrm{rank}_\cR\bigl(Y_k(\cT)\bigr).\]
Then the next result extends to arbitrary values of $a$ the (rank-one) results established by Mazur and Rubin in \cite[\S\,5.3]{MazurRubin04}. Its proof will occupy the remainder of the appendix. 

\begin{thm}\label{iwasawa-evidence-theorem}
    Assume that the extension $\mathcal{L}/k$ and representation $\mathcal{T}$ respectively satisfy the hypotheses $\mathrm{Hyp}(\mathcal{L})$ and $\mathrm{Hyp}(\mathcal{T})$. Assume also that there exists an Euler system $\eta$ in $\ES^{a}(\cT,\cL)$ for which  the element $\eta_{k_\infty}$ that corresponds, via the isomorphism (\ref{ryotaro-iso}), to the family $(\eta_{k_n})_n$ is non-zero. Then both of the following claims are valid. 
    \begin{liste}
        \item{The $\Lambda$-module $H^2_\Iw(\cO_{k,\Pi}, \cT)$ is torsion.} 
        \item{The $\Lambda$-module $H^1_{\cF_{\mathrm{can}}^\ast}(\cO_{k,\Pi}, \cT_{k_\infty}^\lor(1))^\lor$ is both finitely generated and torsion, and there is an inclusion of $\Lambda$-ideals $\im(\eta_{k_\infty})^{\ast\ast} \subseteq \cchar_{\Lambda} (H^1_{\cF_{\mathrm{can}}^\ast}(\cO_{k,\Pi}, \cT_{k_\infty}^\lor(1))^\lor).$}
    \end{liste}\end{thm}

\begin{remark} Claim (a) of Theorem \ref{iwasawa-evidence-theorem} asserts that, under the stated hypotheses, the `weak Leopoldt conjecture' of Perrin-Riou \cite[\S\,1.3]{PerrinRiou98} is valid for $\cT$ over $k_\infty$. In addition, Lemma \ref{lemma-image-Fitt} below implies that if $\eta_{k_\infty}$ is non-zero, then the inclusion in claim (b) is equivalent to an inclusion of characteristic ideals
    \[ \im(\eta_{k_\infty})^{\ast\ast} = \mathrm{char}_{\Lambda}\Big( \bigl(\bidual_{\Lambda}^{a}H^1_\Iw(\cO_{k,\Pi}, \cT)\bigr)/(\Lambda \eta_{k_\infty})\Big) \subseteq \mathrm{char}_{\Lambda}(H^1_{\cF^\ast_{\mathrm{can}}}(\cO_{k,\Pi}, \cT_{k_\infty}^\lor(1))^\lor).
    \]
    This observation implies that Theorem \ref{iwasawa-evidence-theorem}\,(b) improves upon earlier results in the literature that depend upon imposing restrictive hypotheses in order, for example, to rule out trivial zeroes of $p$-adic $L$-functions and, at least for representations of the form $\cR (1) (\chi)$ with a Dirichlet character $\chi$, also assume the validity of Leopoldt's conjecture (see, for example, the results of B\"uy\"ukboduk in \cite{buyukboduk}, \cite{buyukboduk2}, and \cite{buyukboduk3}, and of Mazigh in \cite{mazigh} and \cite{mazigh2}).
\end{remark}

\subsubsection{Twisting representations}

In this section we reduce the proof of  Theorem \ref{iwasawa-evidence-theorem} to consideration of a family of natural twists of $\cT$. To describe these twisted representations, we fix a distinguished polynomial $f$ in $\cR[X]$ that is either constant (and hence equal to $\varpi$) or of degree one. For each natural number $n$, we then define a polynomial  
\begin{align*}
    f_n = \bigg \{\begin{array}{cl}
        f + \varpi^n &\text{if } f = X + a_0\,\,(\text{with}\, a_0 \in \cR),\\
        X^n + \varpi &\text{if } f = \varpi,
    \end{array}
\end{align*}
fix a root $\alpha_n$ of $f_n$ in $\overline{\cQ}$ and consider the ring  $\cR_n\coloneqq \cR[\alpha_n]$. We also fix a  topological generator $\gamma$ of $\Gal(k_\infty/k) \cong \ZZ_p$, define a character 
\[ \psi_n \: \cG_{k_\infty} \to \cR_n, \quad  \gamma \mapsto 1+\alpha_n,\]
and consider the associated representation 
\begin{align}
    \cT(\psi_n) \coloneqq \cT \otimes_\cR \cR_n (\psi_n)\cong \cT \otimes_{\cR} \bigl(\Lambda/(\Lambda f_n)\bigr),
    \label{twisted-rep-isomorphism}
\end{align}
where the isomorphism (of $\cR \llbracket G_k \rrbracket$-modules) sends  each element $t\otimes \alpha_n^m$ to $t\otimes ( X^m \,\text{mod}\, (f_n))$.\\
We note that $\cT(\psi_n)$ is free of finite rank as an $\cR$-module and endowed with a continuous action of $G_k$ that is unramified outside a finite set of places of $k$.

\begin{lemma}\label{iwasawa-twisted-hypotheses-lemma}The following claims are valid:
    \begin{liste}
        \item{For every $n$ the representation $\cT(\psi_n)$ satisfies the hypotheses $\mathrm{Hyp}(\cT(\psi_n))$\,(i) -- (v);}
        \item{There exists an unbounded set of $n$ such that, for every prime $\mathfrak{q}$ of $k$ outside $\Pi$, and every non-negative integer $i$, the endomorphism $\Frob_\mathfrak{q}^{p^i}-1$ acts injectively on $\cT(\psi_n)$.}
    \end{liste}
\end{lemma}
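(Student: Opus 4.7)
The proof splits into two parts, each following a different strategy.

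For (a), the crucial observation is that $\alpha_n$ always lies in the maximal ideal of $\cR_n$: if $f = X + a_0$ is distinguished then $a_0 \in \varpi\cR$ and $\alpha_n = -(a_0+\varpi^n) \in \varpi\cR$, while if $f = \varpi$ then $\cR_n = \cR[\alpha_n]$ is totally ramified of degree $n$, with $\alpha_n$ a uniformiser. Consequently $\psi_n$ reduces to the trivial character modulo the maximal ideal of $\cR_n$, yielding a canonical identification
\[
\overline{\cT(\psi_n)} \cong \overline{\cT} \otimes_{\mathbb{k}} \mathbb{k}_n
\]
of $\mathbb{k}_n[G_k]$-modules, where $\mathbb{k}_n$ denotes the residue field of $\cR_n$. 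Each of the six conditions of Hypothesis \ref{hyp T appendix} for $\cT(\psi_n)$ then follows from the corresponding condition for $\cT$ by a routine base change argument: condition (i) uses that $p$ odd forces archimedean places of $k$ to split completely in $k_\infty/k$, so $\psi_n$ is trivial on every decomposition group at infinity and $Y_k(\cT(\psi_n)) \cong Y_k(\cT) \otimes_\cR \cR_n$; condition (iii) uses that the element $\tau$ supplied by the hypothesis for $\cT$ lies in $G_{k_{p^\infty}k_\infty} \subseteq G_{k_\infty}$ and hence acts trivially on $\cR_n(\psi_n)$; condition (vi) comes from tensoring the filtration on $\cT$ with $\cR_n(\psi_n)$, each new graded piece factoring through $\Gal(L_i k_\infty/k)$. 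For conditions (iv) and (v), I would compute $k(\cT(\psi_n))_{p^\infty} = k(\cT)_{p^\infty} k_\infty$ and then apply the inflation-restriction sequence for the tower $k \subseteq k(\cT)_{p^\infty} \subseteq k(\cT)_{p^\infty} k_\infty$, exploiting the triviality of $\overline{\psi_n}$ (which ensures that $G_k$ acts on $\overline{\cT(\psi_n)}$ through the same quotient as on $\overline{\cT}$) to reduce the required vanishing to the corresponding statement for $\cT$ plus a vanishing over the pro-$p$ cyclic extension $k(\cT)_{p^\infty}k_\infty/k(\cT)_{p^\infty}$ that follows by tracking the $\Gal(k(\overline{\cT})/k)$-action on the coefficients.

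For (b), I would translate the injectivity condition into an eigenvalue statement: since $\Frob_\mathfrak{q}^{p^i}$ acts on $\cT(\psi_n) = \cT \otimes_\cR \cR_n(\psi_n)$ as $\Frob_\mathfrak{q}^{p^i}|_\cT \otimes \psi_n(\Frob_\mathfrak{q})^{p^i}$, injectivity of $\Frob_\mathfrak{q}^{p^i}-1$ for all $i \geq 0$ is equivalent to demanding that $\mu \cdot \psi_n(\Frob_\mathfrak{q})$ be no $p$-power root of unity for any eigenvalue $\mu$ of $\Frob_\mathfrak{q}$ on $\cT \otimes_\cR \overline{\cQ}$. Because $\psi_n(\Frob_\mathfrak{q}) = (1+\alpha_n)^{a(\mathfrak{q})} \in 1+\mathfrak{m}$ (with $a(\mathfrak{q}) \in \mathbb{Z}_p$ denoting the image of $\Frob_\mathfrak{q}$ in $\Gal(k_\infty/k)$) and every $p$-power root of unity reduces to $1$ modulo $\mathfrak{m}$, a coincidence can arise only at \emph{bad} primes $\mathfrak{q}$, namely those whose Frobenius on $\overline{\cT}$ admits $1$ as an eigenvalue.

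The main obstacle is to rule out such coincidences simultaneously for infinitely many $n$, since the bad set of primes is itself infinite. My approach is to exploit the linear disjointness of $k(\overline{\cT})$ and $k_\infty$ over their (finite) intersection: for $M$ chosen large enough to dominate the $p$-adic valuations of $\log(\mu)$ for all bad eigenvalues $\mu \equiv 1 \pmod{\mathfrak{m}}$, the pair $\bigl(\overline{\Frob_\mathfrak{q}},\; a(\mathfrak{q}) \bmod p^M\bigr)$ falls into only finitely many classes. For each such class, the defect condition reduces to an algebraic equation $(1+\alpha_n)^{a} = \zeta\mu^{-1}$ in $\alpha_n$ with $a$ and $\zeta\mu^{-1}$ fixed, which admits at most finitely many solutions $n \in \mathbb{N}$ by a standard $p$-adic logarithm argument. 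Summing over the finitely many strata therefore excludes only finitely many $n$ in total, leaving an unbounded complement as required.
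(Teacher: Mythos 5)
Part (a) of your proposal is essentially the paper's own argument: because $f$ is distinguished one has $\psi_n \equiv 1 \pmod{\varpi}$, hence $\overline{\cT(\psi_n)} = \overline{\cT}$, and the six conditions are transferred from $\cT$ exactly as you indicate (oddness of $p$ for the archimedean invariants, $\tau \in G_{k_{p^\infty}k_\infty}$ acting trivially on $\cR_n(\psi_n)$, twisting the filtration for (vi)); if anything you are more explicit than the paper on (iv)--(vi), which is fine.

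Part (b), however, has a genuine gap at exactly the point where the difficulty lies, namely uniformity in $\mathfrak{q}$. Your stratification of the bad primes by the pair $\bigl(\overline{\Frob_\mathfrak{q}},\, a(\mathfrak{q}) \bmod p^M\bigr)$ does not pin down the eigenvalue $\mu$ of $\Frob_\mathfrak{q}$ on $\cT\otimes_\cR\overline{\cQ}$: two primes with the same residual Frobenius and the same congruence class of $a(\mathfrak{q})$ modulo $p^M$ can have entirely different characteristic-zero eigenvalues, so within a stratum the ``defect equation'' $(1+\alpha_n)^a = \zeta\mu^{-1}$ is not one equation with fixed data, and the conclusion ``finitely many excluded $n$ per stratum'' does not follow. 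Moreover, the choice of $M$ ``dominating $v_p(\log\mu)$ for all bad eigenvalues'' presupposes a uniform bound over the infinitely many bad primes; no such bound exists in general, since Frobenius elements are dense in the image of $G_k$, so there are primes $\mathfrak{q}$ whose Frobenius acts $p$-adically arbitrarily close to the identity on $\cT$, making $v_p(\log\mu)$ unbounded.

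This is precisely where the paper uses Hypothesis \ref{hyp T appendix}\,(vi), which your argument never invokes. After base change to $\C_p$ and d\'evissage along the filtration, one may assume the action factors through $\cG_L$ with $L/k_\infty$ finite and $L/k$ Galois. Every Frobenius then has the form $h\sigma^{c}$ with $h$ in a finite group of order $s$, and raising to the power $m = st$ shows that a non-zero vector fixed by $\Frob_\mathfrak{q}^{p^i}$ forces $\psi_n(\gamma)^{c_0 m}$ to be an eigenvalue of $\rho(\sigma)^{c_0 m}$, i.e.\ forces some eigenvalue $\lambda$ of $\rho(\sigma)$ to satisfy $\lambda = \xi\,\psi_n(\gamma)$ with $\xi$ a root of unity of bounded order. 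This is a condition on the single element $\sigma$ and its at most $r_\cT$ eigenvalues, so it excludes a finite set of $n$ that is independent of $\mathfrak{q}$ and $i$, which is exactly the uniformity your sketch lacks. Without condition (vi) (or some substitute supplying this uniform control over all Frobenii) your approach cannot be completed.
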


\begin{proof}
    Since $p$ is odd, the functor $(-) \otimes_\cR ( \Lambda / (\Lambda f_n))$ commutes with the functor $H^0 (k_v, -)$ for each $v \in S_\infty(k)$, and the validity of $\mathrm{Hyp}(\cT(\psi_n))$\,(i) follows easily from this. In addition, since the element $\tau$ given by $\mathrm{Hyp}(\cT)$\,(iii) acts as the identity on $k_\infty$, the element $\psi_n(\tau)$ acts as the identity on $\cT (\psi_n)$ and so $\tau$ also validates $\mathrm{Hyp}(\cT(\psi_n))$\,(iii).
    
    We next note that, since $f$ is a distinguished polynomial, one has $\psi_n \equiv 1 \pmod{\varpi}$ and so the residual representations $\overline{\cT}$ and $\overline{\cT(\psi_n)}$ coincide.  The validity of $\mathrm{Hyp}(\cT(\psi_n))$\,(ii), (iv) and (v) therefore follows from the assumed validity of $\mathrm{Hyp}(\cT)$. This proves claim (a).
    
    To prove claim (b), it is enough to verify the stated claim for the twisted base-change representation 
    $\cV(\psi_n) \coloneqq (\cT\otimes_\cR \CC_p)(\psi_n)$. By inducting on the length of the filtration of $\cT$ that is given by $\mathrm{Hyp}(\cT)$\,(vi), we can then also reduce to the case that the action of $G_k$ factors through $\cG_L$ for a finite extension $L$ of $k_\infty$ that is Galois over $k$. 
    
To deal with this case we set $\cG \coloneqq \cG_L$ and fix a pre-image $\sigma$ of $\gamma$ under the projection map $\cG \to \cG_{k_\infty}$.  We write $s$ for the order of the finite normal subgroup $H \coloneqq \ker(\pi)$ of $\cG$ and $t$ for the order of the automorphism $h\mapsto h^\sigma \coloneqq \sigma h \sigma^{-1}$ of $H$. Then, for each element $g$ of $\cG$ one has $g = h\sigma^c$ for unique elements $h \in H$ and $c \in \Z_p$, and hence, setting $m \coloneqq st$, one has 
   \[ g^{m} = (h\sigma^c)^m = \Bigl(\prod_{i=0}^{m-1} h^{\sigma^{ci}}\Bigr)\sigma^{cm} = \Bigl(\prod_{i=0}^{t-1} h^{\sigma^{ci}}\Bigr)^{\!s}\sigma^{cm} = \sigma^{cm},\]
where the last equality is true since $\prod_{i=0}^{t-1} h^{\sigma^{ci}}$ belongs to $H$. 

We now fix a prime ideal $\mathfrak{q}$ of $k$ outside $\Pi$ and a non-negative integer $i$, write the image in $\cG$ of the element $\kappa \coloneqq \Frob_\mathfrak{q}^{p^i}$ as $h_0\sigma^{c_0}$ and assume to be given a non-zero element $v$ of $\cV(\psi_n)$ that lies in the kernel of $\kappa-1$. \\
Then, after fixing an isomorphism $\cV (\psi_n) \cong \C_p^a$ of $\C_p$-vector spaces that gives rise to a morphism  $\rho\: G_k \to \Aut_{\C_p} ( \cV (\psi_n)) \cong \mathrm{GL}_{a}(\CC_p)$ induced by the action of $G_k$ on $\cV (\psi_n)$, the above computation shows that 
\begin{equation}\label{req equal 2}
     \rho (\sigma)^{c_0m} \cdot v = \rho(\kappa)^m\cdot v = \psi_n^{-1} (\gamma^{c_0})^{m} \cdot v = \psi_n^{-1} (\gamma)^{c_0m} \cdot v,
    \end{equation}
    and so $\psi_n^{-1} (\gamma)^{c_0m}$ is an eigenvalue of $\rho (\sigma)^{c_0m}$. Hence, since $\rho(\sigma) = \nu\cdot A \cdot \nu^{-1}$ for some invertible matrix $\nu$ and a matrix $A$ in Jordan normal form, it follows that $\rho(\sigma)$ has an eigenvalue $\lambda$ such that $\lambda^{c_0m} = \psi^{-1}_n (\gamma)^{c_0m}$. Thus, we have $\lambda = \xi\cdot \psi^{-1}_n (\gamma)$ for some root of unity $\xi\in \overline{\cQ}^\times$.
    \\

If $f \neq \varpi$, then $\cR_n = \cR$ and so both $\lambda$ and $\psi_n (\gamma)$, and hence also $\xi$, are contained in a finite extension of $\cR$ that only depends on the eigenvalues of $\rho (\sigma)$. On the other hand, if $f = \varpi$ and $n$ is prime to $p$, then $\cR_n$ is the ring of integers of a totally ramified extension of degree $n$ of $\cQ$ and so the order of any root of unity (such as $\xi$) in the extension of $\cQ$ generated by $\cR_n$ and $\lambda$ is bounded independently of $n$. \\
    In particular, as $\rho(\sigma)$ has at most $a$ distinct eigenvalues, and the $\psi_n(\gamma)$ are distinct as $n$ ranges over all natural numbers, this argument shows that there exists a natural number $n_0$ that is independent of $\mathfrak{q}$ and such that the equality (\ref{req equal 2}) cannot be valid for any $n$ that is prime to $p$ and greater than $n_0$.
    
    This implies that for any such $n$, any $i \ge 0$ and any $\mathfrak{q}\notin \Pi$, the kernel of $\Frob_\mathfrak{q}^{p^i} - 1$ on $\cV(\psi_n)$ must vanish, thereby proving claim (b).  
\end{proof}

We obtain Theorem \ref{iwasawa-evidence-theorem} by applying the theory of higher-rank Euler systems developed in \cite{bss2} to the  representations $\cT(\psi_n)$. To prepare for this, we first prove two technical results.

\begin{lemma}\label{iwasawa-theoretic-twists-lemma}
For each natural number $n$ and each field $E$ in $\Omega(\cL/k)$, there exists a natural map of $\bLambda_E$-modules
        \begin{align}
            j_{E,n} \: \left(\bidual_{\bLambda_E}^{a} H^1_\Iw(\cO_{E,\Pi(E)}, \cT)\right) \otimes_{\bLambda_E} \cR_n[\cG_E] \to \bidual_{\cR_n[\cG_E]}^{a} H^1(\cO_{E,\Pi(E)}, \cT(\psi_n)),\label{spectral-descent-hom}
        \end{align}
        where the tensor product is defined using the ring homomorphism  $\psi_{E,n,\ast}: \bLambda_E \to \cR_n[\cG_E]$ that is  induced by the composite of $\psi_n$ and the diagonal map $\cG_{Ek_\infty} \hookrightarrow \cG_E \times \cG_{k_\infty}$.           These maps have the following properties. 
  \begin{liste}
      \item   Each map $j_{k,n}$ is injective.
  \item The maps $\{j_{E,n}\}_{E\in \Omega(\cL/k)}$ induce a natural map $(-)^{\psi_n} \: \ES^{a}(\cT,\cL) \to \ES^{a}(\cT(\psi_n), \cL)$ of $\cR_\cL$-modules.
  \end{liste}
\end{lemma}

\begin{proof} For each $E$ in $\Omega(\cL/k)$ we define a complex of $\cR[\cG_E]$-modules by setting
    \begin{align*}
        C_E^\bullet(\cT) = R\Hom_{\cR[\cG_E]}(R\Gamma_c(\cO_{E,\Pi(E)}, \cT),\cR[\cG_E]).
    \end{align*}
Under hypothesis $\mathrm{Hyp}(\cT)$, the result of \cite[Lem.\@ 3.11]{bss2} implies that $H^0(E,\cT)$ vanishes and the module $H^1(E,\cT)$ is $\cR$-torsion-free. These facts combine with the general result of \cite[Prop.\@ 2.21]{sbA} to imply that $C_E^\bullet(\cT)$ is isomorphic in $D(\cR[\cG_E])$ to a complex of the form $P_E \to P_E$, where $P_E$ is a free $\cR[\cG_E]$-module of finite rank and the first term is placed in degree zero. By a standard argument of derived homological algebra (as, for example, in \cite[Prop.\@ 3.2]{BullachHofer}), it then follows that the derived limit complex
    \begin{align}
        C_{E_\infty}^\bullet(\cT) \coloneqq R\varprojlim_{n \in \N} C_{E_n}^\bullet(\cT)\label{iwasawa-complex}
    \end{align}
    is isomorphic in $D(\bLambda_E)$ to a complex $P_\infty \to P_\infty$, where $P_\infty$ is a free $\bLambda_E$-module of finite rank (and the first term is placed in degree zero). 
    
Next we note that the homomorphism $\psi_{E,n,\ast}$ combines with the general result of \cite[Prop.\@ 1.6.5]{fukaya-kato} to induce an isomorphism in $D(\bLambda_E)$ of the form 
\begin{align*}
        \big[P_\infty \otimes_{\bLambda_E} \cR_n [\cG_E] \to P_\infty \otimes_{\bLambda_E} \cR_n [\cG_E]\big] \cong C_{E_\infty}^\bullet(\cT) \otimes_{\bLambda}^\mathbb{L} \cR_n[\cG_E] \cong C_E^\bullet(\cT(\psi_n)),
    \end{align*}
and hence also an identification $H^1 ( C_{E_\infty}^\bullet(\cT) \otimes_{\bLambda}^\mathbb{L} \cR_n[\cG_E] ) \cong H^1 ( \cO_{E, \Pi (E)}, \cT (\psi_n))$. 
On the other hand, the complexes (where in the first case the first term is placed in degree zero)
    \begin{align*}
        C_{E_\infty}^{a,\bullet}(\cT) &\coloneqq \left[\exprod_{\bLambda_E}^{a} P_\infty \to P_\infty \otimes_{\bLambda_E} \exprod_{\bLambda_E}^{a - 1} P_\infty\right] \\
         C_{E}^{a,\bullet}(\cT(\psi_n)) & \coloneqq C_{E_\infty}^{a,\bullet}(\cT) \otimes_{\bLambda_E}^{\mathbb{L}} \cR_n[\cG_E]
    \end{align*}
are acyclic in degrees less than zero and, moreover, \cite[Lem.\@ B.12]{Sakamoto20} implies that there are canonical isomorphisms  
    \begin{align*}
        H^0(C_{E_\infty}^{a,\bullet}(\cT)) &\cong \bidual_{\bLambda_E}^{a} H^1_{\Iw}(\cO_{E,\Pi(E)}, \cT)\label{bidual-bottom-cohomology}\\
        H^0(C_E^{a,\bullet}(\cT(\psi_n))) &\cong \bidual_{\cR_n[\cG_E]}^{a} H^1(\cO_{E,\Pi(E)}, \cT(\psi_n)). 
    \end{align*}
These facts combine with an explicit analysis of the second page of the spectral sequence  of $\bLambda_E$-modules 
    \begin{equation}\label{spec seq eq}
        E_2^{i,j} = \Tor_{-i}^{\bLambda_E}(H^j(C_{E_\infty}^{a, \bullet}(\cT)), \cR_n[\cG_E]) \implies H^{i+j}(C_E^{a, \bullet}(\cT(\psi_n)).
    \end{equation} 
to imply the existence of a homomorphism $j_{E,n}$ of the required sort. 

Moreover, if $E = k$ then (\ref{spec seq eq}) degenerates on its second page (since the $\Lambda$-module $\cR_n \cong \Lambda/(\Lambda f_n)$ has projective dimension one) to yield a short exact sequence 
    \[ 0 \to \left(\bidual_{\Lambda}^{a} H^1_\Iw(\cO_{k,\Pi(k)}, \cT)\right) \otimes_{\Lambda} \cR_n \to \bidual_{\cR_n}^{a} H^1(\cO_{k,\Pi(k)}, \cT(\psi_n))
        \to H^1(C_{k_\infty}^{a,\bullet}(\cT))[f_n] \to 0,
    \]
    thereby proving claim\, (a). \\
    To prove claim\, (b) we note $\ES^{a}(\cT,\cL)$ identifies with the $\cR_\cL$-module comprising all elements of  $\prod_{E \in \Omega(\cL/k)} \bidual_{\bLambda_E}^{a} H^1_\Iw(\cO_{E,\Pi}, \cT)$ that satisfy the (Iwasawa-theoretic analogue of the) relation (\ref{p-adic-rep-distribution-relation}). It is therefore clear that the maps $\{j_{E,n}\}_E$ combine to induce a map of $\cR_\cL$-modules
    \begin{align*}
        \ES^{a}(\cT,\cL) \to \prod_{E \in \Omega(\cL/k)} \bidual_{\cR_n[\cG_E]}^{a} H^1(\cO_{E,\Pi(E)}, \cT(\psi_n)).
    \end{align*}
    To see that this induces a map $(-)^{\psi_n}$ of the required sort, it is enough to show its image is contained in $\ES^{a}(\cT(\psi_n), \cL)$. This is true since, for every field $E$ in $\Omega(\cL/k)$, and every place $v$ of $k$ outside $ S_\mathrm{min}(\cT(\psi_n))\cup S(E)$, the image of $P_v (\cT, \Frob_v^{-1})$ in $\bLambda_E$ is sent by $\psi_{E,n,\ast}$ to the element $P_v(\cT,\psi_{n} (\Frob_v^{-1}) \cdot \Frob_v^{-1}) = P_v(\cT(\psi_n), \Frob_v^{-1})$ of $\cR_n[\cG_E]$. 
\end{proof}

Before stating the next result we recall that the global duality theorem implies the existence of a canonical exact sequence of $\Lambda$-modules
\begin{equation} \label{exact sequence selmer}
\begin{tikzcd}
    0 \arrow{r} & H^1_{\cF^\ast_{\mathrm{can}}}(\cO_{k,\Pi}, \cT_{k_\infty}^\lor(1))^\lor \arrow{r} & H^2_\Iw(\cO_{k,\Pi}, \cT) \arrow{r} & \bigoplus_{v \in \Pi} H^2(k_v,\cT_{k_\infty}).
    \end{tikzcd}
\end{equation}
(Since $\cF_{\mathrm{can}}$ agrees with the relaxed Selmer structure  on $\cT_{k_\infty}$ (by \cite[Lem.\@ 5.3.1\,(ii)]{MazurRubin04}), this sequence follows, for example, from the argument of Perrin-Riou in \cite[Prop.\@ A.3.2]{PerrinRiou98}.)

The above exact sequence implies $H^1_{\cF^\ast_{\mathrm{can}}}(\cO_{k,\Pi}, \cT_{k_\infty}^\lor(1))^\lor$ is a finitely generated $\Lambda$-module and hence that its $\Lambda$-torsion submodule has a well-defined characteristic ideal. 

\begin{lemma}\label{iwasawa-technical-lemma}
    For all sufficiently large $n$, the following claims are valid:
    \begin{liste}
        \item{The (injective) map $j_{k,n}$ constructed in Lemma \ref{iwasawa-theoretic-twists-lemma} is a pseudo-isomorphism 
        and the order of $\coker(j_{k,n})$ is bounded independently of $n$.
        }
        \item{If $\eta$ is any system as in Theorem \ref{iwasawa-evidence-theorem}, then the component $j_{k,n}(\eta_{k_\infty})$ of $\eta^{\psi_n}$ at $k$ is non-zero and, in $\cR_n$, one has
            \begin{align*}
                \im(j_{k, n} (\eta_{k_\infty})) \subseteq \Fitt_{\cR_n}^0(H^1_{\cF^\ast_\mathrm{can}}(\cO_{k,\Pi}, \cT(\psi_n)^\lor(1))^\lor).
            \end{align*}
            In particular, $H^1_{\cF^\ast_\mathrm{can}}(\cO_{k,\Pi}, \cT(\psi_n)^\lor(1))^\lor$ is finite. 
                }
        \item{The kernel and cokernel of the natural map (of $\cR_n$-modules)
            \begin{align*}
                s_n \: H^1_{\cF^\ast_\mathrm{can}}(\cO_{k,\Pi}, \cT_{k_\infty}^\lor(1))^\lor \otimes_\Lambda \cR_n \to H^1_{\cF^\ast_{\mathrm{can}}}(\cO_{k,\Pi}, \cT(\psi_n)^\lor(1))^\lor
            \end{align*}
            are finite and of orders bounded independently of $n$.
        }
        \item{The polynomial $f_n$ does not divide the characteristic polynomial of the $\Lambda$-torsion submodule of $H^1_{\cF^\ast_\mathrm{can}}(\cO_{k,\Pi}, \cT_{k_\infty}^\lor(1))^\lor$.}
    \end{liste}
\end{lemma}

\begin{proof}
    The proof of Lemma \ref{iwasawa-theoretic-twists-lemma}\,(a) shows that the cokernel of $j_{k,n}$ is isomorphic to $Q[f_n]$ with $Q \coloneqq H^1(C_{k_\infty}^{a,\bullet}(\cT))$. In particular, if $n$ is chosen large enough so that the prime ideal $\Lambda f_n$ is not contained in the support of the $\Lambda$-torsion submodule of $Q$, then $Q[f_n]$ is pseudo-null and so its order is at most the cardinality of the maximal finite $\Lambda$-submodule of $Q$. This proves claim (a).
    
    Next we note that, since the element $\eta_{k_\infty}$ is (by assumption) non-zero, and a non-zero element of a unique factorisation domain can only have finitely many irreducible factors, neither of the principal ideals that are given by $\im(\eta_{k_\infty})^{\ast \ast}$ and the characteristic ideal of the $\Lambda$-torsion submodule of $H^1_{\cF^\ast_\mathrm{can}}(\cO_{k,\Pi}, \cT_{k_\infty}^\lor(1))^\lor$ can be contained in infinitely many of the ideals $\Lambda f_n$. In particular, for any large enough $n$,  the polynomial $f_n$ does not divide the characteristic polynomial of the $\Lambda$-torsion submodule of $H^1_{\cF^\ast_\mathrm{can}}(\cO_{k,\Pi}, \cT_{k_\infty}^\lor(1))^\lor$ and, in addition, the element $j_{k,n}(\eta_{k_\infty})$ is non-zero. The first of these properties immediately implies the property in claim (d), whilst the second property implies that the image $\eta^{\psi_n}$ of $\eta$ inside $\ES^{a}(\cT(\psi_n),\cL)$ is non-zero. 
    
    Now Lemma \ref{iwasawa-twisted-hypotheses-lemma} implies that the representation $\cT(\psi_n)$ satisfies all the hypotheses $\mathrm{(H_0)}$ through $\mathrm{(H_4)}$ that are listed in \cite[\S\,3.1.3]{bss2}, whilst $\cT(\psi_n)$ tautologically satisfies the hypothesis $\mathrm{(H_5^c)}$ in loc.\@ cit. We may therefore apply the general result of \cite[Thm.\@ 3.6\,(iii)\,(c)]{bss2} to the non-zero Euler system $\eta^{\psi_n}$ to directly obtain the inclusion in claim (b). \\
    Since $j_{k, n} (\eta_{k_\infty})$ is non-zero, the ideal $\im (j_{k, n} (\eta_{k_\infty}))$ is not the zero ideal and hence of finite index in $\cR_n$. 
    The second assertion of claim (b) then follows upon noting that, since $\cR_n$ is a discrete valuation ring, a finitely generated $\cR_n$-module $M$ is finite if and only if its Fitting ideal is of finite index in $\cR_n$. \\
Finally we note that the validity of the property in claim (c) (for sufficiently large $n$) follows upon  taking Pontryagin dual of the result of Mazur and Rubin in \cite[Prop.\@ 5.3.14]{MazurRubin}.
\end{proof}

\subsubsection{The proof of Theorem \ref{iwasawa-evidence-theorem}}

To prove claim (a) of Theorem \ref{iwasawa-evidence-theorem} we fix any distinguished linear polynomial $f$ of $\Lambda$, for example we may take $f  = X$. We then let $n$ be a natural number that is large enough to ensure that all of the claims in Lemma \ref{iwasawa-technical-lemma} are valid with respect to the polynomial $f_n$. 

Then, by Lemma \ref{iwasawa-technical-lemma}\,(b), the Fitting ideal $\Fitt_{\cR_n}^0(H^1_{\cF^\ast_{\mathrm{can}}}(\cO_{k,\Pi}, \cT(\psi_n)^\lor(1))^\lor)$ contains a non-zero element and so the (finitely generated) $\cR_n$-module $H^1_{\cF^\ast_{\mathrm{can}}}(\cO_{k,\Pi}, \cT(\psi_n)^\lor(1))^\lor$ is finite. From the result of 
Lemma \ref{iwasawa-technical-lemma}\,(c), it then follows that    $H^1_{\cF^\ast_\mathrm{can}}(\cO_{k,\Pi}, \cT_{k_\infty}^\lor(1))^\lor \otimes_\Lambda \cR_n$ is finite and hence, as a consequence of the structure theorem for finitely generated $\Lambda$-modules, that the $\Lambda$-module  $H^1_{\cF^\ast_\mathrm{can}}(\cO_{k,\Pi}, \cT_{k_\infty}^\lor(1))^\lor$ is torsion.

This observation directly implies the first assertion of Theorem \ref{iwasawa-evidence-theorem}\,(b) and also combines with the exact sequence (\ref{exact sequence selmer}), and the fact that no place in $\Pi\cap S_\fin(k)$ splits completely in $k_\infty$, to imply that the $\Lambda$-module $H^2_\Iw(\cO_{k,\Pi}, \cT)$ is torsion, thereby proving Theorem \ref{iwasawa-evidence-theorem}\,(a).\medskip\\
Before proving the rest of Theorem \ref{iwasawa-evidence-theorem}\,(b), we record two general results that will be used (the second of which is well-known but we include its proof for lack of a better reference).

We recall that, for any finitely generated module $M$ over a Noetherian ring $R$ the group  $\Ext^1_R (M, R)$ is naturally a module over the endomorphism ring $R^\ast$ of $R$.   

\begin{lem} \label{lemma-image-Fitt}
Let $R$ be a Gorenstein ring, $M$ a finitely generated $R$-module and $r$ a non-negative integer. If a non-zero element $m$ of $\bidual^r_R M$ generates a free $R$-module, then one has
\[
\im (m)^{\ast \ast} = \Fitt^0_{R^\ast} \big ( \Ext^1_R \big ( (\bidual^r_R M)/(Rm), R \big ) \big)^{\ast \ast}.
\]
\end{lem}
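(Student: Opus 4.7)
Set $P \coloneqq \bidual^r_R M$ and $Q \coloneqq P/(Ra)$. Both sides of the claimed equality are reflexive ideals of $R$, and since $R$ is Gorenstein (in particular, satisfies Serre's condition $S_2$), such ideals are uniquely determined by their localisations at height-one primes. My plan is therefore to verify the equality $\im(a)_\fp = \Fitt^0_R(\Ext^1_R(Q,R))_\fp$ after localising at each height-one prime $\fp$ of $R$. At such a prime, $R_\fp$ is a Gorenstein local ring of dimension at most one, and the formation of $\Hom$, duals, and exterior powers commutes with the flat base change $R \to R_\fp$ for finitely presented modules, so $P_\fp = \bidual^r_{R_\fp}(M_\fp)$.

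Using the hypothesis that $a$ generates a free rank-one submodule, I would consider the short exact sequence $0 \to R \xrightarrow{\cdot a} P \to Q \to 0$ and dualise it. Since $\Ext^1_R(R,R)=0$, one obtains the exact sequence
\[
0 \to Q^* \to P^* \to R \to \Ext^1_R(Q,R) \to \Ext^1_R(P,R) \to 0,
\]
in which the arrow $P^* \to R$ is evaluation at $a$. Writing $I \subseteq R$ for its image, this yields the canonical short exact sequence $0 \to R/I \to \Ext^1_R(Q,R) \to \Ext^1_R(P,R) \to 0$.

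The key step is to show that $\Ext^1_R(P,R)_\fp = 0$ at every height-one prime $\fp$. For this, I would use that $P_\fp$ is the $R_\fp$-dual of a finitely generated module, hence torsion-free (any element of $\Hom_{R_\fp}(-,R_\fp)$ annihilated by a non-zero-divisor must vanish) and hence maximal Cohen-Macaulay over the one-dimensional Cohen-Macaulay local ring $R_\fp$. Local duality then gives $\Ext^i_{R_\fp}(P_\fp, R_\fp) = 0$ for all $i>0$. Combined with the compatibility of Fitting ideals with flat base change, this yields
\[
I_\fp = \Fitt^0_{R_\fp}(R_\fp/I_\fp) = \Fitt^0_{R_\fp}(\Ext^1_R(Q,R)_\fp) = \Fitt^0_R(\Ext^1_R(Q,R))_\fp.
\]

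Finally, I would identify $\im(a)_\fp$ with $I_\fp$. By definition, $\im(a)$ is the image of the composite $\exprod^r_R M^* \to (\exprod^r_R M^*)^{**} = P^* \to R$, so $\im(a) \subseteq I$ unconditionally. The reverse inclusion follows by showing that the bidualisation map $\exprod^r_R M^* \to P^*$ is surjective after localising at $\fp$: for any finitely generated module $N$ over a Gorenstein local ring of dimension at most one, the canonical map $N \to N^{**}$ factors as the surjection $N \twoheadrightarrow N/N_{\tor}$ followed by the natural isomorphism $N/N_{\tor} \xrightarrow{\sim} (N/N_{\tor})^{**}$, the latter holding because the torsion-free quotient is maximal Cohen-Macaulay and hence reflexive. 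The main obstacle in the plan will be the careful deployment of Cohen-Macaulay approximation theory at height-one primes, and in particular justifying that duals of finitely generated modules over one-dimensional Gorenstein local rings are automatically maximal Cohen-Macaulay and reflexive.
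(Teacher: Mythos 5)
Your proof is correct and takes essentially the same route as the paper: reduce to localisations where $R$ is Gorenstein of dimension at most one, dualise $0 \to Ra \to \bidual^r_R M \to Q \to 0$, kill $\Ext^1_R(\bidual^r_R M, R)$ because the exterior bidual is a dual (hence maximal Cohen--Macaulay), identify the image of the evaluation-at-$a$ map with $\im(a)$ via the surjectivity of $\exprod^r_R M^\ast \to (\bidual^r_R M)^\ast$, and take Fitting ideals. The only (cosmetic) differences are that the paper localises at primes of height \emph{at most} one (thereby also covering minimal primes, which your dimension-$\leq 1$ local analysis would handle anyway) and quotes a reference for the surjectivity of the bidualisation map, where you give the direct torsion-free-quotient/MCM argument.
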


\begin{proof} Since reflexive ideals of $R$ are uniquely determined by their localisations at primes of height at most one, we may, and will,  assume $R$ is a Gorenstein ring of dimension at most one. 

In this case the module $\Ext^1_R ( \bidual^r_R M, R)$ vanishes since the exterior bidual $\bidual^r_R M$ is reflexive. Upon dualising the tautological exact sequence
\begin{cdiagram}
0 \arrow{r} & Rm \arrow{r} & \bidual^r_R M \arrow{r} & (\textstyle \bidual^r_R M)/(Rm) \arrow{r} & 0
\end{cdiagram}%
we therefore obtain an exact commutative diagram
\begin{equation*} \label{diagram-image-fitt}
\begin{tikzcd}
& \big( \bidual^r_R M \big)^\ast \arrow[twoheadrightarrow]{d}[left]{\theta \mapsto \theta(m)} \arrow{r} & (Rm)^\ast \arrow{d}[left]{\theta \mapsto \theta(m)} \arrow{r} & \Ext^1_R \big ( (\textstyle \bidual^r_R M)/(Rm), R \big ) \arrow{r} \arrow[dashed]{d} & 0 \\
0 \arrow{r} & \im (m) \arrow{r} & R \arrow{r} & R/\im (m) \arrow{r} & 0.
\end{tikzcd}
\end{equation*}
Here the second vertical map is bijective as $m$ generates a free module of rank one, and the first vertical map is surjective since the present hypotheses on $R$ imply the natural map $\exprod^r_R M^\ast \to (\bidual^r_R M)^\ast$ is surjective (cf.\@ \cite[Prop.~(5.4.9)\,(iii)]{NSW}). Upon applying the Snake Lemma to the diagram, one therefore finds that the dotted vertical map (that is induced by the commutativity of the diagram) is an isomorphism. This isomorphism leads directly to the claimed description of $\im (m)$.
\end{proof}

\begin{lemma}\label{iwasawa-orders-lemma}
    Let $M$ be a finitely generated torsion $\Lambda$-module that is pseudo-isomorphic to 
    \begin{align*}
        E_M \coloneqq \bigoplus_{j=1}^s \Lambda/(\Lambda g^{m_j}_j),
    \end{align*}
    where each $g_j$ is either $\varpi$ or an irreducible distinguished polynomial and each $m_j$ a natural number. Then, for any irreducible element $f$ of $\Lambda$ that does not divide $\mathrm{char}_\Lambda(M)$, one has 
    \begin{align*}
        \ord_p\left(\left| M/(fM)\right|\right) - \ord_p(|M_\mathrm{fin}|) \leq \ord_p\left(\left|E_M/(fE_M)\right|\right) \leq \ord_p\left(\left|M/(fM)\right|\right),
    \end{align*}
    where $M_\mathrm{fin}$ denotes the maximal finite $\Lambda$-submodule of $M$.
\end{lemma}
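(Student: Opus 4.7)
The plan is to decompose the pseudo-isomorphism into two short exact sequences, run the Snake Lemma (applied to multiplication by $f$) on each, and reconcile the resulting orders. Write the given pseudo-isomorphism as an exact sequence
\[
0 \to K \to M \to E_M \to C \to 0,
\]
with $K$ and $C$ finite, and split it into $0 \to K \to M \to M' \to 0$ and $0 \to M' \to E_M \to C \to 0$. Because $(f)$ is distinct from each $(g_j)$ and $\Lambda$ is a two-dimensional regular local ring at height-one primes, multiplication by $f$ is injective on every summand $\Lambda/(g_j)^{m_j}$, with finite cokernel $\Lambda/(g_j^{m_j}, f)$. Hence $E_M[f] = 0$ and $|E_M/fE_M|$ is finite.

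Applying the Snake Lemma to the first short exact sequence gives
\[
0 \to K[f] \to M[f] \to M'[f] \to K/fK \to M/fM \to M'/fM' \to 0,
\]
and to the second it gives
\[
0 \to M'[f] \to 0 \to C[f] \to M'/fM' \to E_M/fE_M \to C/fC \to 0.
\]
Since $K$ and $C$ are finite, $|K[f]| = |K/fK|$ and $|C[f]| = |C/fC|$. Taking the alternating product of orders in each sequence and combining them (all terms are finite, so this is legitimate) yields, after cancellation,
\[
\mathrm{ord}_p\bigl(|M/fM|\bigr) - \mathrm{ord}_p\bigl(|E_M/fE_M|\bigr) = \mathrm{ord}_p\bigl(|M[f]|\bigr).
\]
This immediately gives the upper bound $\mathrm{ord}_p(|E_M/fE_M|) \le \mathrm{ord}_p(|M/fM|)$.

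For the lower bound it then suffices to prove that $M[f] \subseteq M_{\mathrm{fin}}$. The submodule $M[f]$ is supported on $V(f) \cap \mathrm{supp}_\Lambda(M)$; by hypothesis $(f)$ is not among the height-one primes $(g_j)$ in $\mathrm{supp}_\Lambda(M)$, so this intersection consists entirely of maximal ideals of $\Lambda$. Thus $M[f]$ is a pseudo-null $\Lambda$-submodule of $M$, hence finite, and therefore contained in $M_{\mathrm{fin}}$. The only subtlety is the justification that the Snake Lemma terms combine cleanly; there is no real obstacle since every group appearing is finite under our coprimality hypothesis on $f$. Consequently $\mathrm{ord}_p(|M[f]|) \le \mathrm{ord}_p(|M_{\mathrm{fin}}|)$, and substituting this into the displayed equation above yields the claimed lower bound.
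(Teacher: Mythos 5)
Your proof is correct and follows essentially the same route as the paper: establish $E_M[f]=0$, deduce the Herbrand-quotient identity $|M[f]|/|M/fM| = |E_M[f]|/|E_M/fE_M|$ from the finiteness of the kernel and cokernel of the pseudo-isomorphism, and conclude via $|M[f]| \le |M_{\mathrm{fin}}|$. The only difference is that you spell out the details the paper leaves implicit, namely the Snake Lemma computation behind the Herbrand-quotient identity and the support/pseudo-nullity argument showing $M[f]$ is finite and hence contained in $M_{\mathrm{fin}}$.
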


\begin{proof} The existence of a map of $\Lambda$-modules from $M$ to $E_M$ (or in the reverse direction) that has finite kernel and cokernel combines with a calculation of Herbrand quotients to show that \[|E_M[f]|/|E_M/(fE_M)| = |M[f]|/|M/(fM)|.\] 
The claimed inequalities follow from this equality, the obvious inequality $|M[f]|\le |M_\mathrm{fin}|$ and the fact that $E_M[f]$ vanishes as $f$ is coprime to each $g_j$. \end{proof}

Turning now to the proof of Theorem \ref{iwasawa-evidence-theorem}\,(b), we note that, since $H^2_\Iw (\cO_{k, \Pi}, \cT)$ is a torsion $\Lambda$-module, the argument of \cite[Thm.\@ 3.8\,(a)]{BullachDaoud} implies $\bidual_{\Lambda}^{a} H^1_\Iw(\cO_{k,\Pi}, \cT)$ is a free $\Lambda$-module of rank one. Hence, since  $\eta_{k_\infty}$ generates a free $\Lambda$-module, the quotient $\Lambda$-module 
\[ Q_{\eta} \coloneqq \bigl(\bidual_{\Lambda}^{a} H^1_\Iw(\cO_{k,\Pi}, \cT)\bigr)/(\Lambda\eta_{k_\infty})\]
is torsion. Upon applying Lemma \ref{lemma-image-Fitt} with $M =  H^1_\Iw(\cO_{k,\Pi}, \cT), r = a$ and $m = \eta_{k_\infty}$, and identifying $\Lambda^\ast$ with $\Lambda^\#$ (where, as before, $\#$ indicates that the $\cG_{k_\infty}$-action has been inverted using the involution $\sigma \mapsto \sigma^{-1}$), we may therefore deduce that 
\[ \im(\eta_{k_\infty})^{\ast \ast} = \Fitt^0_{\Lambda^\#} \big ( \Ext^1_\Lambda \big ( Q_\eta, \Lambda \big ) \big)^{\ast \ast} = \Fitt^0_{\Lambda^\#} \big ( Q_\eta^\# \big )^{\ast \ast}
= \Fitt^0_{\Lambda} \big ( Q_\eta \big )^{\ast \ast}
    = \mathrm{char}_{\Lambda}\big( Q_\eta\big),
\]
where the second equality follows from \cite[Prop.\@ (5.5.13)]{NSW} and the last from the fact that $\Fitt^0_\Lambda \big ( M)^{\ast \ast}
    = \mathrm{char}_{\Lambda}\big(M)$ for any finitely generated torsion $\Lambda$-module $M$. 
The proof of claim (b) is therefore reduced to the verification of an inclusion of characteristic ideals
\begin{equation}\label{last step inclusion}
        \mathrm{char}_{\Lambda}\big( Q_\eta \big) \subseteq \mathrm{char}_{\Lambda}(H^1_{\cF^\ast_{\mathrm{can}}}(\cO_{k,\Pi}, \cT_{k_\infty}^\lor(1))^\lor).
\end{equation}

To check this, we fix generating elements $z_\eta$ and $z_\Sel$ of the (principal) $\Lambda$-ideals $\mathrm{char}_\Lambda(Q_\eta)$ and $\mathrm{char}_\Lambda(H^1_{\cF^\ast_{\mathrm{can}}}(\cO_{k,\Pi}, \cT_{k_\infty}^\lor(1))^\lor)$, respectively. We note that $z_\eta$ and $z_\Sel$ can both be factored as a product of irreducible distinguished polynomials and a power of the uniformiser $\varpi$ of $\cR$. In addition, since we are free to verify (\ref{last step inclusion}) after a faithfully flat base change, we may, and will, assume that all occurring irreducible distinguished polynomials are linear. 

In particular, to verify (\ref{last step inclusion}), it is now enough for us to fix a factor $f$ of $z_\Sel$ that is equal to either $\varpi$ or to $X+a_0$ (for some $a_0$ in the maximal ideal of $\cR$) and to show that 
\begin{equation}\label{wanted inequality} \ord_f(z_\eta) \geq \ord_f(z_\mathrm{Sel}).\end{equation}
    
To do this, we let $n$ be a natural number for which all the claims of Lemma \ref{iwasawa-technical-lemma} are valid with respect to the fixed choice of $f$. We then choose a pseudo-isomorphism of the form 
    \begin{equation}\label{fixed quasi}
        H^1_{\cF^\ast_{\mathrm{can}}}(\cO_{k,\Pi}, \cT_{k_\infty}^\lor(1))^\lor \to \bigoplus_{i=1}^t \Lambda/(\Lambda f^{m_i}) \oplus \bigoplus_{j} \Lambda/(\Lambda g_{j})
    \end{equation}
  in which $t$ is a non-negative integer, each $m_i$ a natural number and each $g_{j}$ a (possibly reducible) distinguished polynomial that is independent of $n$ and also, since $n$ validates Lemma \ref{iwasawa-technical-lemma}\,(d), coprime to $f_n$.
    
Then, for each index $j$ and every such $n$, the quotient module $\Lambda/(\Lambda g_{j} + \Lambda f_n)$ is finite and we claim that its cardinality is bounded independently of $n$. To show this, we note that, because $f_n$ is an irreducible distinguished polynomial, the Weierstrass Preparation Theorem gives an isomorphism $\Lambda/(\Lambda g_{j} +  \Lambda f_n) \cong \cR_n/(\cR_n g_{j} (\alpha_n))$. It is then enough to note that the $\cR_n$-valuation of $g_{j} (\alpha_n)$ is bounded since, for all large enough $n$, the strong triangle inequality implies that 
    \[
    \ord_{\cR_n} ( g_{j} (\alpha_n)) 
    = \begin{cases}
    \ord_\cR ( g_{j} (a_0))  \quad & \text{ if } f = X + a_0, \\
    \ord_{\cR_n} (\alpha_n^{\mathrm{deg}(g_{j})}) = \mathrm{deg}(g_{j}) & \text{ if } f = \varpi.
    \end{cases}
    \]

This observation implies the existence of constants $\kappa_1$ and $\kappa_2$ that are independent of $n$ and such that 
    \begin{align}\label{first sequence}
        \ord_p \big ( \big |H^1_{\cF^\ast_{\mathrm{can}}}(\cO_{k,\Pi}, \cT(\psi_n)^\lor(1))^\lor \big| \big) &= \ord_p\big(\big|H^1_{\cF^\ast_{\mathrm{can}}}(\cO_{k,\Pi}, \cT^\lor_{k_\infty}(1))^\lor \otimes_\Lambda \cR_n\big|\big) + \kappa_1\\
        &\geq \ord_p\bigl(\prod_{i=1}^t \left|\Lambda/(\Lambda f^{m_i} +  \Lambda f_n)\right|\bigr) + \kappa_2\notag\\
        &= \sum_{i=1}^t\ord_p\big(\left|\cR_n/(\cR_n f(\alpha_n)^{m_i})\right|\big) + \kappa_2\notag\\
        &= n\cdot \sum_{i=1}^t m_i + \kappa_2\notag\\
        &= n\cdot \ord_f(z_\mathrm{Sel}) + \kappa_2,\notag
    \end{align}
   where the first equality follows from an application of Lemma \ref{iwasawa-technical-lemma}\,(c), the inequality from the second inequality of Lemma \ref{iwasawa-orders-lemma} and the final equality from the pseudo-isomorphism (\ref{fixed quasi}) and choice of element  $z_\mathrm{Sel}$. \\
    In a similar way, after fixing a pseudo-isomorphism of the form 
    \begin{equation}\label{quasi 2}
        Q_\eta \to \bigoplus_{i=1}^{t'} \faktor{\Lambda}{(\Lambda f^{l_i})} \oplus \bigoplus_j \faktor{\Lambda}{(\Lambda h_j)},
    \end{equation}
 in which $t'$ is a non-negative integer, each $l_i$ a natural number and each $h_j$ a (possibly reducible) polynomial that is both independent of $n$ and coprime to $f_n$, one finds that there exist constants $\kappa_3$ and $\kappa_4$ that are independent of $n$ and such that 
    \begin{align}\label{second sequence}
        & \phantom{=}\; \ord_p\Big(\big|\bigl(\bidual_{\cR_n}^{a} H^1(\cO_{k,\Pi}, \cT(\psi_n))\bigr)/(\cR_n j_{k,n}(\eta_{k_\infty}))\big|\Big) \\
        &= \ord_p\big(\big|Q_\eta\otimes_\Lambda \cR_n\big|\big) + \kappa_3\notag\\
        &\leq \ord_p\big(\sum_{i=1}^{t'} \big|\Lambda/(\Lambda f^{l_i} +  \Lambda f_n)\big|\big) + \kappa_4\notag\\
        &= n\cdot\sum_{i=1}^{t'}l_i + \kappa_4\notag\\
        &= n\cdot \ord_f(z_\eta) + \kappa_4.\notag
    \end{align}
    Here the first equality follows from Lemma \ref{iwasawa-technical-lemma}\,(a), the inequality from the first inequality of Lemma \ref{iwasawa-orders-lemma}, and the final equality from the  pseudo-isomorphism (\ref{quasi 2}) and choice of $z_\eta$.\\
 Now, Lemma \ref{iwasawa-technical-lemma}\,(b) combines with Lemma \ref{lemma-image-Fitt} to imply an inclusion
 \[
 \Fitt^0_{\cR_n} \Big ( \Bigl( \bidual_{\cR_n}^{a} H^1(\cO_{k,\Pi}, \cT(\psi_n))\Bigr)/\big (\cR_n  j_{k,n}(\eta_{k_\infty})\big ) \Big) 
\subseteq \Fitt^0_{\cR_n} ( H^1_{\cF^\ast_{\mathrm{can}}}(\cO_{k,\Pi}, \cT(\psi_n)^\lor(1))^\lor).
 \]
Since $\cR_n$ is a discrete valuation ring with finite residue field, this inclusion is therefore equivalent to an inequality
    \[
         \left|\Bigl( \bidual_{\cR_n}^{a} H^1(\cO_{k,\Pi}, \cT(\psi_n))\Bigr)/\big (\cR_n  j_{k,n}(\eta_{k_\infty})\big )\right| \geq \left|H^1_{\cF^\ast_{\mathrm{can}}}(\cO_{k,\Pi}, \cT(\psi_n)^\lor(1))^\lor\right|.
    \]

Upon combining this inequality with those of (\ref{first sequence}) and (\ref{second sequence}), we derive an inequality 
    \begin{align*}
        n\cdot \ord_f(z_\eta) + \kappa_4 \geq n\cdot \ord_f(z_\mathrm{Sel}) + \kappa_2.
    \end{align*}
By taking $n$ sufficiently large, this then implies the inequality (\ref{wanted inequality}) and hence completes the proof of Theorem  \ref{iwasawa-evidence-theorem}. \qed

\renewcommand{\emph}[1]{\textit{#1}}

\addcontentsline{toc}{section}{References}
\tiny
\printbibliography

\small

\textsc{King's College London,
Department of Mathematics,
London WC2R 2LS,
UK} \\
\textit{Email addresses:} \href{mailto:dominik.bullach@kcl.ac.uk}{dominik.bullach@kcl.ac.uk},
\href{mailto:david.burns@kcl.ac.uk}{david.burns@kcl.ac.uk},
\href{mailto:alexandre.daoud@kcl.ac.uk}{alexandre.daoud@kcl.ac.uk} \medskip \\

\textsc{Yonsei University,
Department of Mathematics,
Seoul,
Korea.}\\
\textit{Email address:} \href{mailto:sgseo@yonsei.ac.kr}{sgseo@yonsei.ac.kr}

\end{document}